\numberwithin{equation}{section}
\newcommand{\extg}{\mathfrak{a}} 
\newcommand{\BB}{\mathbf{X}}
\newcommand{\iM}{\mathbb{M}}
\newcommand{\iN}{\mathbb{N}}
\newcommand{\iE}{\mathbb{E}}
\newcommand{\iP}{\mathbb{P}}
\newcommand{\iC}{\mathcal{C}}
\newcommand{\TT}{\mathfrak{T}}
\newcommand{\tto}{\rightrightarrows}    
\newcommand{\Ger}{\mathcal{G}erm}
\newcommand{\CC}{\mathcal{C}} 
\newcommand{\ilim}{\varinjlim} 
\newcommand{\plim}{\varprojlim}
\newtheorem{thm}{Theorem}[section]
\newtheorem{lem}[thm]{Lemma}
\newtheorem{prp}[thm]{Proposition}
\newtheorem{cor}[thm]{Corollary}
\theoremstyle{definition}
\newtheorem{defn}[thm]{Definition}
\newtheorem{exm}[thm]{Example}
\newtheorem{rmk}[thm]{Remark}
\pgfmathsetmacro{\shift}{0.65ex}
\newcommand{\G}{\mathcal{G}}
\newcommand{\F}{\mathcal{F}}
\newcommand{\R}{\mathbb{R}}
\newcommand{\D}{\mathcal{D}}
\renewcommand{\P}{\mathcal{P}}
\renewcommand{\dim}{\text{dim}}
\renewcommand{\ker}{\text{Ker}}
\newcommand{\dom}{\text{Dom}}
\renewcommand\phi{\varphi}
\begin{document}
\title{Haefliger's differentiable cohomology}

\author{Luca Accornero}
\address{Croeselaan 1, 3521 BJ, Utrecht,
The Netherlands}
\email{luca.accornero.nl@gmail.com}

\author{Marius Crainic}
\address{Depart. of Math., Utrecht University, 3508 TA Utrecht,
The Netherlands}
\email{m.crainic@uu.nl}

\thanks{This research is supported by the NWO Vici Grant no. 639.033.312 and by 
an NWO PhD grant with the Utrecht Geometry Center}

\begin{abstract}
We review Haefliger's differentiable cohomology for the pseudogroup of diffeomorphisms of $\mathbb{R}^q$~\cite{HAEFLIGER}. 
We unravel the structure that governs such cohomologies, which, remarkably, is related to the so called Cartan distribution underlying the geometric study of PDEs. 
Hence, we extend Haefliger's differentiable cohomology to the general framwork of flat Cartan groupoids, we investigate its infinitesimal counterpart and relate the two by a van Est-like map. Finally, we define a characteristic map for geometric structures on manifolds $M$ associated to flat Cartan groupoids. The outcome generalizes the existing approaches to characteristic classes for foliations~\cite{HAEFLIGER, BOTTHAEFLIGER, BERNSTEINROSENFELD, BERNSTEINROSENFELD2}. The motivation for this work is two-fold. 
On the one hand, it is motivated by the recent approach to geometric structures via multiplicative (Cartan) distributions~\cite{MARIA,ORI,FRANCESCO}; from that perspective, we are constructing characteristic classes for such structures. On the other hand, it is motivated by our (ongoing) attempt to turn classical symmetries (pseudogroups) into non-commutative, Hopf-algebraic, ones; such attempt is inspired by existing work in non-commutative geometry~\cite{Mosc1, MoscR1, MoscR2} and aims at a unified approach which allows also for non-transitive pseudogroups.

%
%
%
\end{abstract}
\dedicatory{Dedicated to André Haefliger on his 90th birthday.}
\maketitle

\tableofcontents

\section{Introduction}

Various geometric structures on manifolds $M$ come with invariants that live in the cohomology of the ambient space $M$, and which are organised into a map from a certain ``universal space" (associated to the type of structure one is looking at) to the cohomology of $M$; such maps are called generically ``characteristic maps". For instance, for vector bundles one has Pontryagin/Chern classes and the Chern-Weil homomorphisms. To help the reader, but also to put things into perspective, in the first section of this paper we overview some of the standard characteristic maps (for principal bundles, for flat ones, for foliations and then for more general $\Gamma$-structures). 

Haefliger's differentiable cohomology arose in the development of characteristic classes for foliations $\F$ in analogy with the one for flat principal bundles. The theory had produced:
\begin{itemize}
\item explicit/geometric characteristic maps $\kappa^{\F}$, in the spirit of Chern-Weil theory, defined on certain Lie algebra cohomologies 
that can be computed explicitly (subsection \ref{Geometric characteristic classes for foliations}), 
\item abstract characteristic maps $\kappa^{\F}_{\textrm{abs}}$ built via classifying spaces and maps, defined on a rather huge and complicated cohomology $H^{*}(B\Gamma^q)$  (subsection \ref{Abstract characteristic classes for foliations}),
\item a universal characteristic map $\kappa^{\textrm{univ}}$ (subsection \ref{ssec:Motivation and definition})
\end{itemize} 
fitting together in a commutative diagram
\begin{equation}
\begin{tikzcd}
H^*(\mathfrak{a}_q, O_q)\arrow[rr]{}{\kappa^{\F}} \arrow[d, swap]{}{\kappa^{\textrm{univ}}}& & H^*(M).\\
H^*(B\Gamma^q)\arrow[rru, swap]{}{\kappa^{\F}_{abs}}& & 
\end{tikzcd}
\end{equation}
As the name indicates, $\kappa^{\F}_{\textrm{abs}}$ was constructed rather abstractly and even its domain $H^{*}(B\Gamma^q)$ is rather huge and unmanageable. There have been several more explicit descriptions of this cohomology, and of the abstract characteristic map, most notably:
\begin{itemize}
\item as a ``DeRham-like" cohomology based on the Bott-Shulman complex (subsection \ref{More on cohomology of classifying spaces and the abstract characteristic map}),
\item via sheaf cohomology and then, based on explicit bar-type resolutions, via group-like cochains on $\Gamma^q$, the groupoid of germs of diffeomorphisms of $\mathbb{R}^q$. This is in complete analogy with cohomology of discrete groups which, themselves, play a similar role in characteristic maps for flat principal $G$-bundles. 
\end{itemize}
While everything was very similar to the case of flat principal bundles, there was still a question left: 
whether one could make sense, inside the complex computing the cohomology $H^*(B\Gamma^q)$,  of a ``differentiable complex of $\Gamma^q$" giving rise to a ``differentiable cohomology"
\[ H^{*}_{\textrm{diff}}(\Gamma^q)\]
and whether one could prove a ``van Est isomorphism" between this cohomology and $H^*(\mathfrak{a}_q, O_q)$, so that the previous diagram becomes 
\begin{equation}
\begin{tikzcd}
H^*_{\textrm{diff}}(\Gamma^q)\simeq H^*(\mathfrak{a}_q, O_q)\arrow[rr]{}{\kappa^{\F}} \arrow[d, swap]{}& & H^\bullet(M).\\
H^*(\Gamma^q)\simeq H^*(B\Gamma^q)\arrow[rru, swap]{}{\kappa^{\F}_{abs}}& & 
\end{tikzcd}
\end{equation}
Of course, the vertical map would be simply the one induced by the inclusion of differentiable cocycles into the general ones. This is precisely the task undertaken by Haefliger in \cite{HAEFLIGER}. 
%
While Gelfand-Fuchs cohomology $H^*(\mathfrak{a}_q, O_q)$ continued to play a central role in the discussion of characteristic classes for foliations, Haefliger's differentiable cohomology remained a rather ad-hoc construction, which seems to work primarily for infinite jet groupoids. On the other hand,
while it received little attention within the classical approach to characteristic classes, it was very nicely used in Noncommutative Geometry in order to relate Hopf-cyclic cohomology with Gelfand-Fuchs cohomology \cite{Mosc2}. One of the aims of this paper is to explain and provide more insight into Haefliger's 
differentiable cohomology, by providing a conceptual and more general framework, with an eye on questions in Noncommutative Geometry (see below).

As it was/is interesting to generalise such constructions from foliations to general $\Gamma$-structures (also called $\Gamma$-foliations), we will proceed in this generality. But it is fair to say that everything was in place in this generality already during Haefliger's work mentioned above. The only little problems were that 
\begin{itemize}
\item one usually assumed that $\Gamma$ was transitive,
\item while in principle Haefliger's differentiable cohomology $H^*_{\rm diff}(\Gamma^q)$ could just be copied for any pseudogroup $\Gamma$, the situation is actually a bit more subtle (and, as we shall point out, it is for {\it Lie} pseudogroups $\Gamma$ that everything works). 
\end{itemize}
Furthermore, looking at Haefliger's definition, it is clear right away that the outcome does not depend on $\Gamma$, but rather on the associated infinite jet groupoid $J^\infty \Gamma$ (and certain structure on it). 
One of the main driving questions behind this paper was to find the structure on $J^{\infty}\Gamma$ that makes Haefliger's differentiable cohomology work. The outcome is that of ``flat Cartan groupoid $(\Sigma, \CC)$", whose resulting cohomology we called Haefliger cohomology
\[ H^{*}_{\textrm{Haef}}(\Sigma, \CC).\]
For $\Sigma= J^\infty\Gamma$ we find out that the extra-structure that we were after is precisely the well-known Cartan distribution $\CC$ on infinite jet spaces plus the (less known) understanding of its compatibility with the groupoid structure; and, of course,
\[ H^{*}_{\textrm{Haef}}(J^\infty\Gamma, \CC)= H^{*}_{\textrm{diff}}(\Gamma). \]

%
%
%

We must add here that the notion of Cartan groupoid (and variations) came already to our attention from our study 
of Lie pseudogroups from the point of view of PDEs, and of the corresponding geometric structures~\cite{MARIA, ORI, FRANCESCO}. The fact that such different aspects point out to the same structure may seem quite surprising/remarkable at first;
at a second thought however, they just show that it is always the same structure that make the jet spaces interesting and useful over and over again. This rather philosophical insight is actually used several times inside the paper.
For instance, the theory of almost geometric structures reveals the notion of flat ``principal Cartan groupoid-bundle'' or, in the terminology of Section~\ref{Generalization: Haefliger cohomology}, of flat $(\Sigma, \CC)$-structure. We use that notion to point out that some of the characteristic maps associated to geometric structures depend only on the underlying almost structures. In turn, this may have interesting applications to the study of almost structures themselves: as pointed out in Remark \ref{int}, the resulting Haefliger characteristic maps may be used to detect almost $\Gamma$-structures that are not integrable.

At this point we would like to mention a second motivation for the present work: strengthen the bridge between classical symmetries (Lie pseudogroups) and non-commutative symmetries (encoded in Hopf-like structures). The aim would be to extend the previous work in this direction \cite{Mosc1, MoscR1, MoscR2} to general (possibly non-transitive) Lie pseudogroups or even to flat Cartan groupoids (in order to relate it to the unifying approach to geometric structures from \cite{MARIA, FRANCESCO}). One step (that we plan to  undertake in future work) is to construct the correct Hopf-like objects, extending the constructions from \cite{Mosc1, MoscR1}.
The second step would be to investigate the resulting Hopf-cyclic cohomologies, while a third one would concern characteristic maps; this paper should provide the 
necessary ``classical aspects'' for carrying out the second and third steps. This is one of the motivations for investigating the van Est map in this generality (notice that, when one moves away from the transitive case, there is no direct analogue of $H^*(\mathfrak{a}_q, O_q)$). It is also fair to say that our approach to characteristic maps associated to geometric structures is inspired by already existing non-commutative theory \cite{CONMOSC, Mosc1, CraiH}.

Here is a brief outline of the paper. In Section~\ref{Overview of varous characteristic classes}, we briefly review the connection-curvature construction and the classifying space approach to characteristic classes for principal bundles and flat principal bundles, and the similar machinery for characteristic classes for foliations. We introduce pseudogroups and recall the equivalence between the category of pseudogroups and the category of effective étale groupoids. We review the Bott-Shulman model and the sheaf theoretical approach to the cohomology of the classifying space of an étale groupoid. In Section~\ref{Haefliger's differentiable cohomology}, we adapt Haefliger's definition of $H^*_{\rm diff}(\Gamma^q)$ to a general Lie pseudogroup $\Gamma$. We proceed to investigate the minimal structure that allows to construct an analogous cohomology on a general Lie groupoid $\Sigma \tto \BB$, discovering flat Cartan connections; we then briefly review the canonical flat Cartan connection on $J^\infty\Gamma$. In Section~\ref{Generalization: Haefliger cohomology} we depart from Lie pseudogroup to work with Lie groupoids equipped with a flat Cartan connection, introducing what we called ``Haefliger cohomology'' and discussing some examples. We also study flat principal bundles in this context, which include ``almost geometric structures'' as a particular example. In Section~\ref{The infinitesimal version of Haefliger cohomology} we present the infinitesimal picture of the theory discussed in Sections~\ref{Haefliger's differentiable cohomology} and~\ref{Generalization: Haefliger cohomology}. Finally, in Section~\ref{Van Est maps}, we discuss a van Est theorem for groupoids equipped with a flat Cartan connection - generalizing a similar result proved by Haefliger for ${\rm Diff}_{\rm loc}(\mathbb{R}^q)$. We conclude presenting a characteristic map for ``flat principal actions'' of groupoids with a flat Cartan connection. Specializing to $J^\infty\Gamma^q$ with its canonical Cartan connection, we recover Haefliger's approach to characteristic classes of foliations. Throughout the paper, we need to work with pro-finite (dimensional) manifolds and pro-finite Lie groupoids. Since some of the properties that we make use of are not explicitly spelled out in the literature, we provide a detailed appendix.

\medskip

{\bf Acknowledgments.} 
The main results of this paper were first presented at the Conference in honor of Prof. André Haefliger's 90th birthday
held in Geneva in May 2019. We would like to thank A. Alekseev and A. Haefliger for that opportunity. Special thanks are due to Francesco Cattafi and Luca Vitagliano for their valuable feedback. L.A. also wishes to thank Francesco Cattafi, Maarten Mol, Lauran Toussaint and Aldo Witte for all the insightful discussions that helped shaping this paper. Furthermore, 
M.C. is grateful to André Haefliger for the many inspiring interactions and support throughout the years; and for a rare original copy 
of \cite{HAEFLIGER}.

\newpage

\section{Overview of various characteristic classes}\label{Overview of varous characteristic classes}

\subsection{Geometric/abstract characteristic classes for principal bundles}
\label{Geometric/abstract characteristic classes for principal bundles}
Characteristic classes for principal $G$-bundles $P\to M$ ($G$ being a Lie group and $P$, $M$ smooth manifolds) are usually introduced either:
\begin{itemize}
\item[(geom):] more explicitly/geometrically: one uses connections to produce explicit cohomology classes on the base $M$ of the principal $G$-bundle. Known as the Chern-Weil theory, it produces the so-called Chern-Weil map
\[ \kappa^P: \textrm{Inv}(\mathfrak{g}) \to H^*(M)\]
defined on the space of $G$-invariant polynomials $\textrm{Inv}(\mathfrak{g})= S(\mathfrak{g}^*)^{G}$ on the Lie algebra $\mathfrak{g}$ of $G$. 
\item[(abs):] more abstractly/topologically: this is based on the existence of a certain principal $G$-bundle $EG\to BG$ which is universal in the sense that any principal $G$-bundle $P\to M$ arises as the pull-back of $EG$ via a classifying map $M\to BG$. This map is unique up to homotopy, hence one has a uniquely defined characteristic map in cohomology: 
\begin{equation}\label{eq:abstract_char_map_pb}
 \kappa_{\textrm{abs}}^{P}: H^*(BG)\to H^*(M).
\end{equation}
\end{itemize}

In the more abstract approach one concentrates on the classifying space $BG$ and its cohomology and one would like to have more explicit models. While $BG$ is unique up to homotopy, there are various known models for it; however, in general, $BG$ still remains rather abstract. Of course, its cohomology may be simpler and one hopes for more explicit models for realising $H^*(BG)$ and $\kappa_{\textrm{abs}}^{P}$. Of course, the Chern-Weil map $\kappa^{P}$ fits this scheme. First of all one has a natural map
\[ \kappa^{\textrm{univ}}: \textrm{Inv}(\mathfrak{g})\to H^*(BG)\]
that can be thought of as the Chern-Weil map associated to the universal bundle $EG$, and all the maps fit in a commutative diagram: 
\begin{equation}\label{char-diag-G-bundles}
\begin{tikzcd}
\textrm{Inv}(\mathfrak{g})\arrow[rr]{}{\kappa^{P}} \arrow[d, swap]{}{\kappa^{\textrm{univ}}}& & H^*(M)\\
H^*(BG)\arrow[rru, swap]{}{\kappa^{P}_{\rm abs}}& & 
\end{tikzcd}
\end{equation}

Furthermore, it is known (see e.g. \cite{DUPONT}) that, for compact Lie groups $G$, $\kappa^{\textrm{univ}}$ is an isomorphism and, therefore, the two characteristic maps $\kappa^P$ and $\kappa_{\textrm{abs}}^{P}$ become identified:
\[ H^*(BG)\cong \textrm{Inv}(\mathfrak{g}).\]
When $G$ is compact and connected, by restricting to a maximal torus $T\subset G$ (with Lie algebra $\mathfrak{t}\subset \mathfrak{g}$), $\textrm{Inv}(\mathfrak{g})$ becomes isomorphic with the algebra of polynomials on $\mathfrak{t}$ that are invariant under the action of the corresponding Weyl group $W$, i.e. $H^*(BG)\cong S(\mathfrak{t}^*)^W$. More in general, when $G$ is compact with connected component of the identity denoted by $G_0$, 
one has an action of $G/G_0$ on $S(\mathfrak{t}^*)^W$ and
\[
H^*(BG)\cong  {\rm Inv}_{G/G_0}( S(\mathfrak{t}^*)^W).
\]
When $G$ is non-compact and has finitely many connected components, the cohomology of $BG$ is canonically isomorphic to the cohomology of $BK$, the classifying space of the maximal compact subgroup $K$ of $G$. Therefore, for Lie groups, the rather abstract $H^*(BG)$ can often be made quite explicit. 

\begin{exm} When looking at complex vector bundles $E\to M$ interpreted as principal $GL_n(\mathbb{C})$-bundles $Fr(E)\to M$, $K= U(n)$ with maximal torus $T^n$ (diagonal matrices), and the Weyl group $W= S_n$ permuting the entries, 
we have
\[ H^*(BGL_n(\mathbb{C}))\cong H^*(BU(n))= \mathbb{R}[x_1, \ldots, x_n]^{S_n}= \mathbb{R}[c_1, \ldots, c_n],\]
where the $x_i$'s are degree one variables and the $c_i$'s are the fundamental symmetric polynomials on  the $x_i$'s. Of course, for a rank $n$ complex vector bundle $E\to M$, 
\[ c_i(E)= \kappa^{Fr(E)}_{\textrm{abs}}(c_i)\in H^{2i}(M)\]
are the usual Chern classes of $E$.
\end{exm}

\begin{exm}
A similar discussion applies to the Pontryagin classes of real vector bundles $F\to M$,
\[ p_j(F)\in H^{4j}(M), \quad (0\leq j\leq k:= \left[\frac{n}{2}\right], \ n= \textrm{rank}(F)),\]
which are usually defined as the even Chern classes $c_{2j}(F_{\mathbb{C}})$ of the complexification 
$F_{\mathbb{C}}$ of $F$ (with the footnote that the odd classes vanish). The relevant group is $G= GL_n(\mathbb{R})$ and 
\[ H^*(BGL_n(\mathbb{R}))= 
H^*(BO(n))= \mathbb{R}[y_1, \ldots, y_k]^{S_n}= \mathbb{R}[p_1, \ldots, p_k],\]
where $k= \left[ \frac{n}{2}\right]$ and $y_j$ has degree $2j$. 
Of course, with the mind at the definition $p_j(F)= c_{2j}(F_{\mathbb{C}})$, one may think that $y_j= x_{j}^{2}$. 
\end{exm}


\subsection{Geometric/abstract characteristic classes for flat bundles} 
\label{Geometric/abstract characteristic classes for flat bundles} 

The previous discussion serves as guideline in various other contexts. For instance, a related (and actually simpler) discussion is that of characteristic classes for flat $G$-bundles $P\to M$:
\begin{itemize}
\item[(geom):] more explicitly: the flatness is encoded in a flat connection $1$-form $\omega\in \Omega^1(P, \mathfrak{g})$. This is immediately turned into a chain map from the Chevalley-Eilenberg complex of $\mathfrak{g}$ to the DeRham complex of $P$ and then to a map in cohomology, 
\begin{equation}\label{kappa-omega}
\kappa^{\omega}: H^*(\mathfrak{g}, K) \to H^*(M),
\end{equation}
defined on the Lie algebra cohomology of $\mathfrak{g}$ relative to a maximal compact subgroup $K$ of $G$. 
\item[(abs):] abstractly/topologically: the flatness of $P$ ensures that it arises from a principal $G^{\delta}$-bundle ($G$ endowed with the discrete topology) hence, by the discussion above, the relevant characteristic map is
\[ \kappa_{\textrm{abs}}^{\omega}: H^*(BG^{\delta})\to H^*(M).\] 
\end{itemize}

\medskip

Again, the two theories are related to each other via a ``universal characteristic map''
\[ \kappa^{\textrm{univ}}_{\delta}: H^*(\mathfrak{g}, K)\to H^*(BG^{\delta}).\]
Actually, everything can be made even more explicit since:
\begin{itemize}
\item there is a simple explicit model computing $H^*(BG^{\delta})$ namely the group cohomology $H_\delta^*(G)$ of $G$ viewed as a discrete group; recall that the relevant complex is given by group cochains $C^p_\delta(G)= C^p(G^\delta)= \{c: G^p\to \mathbb{R}\}$ (which ignore the topology on $G$), while the differential is the group differential given by
\begin{align}\label{delta-group}
\delta(c)(g_1, \dots, g_{p+1})= & c(g_2, \dots, g_{p+1})+ \sum\limits^p_{i=1}(-1)^ic(g_1, \dots, g_i\cdot g_{i+1}, \dots, \dots g_{p+1})\\
& +(-1)^{p+1}c(g_1, \dots, g_p)\nonumber 
\end{align}
\item of course, inside $C^{*}_\delta(G)$ one has the subcomplex of differentiable cochains
\[ C^{*}_{\textrm{diff}}(G)\subset C^{*}_\delta(G)\]
and then the so-called differentiable cohomology $H^{*}_{\textrm{diff}}(G)$ mapping naturally into $H^*_\delta(G)= H^*(BG^{\delta})$.
\item and one has the so-called van Est isomorphism, 
\[
VE: H^*_{\rm diff}(G)\overset{\simeq}{\longrightarrow}H^*(\mathfrak{g}, K)
\]
\end{itemize}

All together, everything fits in one commutative diagram analogous to (\ref{char-diag-G-bundles}): 

\begin{equation}\label{char-diag-G-bundles-discrete}
\begin{tikzcd}
H^*_{\rm diff}(G)\simeq H^*(\mathfrak{g}, K)\arrow[rr]{}{\kappa^{\omega}} \arrow[d, swap]{}{\kappa_{\delta}^{\textrm{univ}}}& & H^*(M)\\
H^*_\delta(G)= H^*(BG^\delta)\arrow[rru, swap]{}{\kappa^{\omega}_{\rm abs}}& & 
\end{tikzcd}
\end{equation}

\begin{exm}
For flat complex vector bundles one deals with $G= GL_n(\mathbb{C})$,  
\[ H^{*}_{\textrm{diff}}(GL_n(\mathbb{C}))\cong H^*(\mathfrak{gl}_n(\mathbb{C}, U(n))\cong \Lambda(h_1, \ldots, h_{2n-1}),\]
the exterior algebra in generators $h_{2i-1}$ of degree $2i-1$. In some sense, the vanishing of the Chern classes (in degree $2i$) give rise to ``secondary'' classes in odd degrees. A completely analogous story holds for real vector bundles. See, e.g., \cite{KT}.

\end{exm}

\subsection{Geometric characteristic classes for foliations}
\label{Geometric characteristic classes for foliations}


The explicit/geometric approach to characteristic classes for foliations has an outcome similar to what has been described above: for a codimension $q$ foliation $\F$ on a manifold $M$ there is an explicit characteristic map 
\begin{equation}\label{eq:k-abstr} 
\kappa^{\F}: GF^*_q\to H^{*}(M) ,
\end{equation} 
defined on certain cohomology groups $GF_{q}^{*}$. Here we use the notation ``GF" to illustrate the fact that these groups are usually referred to as ``Gelfand-Fuchs cohomology groups". Themselves rather explicit/computable, they have several different descriptions, depending on the way one approaches characteristic classes for foliations.

The most explicit way of describing codimension $q$ foliations on a manifold $M$ is as codimension $q$ sub-bundles $\F\subset TM$ which are involutive (closed under taking Lie brackets). And the explicit approach to $\kappa^{\F}$ arises by looking at the associated normal bundle $\nu= TM/\F$. Actually, one of the origins of the characteristic classes for foliations is 
Bott's vanishing theorem which says that 
all the usual characteristic classes (polynomial expressions in the Pontryagin classes) of  
$\nu$ vanish in degrees $> 2q$. As for flat bundles, this gives rise to ``secondary classes" which Bott constructs (see \cite{BOTTSTASHEFF}) in the spirit of the Chern-Weil theory. The vanishing result indicates that, instead of the polynomial algebra $\mathbb{R}[c_1, \ldots, c_q]$ one should be using
\[ \mathbb{R}_q[c_1, \ldots, c_q]:= \mathbb{R}[c_1, \ldots, c_q]/\{\P: \textrm{deg}(\P)> 2q\}.\]
Combined with the fact that the odd Chern classes of $\nu_{\mathbb{C}}$ vanish, one eventually discovers the differential graded algebra 
\[ WO_q:= \Lambda(h_1, h_3, \ldots, h_{\left<q\right>})\otimes \mathbb{R}_q[c_1, \ldots, c_q]\]
where $c_i$ have degree $2i$, $h_i$ have degree $2i-1$ (and defined only when $i$ itself is odd), $\left<q\right>$ is the largest odd number with $2\left<q\right>-1\leq q$, endowed with the differential
\[ d(c_i)= 0, \quad d(h_i)= c_i .\]
With these, 
\[ GF^*_q:= H^*(WO_q)\]

Another rather explicit but more conceptual standard description of $GF^*_q$ is as the cohomology of the Lie algebra $\mathfrak{a}_q$ of formal vector fields on $\mathbb{R}^q$, i.e. expressions of type
\[
X=\sum\limits_{i}^{q}f^i(x^1, \dots x^q)\frac{\partial}{\partial x^i}
\]
where the $f^i$'s are formal power series in the variables $x^i$'s. We endow $\mathfrak{a}_q$ with the power series topology and we consider the resulting (continuous) Lie algebra cohomology $H^*(\mathfrak{a}_q)$ and the relative version $H^*(\mathfrak{a}_q, O_q)$. As explained in \cite{BOTT} there is a canonical map from $WO_q$ to the relative Chevalley-Eilenberg complex, which induces an isomorphism 
\[ H^*(WO_q) \cong H^*(\mathfrak{a}_q, O_q).\]
With this in mind, $\kappa^{\F}$ can be constructed in the spirit of (\ref{kappa-omega}). 


\subsection{Abstract characteristic classes for foliations. Pseudogroups and $\Gamma$-structures}
\label{Abstract characteristic classes for foliations}

The abstract characteristic map for foliations is based on the reinterpretation of foliations as 
principal bundles where, instead of Lie groups as structure groups one has to allow for pseudogroups/étale groupoids. 

To recall this we start from the very definition of  (codimension $q$) foliations $\F$ on $M$: as partitions of  $M$ by connected immersed submanifolds (leaves) of codimension $q$ which, locally, look like
\[ \mathbb{R}^n= \R^p\times \R^q= \cup_{y\in \mathbb{R}^q} \mathbb{R}^{p} \times \{y\} \quad (p= n- q).\]
This is realised by ``foliation charts'' on opens $U_i\subset M$ (covering $M$) ,
\[ \chi_i: U_i\to \mathbb{R}^n;\]
the fact that the leaves are preserved translates into the fact that the changes of coordinates 
$\chi_{j}\circ \chi_{i}^{-1}$ are, locally, of type 
\[ (x, y)\mapsto (h_{ij}(x, y) , g_{ij}(y))\]
Of course, this definition is equivalent to the description as an involutive sub-bundle of $TM$ thanks to Frobenius' theorem.

A slight variation is obtained by realizing that the leaves are induced by (i.e. are just the connected components of fibers of) submersions 
\[ f_i: U_i\rightarrow \R^q,\]
and the different submersions $f_i$ are, locally, related by 
\begin{equation}
\label{def-g-U-V} 
f_{j}= g_{ij}\circ f_i.
\end{equation}
These are the same $g_{ij}$'s as before, and they are diffeomorphisms between opens inside $\R^q$- the collection of which we denote by $\textrm{Diff}_{\textrm{loc}}(\R^q)$. Notice that these $g_{ij}$'s are there only locally: even when $i$ and $j$ are fixed, as $U_i\cap U_j$ may be disconnected. One can elegantly encode this situation by passing to germs:
\[ \Gamma^q:= \{\textrm{germ}_x(\phi): \phi\in \textrm{Diff}_{\textrm{loc}}(\R^q), x\in \textrm{Domain}(\phi)\}.\]
This has the so-called germ topology, in which the basic opens are the ones consisting of germs of any given $\phi\in \textrm{Diff}_{\textrm{loc}}(\R^q)$:
\[ U(\phi):= \{ \textrm{germ}_x(\phi): x\in \textrm{Domain}(\phi)\}.\]
Equipped with such topology, $\Gamma^q$ is locally Euclidean; in fact, the natural projection 
\[
{\rm germ}_x(\varphi)\in \Gamma^q  \to x \in \mathbb{R}^q
\]
is a local homeomorphism. Consequently, $\Gamma^q$ can be equipped with a natural smooth structure, with the caveat that it is neither Hausdorff nor second countable. 
Nevertheless, with this one may say, with a slight abuse of notation, that 
\[ g_{ij}: U_i\cap U_j\rightarrow \Gamma^q\]
are smooth maps, bringing foliations closer to principal bundles. Note, in particular, that the cocycle condition 
\[ g_{jk}g_{ij}= g_{ik}\]
is satisfied, as it follows from (\ref{def-g-U-V}) and the fact that the $f_i$s are submersions. Understanding what allows one to write down such an equation  reveals the relevant structure behind this discussion. In particular, one discovers the more general setting of pseudogroups, étale groupoids and $\Gamma$-structures. We stress that this general framework not only allows us to get an abstract characteristic map for foliation in complete analogy with~\eqref{eq:abstract_char_map_pb}, but provide us with a unifying approach that:
\begin{itemize}
\item produces more general constructions, i.e. the characteristic map is defined for $\Gamma$-structures, Definition~\ref{def:Gamma_structure} below;
\item is well suited to describe a geometric map as well, subsection~\ref{Intermezzo: back to geometric characteristic classes} and section~\ref{Haefliger's differentiable cohomology};
\item paves the way for a further generalization of the various concepts involved, sections~\ref{Generalization: Haefliger cohomology},~\ref{The infinitesimal version of Haefliger cohomology} and~\ref{Van Est maps}, that makes the structure behind both maps and their interplay conceptually clear.
\end{itemize}

We start by summing up the main definitions and constructions.
\medskip

First of all, a {\bf pseudogroup} on a manifold $\BB$ is a subset $\Gamma\subset \mathrm{Diff_{loc}}(\BB)$ of the set of diffeomorphisms between opens in $\BB$ satisfying the following axioms:
\begin{itemize}
\item $\Gamma$ is closed under composition, inversion and it contains the identity $id_\BB$.
\item $\Gamma$ is closed under restriction to smaller opens.
\item Let $\mathcal{U}$ be an open cover of an open set $\tilde{U}\subset \BB$ and $\phi$ a diffeomorphism defined on $\tilde{U}$ and such that $\phi|_{U}\in \Gamma$ for all $U\in \mathcal{U}$. Then $\phi \in \Gamma$.
\end{itemize} 

\medskip 

Of course, the basic example for us is $\textrm{Diff}_{\textrm{loc}}(\R^q)$. As indicated by the discussion above, to talk about cocycles (and principal bundles) it is useful to pass to germs
\[ \Ger(\Gamma):= \{\textrm{germ}_x(\phi): \phi\in \Gamma, x\in \textrm{Domain}(\phi)\}\]
endowing $\Ger(\Gamma)$ with the germ topology; as for $\Gamma^q$, this choice of topology makes $\Ger(\Gamma)$ locally Euclidean. The structure present on 
$\Ger(\Gamma)$ that is relevant to our discussion is that of groupoid over $\BB$- schematically denoted as
\[ \Ger(\Gamma)\tto \BB.\]
In more detail: it comes with source and target maps
\[ s, t: \Ger(\Gamma)\rightarrow \BB\]
which send an element $\textrm{germ}_x(\phi)$ to $x$, and $\phi(x)$, respectively; and, using composition of functions, any two elements $\gamma_1, \gamma_2\in \Ger(\Gamma)$ which match (i.e. $s(\gamma_1)= t(\gamma_2)$) can be composed to give $\gamma_1\cdot \gamma_2\in \Ger(\Gamma)$. Indeed, writing $\gamma_i= \textrm{germ}_{x_i}(\phi_i)$, the matching condition is $x_1= \phi(x_2)$ and then $\phi:= \phi_1\circ \phi_2$ is well defined around $x:= x_2$ hence we can define $\gamma_1\cdot \gamma_2:= \textrm{germ}_{x}(\phi)$. 
\medskip

There is one important property 
of the groupoid $\Ger(\Gamma)$ to be noticed:  
it is an {\bf étale groupoid}, in the sense that its source and target maps are local diffeomorphisms. The fact that étale groupoids $\G\tto \BB$ are closely related to pseudogroups on $\BB$ is seen right away by looking at bisections of $\G$, i.e. submanifolds $\mathcal{B}\subset \G$ on which both $s$ and $t$ restrict to diffeomorphism onto opens inside $\BB$, $U:= s(\Sigma)$ and $V:= t(\Sigma)$.  
 One can re-interpret such bisections as sections of $s$
\[ b: U\to \G\]
with the property that $t\circ b$ is a diffeomorphism onto its image ($b$ is the inverse of $s|_{\mathcal{B}}$,  and, reversely, $\mathcal{B}$ is the image of $b$). 
Any such bisection gives rise to a diffeomorphism 
\begin{equation}\label{basic-action-for-étale}
\phi_{\mathcal{B}}=\phi_b:=\textrm{germ}_g(t)\circ \textrm{germ}_g(s)^{-1}: U\to V
\end{equation}
and, all together, they form {\bf the pseudogroup $\Gamma_{\G}$ associated to 
$\G$}. Note that:
\begin{itemize}
\item the pseudogroup associated to an étale groupoid of type $\Ger(\Gamma)$ is $\Gamma$ itself;
\item for a general étale groupoid $\G\tto \BB$, there is a canonical surjection from $\G$ into the germ groupoid associated to $\Gamma_{\G}$,
\[ \G\rightarrow \Ger(\Gamma_{\G}), \quad g\mapsto \xi_g ,\]
where $\xi_g$ is represented by a/any open neighborhood $\Sigma_g$ of $g$ on which 
$s$ and $t$ restrict to diffeomorphisms - more directly, by the germ of 
$\phi_\mathcal{B}$.
\end{itemize} 
It is customary to call an étale groupoid $\G\tto \BB$ {\bf effective} if, for $g\in \G$, the germ $b_g$ is the identity only if $g$ is a unit arrow. All together, one obtains
%
\[ 
 \left\{   \begin{array}{c}
            \text{pseudogroups}\\
            \text{$\Gamma$ on $\BB$} 
            \end{array} 
\right\} 
\stackrel{1-1}{\longleftrightarrow}
\left\{   \begin{array}{c}
           \text{effective étale groupoids}\\
           \text{$\G\tto \BB$}
           \end{array} 
\right\} 
\]
up to isomorphism or, more precisely, an equivalence of  categories. However, many notions involving pseudogroups $\Gamma$ (e.g. equivalences, cocycles, actions) are simpler and more elegantly described passing to the associated germ groupoids $\Ger(\Gamma)$.

\begin{defn}\label{Cocycle} Let $\Gamma$ be a pseudogroup on $\BB$ and let $M$ be another manifold. A {\bf $\Gamma$-cocycle} on $M$ consists of a cover $\mathcal{U}=\{U_i\}_{i\in I}$ of $M$ and smooth maps 
\[ \gamma_{ij}:U_i\cap U_j\to \Ger(\Gamma)\] 
satisfying
\begin{itemize}
\item for $i= j$, each $\gamma_{i i}(x)$ is a unit (hence can be seen as a point in $\BB$),
\item for arbitrary $i$ and $j$ and $x\in U_i\cap U_j$, $\gamma_{ij}(x)$ is an arrow
from $\gamma_{ii}(x)$ to $\gamma_{jj}(x)$,
\end{itemize}
and such that the cocycle condition 
\[
\gamma_{jk}(x)\cdot \gamma_{ij}(x)=\gamma_{ik}(x).
\]
is satisfied for all $x\in U_i\cap U_j\cap U_k$. Similarly we talk about {\bf $\G$-cocycles}
for any groupoid $\G\tto \BB$ by replacing above $\Ger(\Gamma)$ with $\G$. \end{defn}

\medskip 

Note that the dotted conditions actually follow from the cocycle one and the fact that it should make sense. Spelling them out, the first condition encodes maps 
\[ f_{i}:= \gamma_{i, i}:U_i\to \BB\]
while the second condition means that, around each $x\in U_i\cap U_j$, one can find $g_{ij}\in \Gamma$ representing $\gamma_{ij}(x)$ such that, on its domain, 
\[ f_j= g_{ij}\circ f_i.\]
When we want to emphasize the $f_i$s, we also say that $(\mathcal{U}, f_i, \gamma_{ij})_{i, j \in I}$ is a $\Gamma$-cocycle. 
If the maps $f_i$ are submersions, then the cocycle condition follows automatically, but not in general. 

One recognizes the discussion from foliations when $\BB= \R^q$, $\Gamma= \Gamma^q$ and the $f_i$'s are submersions. General $\Gamma^q$-cocycles can be thought of as a ``singular foliations" in the naive sense: the fibers of the $f_i$'s can change dimension when moving transversally. 

\begin{rmk}\label{rk:gpd-GU} Any open cover $\mathcal{U}= \{U_i\}_{i\in I}$ of a manifold $M$ gives rise to a Mayer-Vietoris groupoid $M_{\mathcal{U}}$ which allows to re-interpret the notion of cocycle a bit more conceptually. The groupoid $M_{\mathcal{U}}$ is defined over the disjoint union of the $U_i$ as the pull-back of the unit groupoid $M\tto M$. More explicitly, $M_{\mathcal{U}}$ is the groupoid 
\[  \bigsqcup_i U_i\cap U_j\tto \bigsqcup_i U_i\]
where to describe the groupoid structure we write the elements in 
the base as pairs $(i, x)$ with $i\in I$ and $x\in U_i$, and the elements in the total space as triples $(i, x, j)$ with $i, j\in I$ and $x\in U_i\cap U_j$; in these notations, the target, source and composition are:
\[ t(i, x, j)= (i, x), \quad s(i, x, j)= (j, x),\quad (i, x, j)\cdot (j, x, k)= (i, x, k).\]

With this, for any étale groupoid $\G\tto \BB$, a $\G$-cocycle $\gamma= \{\gamma_{ij}\}$ on $M$ over an open cover $\mathcal{U}= \{U_i\}$ of $M$ is the same thing as a morphism of groupoids
\begin{equation}\label{eq:cocycl-Veit-morphism} 
\gamma: M_{\mathcal{U}}\to \G.
\end{equation}
\end{rmk}

%
%

\medskip

There is also a straightforward analogue of the notion of equivalence of cocycles for Lie groups, called {\bf equivalence of $\Gamma$-cocycles}. Compactly: two $\Gamma$-cocycles indexed by $I$ and $J$ are equivalent if they are part of a larger $\Gamma$-cocycle indexed by $I\sqcup J$.

%
%
%

\begin{defn}\label{def:Gamma_structure}
A {\bf Haefliger $\Gamma$-structure} on a manifold $M$ is an equivalence class of $\Gamma$-cocycles. 
A {\bf $\Gamma$-structure} on $M$ is a Haefliger $\Gamma$-structure induced by a $\Gamma$-cocycle for which all the $f_i$s are submersions.
\end{defn}

\begin{rmk}
	
	A remark is due concerning our choice of terminology.
	
	$\Gamma$-structures are sometimes also called $\Gamma$-foliations. This terminology emphasises the fact that a $\Gamma$-structure on $M$ induces a foliation (built out of the fibers of the submersions $f_i$'s from the cocycle). In some sense, a $\Gamma$-structure on $M$ is a foliation together with extra-structure in the transversal direction.
	
	Moreover, in some of the existing literature, the term ``$\Gamma$-structure" is reserved to the case when the dimension of $M$ is equal to the dimension of $\BB$ (on which $\Gamma$ lives). Under such assumption, one usually works with the notion of 
	{\bf $\Gamma$-atlas} on $M$, i.e. a smooth atlas modelled on opens in $\BB$ with the property that the coordinate changes are elements of $\Gamma$. Of course, this is just the notion of $\Gamma$-cocycle which becomes simpler due to the condition $\textrm{dim}(M)= \textrm{dim}(\BB)$.
\end{rmk}

\begin{exm}  
	Let $\BB= \mathbb{C}^k= \mathbb{R}^{2k}$ and $\Gamma$ be the pseudogroup of holomorphic diffeomorphisms. A $\Gamma$-structure on a $ 2k$-dimensional manifold $M$ is the same thing as a complex structure on $M$; if $\dim(M)>2k$, one is looking at transversally holomorphic foliations on $M$ of codimension $n- 2k$. 
	
	On the other hand, a Haefliger $\Gamma$-structure on $M$ induces a singular foliation on $M$, and the fact that such foliation can be ``presented'' by a $\Gamma$-cocycle can be interpreted as the existence of a complex structure in the transverse direction/on its leaf space.
\end{exm}

\medskip

As for cocycles with values in a Lie group $G$, one can re-encode everything into more global objects: principal bundles.  In particular, this allows one not to worry anymore about the choice of coverings (and the corresponding notion of equivalence). 

This can be done (and will be needed) in the generality of {\bf Lie groupoids}. A Lie groupoid is a groupoid $\Sigma\tto \BB$ where both the arrow space and the unit space are smooth manifolds, all the defining maps are smooth and both the source and the target map are surjective submersions. We will use $\mathcal{G}$ to denote arrow spaces of étale groupoids and $\Sigma$ to denote arrow spaces of more general, non étale, Lie groupoids. 

For a Lie groupoid $\Sigma\tto \BB$, a {\bf principal $\Sigma$-bundle} over a manifold $M$, schematically described as
\[
\xymatrix{
\Sigma \ar@<0.25pc>[d] \ar@<-0.25pc>[d]  & \ar@(dl, ul) &  P \ar[dll]^{\mu}\ar[dr]^{\pi} &    \\
\BB&  & &  M},
\]
is a manifold $P$, together with a submersion $\pi: P\to M$ and 
a free and proper action of $\Sigma$ on $P$ from the left, along some smooth map $\mu: \mathcal{P}\to \BB$, such that $\pi$ is identified with the resulting quotient map $P\to P/\Sigma$. Recall here that an action of $\Sigma$ on $P$ means that any arrow $g\in \Sigma$ from $x$ to $y$ gives rise to ``multiplication by $g$'' between the resulting $\mu$-fibers
\[ \mu^{-1}(x) \to \mu^{-1}(y), \quad p\mapsto g\cdot p\]
satisfying the usual identities for actions. Of course, everything is required to be smooth- i.e. $(g, p)\mapsto g\cdot p$ is smooth as a map defined on the submanifold $\Sigma\ltimes P\subset \Sigma\times P$ consisting of pairs $(g, p)$ with $\mu(p)= s(g)$. 
Similarly one can talk about actions from the right and right principal $\Sigma$-bundles:
\[
\xymatrix{
&   P \ar[drr]_{\mu}\ar[dl]_{\pi}  & \ar@(dr, ur)& \Sigma \ar@<0.25pc>[d] \ar@<-0.25pc>[d]      \\
M&  & &  \BB},
\]
%
%
when the multiplication $p\mapsto p\cdot g$ by an arrow $g\in \Sigma$ from $x$ to $y$ takes $\mu^{-1}(y)$ to $\mu^{-1}(x)$. The action is free if the equalities of type $g\cdot p= p$ hold only when $g$ is a unit arrow; and the action is proper if the map $\Sigma\ltimes P\to P\times P$, $(g, p)\mapsto (g\cdot p, p)$ is a proper map. These conditions can be slightly reformulated by saying that one has a smooth action of $\Sigma$ on $P$ such that the map
\[ \Sigma\ltimes P \to P\times_{M} P\]
is a diffeomorphism.

For future use, we recall here that a (say, left) action of $\Sigma$ on $P$ along $\mu:P\to \BB$ is equivalently encoded into the Lie groupoid structure on $\Sigma\ltimes P \tto P$ such that
\[
s(g,p) = p,\quad t(g,p) = g\cdot p,\quad (h,g\cdot p)\cdot (g,p) = (hg,p).
\]
The Lie groupoid $\Sigma\ltimes P\tto P$ is called the {\bf action groupoid} associated to the action of $\Sigma$ on $P$; when such action is principal, the diffeomorphism $\Sigma\ltimes P \to P\times_{M} P$ is in fact a Lie groupoid isomorphism, where the right hand side carries the groupoid structure with structure maps
\[
s(p_1,p_2) = p_1,\quad t(p_1,p_2) = p_2,\quad (p_1,p_2)\cdot (p_2,p_3) = (p_1,p_3).
\]

 Of course, the entire discussion is completely similar to that from principal bundles with Lie structural group. And so is the relationship with cocycles: given such a principal $\Sigma$-bundle $\pi: P\to M$, one chooses a cover $\{U_i\}_{i\in I}$ of $M$ by opens $U_i$ on which $P$ admits sections $\sigma_i: U_i\to P$ and then, on points $x\in U_i\cap U_j$ in the overlaps, one can write 
\[ \sigma_i(x)=  \gamma_{ij}(x)\cdot \sigma_j(x)\]
for some $\gamma_{ij}(x)\in \Sigma$. Note the the resulting maps into $\BB$ are
\[ f_i= \mu\circ \sigma_i: U_i \to \BB\]
and one ends up with a $\Sigma$-cocycle. And, still proceeding like for Lie groups to define the notion of equivalence of coycles (see \cite{JANEZ}), one obtains 
\[ 
 \left\{   \begin{array}{c}
 %
 %
            \text{$\Sigma$-cocycles on $M$}\\
            \text{(up to equivalence)}
            \end{array} 
\right\} 
\stackrel{1-1}{\longleftrightarrow}
\left\{   \begin{array}{c}
 %
 %
           \text{principal $\Sigma$-bundles on $M$}\\
           \text{(up to isomorphism)}
           \end{array} 
\right\} 
\]\

And, again as for Lie groups, for any Lie groupoid $\Sigma$ one can talk about its classifying space $B\Sigma$, for which there are various homotopically equivalent models. Principal $\Sigma$-bundles $\pi: P\to M$ come together with classifying maps $M\to B\Sigma$ uniquely defined up to homotopy. A standard approach can be found in \cite{SEGAL};  $B\Sigma$ can be defined as the thick geometric realization of the nerve of $\Sigma$ (see also subsection~\ref{More on cohomology of classifying spaces and the abstract characteristic map}).

Observe that, when the Lie groupoid $\Sigma\tto \BB$ is an étale groupoid $\G\tto \BB$, the projection $\pi$ is an étale map. To stress this, we will use the notation $\P$ for the total space of a principal $\G$-bundle. We focus now on this case.

As a consequence of the existence of a classifying space, one has abstract characteristic maps 
\begin{equation}\label{eq:abstr-k-G} 
\kappa^{\mathcal{P}}_{\textrm{abs}}: H^*(B\G)\to H^*(M).
\end{equation}
An explicit construction is described/recalled a bit later, in Example \ref{exm:back:charact;map}.  For $\G= \Ger(\Gamma)$ associated to a pseudogroup $\Gamma$, one uses the notation $B\Gamma$. 
Finally, applied to codimension $q$ foliations $\F$ on $M$ interpreted as $\Gamma^q$-cocycles, one obtains the resulting abstract characteristic maps that are of interest for us:
\begin{equation}\label{eq:abstr-k-Gammaq} 
 \kappa^{\F}_{\textrm{abs}}: H^*(B\Gamma^q)\to H^*(M).
 \end{equation}

\subsection{Intermezzo: back to geometric characteristic classes (for $\Gamma$-structures)}
\label{Intermezzo: back to geometric characteristic classes}

While the discussion of the abstract characteristic classes map led us to the more general context of pseudogroups $\Gamma$ and $\Gamma$-structures, 
it is natural to wonder whether also the geometric characteristic map (\ref{eq:k-abstr}) can be adapted to this more general context. The answer is ``yes", at least in the case of pseudogroups 
$\Gamma$ that are {\bf Lie} (see below) and {\bf transitive} in the sense that for any two points in the base, $x, y\in \BB$, there exists $\phi\in \Gamma$ such that $\phi(x)= y$. The key is to look at the infinitesimal counterpart of diffeomorphisms, i.e. at vector fields.

\begin{defn}\label{defn:Gamma-vector-fields}
Let $\Gamma$ be a pseudogroup on $\BB$. 
\begin{itemize}
\item A {\bf (local) $\Gamma$-vector field} is any 
(local) vector field $X$ on $\BB$ with the property that its associated flow of diffeomorphisms $\phi_X^t$ all belong to $\Gamma$. Denote by $\mathfrak{X}_{\Gamma}(U)$ the space of such vector fields defined over $U\subset \BB$ open. 
\item A {\bf formal $\Gamma$-vector field at $x$}, for $x\in \BB$, is an infinite jet at $x$ of a local $\Gamma$-vector field defined around $x$. Denote by $\mathfrak{a}_x(\Gamma)$ the space of such.
\item When $\Gamma$ is a transitive pseudogroup on $\BB= \R^q$, $x= 0$,  the space $\mathfrak{a}_0(\Gamma)$ can be equipped with the Lie bracket defined by
\[
[j^\infty_0X, j^\infty_0 Y]=j^\infty_0[X, Y],
\]
where $X, Y$ are local $\Gamma$-vector fields around $0$; it will be called the {\bf algebra of $\Gamma$-vector fields on $\R^q$}, or the {\bf formal algebra of $\Gamma$}, and will be denoted $\mathfrak{a}(\Gamma)$. 
\end{itemize}
\end{defn}

%
%
%
%

%
%

Concerning the third point, it is not difficult to see that, when $\Gamma$ is transitive, all the formal algebras $\mathfrak{a}_x(\Gamma)$ are isomorphic to each other hence, up to isomorphism, there is an unambiguously defined formal Lie algebra $\mathfrak{a}(\Gamma)$; however, to work with a concrete model, one has to fix a base point $x\in \BB$. When $\BB=\mathbb{R}^q$ and $\Gamma$ is the pseudogroup ${\rm Diff}_{\rm loc}(\mathbb{R}^q)$, the resulting Lie algebra is denoted by $\mathfrak{a}_q$. Notice that in this case, formal vector fields $X\in \mathfrak{a}_q$ are simply expressions 
\[
X=\sum\limits_{i}^{q}f^i(x^1, \dots x^q)\frac{\partial}{\partial x^i}
\]
where each $f^i$ is a formal power series in $x^1, \ldots, x^q$. 
%
%
%
%

\medskip

For a general pseudogroup $\Gamma$, germs of elements from $\Gamma$ gave rise to the groupoid $\Ger(\Gamma)$. Similarly, for $l\in \mathbb{N}$, $l$-jets of elements from $\Gamma$ gives rise to the {\bf groupoid of $l$-jets $J^l\Gamma$} and one has the tower of groupoids
\begin{equation}\label{eq:the-jet-tower} 
J^{\infty}\Gamma \rightarrow \ldots \rightarrow J^l\Gamma\rightarrow J^{l-1}\Gamma \rightarrow \ldots \rightarrow J^1\Gamma\rightarrow J^0\Gamma.
\end{equation}
For instance, $J^0\Gamma\subset \BB\times \BB$ and, if $\Gamma$ is transitive, this inclusion becomes equality. For general $l$, $J^l\Gamma$ is a subspace of the manifold $J^l(\BB, \BB)$ of $l$-jets of diffeomorphisms of $\BB$. When all these are smooth submanifolds and all the projections in the tower are surjective submersions, one says that $\Gamma$ is a {\bf Lie pseudogroup}. 
Lie pseudogroups are best discussed in the framework of profinite dimensional differential geometry, which allows to make sense of $J^\infty\Gamma$ as a {\bf pf Lie groupoid}. We refer to the Appendix; in particular, Example~\ref{exam-Lie-pseudogroups} provides a definition of Lie pseudogroup that fit into this framework and explores some of the features of the tower above. Using terminology from the appendix, we point out that {\it we will always assume the tower above to be a {\bf normal pf-atlas} of $J^\infty\Gamma$}, i.e. that the natural map $J^\infty\Gamma\to \lim\limits_{\longleftarrow}J^l\Gamma$ is a bijection. Here, we are especially interested in the inverse system of the isotropy Lie groups $J^l\Gamma_x$ at any $x\in \BB$, for $l\in \mathbb{N}$. The isotropy group $J^{\infty}\Gamma_x$ is isomorphic to the limit of this system via an isomorphism of pf-Lie groups. 
Thanks to the Lie assumption one can find, at each $b\in \BB$, a subgroup 
$K\subset J^{\infty}\Gamma_b$ such that its projection at each finite order $l$ is a maximal compact subgroup of $J^l\Gamma$. Such a $K$ will be called a {\bf maximal compact subgroup at $b$}; they are unique up to conjugation. As before, when $\Gamma$ is transitive, one can move from $x$ to any other point in $\BB$ and carry $K$ to similar groups at other points. In particular, when $\Gamma$ is a transitive Lie pseudogroup over $\BB= \R^q$, we use $x= 0$ and we talk about {\bf maximal compact subgroups for $\Gamma$}. For instance, when $\Gamma$ is the entire pseudogroup $\textrm{Diff}_{\textrm{loc}}(\R^q)$, one can take $K= O(q)$; this can be embedded in the isotropy group of the infinite jet groupoid of ${\rm Diff}_{\textrm{loc}}(\R^q)$ by taking infinite jets of linear orthogonal maps.

\begin{thm}[\cite{BOTTHAEFLIGER}, Theorems $1$-$2$] 
For any transitive Lie pseudogroup $\Gamma$ on $\R^q$ and for any choice of a maximal compact $K$, any principal $\G= \Ger(\Gamma)$-bundle $\mathcal{P}\to M$ can be associated to a map in cohomology
\[ \kappa^{\mathcal{P}}: H^*(\mathfrak{a}(\Gamma), K) \to H^*(M)\]
such that: 
\begin{itemize}
\item when $\Gamma= {\rm Diff}_{\textrm{loc}}(\R^q)$ and $\P$ describes a codimension $q$ foliations, one has a canonical isomorphism 
\[ H^*(\mathfrak{a}_q, O(q))\cong H^*(WO_q)\]
and we recover the geometric characteristic map $\kappa^{\F}$~\eqref{eq:k-abstr} of the foliation.
\item if $\mathcal{P}$ encodes a $\Gamma$-structure, with induced foliation $\F$ on $M$, then the composition of $\kappa^{\mathcal{P}}$ with the canonical map induced by the inclusion $i: \mathfrak{a}(\Gamma)\hookrightarrow \mathfrak{a}$ is precisely the geometric characteristic map $\kappa^{\F}$~\eqref{eq:k-abstr}. 
\[ 
\xymatrix{
      H^*(\mathfrak{a}_q, O(q)) \ar[rr]^-{\kappa^{\F}}\ar[rd]_{i^*} & & H^*(M)\\
      & H^*(\mathfrak{a}(\Gamma), K) \ar[ru]_-{\kappa^{\mathcal{P}}} & 
}\]                
\end{itemize}
\end{thm}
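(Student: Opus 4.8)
The plan is to build $\kappa^{\mathcal{P}}$ exactly as the geometric map \eqref{kappa-omega} for flat bundles, with the flat connection $1$-form replaced by the canonical flat Cartan connection on the infinite jet groupoid. First I would pass from the \'etale bundle $\mathcal{P}$ to the jet picture: since $\Gamma$ is a \emph{transitive Lie} pseudogroup, the jet groupoids $J^l\Gamma \tto \R^q$ are transitive pf-Lie groupoids, and sending a germ to its infinite jet defines a morphism $\Ger(\Gamma) \to J^\infty\Gamma$ over $\R^q$. Extending the structure groupoid of $\mathcal{P}$ along it produces a principal $J^\infty\Gamma$-bundle $P^\infty \to M$ with moment map $\mu^\infty$; restricting over the base point $0\in\R^q$ and using transitivity, this is equivalent to a principal bundle $R := (\mu^\infty)^{-1}(0) \to M$ for the isotropy group $H := J^\infty\Gamma_0$, whose maximal compact I take to be $K$ (chosen inside $O(q)$, which is legitimate as $K$ is unique up to conjugation and $\Gamma \subset \mathrm{Diff}_{\mathrm{loc}}(\R^q)$).

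Next I would transport the canonical flat Cartan connection of $J^\infty\Gamma$ onto $R$, obtaining an $\mathfrak{a}(\Gamma)$-valued form $\omega \in \Omega^1(R, \mathfrak{a}(\Gamma))$ satisfying the Maurer--Cartan equation $d\omega + \tfrac12[\omega,\omega]=0$; here $\mathfrak{a}(\Gamma)$ is the formal algebra, sitting in the Cartan pair $0 \to \mathrm{Lie}(H) \to \mathfrak{a}(\Gamma) \to \R^q \to 0$. Precisely as in the flat $G$-bundle case, flatness makes $\omega$ into a morphism of differential graded algebras $\omega^* \colon C^*(\mathfrak{a}(\Gamma)) \to \Omega^*(R)$ from the Chevalley--Eilenberg complex to the de Rham complex. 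Because $\omega$ is $H$-equivariant and reproduces the generators of $\mathrm{Lie}(H)$ along the vertical directions, $\omega^*$ carries the relative subcomplex $C^*(\mathfrak{a}(\Gamma), K)$ into the $K$-basic forms, i.e. into $\Omega^*(R/K)$. Finally, since $K$ is maximal compact in the pf-Lie group $H$, the fibre $H/K$ is contractible, so the projection $R/K \to M$ induces an isomorphism $\pi^* \colon H^*(M) \xrightarrow{\ \cong\ } H^*(R/K)$; I then set $\kappa^{\mathcal{P}} := (\pi^*)^{-1} \circ H^*(\omega^*)$.

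For the two listed compatibilities I would argue by naturality. When $\Gamma = \mathrm{Diff}_{\mathrm{loc}}(\R^q)$, the isomorphism $H^*(\mathfrak{a}_q, O(q)) \cong H^*(WO_q)$ is the one recalled from \cite{BOTT}, and I would check that $R$ is the transverse (infinite) frame bundle of $\F$ and that, truncated to order one, $\omega$ is a Bott connection, so that $\omega^*$ precomposed with $WO_q \to C^*(\mathfrak{a}_q, O(q))$ reproduces Bott's Chern--Weil recipe for $\kappa^{\F}$~\eqref{eq:k-abstr}. For a $\Gamma$-structure, the inclusion of pseudogroups $\Gamma \hookrightarrow \mathrm{Diff}_{\mathrm{loc}}(\R^q)$ induces on jets an embedding $J^\infty\Gamma \hookrightarrow J^\infty\mathrm{Diff}_{\mathrm{loc}}(\R^q)$ and, on formal algebras, the inclusion $i \colon \mathfrak{a}(\Gamma) \hookrightarrow \mathfrak{a}_q$; extending the structure groupoid sends $\mathcal{P}$ to the $\Gamma^q$-bundle $\mathcal{P}_{\F}$ describing the induced foliation, while the Cartan connection on $J^\infty\Gamma$ is the restriction of the one on $J^\infty\mathrm{Diff}_{\mathrm{loc}}(\R^q)$. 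Chasing these compatible data through $\omega^*$ yields $\kappa^{\F} = \kappa^{\mathcal{P}} \circ i^*$, which is the asserted triangle.

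The main obstacle I anticipate is twofold. Conceptually, the delicate point is the identification in the foliation case of the abstract Maurer--Cartan morphism $\omega^*$ with Bott's explicit construction of $\kappa^{\F}$: this requires matching the canonical flat Cartan connection, order by order, with a choice of Bott connection and checking that the resulting classes are independent of that choice. Technically, every step lives on pf-dimensional objects -- defining $\Omega^*(P^\infty)$ and the Maurer--Cartan form, verifying the Maurer--Cartan equation as a genuine identity on the inverse limit, and establishing contractibility of $H/K$ together with the resulting cohomology isomorphism -- so the argument relies essentially on the pf-differential geometry developed in the appendix.
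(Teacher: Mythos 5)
Your construction is correct and is essentially the explicit Chern--Weil-style realization that the paper itself gives at the end of Section 6 (adapted from Bernstein--Rosenfeld): prolong $\mathcal{P}$ to the flat $(J^\infty\Gamma,\CC^\infty)$-structure $\mathcal{P}^\infty$ as in Example \ref{ex:formal_Gamma_structures}, restrict the dual form $\theta\in\Omega^1(\mathcal{P}^\infty,\mu^*A^\infty)$ to $Q=\mu^{-1}(x)$, use the Maurer--Cartan equation of Proposition \ref{prp:MC-eq-CP} for the pointwise bracket on $\mathfrak{a}_x(\Gamma)$, and pass to $K$-basic forms using contractibility of the fibres of $Q/K\to M$. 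The genuine difference is one of routing: the paper's primary definition of $\kappa^{\mathcal{P}}$ factors through Haefliger's differentiable cohomology, namely as the composite of the Haefliger characteristic map \eqref{eq:Haefl-charact-map} (built from the general van Est map of Proposition \ref{prp-general-VE} applied to the proper action of $J^\infty\Gamma$ on $\mathcal{P}^\infty$) with the inverse of the van Est isomorphism $H^*_{\rm Haef}(J^\infty\Gamma,\CC^\infty)\cong H^*(\mathfrak{a}(\Gamma),K)$ of Theorem \ref{Improved_Haefliger}, whereas you go directly from the Chevalley--Eilenberg complex to $\Omega^*(R/K)$ and never meet $H^*_{\rm diff}(\Gamma)$. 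The paper's detour is what produces the factorization \eqref{eq: Haefliger-diagram} through $H^*_{\rm diff}(\Gamma)$ and its comparison with $H^*(B\Gamma)$ --- the raison d'\^etre of the whole construction --- while your route buys a shorter, self-contained definition in the spirit of \eqref{kappa-omega}; the paper presents both descriptions but does not verify in print that they agree, so if you want your $\kappa^{\mathcal{P}}$ to be \emph{the} map fitting into diagram \eqref{char-diag-Gamma-bundles} that comparison would still have to be supplied. Two minor cautions: for a general transitive Lie pseudogroup, $K$ is a compact subgroup of the pf-group $J^\infty\Gamma_0$ projecting onto maximal compacts of each finite jet group (it is not literally a subgroup of $O(q)$), and the contractibility of $J^\infty\Gamma_0/K$ rests on the affine-bundle structure of the projections $J^k\Gamma\to J^{k-1}\Gamma$ for $k\geq 2$, as in Example \ref{Classical_Haefliger}; and the identification of the Maurer--Cartan morphism with Bott's recipe for $\kappa^{\F}$, which you correctly single out as the delicate step, is not written out in the paper either and is where the real work lies.
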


Incidentally, let us point out here that the obvious map from $\mathfrak{X}(\R^q)$ to $\mathfrak{a}_q$ (taking the infinite jet at the origin) is known to induce isomorphisms in (continuous) Lie algebra cohomology (see~\cite{GF}), and this provides yet another description of the Gelfand-Fuchs cohomology. In fact, this is the original approach of Gelfand and Fuchs.

\subsection{Cohomology of classifying spaces: the Bott-Shulman model}
\label{More on cohomology of classifying spaces and the abstract characteristic map}

 With the construction of the abstract characteristic map in mind (see the conclusions of subsection \ref{Abstract characteristic classes for foliations}) we see that, as for Lie groups, what really matters for us is just the cohomology of the classifying spaces rather than the classifying spaces themselves. There are various more explicit models available- each one of them providing a more or less explicit description of the characteristic map. One such model is the Bott-Shulman double complex $\Omega^*(\Sigma^*)$,  defined for any Lie groupoid $\Sigma\tto \BB$. It is based on the so-called nerve of $\Sigma$, whose construction we briefly recall. One considers the spaces $\Sigma^{(p)}\subset \Sigma^p$ of $p$-strings $(g_1, \ldots, g_p)$ of composable arrows of $\Sigma$, related by the maps:
\[ d_i: \Sigma^{(p)}\rightarrow \Sigma^{(p-1)}, \quad d_i(g_1, \ldots, g_p)= \left\{
\begin{array}{cc}
(g_2, \ldots, g_p) & \textrm{if $i= 0$}\\
(g_1, \ldots, g_ig_{i-1}, \ldots, g_p) & \textrm{for $1\leq i \leq p-1$}\\
(g_1, \ldots, g_{p-1})& \textrm{if $i= p$}
\end{array}
\right.
\]
Note that these maps show up already when looking at group cohomology, the differential (\ref{delta-group}) being simply 
\[ \delta= \sum_i (-1)^i d_{i}^{*} .\]
The same formula, but using pull-backs of forms, defines a differential which, together with DeRham differential, forms a double complex
\[ (\Omega^*(\Sigma^{(*)}), \delta, d_{DR}),\]
with associated total complex 
\begin{equation}\label{Tot-BS} 
\textrm{Tot}^k\Omega^*(\Sigma^{(*)}):= \bigoplus_{p+ q= k} \Omega^q(\Sigma^{(p)}), \quad D_{\textrm{tot}}= \delta+ (-1)^p d_{DR} \ \textrm{on}\ \Omega^q(\Sigma^{(p)}).
\end{equation}
We refer to both of them as the {\bf Bott-Shulman complex} of the Lie groupoid $\Sigma\tto \BB$.

\begin{defn}\label{defn:BS-DR} The cohomology of the Bott-Shulman complex is called the {\bf DeRham cohomology} of the Lie groupoid $\Sigma\tto \BB$, denoted $H^{*}_{\textrm{dR}}(\Sigma)$. 
%
\end{defn}

By a folklore theorem, for Hausdorff groupoids, this is isomorphic to the cohomology of $B\Sigma$ (see below). 
Furthermore, also the product structure on cohomology can be exhibited directly on the Bott-Shulman complex: 

\begin{equation}\label{eq:cup-BS-complex}
- \cup_{\textrm{tot}} -: \Omega^q(\Sigma^{(p)})\times \Omega^{q'}(\Sigma^{(p')})\to \Omega^{q+q'}(\Sigma^{(p+p')}),\quad 
\omega\cup_{\textrm{tot}} \omega':=  (-1)^{qp'} \textrm{first}_{p}^*(\omega) \wedge \textrm{last}_{p'}^*(\omega'), 
\end{equation}
where $\textrm{first}_p: \Sigma^{(p+p')}\to \Sigma^{(p)}$ keeps the first $p$ arrows, $\textrm{last}_{p'}$ keeps the last $p'$.
Of course, the signs are chosen so that $D_{\textrm{tot}}$ satisfies the Leibniz identity w.r.t. the total degree:
\begin{equation}\label{1_Leibniz-total} 
D_{\textrm{tot}}(\omega\cup_{\textrm{tot}}\omega')= D_{\textrm{tot}}(\omega)\cup_{\textrm{tot}}\omega'+ 
(-1)^{k} \omega\cup_{\textrm{tot}} D_{\textrm{tot}}(\omega')
\end{equation}
where $k= p+q$ is the total degree of $\omega\in \Omega^q(\Sigma^{(p)})$. Therefore:
\[(\textrm{Tot}\, \Omega^{*, *}(\Sigma), D_{\textrm{tot}}, \cup_{\textrm{tot}})\] 
becomes a DGA and the isomorphism with 
$H^*(B\Sigma)$ mentioned above
is an isomorphism of algebras.

%
%

\medskip

Recall also that, next to the maps $d_i$ there are also the degeneracy maps
\[ s_i: \Sigma^{(p)}\to \Sigma^{(p+1)}, \quad s_i(g_1, \ldots, g_p)= (\ldots, g_i, 1, g_{i+1}, \ldots)\quad (0\leq i\leq p)\] 
All together, they form the {\bf nerve of $\Sigma$}, which is a simplicial manifold, in the sense that the simplicial identities
\begin{equation}\label{simplicial-identities}
\left\{ 
\begin{array}{cc}
d_id_j= d_{j-1}d_i, & \textrm{if $i< j$}\\
s_i s_j= s_j s_{i-1} & \textrm{if $i> j$}
\end{array}
\right.
\quad 
d_i s_j= \left\{ \begin{array}{cc}
                              s_{j-1}d_i & \textrm{if $i<j$}\\
                              \textrm{id} & \textrm{if $i\in \{j, j+1\}$}\\
                              s_j d_{i-1} & \textrm{if $i> j+1$}
                         \end{array}
                \right.
\end{equation}
are satisfied. The main point is that to any simplicial manifold one can associate a space, called geometric realization - a construction which applied to the nerve of $\Sigma$ produces precisely $B\Sigma$. See~\cite{SEGAL}.

%

%
%

We are interested in the case of étale groupoids $\mathcal{G}\tto \BB$ such as $\Ger(\Gamma)$ - where there are a couple of remarks to be made.

\begin{rmk}
So far we have ignored one issue: some of our groupoids, especially the ones that use the germ topology such as $\Ger(\Gamma)$, have a space of arrows which is a non-Hausdorff manifold. Often one just ignores at first this aspect, carries on and returns later adapting the arguments/constructions to include the non-Hausdorff case. This affects our discussions here because the folklore theorem that the Bott-Shulman complex of $\G$ computes the cohomology of $B\G$ requires $\G$ to be Hausdorff. The reason is very simple: for non-Hausdorff manifolds $M$, differential forms still provide a resolution of the constant sheaf, but it is no longer a ``good" resolution (not even acyclic); in other words, while there still is a canonical map from DeRham cohomology to sheaf cohomology (with coefficients in $\R$, say), this may fail to be an isomorphism if $M$ is not Hausdorff. However, pin-pointing the problem also indicates the way out: just build an analogue of the Bott-Shulman complex which uses ``good resolutions". However, as indicated above, for the reader who is not comfortable with non-Hausdorff manifolds one may just assume first that we work only with pseudogroups for which $\Ger(\Gamma)$ is Hausdorff. 
\end{rmk}

The other remark to be made is that, 
for étale groupoids, there is more structure available that allows one to re-interpret the Bott-Shulman complex (and variations if it). Very briefly: the sheaves of differential forms on the base carry an action of the groupoid (using the germs (\ref{basic-action-for-étale}) induced by arrows). This brings us to another approach to the cohomology of $B\G$ which, in this context, was first considered by Haefliger: via $\G$-sheaves and their cohomology. 

\subsection{Cohomology of classifying spaces: sheaf theoretical approach}

\medskip

The notion of $\Gamma$-sheaf for a pseudogroup $\Gamma$ over $\BB$ should be clear: 

\begin{defn} Given a pseudogroup $\Gamma$ over $\BB$, 
a $\Gamma$-sheaf is any sheaf $\mathcal{S}$ over $\BB$ together with an action of
$\Gamma$ on $\mathcal{S}$, i.e. a collection of maps 
\[\phi^*: \mathcal{S}(V)\to \mathcal{S}(U),\quad U, V \text{ opens in } \BB\]
- one for any element $\phi: U\to V$ of $\Gamma$ - satisfying the functoriality condition 
\[ \phi^*\circ \psi^*= (\psi\circ \phi)^*\]
for any $\phi, \psi\in \Gamma$ composable.
We denote by $\mathsf{Sh}(\Gamma)$ the resulting category of $\Gamma$-sheaves (where morphisms are morphisms of sheaves that commute with all $\phi^*$'s). Similarly, we introduce the category $\mathsf{Ab}(\Gamma)$ of abelian $\Gamma$-sheaves, or the the category $\mathsf{Vect}_{\R}(\Gamma)$ of $\Gamma$-sheaves of vector spaces. 
\end{defn}

%
%

Note that, in some sense, we are enlarging the lattice of opens $\mathcal{O}p(\BB)$ in $\BB$ by $\Gamma$. We interpret $\mathcal{O}p(\BB)$ as the category with the opens $U\subset \BB$ as objects, and inclusions $U\hookrightarrow V$ as morphisms- so that a (pre-)sheaf on $\BB$ can be interpreted as a functor $\mathcal{O}p(\BB)\to \mathsf{Set}$. Any pseudogroup $\Gamma$ gives rise to a similar, but larger category 
$\mathcal{O}p_{\Gamma}(\BB)$, 
with the same objects, but where the morphisms are all the smooth maps $\phi: U\to V$ between opens in $\BB$ such that, as a map from $U$ to $\phi(U)$, $\phi$ belongs to $\Gamma$. Intuitively, one should think of $\mathcal{O}p_{\Gamma}(\BB)$ as representing the lattice of opens in the quotient space $\BB/\Gamma$ of $\Gamma$-orbits - where the {\bf orbit} of $x\in \BB$ is defined as the set of $y\in \BB$ such that $\phi(x)=y$ for some $\phi\in \Gamma$. With this, $\Gamma$-sheaves can be interpreted as contravariant functors
\[ \mathcal{S}: \mathcal{O}p_{\Gamma}(\BB)\to \mathsf{Set}\]
satisfying the gluing condition. Of course, the sheaves of sections of natural bundles~\cite{NIJENHUIS} are automatically $\Gamma$-sheaves for any $\Gamma$. One example to have in mind are differential forms. 

Sheaf cohomology of topological spaces $\BB$ arises taking the right derived functors of the global sections functor $\mathcal{O}p(\BB)\to \mathsf{Ab}$. For $\Gamma$-sheaves, one considers invariant global sections:

\begin{defn} For a $\Gamma$-sheaf $\mathcal{S}$, a global section $s\in \mathcal{S}(\BB)$ is called invariant if, for any $\phi: U\to V$ in $\Gamma$, 
\[ \phi^*(s|_{V})= s|_{U}.\]
We denote by $\mathcal{S}^{\textrm{inv}}(\BB)$ the space of such sections. 
\end{defn}

It is not difficult to see that the category $\mathsf{Ab}(\Gamma)$ of abelian $\Gamma$-sheaves is abelian and $\mathcal{S}\mapsto \mathcal{S}^{\textrm{inv}}(\BB)$ is a left-exact functor; one can also show that $\mathsf{Ab}(\Gamma)$ has enough injectives (see~\cite{HAEFLIGERNOTES}).

\begin{defn} Given a pseudogroup $\Gamma$ over $\BB$, the resulting right derived functors of $\mathcal{S}\mapsto \mathcal{S}^{\textrm{inv}}(\BB)$ are denoted
\[ H^{p}(\Gamma, \cdot): \mathsf{Ab}(\Gamma)\to \mathsf{Ab}, \quad
\mathcal{S}\mapsto H^{p}(\Gamma, \mathcal{S}).\]
$H^p(\Gamma, \mathcal{S})$ is called the {\bf $p$-cohomology group} of $\Gamma$ with coefficients in $\mathcal{S}$. 
\end{defn}

Hence, in order to compute the cohomology with coefficients in $\mathcal{S}$ explicitly one needs an injective resolution 
\[
0\to \mathcal{S}\to \mathcal{I}^*
\]
 by $\Gamma$-sheaves of $\mathcal{S}$ and $H^{*}(\Gamma, \mathcal{S})$ is the cohomology of the complex: 
\[
 0\to \mathcal{I}^{*, \textrm{inv}}(\BB)
\]  
For the constant sheaf $\mathcal{S}= \mathbb{R}$ we simplify the notation to $H^{*}(\Gamma)$. As for usual cohomology, concrete models are obtained using various injective resolutions of $\mathbb{R}\in \mathsf{Ab}(\Gamma)$. And this provides explicit models for $H^*(B\Gamma)$ since:

\begin{thm}\label{thm-Haefl-conj} For any pseudogroup $\Gamma$, $H^*(\Gamma)$ is canonically isomorphic to $H^*(B\Gamma)$. 
\end{thm}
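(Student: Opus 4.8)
The plan is to exhibit both $H^*(\Gamma)$ and $H^*(B\Gamma)$ as the abutments of two first-quadrant spectral sequences sharing the same $E_1$-page, and then to produce a natural comparison that is the identity on $E_1$. First I would pass to the associated effective étale groupoid $\G=\Ger(\Gamma)$, using the equivalence recalled above, under which $\mathsf{Ab}(\Gamma)$ is identified with the category of $\G$-equivariant abelian sheaves on $\BB$ and the invariant-global-sections functor becomes the functor of $\G$-invariant sections; thus $H^*(\Gamma,\mathcal{S})$ is the cohomology of the classifying topos of $\G$. The common scaffolding on both sides is the nerve $\G^{(\bullet)}$, and the shared $E_1$-page I am aiming for is
\[ E_1^{p,q}= H^q\big(\G^{(p)};\R\big), \qquad d_1=\sum_i(-1)^i d_i^*, \]
with $d_1$ the alternating sum of the face maps of the nerve — exactly the simplicial differential appearing in the Bott--Shulman complex — and with $H^q(\G^{(p)};\R)$ read as sheaf cohomology (with constant coefficients) of the possibly non-Hausdorff manifold $\G^{(p)}$.

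For the right-hand side I would use that $B\Gamma=B\G$ is, by Segal's construction, the geometric realization of the simplicial manifold $\G^{(\bullet)}$. The skeletal filtration of this realization yields the standard spectral sequence of a simplicial space, whose $E_1$-page is exactly $H^q(\G^{(p)};\R)$ with $d_1$ the alternating sum of the face maps, and which converges to $H^*(B\Gamma;\R)$.

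For the left-hand side I would build a double complex resolving the constant $\Gamma$-sheaf $\R$. In the sheaf direction I take a canonical soft (Godement) resolution $\R\to\mathcal{C}^\bullet$, which, being functorial, is automatically a resolution inside $\mathsf{Ab}(\Gamma)$; in the equivariance direction I use the comonadic bar resolution generated by the adjunction between pullback and pushforward along the source map of $\G$, whose term of simplicial degree $p$, after taking $\G$-invariant global sections, yields the global sections of the pullback of $\mathcal{C}^\bullet$ over $\G^{(p)}$. Applying invariant global sections to the total complex computes $H^*(\Gamma,\R)$, provided the bar resolution is acyclic for invariant sections; this one checks levelwise from the softness of the $\mathcal{C}^q$ together with the étale property of $\G$. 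Filtering by simplicial degree then gives a spectral sequence with the same $E_1^{p,q}=H^q(\G^{(p)};\R)$ and the same $d_1$.

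It remains to compare the two. I would realize both double complexes as cochains on the nerve valued in a good resolution of $\R$, and construct a morphism between them inducing the identity on $E_1$; by the comparison theorem for spectral sequences this forces an isomorphism on abutments, and naturality makes it canonical. The main obstacle is twofold. First, the delicate technical point flagged already in the remark on non-Hausdorff groupoids: one must use good (soft or injective) resolutions rather than the de Rham complex, since for non-Hausdorff $\G^{(p)}$ the de Rham resolution of $\R$ fails to be acyclic, so that $H^q(\G^{(p)};\R)$ really means sheaf cohomology. Second, the convergence of both spectral sequences, which is genuinely delicate because the geometric realization and the groupoids involved are infinite-dimensional. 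In the language of topoi, the whole statement is an instance of Moerdijk's comparison theorem identifying the cohomology of the classifying topos of an étale groupoid with that of its classifying space.
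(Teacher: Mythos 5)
The paper does not actually prove this statement: Theorem \ref{thm-Haefl-conj} is Haefliger's conjecture, and the text merely quotes it, deferring to Moerdijk's proof (the reference \cite{IEKE}, ``Proof of a conjecture of A.~Haefliger''). So there is no internal argument to compare yours against; your proposal has to stand on its own as a proof sketch.

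As such, it correctly sets up the two spectral sequences with $E_1^{p,q}=H^q(\G^{(p)};\R)$, correctly flags the non-Hausdorffness and convergence issues, and even names the right theorem at the end. But the step you dispose of in one sentence --- ``realize both double complexes as cochains on the nerve valued in a good resolution of $\R$, and construct a morphism between them inducing the identity on $E_1$'' --- is where the entire difficulty of the conjecture lives, and it is not a routine verification. The skeletal filtration of the (fat) realization computes \emph{singular} cohomology of $B\G$, whereas your left-hand $E_1$-page is \emph{sheaf} cohomology of the $\G^{(p)}$; identifying the two for non-Hausdorff, non-paracompact arrow spaces, and then producing an actual map of filtered complexes between singular cochains on the realization and the equivariant Godement--bar double complex (rather than just an abstract coincidence of $E_1$-pages, which by itself proves nothing about the abutments), is precisely what kept the statement open from 1958 until Moerdijk's topos-theoretic argument comparing the classifying topos of $\G$ with the sheaf topos of $B\G$. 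As written, the proposal asserts the crux rather than proving it; either carry out that comparison in detail or, as the paper does, cite \cite{IEKE}.
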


It is interesting to rewrite the entire discussion about cohomology in terms of  
$\Ger(\Gamma)$ rather than $\Gamma$ itself. Of course, an outcome is a slight generalisation to the more general context of étale groupoids $\G\tto \BB$. 
One way to proceed is to consider the analogue of $\mathcal{O}p_{\Gamma}(\BB)$ known as the embedding category of $\G$ and denoted $\textsf{Emb}(\mathcal{G})$.  Its objects are opens $U\subset M$, while arrows from $U$ to $V$ are bisections $\sigma:U\to \mathcal{G}$ with the property that $t(\sigma(U))\subset V$. The product of $\sigma_1: U_1\to \mathcal{G}$ from $U_1$ to $U_2$ with $\sigma_2: U_2\to \mathcal{G}$ from $U_2$ to $U_3$ is given by 
\[
\sigma_2\cdot \sigma_1 (x)=\sigma_2(\sigma_1(x)) \cdot \sigma_1(x).
\]
This gives rise to the resulting categories of $\G$-sheaves  $\mathsf{Sh}(\G)$, $\mathsf{Ab}(\G)$, etc. Also the discussion about cohomology carries on without any change, giving rise to 
\[ H^*(\G, \cdot): \mathsf{Ab}(\G)\to \mathsf{Ab}\]
and the analogue of Theorem \ref{thm-Haefl-conj} continues to hold; see \cite{IEKE}.
%

\medskip

We would like to emphasize that, even if one is interested only in the case $\G= \Ger(\Gamma)$ for pseudogroups $\Gamma$, the point of view of étale groupoids provides extra-insight. For instance, the usual interpretation of (standard) sheaves $\mathcal{S}$ on a space $\BB$ as étale spaces $\hat{\mathcal{S}}\to \BB$ (made of germs of sections)
has a now straightforward generalization: $\G$-shaves become étale spaces $\hat{\mathcal{S}}\to \BB$ together with a (continuous) action of $\G$ on $\hat{\mathcal{S}}$ - say from the right. In particular, any arrow of $\G$, $g: x\to y$, induces an action by $g$, $g^*: \mathcal{S}_y\to \mathcal{S}_x$. While sections of the sheaf correspond to sections of $\hat{\mathcal{S}}\to \BB$, for such a (global) section $s$ the meaning of invariance should be clear: 
\[ g^*(s(y))= s(x) \]
for any arrow $g: x\to y$ of $\G$.

\medskip

Another illustration of the use of étale groupoids is the fact that they provide bar-type complexes computing the cohomology. 
Ultimately, this is what allows one to relate this cohomology to the Bott-Shulman complex (and to overcome the non-Hausdorness problem). Furthermore, this also provides a framework that puts together the characteristic classes for foliations with the ones for flat bundles; note here that, when $\G$ is just a discrete group (interpreted as an étale groupoid over a point), one just recovers the usual group cohomology (Example~\ref{exam-discrete-groups}).

%
%
%

To describe the bar-type complexes we follow~\cite{HAEFLIGER}.
As in the previous subsection, we look at the space of $p$ composable arrows
\[ \mathcal{G}^{(p)}=\mathcal{G}\tensor[_s]{\times}{_t}\dots \tensor[_s]{\times}{_t}\mathcal{G}.\] 
A {\bf $p$-cochains on $\G$} with values in a $\mathcal{G}$-sheaf $\mathcal{S}$
is any global section of the pull-back $t^*\mathcal{S}$ of the sheaf $\mathcal{S}$ to
$\mathcal{G}^{(p)}$
where $t: \mathcal{G}^{(p)}\to \BB$ takes the target of the first element of a $p$-string.
The corresponding space is denoted by $C^p(\mathcal{G}, \mathcal{S})$. Hence
\begin{equation}\label{equation:Cp(G,E)} 
C^p(\mathcal{G}, \mathcal{S})= \Gamma(\G^{(p)}, t^*\mathcal{S}).
\end{equation}
Equivalently, in terms of the stalks $\mathcal{S}_x$ (i.e. the associated étale spaces of germs $\hat{\mathcal{S}}\to \BB$), a $p$-cochain $c\in C^p(\mathcal{G}, \mathcal{S})$ is a map 
\[ \G^{(p)} \ni (g_1, \ldots, g_p)\mapsto c(g_1, \ldots, g_p)\in \mathcal{S}_{t(g_1)}\]
which is continuous.
 The {\bf groupoid differential} 
 \[
\delta: C^p(\mathcal{G},\mathcal{S})\to C^{p+1}(\mathcal{G}, \mathcal{S})
\]
is defined by the adaptation of formula (\ref{delta-group}), where we use the action by $g_1$ so that the resulting formula lands in the desired space (germs at $t(g_1)$):
 \begin{align}\label{delta-group-sheaf}
\delta(c)(g_1, \dots, g_{p+1})= & g_1\cdot c(g_2, \dots, g_{p+1})+ \sum\limits^p_{i=1}(-1)^ic(g_1, \dots, g_i\cdot g_{i+1}, \dots, \dots g_{p+1})\\
& +(-1)^{p+1}c(g_1, \dots, g_p)\nonumber 
\end{align}


Notice that, if $\Phi:\mathcal{S}^1\to \mathcal{S}^2$ is a morphism of $\mathcal{G}$-sheaves, there is an induced morphism of complexes $C^*(\mathcal{G},S^1)\to C^{*}(\mathcal{G}, S^2)$.

\medskip
In particular, for pseudogroups one obtains the following notion of cocycle, which would not have been so natural without the viewpoint of the germ groupoid.

\begin{defn}\label{sdefinition:cont-cochains} Given a pseudogroup $\Gamma$ over $\BB$, by a {\bf continuous $p$-cochain} on $\Gamma$ we mean any $p$-cochain on the germ groupoid $\Ger(\Gamma)\tto \BB$, with resulting complex denoted
\[ C^{*}_{\textrm{cont}}(\Gamma, \mathcal{S}):= C^{*}(\Ger(\Gamma), \mathcal{S}),\]
for any $\Gamma$-sheaf $\mathcal{S}$.
\end{defn}

Returning to general étale groupoids $\G\tto \BB$, it is not so easy to ``resolve" an arbitrary sheaf $\mathcal{S}$ by a resolution consisting of sheaves which are injective as $\G$-sheaves (or just acyclic w.r.t. the cohomology of $\G$-sheaves). One reason is that the injectivity (or acyclicity) as a sheaf over $\BB$ does not imply the injectivity (or acyclicity) as a $\G$-sheaf. The bar-complexes arise by ``resolving" the sheaves in steps. The outcome is the following:

\begin{thm}\label{double_cpx}
Let $\mathcal{S}$ be a $\mathcal{G}$-sheaf and 
\[
0\to \mathcal{S}\to \mathcal{F}^*
\]
a resolution by $\mathcal{G}$-sheaves which are acyclic as sheaves on $\BB$ and also when pulled back to the spaces $\G^{(p)}$ via $t$. Then, for all $k$,
\begin{equation}
H^k(\mathcal{G}, \mathcal{S})\cong H^k(C^*(\mathcal{G}, \mathcal{F}^*))
\end{equation}
where the right hand side is the $k$-th cohomology group of the double complex $\left((C^p(\mathcal{G}, \mathcal{F}^q), \delta, d)\right)_{p,q}$ and $d$ is induced by the differential of the resolution $\mathcal{F}^*$.
\end{thm}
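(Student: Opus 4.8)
The plan is to read off the cohomology from the two spectral sequences of the double complex $(C^p(\mathcal{G}, \mathcal{F}^q), \delta, d)$, using one filtration to recognise the abstract groupoid cohomology and the other to exploit the acyclicity hypotheses. Recall first that, by definition, $H^*(\mathcal{G}, \mathcal{S})$ is the derived functor of the invariant--sections functor $\mathcal{S} \mapsto \mathcal{S}^{\mathrm{inv}}(\BB)$; since $\mathcal{S}^{\mathrm{inv}}(\BB) = \mathrm{Hom}_{\mathsf{Ab}(\mathcal{G})}(\underline{\R}, \mathcal{S})$, we may also regard it as $\mathrm{Ext}^*_{\mathsf{Ab}(\mathcal{G})}(\underline{\R}, \mathcal{S})$, computable either from an injective resolution of $\mathcal{S}$ or from a suitable resolution of the constant sheaf $\underline{\R}$.

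First I would filter by the resolution degree $q$, i.e. take $d$-cohomology first. For fixed $p$ the $p$-th column is $C^p(\mathcal{G}, \mathcal{F}^\bullet) = \Gamma(\mathcal{G}^{(p)}, t^*\mathcal{F}^\bullet)$. Since $t^*$ is exact, $0 \to t^*\mathcal{S} \to t^*\mathcal{F}^\bullet$ is a resolution on $\mathcal{G}^{(p)}$, and by hypothesis its terms are acyclic there; hence taking global sections computes sheaf cohomology and the first page is
\[
{}^{I}E_1^{p,q} = H^q(\mathcal{G}^{(p)}, t^*\mathcal{S}).
\]
This page is intrinsic to $(\mathcal{G},\mathcal{S})$: it is the familiar ``simplicial space'' spectral sequence computing $H^*(B\mathcal{G};\mathcal{S})$, and it is manifestly independent of the chosen resolution. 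Consequently it suffices to compute the abutment $H^*(\mathrm{Tot})$ for a single convenient resolution, provided I also check that every resolution allowed in the statement maps to the convenient one over $\mathrm{id}_{\mathcal{S}}$, inducing the identity on this page and hence an isomorphism on total cohomology.

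It then remains to identify the abutment with $H^*(\mathcal{G}, \mathcal{S})$ for one good resolution, and for this I would pass to the $\delta$-filtration. Concretely, take the canonical resolution of $\mathcal{S}$ by the \emph{coinduced} $\mathcal{G}$-sheaves $\mathcal{F}^q = s_* t^* \mathcal{A}^q$, where $\mathcal{A}^\bullet$ is a resolution of the underlying sheaf by sheaves of discontinuous sections; these are injective (so lifts over $\mathrm{id}_{\mathcal{S}}$ from an arbitrary qualifying resolution exist), they are flasque on $\BB$, and, being coinduced from discontinuous sections, they remain acyclic when pulled back to the $\mathcal{G}^{(p)}$, so the previous paragraph applies to them. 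For such coinduced sheaves the key lemma is a Shapiro--type statement: the $\delta$-complex $C^\bullet(\mathcal{G}, \mathcal{F}^q)$ is acyclic in positive degrees with $H^0_\delta = (\mathcal{F}^q)^{\mathrm{inv}}(\BB)$, proved by the standard extra--degeneracy contraction inserting a unit arrow $1_{s(g_p)}$ at the end of a string — available precisely because the sections involved are unconstrained. Hence the $\delta$-first spectral sequence collapses to the $p=0$ column and
\[
H^*\big(\mathrm{Tot}\, C^\bullet(\mathcal{G}, \mathcal{F}^\bullet)\big) \cong H^*\big((\mathcal{F}^\bullet)^{\mathrm{inv}}(\BB)\big).
\]
Since coinduced sheaves built from acyclic ones are acyclic for the invariant--sections functor (by the adjunction computing $\mathrm{Ext}^*_{\mathsf{Ab}(\mathcal{G})}(\underline{\R}, s_*t^*\mathcal{A}^q)$ as $H^*(\BB,\mathcal{A}^q)$), the right-hand side is an acyclic-resolution model for $H^*(\mathcal{G}, \mathcal{S})$. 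Combining this with the resolution-independence established above yields the claim for any resolution satisfying the hypotheses.

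I expect the main obstacle to be precisely the reconciliation of the two roles the resolution must play. The $d$-direction needs the terms to be \emph{acyclic on each nerve space} $\mathcal{G}^{(p)}$, whereas the collapse of the $\delta$-direction needs them to be \emph{relatively injective} (coinduced) as $\mathcal{G}$-sheaves; these are a priori independent conditions, and a generic injective $\mathcal{G}$-sheaf need not be acyclic after pullback to $\mathcal{G}^{(p)}$. The technical heart is therefore to verify that the coinduced resolution satisfies both at once — the discontinuous-sections description ensuring pullback-acyclicity, and the unit-insertion homotopy ensuring the bar-direction collapse — after which the intrinsic nature of the $d$-filtration's $E_1$-page transports the conclusion to the general resolution permitted in the statement.
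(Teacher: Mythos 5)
The paper does not actually prove this theorem itself --- it defers to Haefliger's unpublished notes \cite{HAEFLIGERNOTES} and only records that the Godement resolution satisfies the hypotheses --- so your argument has to stand on its own. Its architecture is the right one and matches the standard proof: the two hypercohomology spectral sequences of the double complex, the resolution-independent first page ${}^{I}E_1^{p,q}=H^q(\G^{(p)},t^*\mathcal{S})$ used to transport the computation from a distinguished resolution to an arbitrary qualifying one, and a Shapiro-type collapse of the $\delta$-direction for coinduced coefficients. Your observation that pullback-acyclicity on the nerve and relative injectivity are a priori independent conditions that the reference resolution must satisfy simultaneously is exactly the right thing to worry about.

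There is, however, a concrete gap in the construction of that reference resolution: termwise coinduction of a resolution is not a resolution. If $0\to\mathcal{S}\to\mathcal{A}^\bullet$ is exact and $\mathcal{F}^q:=s_*t^*\mathcal{A}^q$, then left exactness of $s_*t^*$ gives $\ker(\mathcal{F}^0\to\mathcal{F}^1)=s_*t^*\mathcal{S}$, which strictly contains $\mathcal{S}$ unless $\G$ is a unit groupoid; so the augmented complex $0\to\mathcal{S}\to\mathcal{F}^\bullet$ fails to be exact at $\mathcal{F}^0$. This is already visible for $\G=G$ a discrete group over a point: a resolution $0\to M\to I^\bullet$ of abelian groups coinduces to $0\to M\to\mathrm{Maps}(G,I^\bullet)$, whose zeroth cohomology is $\mathrm{Maps}(G,M)\neq M$. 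Consequently the complex you feed into the Shapiro/adjunction step is not a resolution of $\mathcal{S}$ by $\mathrm{inv}$-acyclics, and the identification of the abutment with $H^*(\G,\mathcal{S})$ breaks down. The standard repair is to use the full cosimplicial (cobar) coinduced resolution $\mathcal{S}\to s_*t^*\mathcal{S}\to(s_*t^*)^2\mathcal{S}\to\cdots$, suitably interleaved with the Godement construction so that each term is flasque on $\BB$ and acyclic on every $\G^{(p)}$, or to build a step-by-step injective resolution in $\mathsf{Ab}(\G)$ and verify those two properties for its terms. With such a resolution in place, the rest of your argument --- the extra-degeneracy contraction, the adjunction identifying the $p=0$ column, and the comparison through the first spectral sequence --- does go through.
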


See~\cite{HAEFLIGERNOTES} for a proof; in~\cite{HAEFLIGER}, the above theorem is used to define continuous cohomology via the Godement resolution of a sheaf.  The advantage of the Godement resolution
\[
0\to \mathcal{S}\to \mathcal{C}^0\mathcal{S}\to \mathcal{C}^1\mathcal{S}\to \mathcal{C}^2\mathcal{S}\dots
\]
(defined over any space) is that it is flabby (hence also acyclic) and is preserved by taking pull-backs via étale maps - hence the previous theorem applies. 
%
%

The above theorem only needs the base $\BB$ to be a topological space. When $\BB$ is a manifold and we deal with real coefficients, we have also the de Rham resolution
\begin{equation}\label{DeRham-res}
0\to \mathbb{R}_\BB\to \Omega^0_\BB\to \Omega^1_\BB\to \dots
\end{equation}
by the $\mathcal{G}$-sheaves of differential forms. Since the maps $t: \G^{(p)}\to \BB$ are étale, they pull-back the sheaf of differential forms on $\BB$ to sheaves of differential forms on the manifolds $\G^{(p)}$. Hence one obtains the following, which gives a re-interpretation of the Bott-Shulman complex:
\[ \Omega^{q}(\G^{(p)})= \mathcal{C}^{p}(\G, \Omega_{\BB}^{q}). \]
Recall that the sheaf of differential forms of any degree is acyclic whenever we work over Hausdorff manifolds. Consequently, one obtains the relationship with DeRham cohomology mentioned above (see Definition \ref{defn:BS-DR}) ~\cite{HAEFLIGER, HAEFLIGERNOTES}:

\begin{lem}\label{res_diff_forms}
For any étale groupoid $\mathcal{G}\rightrightarrows \BB$ there is an canonical map 
\[ \iota: H^{*}_{\rm dR}(\G)\to H^{*}(\G, \R)\]
and, if $\G$ is Hausdorff, this map is an isomorphism.   
\end{lem}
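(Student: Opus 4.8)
The plan is to recognise the Bott--Shulman complex of $\G$ as the bar-type double complex associated, via Theorem~\ref{double_cpx}, to the de Rham resolution, and then read off both the map and the isomorphism from that identification.

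First I would make precise the identification anticipated in the discussion preceding the statement. Since each target map $t\colon \G^{(p)}\to\BB$ is étale, one has $t^*\Omega^q_\BB=\Omega^q_{\G^{(p)}}$, and hence a natural isomorphism $\Omega^q(\G^{(p)})\cong C^p(\G,\Omega^q_\BB)$. The crux of this step is to check that this is an isomorphism of \emph{double complexes}: that the simplicial differential $\sum_i(-1)^i d_i^*$ of the Bott--Shulman complex corresponds to the groupoid differential $\delta$ of~(\ref{delta-group-sheaf}) for the $\G$-sheaf $\Omega^\bullet_\BB$, and that $d_{\rm DR}$ corresponds to the differential induced by the de Rham resolution. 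The only point needing care is the $d_0$/first term: the $\G$-action on $\Omega^\bullet_\BB$ is by pullback along the local diffeomorphisms $\phi_{\mathcal{B}}$ attached to arrows, and one must verify that pulling back along the face map $d_0$ is exactly the action $g_1\cdot(-)$ appearing in~(\ref{delta-group-sheaf}), the signs matching by construction of $D_{\rm tot}$. Granting this, $H^*_{\rm dR}(\G)=H^*\big(\mathrm{Tot}\,C^*(\G,\Omega^\bullet_\BB)\big)$.

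Next I would treat the Hausdorff case directly. The de Rham complex $0\to\R_\BB\to\Omega^\bullet_\BB$ is a resolution of $\R_\BB$ by $\G$-sheaves (differential forms being natural, hence carrying the $\G$-action by pullback), the Poincaré lemma being purely local. If $\G$ is Hausdorff so is each $\G^{(p)}$, and the sheaves of differential forms on a Hausdorff manifold are acyclic; thus each $\Omega^q_\BB$ is acyclic on $\BB$ and its $t$-pullback $\Omega^q_{\G^{(p)}}$ is acyclic on $\G^{(p)}$. Hence the hypotheses of Theorem~\ref{double_cpx} are met with $\mathcal{F}^\bullet=\Omega^\bullet_\BB$, giving a canonical isomorphism $H^k(\G,\R)\cong H^k\big(\mathrm{Tot}\,C^*(\G,\Omega^\bullet_\BB)\big)=H^k_{\rm dR}(\G)$. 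For the canonical map in general, I would compare the de Rham resolution with the Godement resolution $0\to\R_\BB\to\mathcal{C}^\bullet\R_\BB$, which is flabby (hence acyclic) and, being preserved by étale pullback, satisfies the hypotheses of Theorem~\ref{double_cpx} for any $\G$. Standard homological algebra yields a morphism of $\G$-sheaf resolutions $\Omega^\bullet_\BB\to\mathcal{C}^\bullet\R_\BB$ over $\mathrm{id}_{\R_\BB}$, unique up to homotopy; applying $C^p(\G,-)$ and totalising gives a well-defined map
\[
\iota\colon H^*_{\rm dR}(\G)=H^*\big(\mathrm{Tot}\,C^*(\G,\Omega^\bullet_\BB)\big)\longrightarrow H^*\big(\mathrm{Tot}\,C^*(\G,\mathcal{C}^\bullet\R_\BB)\big)\cong H^*(\G,\R),
\]
the last isomorphism being Theorem~\ref{double_cpx}. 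Uniqueness up to homotopy makes $\iota$ independent of choices, hence canonical; and when $\G$ is Hausdorff, naturality of the isomorphism of Theorem~\ref{double_cpx} in the resolution forces $\iota$ to agree with the direct isomorphism above, so $\iota$ is an isomorphism.

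The main obstacle I anticipate is not the homological algebra but the bookkeeping of the first step: verifying that the two double-complex structures genuinely coincide under $\Omega^q(\G^{(p)})\cong C^p(\G,\Omega^q_\BB)$, i.e.\ that the simplicial coface maps and the groupoid differential agree once the $\G$-action on forms is inserted, and that all signs are compatible. A secondary subtlety is ensuring the comparison map into the Godement resolution can be taken $\G$-equivariantly and that the resulting $\iota$ matches the Theorem~\ref{double_cpx} isomorphism in the Hausdorff case, which is precisely where the naturality of that isomorphism in the chosen resolution is used.
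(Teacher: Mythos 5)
Your proposal is correct and follows essentially the same route the paper takes: reinterpret the Bott--Shulman complex as $C^p(\G,\Omega^q_\BB)$ using that the maps $t\colon\G^{(p)}\to\BB$ are étale, invoke acyclicity of sheaves of forms on Hausdorff manifolds so that Theorem~\ref{double_cpx} applies to the de Rham resolution, and obtain the canonical map in general by comparison with the Godement resolution. The paper leaves these steps as a sketch with references, so your additional care about matching the two double-complex structures and the naturality of the comparison map is a reasonable filling-in rather than a departure.
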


In the non-Hausdorff case the situation is a bit more delicate and 
Theorem~\ref{double_cpx} may not apply to the DeRham resolution; hence in that case one obtains a map between the two cohomologies, but it may fail to be an isomorphism.

%
%
%

\begin{exm}\label{exam-discrete-groups} It is clear that when $\G= G$ is a discrete group viewed as an étale groupoid over a point, one recovers the usual cohomology of $G$ with coefficients in $G$-modules.

At the other extreme one has that, for any manifold $M$, the cohomology of the unit groupoid $M\tto M$ (consisting only of units)
is the standard sheaf cohomology of $M$. 

A related example is the groupoid $M_{\mathcal{U}}$ associated to an open cover $\mathcal{U}$ of $M$ (see Remark \ref{rk:gpd-GU}). Due to the gluing condition on sheaves, there is an immediate equivalence of categories
\[ \mathsf{Ab}(M_{\mathcal{U}})\cong \mathsf{Ab}(M)\]
and it is not difficult to see that also the cohomology is preserved:
\[ H^*(M, \mathcal{S})\cong H^*(M_{\mathcal{U}}, \mathcal{S}_{\mathcal{U}}).\]
\end{exm}

\medskip

\begin{exm}[back to the abstract characteristic map (\ref{eq:abstr-k-G})] 
\label{exm:back:charact;map} 
The very last example gives, as promised, a more concrete model for the abstract characteristic map (\ref{eq:abstr-k-G}). To achieve that, recall that any $\G$-cocycle $\gamma= \{\gamma_{ij}\}$ over the cover $\mathcal{U}$ can be interpreted as a morphism of groupoids $\gamma: M_{\mathcal{U}}\to \G$ (cf. Remark \ref{rk:gpd-GU}). Furthermore, it is clear that the Bott-Shulman complex (and the DeRham cohomology) of groupoids is functorial with respect to groupoid morphisms. Hence, a cocycle $\gamma$ induces
\begin{align*}
\gamma^*: C^p(\mathcal{G}, \Omega^q_{\BB})&\to C^p(M_\mathcal{U}, \Omega^q_M)\\
c &\to \gamma^*(c)
\end{align*}
where, explicitly,
\begin{equation}\label{dR_gamma}
{\gamma}^*(c)_{i_0, \dots i_p}(x)={f}^*_i(c(\gamma_{i_{p-1}i_p}(x), \dots,\gamma_{i_0i_1}(x))).
\end{equation}
Here, we are making use of the fact that a groupoid cochain in $M_\mathcal{U}$ is the same thing as a \v{C}ech cochain.
Passing to cohomology, 
\begin{equation}\label{eq:abstr-k-G-dR}
 \gamma^{*}: H^{*}_{\textrm{dR}}(\G) \to H^{*}(M) .
 \end{equation}
When $\G$ is Hausdorff so that we can apply Lemma \ref{res_diff_forms} (see also Theorem \ref{thm-Haefl-conj}), this becomes precisely the promised explicit description of the characteristic map (\ref{eq:abstr-k-G}). In the general case, this is the composition of $\iota: H^{*}_{\textrm{dR}}(\G) \to H^*(\G, \R)\cong H^*(B\G)$ with the characteristic map (\ref{eq:abstr-k-G}) \[\kappa^{\mathcal{P}}_{\textrm{abs}}: H^*(B\G)\to H^*(M).\]

%
%
\end{exm}

\begin{rmk}[product structure]\label{rk:product-str} One of the advantages of the bar-complexes is that they allow one to 
exhibit explicitly the product structures present in cohomology. 
For instance, for any three abelian $\G$-sheaves $\mathcal{S}_1$, $\mathcal{S}_2$ and
$\mathcal{S}$ and any map of abelian $\G$-sheaves $w: \mathcal{S}_1\otimes \mathcal{S}_2\to \mathcal{S}$, 
one has an induced operation  
\[ -\cup_{w} -:  C^{p}(\G, \mathcal{S}_1)\times C^{p'}(\G, \mathcal{S}_2)\to C^{p+p'}(\G, \mathcal{S}), \]
\[ (c_1\cup_w c_2)(g_1, \ldots, g_{p+p'})= w\left( c(g_1, \ldots, g_p),\ g_1\cdot\ldots \cdot g_p \cdot c'(g_{p+1}, \ldots, g_{p+ p'})\right)\in \mathcal{S}_{t(g_1)}.\]
In particular:
\begin{itemize}
\item when $\mathcal{S}_1= \mathcal{S}_2= \mathcal{S}=\mathbb{R}$ and $\omega$ is the identity, one obtains the so-called cup-product 
\[  -\cup -:  C^{p}(\G)\times C^{p'}(\G)\to C^{p+p'}(\G), \]
making $C^{*}(\G)$ into a graded algebra. Together with $\delta$, it is actually a differential graded algebra, i.e. $\delta$ satisfies the derivation rule
\[ \delta(c\cup c')= \delta(c)\cup c'+ (-1)^p c\cup \delta(c');\] 
\item similarly, when $\mathcal{S}_1$ is $\mathbb{R}$ and $\mathcal{S}_2$ is an arbitrary $\mathcal{S}$ 
one obtains that $C^{p}(\G, \mathcal{S})$  comes with a left and a right action of $C^{*}(\G)$, making it into a (differential) graded module;
\item when $\mathcal{S}_1= \Omega^{q}_{\BB}$, $\mathcal{S}_2= \Omega^{q'}_{\BB}$ are sheaves of differential forms and $w$ is the usual wedge operation, one obtains an induced operation 
\begin{equation}\label{eq:cup-BS-complex}
- \cup -: \Omega^q(\G^{(p)})\times \Omega^{q'}(\G^{(p')})\to \Omega^{q+q'}(\G^{(p+p')}).
\end{equation}
\end{itemize}
It is immediate to see that this is precisely the unsigned version of the total cup-product (\ref{eq:cup-BS-complex}), a version that interacts nicer with $\delta$ and $d$ (rather than the total differential). More precisely one has:
\begin{equation}\label{eq:1_delta-deriv} 
\delta(\omega\cup \omega')= \delta(\omega)\cup \omega'+ (-1)^p \omega\cup \delta(\omega') 
\end{equation}
\begin{equation}\label{eq:1_d-deriv-cup} 
d(\omega\cup \omega')= d(\omega)\cup \omega'+ (-1)^q \omega\cup d(\omega') 
\end{equation}
for all $\omega\in \Omega^{p, q}, \omega'\in \Omega^{p', q'}$. Of course, the last equation is a consequence of the Leibniz rule for the usual wedge product:
\begin{equation}\label{eq:1_d-deriv} 
d(\omega\wedge \omega')= d(\omega)\wedge \omega'+ (-1)^q \omega\wedge d(\omega') . 
\end{equation}

%
\end{rmk}

\section{Haefliger's differentiable cohomology}\label{Haefliger's differentiable cohomology}

\subsection{Haefliger's differentiable cohomology of pseudogroups}
\label{ssec:Motivation and definition}

Let us return to the two characteristic maps for foliations: the geometric one $\kappa^{\F}$ from (\ref{eq:k-abstr}) and the abstract one $\kappa^{\F}_{\textrm{abs}}$ from (\ref{eq:abstr-k-Gammaq}). As for principal bundles with corresponding commutative diagram (\ref{char-diag-G-bundles}), and as for discrete bundles
with corresponding commutative diagram (\ref{char-diag-G-bundles-discrete}), one has a similar commutative diagram 
\begin{equation}\label{char-diag-Gamma-bundles}
\begin{tikzcd}
\textrm{GF}_{q}^{*}\arrow[rr]{}{\kappa^{\F}} \arrow[d, swap]{}{\kappa^{\textrm{univ}}}& & H^*(M)\\
H^*(B\Gamma^q)\arrow[rru, swap]{}{\kappa^{\F}_{\rm abs}}& & 
\end{tikzcd}
\end{equation}
Here, the existence of vertical map $\kappa^{\textrm{univ}}$ making the previous diagram commutative can be obtained using the universality of $B\Gamma^q$, or explicitly using the bar-complex (see also below for a different, simpler, description).  We have seen that, in this diagram: 
\begin{itemize}
\item the domain of the abstract characteristic map could be interpreted as the cohomology of the groupoid $\Gamma^q$.
\item the domain $\textrm{GF}_{q}^{*}$ of the geometric characteristic map has several descriptions - e.g. very explicitly using the complex $WO_q$ or, still explicit but more conceptual, using the Lie algebra $\mathfrak{a}_q$ of formal vector fields.
\end{itemize}
Given the striking analogy with the discussion of characteristic classes for flat bundles, there is an obvious question: {\it can one interpret the domain of the geometric characteristic map as a certain ``differentiable cohomology" 
\begin{equation}\label{eq:diff-coh-gamma-q} 
H^{*}_{{\rm diff}}(\Gamma^q)
\end{equation}
so that the previous diagram appears as a variation of (\ref{char-diag-G-bundles-discrete}):
\begin{equation}\label{eq: Haefliger-diagram}
\begin{tikzcd}
H^*_{{\rm diff}}(\Gamma^q)\arrow[rr]{}{\kappa^{\F}} \arrow[d, swap]{}{\kappa^{\textrm{univ}}}& & H^*(M)\\
H^*(\Gamma^q)\arrow[rru, swap]{}{\kappa^{\F}_{\rm abs}}& & 
\end{tikzcd}
\end{equation}
combined with a ``van Est isomorphism" 
\begin{equation}\label{eq: Van-Est-Haefliger}
VE: H^*_{{\rm diff}}(\Gamma^q)\overset{\simeq}{\to} \textrm{GF}_{q}^{*} 
\end{equation}}
The construction of the ``differentiable cohomology (\ref{eq:diff-coh-gamma-q})" was carried out by Haefliger \cite{HAEFLIGER}. However, it seems to be forgotten. It does look a bit ad-hoc in the sense that it is described only for $\Gamma^q$ and furthermore, it is not clear what is the structure that makes the definition work. Our aim in this paper is to clarify this construction and, in particular, provide the general conceptual  framework to which it belongs.

Let us go through Haefliger's definition, also making sure that it makes sense for more general pseudogroups $\Gamma$ - hence defining $H^{*}_{\textrm{diff}}(\Gamma)$. 
Haefliger's idea is very simple: 
while ``continuous cohomology" of $\Gamma$ is, modulo the Hausdorfness issue, computed by the Bott-Shulman complex
\[ \Omega^{q}(\G^{(p)})= C^p(\G, \Omega^{q}_{\BB})\quad (\G= \Ger(\Gamma)),\]
the differentiable cohomology should be computed by a sub-complex
consisting of ``differentiable cochains". The notion of ``differentiable cochain"
makes sense for cochains
\[ c\in C^p(\G, \mathcal{E})\]
whenever $\mathcal{E}$ is the sheaf of sections of a smooth vector bundle $E\to \BB$: it means that at any 
\[ g= (g_1, \ldots, g_p)= (\textrm{germ}_{x_1}(\phi_1), \ldots, \textrm{germ}_{x_p}(\phi_p))\in \G^{(p)},\]
the value $c(g)\in E_{t(g_1)}$ depends only on 
\[ j^{\infty}(g):= (j^{\infty}_{x_1}(\phi_1), \ldots, j^{\infty}_{x_p}(\phi_p)),\]
and it is ``smooth in $j^{\infty}(g)$". More precisely, we define
\[ C^{p}_{\textrm{diff}}(\Gamma, E):= C^p(J^{\infty}\Gamma, E), \]
the space of smooth sections of the pull-back of the vector bundle $E$ via 
$t: J^{\infty}\Gamma\to \BB$ that takes the target of the first arrow. We recall once more that the ``smooth structure" on $J^\infty\Gamma$ is the one of profinite dimensional manifold; see the appendix for details. Observe that, to have a profinite dimensional smooth structure on $J^\infty\Gamma$, we need to assume that $\Gamma$ is a Lie pseudogroup, see~\eqref{eq:the-jet-tower} and the following discussion; recall also that we take $J^\infty\Gamma\cong \plim J^l\Gamma$ as part of our definition of Lie pseudogroup.  

Of course, via the infinite jet map $j^{\infty}: \G\to J^{\infty}\Gamma$, we have
\[ C^{p}_{\textrm{diff}}(\Gamma, E)\subset C^{p}(\G, \mathcal{E})\quad (\textrm{$\mathcal{E}$ being the sheaf of sections of $E$}).\]

\begin{defn} \label{defn:diff-coh-gamma} {\bf Haefliger's differentiable cohomology} of a Lie pseudogroup $\Gamma$,
denoted $H^*_{\textrm{diff}}(\Gamma)$, 
 is the cohomology of the simple complex associated to the double complex
\begin{equation}\label{eq:diff-cplx-3}
C^{p, q}_{\textrm{diff}}(\Gamma):= C^{p}_{\textrm{diff}}(\Gamma, \Lambda^{q}T^*\BB)= C^{p}(J^\infty\Gamma,  \Lambda^{q}T^*\BB),
\end{equation}
which, via the map induced by $j^{\infty}: \G\to J^\infty\Gamma$, is a subcomplex of the Bott-Shulman complex associated to $\G= \Ger(\Gamma)$. 
We will denote by $j^*: C^{p, q}_{\textrm{diff}}(\Gamma)\hookrightarrow \Omega^q(\G^{(p)})$ the corresponding inclusion and, similarly the map induced in cohomology
\[ j^*: H^{*}_{\textrm{diff}}(\Gamma)\to H^{*}_{\textrm{dR}}(\G) \quad (\G= \Ger(\Gamma)).\]
Finally, for a $\Gamma$-structure $\mathcal{P}$ on a manifold $M$ (represented by some cocycle $\gamma$, c.f. Definition~\ref{Cocycle}), the composition of $j^*$ with the characteristic map $\gamma^*$ (\ref{eq:abstr-k-G-dR}) is denoted
\begin{equation}\label{eq:dif-charact-map} 
 \kappa^{\mathcal{P}}_{\textrm{diff}}: H^{*}_{\textrm{diff}}(\Gamma)\to H^*(M)
 \end{equation}
and is called \textbf{the differentiable characteristic map} associated to the $\Gamma$-structure $\mathcal{P}$ on M. 
\end{defn}

The definition of the differentiable cohomology above is precisely Haefliger's definition \cite{HAEFLIGER}. Being a subcomplex means that it is preserved by the differentials $\delta$ (along $p$) and $d$ (along $q$) of the Bott-Shulman complex, therefore giving rise to similar differentials 
\begin{equation}\label{eq:diff-cplx-delta-diff} \ 
\delta: C^{p, q}_{\textrm{diff}}(\Gamma)\to C^{p+1, q}_{\textrm{diff}}(\Gamma), \quad d: C^{p, q}_{\textrm{diff}}(\Gamma)\to C^{p, q+1}_{\textrm{diff}}(\Gamma).
\end{equation}

On the other hand, it is immediate to see that this subcomplex is closed also under the product structure (see Remark \ref{rk:product-str}); actually, the induced cup-product on the differentiable complex fits precisely the scheme described in Remark \ref{rk:product-str}, but for the groupoid $J^\infty\Gamma$ and with coefficients in vector bundle representations rather than in sheaves. Therefore, $C^{*,*}_{\textrm{diff}}(\Gamma)$ inherits from the Bott-Shulman complex the same type of structure/properties; this will be made more precise later on but, for now, let us mention the fact that the Leibniz-type identities  \eqref{1_Leibniz-total}, \eqref{eq:1_delta-deriv} and
\eqref{eq:1_d-deriv-cup}  
 will be inherited. 
%
%
%


Strictly speaking, the fact that the differentiable complex is a subcomplex is not immediate and requires a proof. Even more puzzling is to understand ``why" this happens and unravel the structure on $J^{\infty}\Gamma$ that governs this construction. More precisely, while the differentiable complex is defined using cochains on $J^{\infty}\Gamma$, it is natural
 to look at general Lie groupoids $\Sigma\tto \BB$ and:
 
 \medskip
 
%

{\it {\bf Problem:} investigate the structure on $\Sigma\tto \BB$ that is needed in order to be able to form a bicomplex
\begin{equation}\label{eq:general-bic}
(C^{p}(\Sigma, \Lambda^qT^*\BB), \delta, d),
\end{equation}
with ``basic properties" similar to those of the differentiable complex; and, of course, $C^{*,*}_{\rm diff}(\Gamma)$ should be obtained in the particular case when $\Sigma= J^{\infty}\Gamma$. Here, by basic properties we mean that (\ref{eq:general-bic}) is a double complex, the Leibniz identities \eqref{1_Leibniz-total}, \eqref{eq:1_delta-deriv} and \eqref{eq:1_d-deriv-cup} still hold and, in low degrees:
\begin{itemize}
\item for $q= 0$, $\delta$ is the usual groupoid differential on $C^{*}(\Sigma)$
\item for $p= 0$, $d$ is the DeRham differential on $\Omega^{*}(\BB)$. 
\end{itemize}
}

It is remarkable that the outcome is precisely the structure that also shows up in the study of partial differential equations from a geometric point of view, i.e. the study of the {\bf Cartan distribution} on jet spaces and their submanifolds, see~\cite{KRASILSHCHIKVERBOVETSKY, GOLDSCHMIDT}. 

Below, we explain how one can slowly discover this structure when trying to construct the complex~\eqref{eq:general-bic}.

\subsection{The  horizontal differential $\delta$ and groupoid actions}\label{subsection:action_TX}

The fact that differentials $\delta$ can be seen as an algebraic way to encode actions is rather standard - e.g. one has the following general result. 


\begin{lem}\label{lemma-general-delta} For any groupoid $\Sigma\tto \BB$ and any vector bundle $E\to \BB$ there is a 1-1 correspondence between actions of $\Sigma$ on $E$ (making $E$ into a representation) and differentials  
\[ \delta: C^{*}(\Sigma, E)\to C^{*+1}(\Sigma, E)\] 
which make $C^{*}(\Sigma, E)$ into a DG module over $(C^{*}(\Sigma), \delta)$ (i.e.\eqref{eq:1_delta-deriv} holds for $\omega\in C^p(\Sigma)$, $\omega'\in C^{p'}(\Sigma, E)$) and such that $\delta$ preserves the subcomplex of normalised cochains (i.e. cochains that vanish whenever one of the entries is a unit).
\end{lem}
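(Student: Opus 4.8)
The plan is to exhibit the correspondence explicitly in both directions and check that the two assignments are mutually inverse; the only substantial point is a tensoriality argument in degree zero. In the easy direction, given an action of $\Sigma$ on $E$ I would define $\delta$ by the groupoid differential \eqref{delta-group-sheaf} and verify the three required properties. That $\delta^2=0$ and that the Leibniz rule \eqref{eq:1_delta-deriv} holds are formal consequences of the associativity of the action together with the combinatorics of the face maps $d_i$, exactly as in the classical case of group cohomology with coefficients; preservation of the normalised subcomplex follows from the simplicial identities \eqref{simplicial-identities} relating the $d_i$ to the degeneracies $s_i$. These are routine verifications.

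For the converse I would reconstruct the action from $\delta$ in the lowest degrees. For $c\in C^0(\Sigma,E)=\Gamma(E)$ set $\alpha(c):=\delta(c)+t^*c\in C^1(\Sigma,E)$, so that $\alpha(c)(g)=\delta(c)(g)+c(t(g))\in E_{t(g)}$; this is the would-be value $g\cdot c(s(g))$, since the $-c(t(g))$ term of $\delta$ in degree zero carries no action. The key step is to show that $\alpha(c)(g)$ depends only on $c(s(g))$. Applying the Leibniz rule for the (action-independent) right $C^*(\Sigma)$-module structure of Remark~\ref{rk:product-str} to the product of $c$ with a function $\omega\in C^\infty(\BB)=C^0(\Sigma)$, a short computation collapses to
\[
\alpha(\omega c)(g)=\omega(s(g))\,\alpha(c)(g).
\]
Hence $c\mapsto\alpha(c)(g)$ is a linear map $\Gamma(E)\to E_{t(g)}$ that is $C^\infty(\BB)$-linear relative to evaluation at $s(g)$; by the usual Hadamard/bump-function argument such a map vanishes on sections vanishing at $s(g)$ and therefore factors through the fibre $E_{s(g)}$, defining a linear map $\rho_g\colon E_{s(g)}\to E_{t(g)}$. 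This $\rho$ is the candidate action. That units act trivially is immediate from normalisation: since $c\in C^0$ is vacuously normalised, $\delta(c)$ is normalised, so $\delta(c)(1_x)=0$ and thus $\rho_{1_x}(c(x))=\alpha(c)(1_x)=c(x)$, i.e. $\rho_{1_x}=\mathrm{id}$.

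It then remains to identify $\delta$ with the standard differential $\delta_\rho$ built from $\rho$ and to prove associativity of $\rho$. Since every element of $C^p(\Sigma,E)=\Gamma(\Sigma^{(p)},t^*E)$ is a locally finite sum of products $\omega\cdot t^*c$ with $c\in\Gamma(E)$ and $\omega\in C^p(\Sigma)$, the complex $C^*(\Sigma,E)$ is generated as a $C^*(\Sigma)$-module by $C^0(\Sigma,E)$. Both $\delta$ and $\delta_\rho$ are module derivations over $(C^*(\Sigma),\delta)$ that agree on these generators (by construction $\delta|_{C^0}=\delta_\rho|_{C^0}$) and on the algebra $C^*(\Sigma)$ itself, so $\delta=\delta_\rho$. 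Finally, evaluating $0=\delta^2c=\delta_\rho^2c$ on a degree-zero $c$ at a composable pair $(g_1,g_2)$ reduces, after the interior terms cancel, to $(\rho_{g_1}\rho_{g_2}-\rho_{g_1g_2})(c(s(g_2)))=0$, whence $\rho_{g_1}\rho_{g_2}=\rho_{g_1g_2}$; together with $\rho_{1_x}=\mathrm{id}$ this makes $\rho$ a genuine representation. The two assignments are visibly mutually inverse, which gives the claimed bijection.

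I expect the main obstacle to be precisely the tensoriality step — proving that $\delta$ in degree zero is fibrewise rather than merely local in the section, so that it actually yields bundle maps $\rho_g$ — and the module structure over $C^\infty(\BB)$ is exactly the structure that forces this. The remaining bookkeeping (generation of $t^*E$ by pullbacks, and the normalisation/simplicial checks in the first direction) is straightforward. It is worth noting that all three hypotheses are used and each plays a distinct role: the module/Leibniz property gives tensoriality and pins down $\delta$ in all degrees, normalisation encodes unitality of the action, and $\delta^2=0$ encodes its associativity.
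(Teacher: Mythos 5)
Your proof is correct and takes essentially the same approach the paper sketches: the paper simply cites Lemma 2.6 of \cite{CAMILOMARIUS} and records the degree-zero reconstruction formula $g\cdot v=\xi(t(g))-\delta(\xi)(g)$ for a section $\xi$ with $\xi(s(g))=v$, which is exactly your $\alpha(c)(g)=\delta(c)(g)+c(t(g))$ up to the sign convention of \eqref{delta-group-sheaf}. The details you supply --- tensoriality of $\delta$ in degree zero via the $C^{\infty}(\BB)$-module structure, unitality from normalisation, associativity from $\delta^2=0$, and the identification $\delta=\delta_\rho$ by generation over $C^{*}(\Sigma)$ --- are precisely what the paper leaves to the reader, and they are carried out correctly, with the (necessary, to avoid circularity) reading of the module structure as the action-independent one.
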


This appears e.g. as Lemma 2.6 in \cite{CAMILOMARIUS}, but the proof is rather obvious. Explicitly, $\delta$ is given by the standard formula (\ref{delta-group-sheaf}), while the action can be recovered by what $\delta$ does on elements $\xi\in C^0(\Sigma, E)$, i.e. sections of $E$: for $g: x\to y$ arrow of $\Sigma$ and $v\in E_x$, one chooses $\xi$ with $\xi(x)= v$ and then $g\cdot v= \xi(y)- \delta(\xi)(g)$. 
\medskip

Returning to our main problem we see that, in order to have the differential $\delta$, we need actions of $\Sigma$ on $\Lambda^qT^*\BB$ (at least if we add the condition on normalised cochains); furthermore, taking advantage of the derivation identity (\ref{eq:delta-deriv}) for all $q$ and $q'$ (not only for $q= 0$), it is not difficult to see that the actions on $\Lambda^qT^*\BB$ must be the induced (diagonal) action on  $\Lambda^1T^*\BB= T^*\BB$.  Dualising, we need a right action of $\Sigma$ on $T\BB$ - so that any
arrow $g\in \Sigma$ acts as a linear map
\begin{equation}\label{eq:ind-right-action}
g^*: T_{t(g)}\BB \to T_{s(g)}\BB.
\end{equation}
Of course, using the fact that we deal with groupoids, i.e. using the presence of inverses, one can always turn such a right action into a left one by $g\cdot v:= (g^{-1})^*(v)$. The conclusion is that what we need is: {\it an action of $\Sigma$ on $T\BB$}.

%
%
%
%

Returning to the infinite jet groupoid, $T\BB$ carries an obvious action of $J^\infty\Gamma$ (in fact, of $J^k\Gamma$, for any $k\geq 1$): $j^\infty_x f\in J^{\infty}\Gamma$ acts on tangent vectors via $(df)_x$. In turn this induces actions on the exterior powers $\Lambda^qT^*\BB$ ($j^\infty_x f\in J^{\infty}\Gamma$ acts on a $q$-linear form at $x$ via the pullback by $f^{-1}$) and then it is not difficult to see that, indeed, $\delta$ of the differentiable complex becomes just the corresponding differential.

\subsection{The  vertical differential $d$ and connections}\label{The  vertical differential $d$ and connections}

The differential $d$ is a bit more subtle as it reveals certain types of connections. Again, this is based on a rather standard lemma that holds in the very general context of surjective submersions $t: P\to \BB$ when one is looking for operators acting on horizontal forms 
\begin{equation} \label{eq-D-conn-DR}
D: \Omega^{*}_{\textrm{hor}}(P)\to \Omega^{*+ 1}_{\textrm{hor}}(P).
\end{equation}
Recall here that a form $\omega\in \Omega^q(P)$ is said to be {\bf horizontal} if, 
$i_V(\omega)= 0$ whenever $V$ is a vector tangent to the $t$-fibers (i.e. $V\in \textrm{Ker}(dt)$).
To make the relationship with complexes of type 
(\ref{eq:general-bic}) more transparent, note that there is an 
identification 
 \[ \Omega^{*}_{\textrm{hor}}(P)\cong \Gamma(P, t^*\Lambda^{*} T^*\BB).\]

On the other hand, recall that:

\begin{defn}\label{def:Ehr-conn}
An {\bf Ehresmann connection} on $t: P\to \BB$ is a vector sub-bundle $\CC\subset TP$ that is complementary to the sub-bundle of vertical vectors $T^{\textrm{v}}P= \textrm{Ker}(dt)$. 
\end{defn}
Such a $\CC$ gives rise to (and can be reinterpreted as) an operation of horizontal lifting of vector fields
\begin{equation}\label{eq:horizontal-lift}
 \textrm{hor}^{\CC}: \mathfrak{X}(\BB)\to \mathfrak{X}(P)
\end{equation}
which is actually defined pointwise: at each $x\in P$, $\textrm{hor}^{\CC}_x$ sends a vector $v\in T_{t(x)}\BB$ 
to the unique vector in $\CC_x$ that project via $t$ to $v$. 
%
Using the projection on $\CC$, $\textrm{pr}_{\CC}: TP\to \CC$, the DeRham operator on $\Omega^*(P)$ induces an operator $d_\CC$ on $\Omega^{*}_{\textrm{hor}}(P)$ by 
\[ d_{\CC}(\omega)(X^1, \ldots, X^q):= (d_{DR}\omega)(\textrm{pr}_{\CC}(X^1), \ldots, \textrm{pr}_{H}(X^q)).\]
or, interpreting horizontal forms as sections of $t^*\Lambda^{*} T^*\BB$, for $v^1, \ldots, v^q\in T_{t(x)}\BB$:
\[ d_{\CC}(\omega)(v^1, \ldots, v^q):= (d_{DR}\omega)(\textrm{hor}_{x}^{\CC}(v^1), \ldots, 
\textrm{hor}_{x}^{\CC}(v^p))
.\]

%
%

\begin{lem}\label{lemma-general-D}  For any bundle $t: P\to \BB$, the construction $\CC\mapsto d_\CC$ gives a 1-1 correspondence between Ehresmann connection and linear operators (\ref{eq-D-conn-DR}) 
satisfying the following two properties:
\begin{itemize}
\item They satisfy the Leibniz derivation identity (\ref{eq:D-deriv}) 
for all $\omega\in \Omega^q(\BB)$, $\omega\in \Omega^{q'}_{\rm hor}(P)$.
\item On basic forms, i.e. of type $t^*(\omega)$ with $\omega\in \Omega^{*}(\BB)$, $D(t^*(\omega))= t^*(d_{DR}\omega)$. 
\end{itemize}
Moreover, the following three conditions are equivalent:
\begin{itemize}
	\item[i)] $(\Omega^*_{\rm hor}(P),d_{\CC})$ is a cochain complex, i.e. $d_{\CC}^2= 0$;
	\item[ii)] $\CC$ is an involutive distribution, i.e. for all $X,Y$ vector fields tangent to $\CC$, the Lie bracket $[X,Y]$ is tangent to $\CC$;
	\item[iii)] $\CC$ is a flat connection, i.e. the induced operation 
	\[ \rm{hor}^\CC: \mathfrak{X}(\BB)\to \mathfrak{X}(P)\]
	of lifting vector fields from the base preserves the Lie brackets. 
\end{itemize}  
\end{lem}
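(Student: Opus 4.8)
The plan is to show that $\CC\mapsto d_{\CC}$ is a bijection onto the operators satisfying the two listed properties, and then to compute $d_{\CC}^{2}$. First I would check the map is well defined: since $\textrm{pr}_{\CC}$ annihilates vertical vectors, $d_{\CC}\omega$ is again horizontal; and because $dt\circ\textrm{pr}_{\CC}=dt$ (the projection only changes a vector by a vertical one) together with $d_{DR}(t^{*}\omega)=t^{*}(d_{DR}\omega)$, the Leibniz identity \eqref{eq:D-deriv} and the basic-forms property $d_{\CC}(t^{*}\omega)=t^{*}(d_{DR}\omega)$ follow by direct evaluation on lifted vectors. This part is routine.

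For injectivity I would evaluate on functions. On $\Omega^{0}_{\textrm{hor}}(P)=C^{\infty}(P)$ one has $d_{\CC}f=df\circ\textrm{pr}_{\CC}$, i.e. $X\mapsto df(\textrm{pr}_{\CC}X)$. Since the covectors $df$ separate tangent vectors, the projection $\textrm{pr}_{\CC}$ — and hence $\CC=\textrm{Im}(\textrm{pr}_{\CC})$ — is recovered from $d_{\CC}|_{C^{\infty}(P)}$, so two connections inducing the same operator must coincide.

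Surjectivity is where the real work lies. Given $D$ with the two properties, the Leibniz identity reduces $D$ to its restriction to functions: locally any $\omega'\in\Omega^{q}_{\textrm{hor}}(P)$ is $\sum_{I}f_{I}\,t^{*}(dx^{I})$, and since $D(t^{*}dx^{I})=t^{*}(d_{DR}\,dx^{I})=0$, the rule gives
\[
D\omega'=\sum_{I}(-1)^{q}\,t^{*}(dx^{I})\wedge Df_{I}.
\]
The main obstacle is to show that $D|_{C^{\infty}(P)}\colon C^{\infty}(P)\to\Omega^{1}_{\textrm{hor}}(P)$ is a derivation, hence tensorial: $(Df)(x)$ depends only on $d_{x}f$, which yields a bundle map $\phi\colon T^{*}P\to t^{*}T^{*}\BB$ with $Df=\phi(df)$. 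Applying the basic-forms property to $f=t^{*}g$ gives $\phi\circ(dt)^{*}=\textrm{id}$, so $\phi^{*}\colon t^{*}T\BB\to TP$ is a right inverse of $dt$; its image is a complement $\CC$ of $\textrm{Ker}(dt)$, i.e. an Ehresmann connection, and by construction $D=d_{\CC}$ on functions and hence everywhere by the displayed reduction.

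Finally, for the flatness statement, I would note that $d_{\CC}^{2}$ is again $\Omega^{*}(\BB)$-linear and kills pullbacks, since $d_{\CC}^{2}(t^{*}\omega)=t^{*}(d_{DR}^{2}\omega)=0$; the same reduction then shows $d_{\CC}^{2}=0$ if and only if $d_{\CC}^{2}f=0$ for all $f\in C^{\infty}(P)$. Evaluating on horizontal lifts with the Cartan formula $d\alpha(U,V)=U\alpha(V)-V\alpha(U)-\alpha([U,V])$ gives
\[
d_{\CC}^{2}f(X,Y)=df\big(\textrm{pr}_{\textrm{v}}[\textrm{hor}^{\CC}X,\textrm{hor}^{\CC}Y]\big),
\]
and $\textrm{pr}_{\textrm{v}}[\textrm{hor}^{\CC}X,\textrm{hor}^{\CC}Y]=[\textrm{hor}^{\CC}X,\textrm{hor}^{\CC}Y]-\textrm{hor}^{\CC}[X,Y]$ is exactly the curvature of $\CC$. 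As the $df$ separate vectors, $d_{\CC}^{2}=0$ iff this vanishes, i.e. iff $\textrm{hor}^{\CC}$ preserves brackets, iff $\CC$ is flat. I expect the tensoriality reduction in the surjectivity step to be the only genuinely delicate point; once $D$ is known to be pointwise in $df$, everything else is a direct unwinding.
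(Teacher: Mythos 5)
The paper does not actually prove this lemma (it is presented as a standard fact), so your proposal has to stand on its own. Its architecture --- well-definedness, injectivity by evaluating on functions, surjectivity by reducing $D$ to $D|_{C^\infty(P)}$ via the Leibniz rule, and the identification of $d_\CC^2$ with the curvature --- is the standard and correct one, and the flatness computation at the end is fine. The problem is the step you yourself single out as ``the main obstacle'': you assert that $D|_{C^{\infty}(P)}$ is a derivation (hence tensorial in $df$) but never derive it, and in fact it does \emph{not} follow from the two listed properties. The Leibniz identity is only imposed for $\omega\in\Omega^q(\BB)$, i.e.\ it only constrains $D$ as a map of $\Omega^*(\BB)$-modules; it says nothing about products of two arbitrary functions on $P$. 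Concretely, take $P=\R_x\times\R_y\to\R_x=\BB$ and set $Df=(\partial_xf+\partial_y^2f)\,dx$ on $\Omega^0_{\rm hor}(P)=C^\infty(P)$ and $D=0$ on $\Omega^1_{\rm hor}(P)$ (forced, since $\Omega^2_{\rm hor}(P)=0$). One checks directly that $D(t^*g)=t^*(dg)$ and that the Leibniz identity holds in every bidegree (the only nontrivial case is $q=q'=0$, where it reduces to $\partial_y^2(gf)=g\,\partial_y^2f$ for $g=g(x)$), yet $Df$ depends on the $2$-jet of $f$ in the fibre direction, so $D\neq d_\CC$ for any Ehresmann connection $\CC$.

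So, as literally stated, the correspondence is not onto the class of operators described, and no argument can close this gap without an extra hypothesis. The intended reading must be that $D$ is in addition a derivation with respect to multiplication by arbitrary functions on $P$ (equivalently, a first-order operator on $C^\infty(P)$ with pointwise dependence on $df$), which is exactly what your bundle map $\phi:T^*P\to t^*T^*\BB$ encodes. With that hypothesis granted, the rest of your argument is correct: $\phi\circ(dt)^*=\mathrm{id}$ dualizes to a splitting $\phi^*$ of $dt$ whose image is the desired $\CC$; the local expansion $\omega'=\sum_If_I\,t^*(dx^I)$, together with locality over $\BB$ (which does follow, since $D-d_\CC$ is $C^\infty(\BB)$-linear), propagates the equality $D=d_\CC$ from functions to all horizontal forms; and your curvature identity $d_\CC^2f(\mathrm{hor}^\CC X,\mathrm{hor}^\CC Y)=df\bigl([\mathrm{hor}^\CC X,\mathrm{hor}^\CC Y]-\mathrm{hor}^\CC[X,Y]\bigr)$ settles the flatness statement. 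You should either state the derivation property as an explicit hypothesis (flagging the imprecision in the lemma) or prove it from a strengthened form of the Leibniz identity; as written, the surjectivity step fails.
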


This lemma is relevant for us when applied to the maps
\[ t: \Sigma^{(p)}\to \BB.\]
Note that having operators $d$ with the properties that we required in the Problem, hence also satisfying (\ref{eq:1_d-deriv-cup}),
implies that the conditions of the lemma are satisfied. Indeed, for $p= 0$ and $\omega$ arbitrary (hence $\omega\in \Omega^q(\BB)$),  $q'= 0$ and $\omega'= 1$ (the constant $0$-form on $\Sigma^{(p)}$ with $p$-arbitrary), since
\[ \omega\cup 1= t^*(\omega)\]
identity (\ref{eq:1_d-deriv-cup}) becomes the second condition in the previous lemma. All together, one ends up with Ehresmann connections 
\[ \CC^{(p)}\subset T\Sigma^{(p)} .\]
Of course, they will not be independent, as one discovers using the entire derivation identities (see subsection $3.5$). Before we discuss that, we stare a bit at the case $p= 1$. While the way $\CC\subset T\Sigma$ is obtained appeals only to the submersion $t: \Sigma\to \BB$, one can slowly bring in more of the groupoid structure. First of all, using also the source map we obtain an ``action-like" operation induced by $\CC$: any arrow $g$ from $x$ to $y$ induces an operation 
\begin{equation}\label{eq:ind-right-action-2}
\lambda_{g}^{*}: T_y\BB\to T_x\BB, \quad \lambda_{g}^{*}(v)= (ds)_g(\textrm{hor}^{\CC}_{g}(v)).
\end{equation}
There is no surprise that this is basically the same action as in the previous subsection.

\subsection{Compatibility of $\delta$ and $d$}

The reason that the two actions coincide is precisely the compatibility of $\delta$ and $d_\CC$
\[ d_\CC\circ \delta= \delta\circ d_\CC,\]
applied in low degrees $p$ (i.e. for $p=0,1$):

\begin{lem}\label{lemma:commutativity-very-low} Assume we are given a Lie groupoid $\Sigma\tto \BB$ together with an Ehresmann connection $\CC$ along $t: \Sigma\to \BB$ and an action of $\Sigma$ on the vector bundle $T\BB$. Then the corresponding operators $\delta$ and $d_\CC$ are compatible, i.e. the following diagram commutes:
\[
\xymatrix{
\ldots \ar[r]^-{d_{DR}} & C^0(\Sigma, \Lambda^{q}T^*\BB) = \Omega^q(\BB) \ar[r]^-{d_{DR}}\ar[d]_-{\delta} & \Omega^{q+1}(\BB)= C^0(\Sigma, \Lambda^{q+1}T^*\BB)\ar[d]^-{\delta} \ar[r]^-{d_{DR}} & \ldots \\
\ldots \ar[r]^-{d_{\CC}} & C^1(\Sigma, \Lambda^{q}T^*\BB) = \Omega^q_{{\rm hor}}(\Sigma)\ar[r]^-{d_\CC} & \Omega^{q+1}_{{\rm hor}}(\Sigma)= 
C^1(\Sigma, \Lambda^{q+1}T^*\BB) \ar[r]^-{d_{\CC}} & \ldots 
}
\]
if and only if action-like operation (\ref{eq:ind-right-action-2}) induced by $\CC$ coincides with the dual (\ref{eq:ind-right-action}) of the original action. 
\end{lem}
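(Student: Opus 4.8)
The plan is to verify the commutativity of each square of the diagram by a direct computation, exploiting that both composites $\delta\circ d_{DR}$ and $d_{\CC}\circ\delta$ take values in $C^1(\Sigma,\Lambda^{q+1}T^*\BB)=\Omega^{q+1}_{\mathrm{hor}}(\Sigma)$. Since a horizontal form is determined by its values on horizontal vectors, it suffices to evaluate both composites at an arbitrary arrow $g$ on horizontal lifts $\mathrm{hor}^{\CC}_g(v_0),\dots,\mathrm{hor}^{\CC}_g(v_q)$, where the $v_i$ are vector fields on $\BB$. Throughout I will use that $dt\circ\mathrm{hor}^{\CC}_g=\mathrm{id}$, by the very definition of the horizontal lift, and that $ds_g\circ\mathrm{hor}^{\CC}_g=\lambda^{*}_g$, by (\ref{eq:ind-right-action-2}).

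The essential mechanism is already visible, and the equivalence cleanest, in the case $q=0$, which I would treat first. Here $\delta f=s^*f-t^*f$ for $f\in C^\infty(\BB)$ (the action on scalars being trivial), so $d_{\CC}\delta f=d_{\CC}(s^*f)-d_{\CC}(t^*f)$; by the basic-form property of Lemma \ref{lemma-general-D} the second term is $t^*(df)$, while the first evaluates on $\mathrm{hor}^{\CC}_g(v)$ to $(df)_{s(g)}(ds_g\,\mathrm{hor}^{\CC}_g v)=(df)_{s(g)}(\lambda^{*}_g v)$. On the other hand $\delta\,d_{DR}f=\delta(df)$ has value $g\cdot(df)_{s(g)}-(df)_{t(g)}$, which on $\mathrm{hor}^{\CC}_g(v)$ reads $(df)_{s(g)}(g^{*}v)-(df)_{t(g)}(v)$, using that $g\cdot$ on covectors is precomposition with the dual action $g^{*}$ of (\ref{eq:ind-right-action}). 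The target terms agree, and the two source terms coincide for all $f$, $v$ and $g$ exactly when $\lambda^{*}_g=g^{*}$. This already proves necessity (the $q=0$ square alone forces the condition) and exhibits the mechanism behind sufficiency.

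For the remaining squares ($q\ge1$) I would, assuming $\lambda^{*}_g=g^{*}$, split $\delta\omega$ into its ``target part'', which is the basic form $-t^*\omega$ and contributes $-t^*(d\omega)$, hence $-(d\omega)_{t(g)}(v_\bullet)$ on horizontal lifts, and its ``source part'', whose value on horizontal lifts is $\omega_{s(g)}(g^{*}v_\bullet)=\omega_{s(g)}(\lambda^{*}_g v_\bullet)=(s^*\omega)(\mathrm{hor}^{\CC}v_\bullet)$; thus the source part is the horizontal restriction of $s^*\omega$. Expanding $d_{\CC}$ of it through the intrinsic Koszul formula for the de Rham differential along horizontal lifts, the base-derivative terms reassemble into $(s^*d\omega)(\mathrm{hor}^{\CC}v_\bullet)=(d\omega)_{s(g)}(\lambda^{*}_g v_\bullet)=(d\omega)_{s(g)}(g^{*}v_\bullet)$, matching the source term of $\delta\,d_{DR}\omega$ computed as above. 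Together with the matching target terms, this gives $d_{\CC}\delta\omega=\delta\,d_{DR}\omega$ up to the bracket terms discussed next.

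The main obstacle I anticipate is precisely those bracket terms for $q\ge1$. Because $d_{\CC}$ evaluates the de Rham differential on horizontal lifts while $d_{DR}(s^*\omega)=s^*(d\omega)$ is insensitive to the splitting, the difference between the two collects the contributions in which the form is fed the $t$-vertical components of the brackets $[\mathrm{hor}^{\CC}v_i,\mathrm{hor}^{\CC}v_j]$ through $ds_g$ (the horizontal components being accounted for by $dt[\mathrm{hor}^{\CC}v_i,\mathrm{hor}^{\CC}v_j]=[v_i,v_j]$). These vertical components are exactly the curvature of $\CC$, so the delicate point is to track the signs of the Koszul formula and to check that, under the hypothesis $\lambda^{*}_g=g^{*}$, their $ds_g$-images drop out of the equality; they vanish outright when $\CC$ is flat, which is the situation of the canonical Cartan connection on $J^{\infty}\Gamma$ that the construction ultimately targets. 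They are entirely absent for $q=0$, which is why that case is completely transparent and carries the equivalence.
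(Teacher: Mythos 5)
Your necessity argument and your $q=0$ computation coincide with the paper's: the $t$-terms match by the basic-form property of $d_\CC$, and the $s$-terms force $\lambda^*_g=g^*$. The genuine gap is in the sufficiency direction for $q\geq 1$, at exactly the point you flag as ``delicate'': the curvature contributions do \emph{not} drop out under the stated hypotheses, so no amount of sign-tracking will close the argument. Concretely, once the two actions agree one has $\delta(df)=d_\CC(\delta f)$, hence
\[
d_\CC\bigl(\delta(df)\bigr)-\delta\bigl(d_{DR}(df)\bigr)=d_{\CC}^{2}(\delta f),
\]
and evaluating on $\textrm{hor}^{\CC}_g(v),\textrm{hor}^{\CC}_g(w)$ gives $df\bigl(ds_g\bigl([\textrm{hor}^{\CC}v,\textrm{hor}^{\CC}w]-\textrm{hor}^{\CC}[v,w]\bigr)\bigr)$, the $ds$-image of the curvature of $\CC$. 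The hypothesis that the quasi-action is an honest action does not kill this term (nor does multiplicativity): on the pair groupoid $\BB\times\BB\tto\BB$ the connections $\CC_{(x,y)}=\{(v,\,C(y)C(x)^{-1}v)\}$ for $C:\BB\to GL_n$ are multiplicative and induce genuine actions, yet for generic non-constant $C$ (e.g.\ $C(x)=\mathrm{diag}(e^{x_1x_2},1)$ on $\R^2$) the curvature has nonzero $ds$-image and the square at $q=1$ fails on $\eta=db_1$. So the statement you are trying to prove really needs flatness of $\CC$ (or at least $s$-verticality of its curvature); as you observe, in the flat case $[\textrm{hor}^{\CC}v,\textrm{hor}^{\CC}w]=\textrm{hor}^{\CC}[v,w]$ and your direct Koszul computation then closes with no correction terms at all.

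For comparison, the paper's own proof of sufficiency takes a different and shorter route: it verifies the $q=0$ case, extends to exact $1$-forms by asserting that $d_\CC\circ\delta-\delta\circ d$ ``commutes with $d_\CC$'', and then extends to all $q$-forms via the derivation identities, using that forms are locally generated by functions and exact differentials. This avoids your explicit bracket bookkeeping by letting the Leibniz structure do the work, but the step on exact $1$-forms reduces to the very same assertion $d_{\CC}^{2}(\delta f)=0$ and is subject to the same caveat. In the setting the section is building towards (flat Cartan groupoids, which is all that is used for the Haefliger complex), either route is complete; in the generality of the lemma as literally stated, neither is, and you should make the flatness hypothesis explicit rather than leave the cancellation as a delicate point to be checked.
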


\begin{proof} Writing the compatibility condition for $q= 0$ and applying it to arbitrary functions 
$f\in C^{\infty}(\BB)$ one obtains
\[ d_\CC(s^*f- t^*f)= g\cdot s^*(d f)- t^* (df),\]
where the $t$-terms coincide by the properties of $d_\CC$. We are left with the $s$-terms which, applied to arbitrary $v\in T_{t(g)}\BB$ becomes
\[ (df)(ds)_g(\textrm{hor}^{\CC}_{g}(v))= (df)(ds)(g^{-1}\cdot v).\]

Conversely, if the two actions coincide it is clear that the diagram is commutative when applied on functions (i.e. at $q=0$ ). Since $d_\CC\circ \delta- \delta\circ d$ commutes with $d_\CC$, the same happens when applied on exact $1$-forms $df$. Using the derivation identities it follows that it happens on all $q$-forms on $\BB$. 
\end{proof}

\subsection{Only one (multiplicative) connection $\CC$}
\label{subsection: Only one (multiplicative) connection $C$} 

The conclusion of subsection \ref{The  vertical differential $d$ and connections} was that we need Ehresmann connections $\CC^{(p)}\subset T\Sigma^{(p)}$, each one of them giving rise to $d= d_{\CC^{(p)}}$ acting on $C^{p}(\Sigma, \Lambda^{*}T^*\BB)$. However, these will be related if we want the derivation identity  (\ref{eq:1_d-deriv-cup}) to hold.

\begin{lem}\label{lem:one_connection} Assume that $\CC^{(p)}$ are Ehresmann connections on $t: \Sigma^{(p)}\to \BB$ so that the resulting operators  $d= d_{\CC^{(p)}}$ together satisfy the derivation identity  (\ref{eq:1_d-deriv-cup}). Then each $\CC^{(p)}$ is determined by $\CC:= \CC^{(1)}$ as
\begin{equation}\label{descr-H-p} 
\CC^{(p)}= \{(v^1, \ldots, v^p)\in T\Sigma^{(p)}: v^1, \ldots, v^p\in \CC\}.
\end{equation}
Equivalently, the induced horizontal lifting map is given by 
\[{\rm hor}^{\CC^{(p)}}_{g_1, \ldots, g_p}(v)= \left( {\rm hor}^{\CC}_{g_1}(v),  {\rm hor}^{\CC}_{g_2}(g_{1}^{*}v), \ldots, {\rm hor}^{\CC}_{g_p}(g_{p-1}^{*} \ldots g_{1}^{*}v)\right) .\]
\end{lem}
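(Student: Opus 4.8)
The plan is to pin down each Ehresmann connection $\CC^{(p)}$ by computing its horizontal lifting map $\textrm{hor}^{\CC^{(p)}}$, proceeding by induction on $p$; the case $p=1$ is the definition $\CC=\CC^{(1)}$. Since a connection is the same data as its horizontal lift, and since a tangent vector $\textrm{hor}^{\CC^{(p)}}_{g}(v)\in T_g\Sigma^{(p)}$ is determined by its pairing with the differentials $dF$ of smooth functions $F$ on $\Sigma^{(p)}$ (recall $d_{\CC^{(p)}}(F)(v)=dF(\textrm{hor}^{\CC^{(p)}}(v))$), it suffices to compute $d_{\CC^{(p)}}$ on a family of functions whose differentials span $T^*\Sigma^{(p)}$ pointwise. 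The key observation here is that $\textrm{first}_1\times\textrm{last}_{p-1}\colon \Sigma^{(p)}\to\Sigma\times\Sigma^{(p-1)}$ realizes $\Sigma^{(p)}$ as a fibred product, hence is an embedding; consequently the differentials of the pulled-back functions $\textrm{first}_1^*(a)$ (for $a\in C^\infty(\Sigma)$) and $\textrm{last}_{p-1}^*(b)$ (for $b\in C^\infty(\Sigma^{(p-1)})$) together span $T^*_g\Sigma^{(p)}$ at every $g$.

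The heart of the argument is to rewrite these pulled-back functions as cup products with the constant function $1$ and to invoke the derivation identity~(\ref{eq:1_d-deriv-cup}). Writing $1$ for the constant $0$-cochain and using $d(1)=0$, one has $\textrm{first}_1^*(a)=a\cup 1$ and $\textrm{last}_{p-1}^*(b)=1\cup b$ (at the level of functions the twist by the action is trivial, the representation on $\Lambda^0T^*\BB$ being trivial), whence
\begin{align*}
d_{\CC^{(p)}}(\textrm{first}_1^*(a)) &= d_{\CC}(a)\cup 1 = \textrm{first}_1^*(d_\CC a),\\
d_{\CC^{(p)}}(\textrm{last}_{p-1}^*(b)) &= 1\cup d_{\CC^{(p-1)}}(b).
\end{align*}
In the second line the twist by the action becomes visible: evaluating $1\cup\beta$ at $(g_1,\ldots,g_p)$ on $v\in T_{t(g_1)}\BB$ gives $\beta(g_2,\ldots,g_p)$ applied to $g_1^*v$, where $g_1^*$ is the action~(\ref{eq:ind-right-action}). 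Pairing both identities with $\textrm{hor}^{\CC^{(p)}}_g(v)$ and using that arbitrary $a$, resp.\ $b$, separate tangent vectors, we obtain
\[
d\,\textrm{first}_1\big(\textrm{hor}^{\CC^{(p)}}_g(v)\big)=\textrm{hor}^{\CC}_{g_1}(v),\qquad
d\,\textrm{last}_{p-1}\big(\textrm{hor}^{\CC^{(p)}}_g(v)\big)=\textrm{hor}^{\CC^{(p-1)}}_{(g_2,\ldots,g_p)}(g_1^*v).
\]
Feeding the inductive hypothesis into the second equality gives precisely the asserted formula for $\textrm{hor}^{\CC^{(p)}}_g(v)$. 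The first description~(\ref{descr-H-p}) then follows: each component of $\textrm{hor}^{\CC^{(p)}}_g(v)$ lies in $\CC$, so $\CC^{(p)}\subseteq\{(v^1,\ldots,v^p)\in T\Sigma^{(p)}:v^i\in\CC\}$, and equality holds by a dimension count, both spaces being complements of $\ker dt^{(p)}$ of dimension $\dim\BB$.

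I expect the principal difficulty to be bookkeeping rather than conceptual: one must track the twisted cup product so that the action $g_1^*$ enters with the correct variance — this is exactly what produces the successive twists $g_{i-1}^*\cdots g_1^*$ in the final formula — and one must check that the constant function acts as a unit on each nerve space. It is worth stressing that flatness of $\CC$ plays no role here; only the derivation identity~(\ref{eq:1_d-deriv-cup}) is used. Finally, the consistency of the resulting formula, namely composability of its components inside $T\Sigma^{(p)}$, is automatic and amounts precisely to the compatibility between $\CC$ and the action recorded in Lemma~\ref{lemma:commutativity-very-low}.
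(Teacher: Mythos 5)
Your proof is correct and follows essentially the same route as the paper's: both extract the components of $\textrm{hor}^{\CC^{(p)}}$ by applying the Leibniz identity (\ref{eq:1_d-deriv-cup}) to cup products of functions in which one of the factors is the constant function $1$, and both obtain (\ref{descr-H-p}) from the lifting formula by a dimension count. The paper only writes out the case $p=2$ via the two identities (\ref{eq:int:proof-pr}); your induction on $p$ through the $1+(p-1)$ fibred-product decomposition simply makes the general case (and the fact that such pulled-back functions separate tangent vectors) explicit.
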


\begin{proof} We concentrate on the last formula (which implies the other part by dimension counting). Also, for notational simplicity, we assume $p= 2$. It suffices to use (\ref{eq:1_d-deriv-cup}) in a very extreme case: for $0$-forms, i.e. functions, $f_1, f_2\in C^{\infty}(\Sigma)= C^1(\Sigma, \Lambda^0T^*\BB)$:
\[ d_{\CC^{(2)}}(f_1\cup f_2)= d_{\CC}(f_1)\cup f_2+ f_1\cup d_{\CC}(f_2) \]
which, in turn, can be rewritten as
\[ d_{\CC^{(2)}}(\textrm{pr}^{*}_{1}f_1\cup \textrm{pr}^{*}_{2}f_2)= \textrm{pr}^{*}_{1}d_{\CC}(f_1)\cup\textrm{pr}^{*}_{2}f_2+ \textrm{pr}^{*}_{1}f_1\cup (1\cup d_{\CC}(f_2)) .\]
Taking $f_2= 1$ and $f_1= f\in C^{\infty}(\Sigma)$ arbitrary, and then $f_1= 1$ and $f_2= f$, we obtain:
\begin{equation}\label{eq:int:proof-pr} 
d_{\CC^{(2)}}(\textrm{pr}^{*}_{1}f)= \textrm{pr}^{*}_{1}d_{\CC}(f), \quad d_{\CC^{(2)}}(\textrm{pr}^{*}_{2}f)= 1\cup d_{\CC}(f).
\end{equation}
Writing out $d_\CC$ in terms of the horizontal lifts, the first equation translates into the fact that for $(g_1, g_2)\in \Sigma^{(2)}$, $v\in T_{t(g_1)}\BB$, the first component of $\textrm{hor}^{\CC^{(2)}}_{g_1, g_2}(v)$ is $\textrm{hor}^{\CC}_{g_1}(v)$. Similarly, the second equation implies that the second component of $\textrm{hor}^{\CC^{(2)}}_{g_1, g_2}(v)$ is $\textrm{hor}^{\CC}_{g_2}(g_1^*v)$. 
\end{proof}

Yet another property that we addressed only degrees $0$ and $1$, namely the compatibility of $\delta$ and $d$ from Lemma \ref{lemma:commutativity-very-low}, has more implications on $\CC$ if one moves one line higher. To state it
we make use of the fact that for any groupoid $\Sigma \tto \BB$, by taking the differentials of the structure maps of $\Sigma$ (source, target, multiplication), one obtains a new groupoid 
\[ T\Sigma \tto T\BB,\]
called the tangent groupoid of $\Sigma$. With this, 

\begin{defn} \label{defin:multiplicativity}
A sub-bundle $\CC\subset T\Sigma$ (as vector bundles over $\Sigma$) is said to be {\bf multiplicative} if it is a full sub-groupoid of $T\Sigma$. 
\end{defn}
This boils down to the following explicit conditions: 
\begin{enumerate}
\item[m1:] $ds, dt: \CC\to T\BB$ are surjective. 
\item[m2:] for $v, w\in \CC$ composable (i.e. with $ds(v)= dt(w)$), $v\circ w:= dm(v, w)\in \CC$.
\item[m3:] at any $x\in \BB$, $\CC_{1_x}$ contains $T_x\BB$ (interpreted as a subspace of $T_{1_x}\Sigma$ 
via the unit map).
\item[m4:] the differential of the inversion map $\tau: \Sigma\to \Sigma$ takes $\CC$ to itself. 
\end{enumerate}

While these conditions make sense for more general sub-bundles $\CC$, for Ehresmann connections they can be expressed in terms of the operation of horizontal lifting 
as:
%
%
%
\begin{eqnarray}
 &  & dm\left( \textrm{hor}^\CC_{g_1}(v), \textrm{hor}^\CC_{g_2}(g_{1}^{*}v)\right)= \textrm{hor}^\CC_{g_1g_2}(v),
\quad \textrm{for all $g_1, g_2\in\Sigma$ composable, $v\in T_{t(g_1)}\BB$,} \label{eq:m2}\\
 &  &  \textrm{hor}^\CC_{1_x}(v)= v,
\quad \textrm{for all $v\in T_x\BB\subset T_{1_x}\Sigma$, } \label{eq:m3}\\
 &  & \textrm{hor}^\CC_{g^{-1}}(\lambda_{g}^{*}v)= (d\tau) \left(\textrm{hor}^\CC_g(v)\right),
\quad \textrm{for all $g\in \Sigma$, $v\in T_{t(g)}\BB$,}\label{eq:m4}
\end{eqnarray}
where $\lambda_{g}^{*}$ denotes the induced quasi-action (\ref{eq:ind-right-action-2}). See also the proof below. Moreover, condition m1 can be rephrased into the condition that all the $\lambda_{g}^{*}$ are isomorphisms; the rest of the conditions imply, of course, that the quasi-action is actually an action. 

\begin{lem} 
With the assumptions from Lemma~\ref{lem:one_connection} and Lemma~\ref{lemma:commutativity-very-low}, the 
following diagram is commutative
\[
\xymatrix{
\ldots \ar[r]^-{d_{\CC}} & C^1(\Sigma, \Lambda^{q}T^*\BB)= \Omega^q_{{\rm hor}}(\Sigma)  \ar[r]^-{d_{\CC}}\ar[d]_-{\delta} &  C^1(\Sigma, \Lambda^{q+1}T^*\BB)=\Omega^{q+1}_{{\rm hor}}(\Sigma)\ar[d]^-{\delta} \ar[r]^-{d_{\CC}} & \ldots \\
\ldots \ar[r]^-{d_{\CC^{(2)}}} & C^2(\Sigma, \Lambda^{q}T^*\BB) = \Omega^q_{{\rm hor}}(\Sigma^{(2)})\ar[r]^-{d_{\CC^{(2)}}} & 
C^2(\Sigma, \Lambda^{q+1}T^*\BB) =\Omega^{q+1}_{{\rm hor}}(\Sigma^{(2)})\ar[r]^-{d_{\CC^{(2)}}}  & \ldots 
}
\]
if and only if $\CC$ is multiplicative (actually, if {\rm m2} holds). 
\end{lem}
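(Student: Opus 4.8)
The plan is to package the two composites into a single failure-to-commute operator
\[ \Phi := \delta\circ d - d\circ\delta \colon C^{p}(\Sigma,\Lambda^q T^*\BB)\to C^{p+1}(\Sigma,\Lambda^{q+1} T^*\BB), \]
where $d$ means $d_\CC$ on the row $p=1$ and $d_{\CC^{(2)}}$ on the row $p=2$, and to show that $\Phi$ vanishes on the whole row $p=1$ if and only if {\rm m2} holds. Writing $C^{p,q}:=C^p(\Sigma,\Lambda^qT^*\BB)$, the first step is to observe that $\Phi$ is a derivation for the cup product: plugging the Leibniz identities~(\ref{eq:1_delta-deriv}) and~(\ref{eq:1_d-deriv-cup}) into $\delta d(\alpha\cup\beta)$ and $d\delta(\alpha\cup\beta)$ and subtracting, the four mixed terms cancel in pairs and one is left with $\Phi(\alpha\cup\beta)=\Phi(\alpha)\cup\beta+(-1)^{p+q}\alpha\cup\Phi(\beta)$ for $\alpha\in C^{p,q}$. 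This is the general fact that the graded commutator of two graded derivations is a derivation, but I would record the explicit cancellation since $\delta$ and $d$ carry different Koszul signs ($(-1)^p$ versus $(-1)^q$).

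Next I would use this to localise the problem to the bottom of the row. By Lemma~\ref{lemma:commutativity-very-low}, whose conclusion we are assuming (the quasi-action~(\ref{eq:ind-right-action-2}) agrees with the given action, so that the $C^{0,*}$--$C^{1,*}$ square already commutes), we have $\Phi=0$ on $C^{0,*}$. The key observation is that every $\eta\in C^{1,q}=\Omega^q_{\mathrm{hor}}(\Sigma)$ is, in local coordinates $x^i$ on $\BB$, a finite sum $\eta=\sum_I f_I\cup dx^I$ with $f_I\in C^{1,0}=C^\infty(\Sigma)$ and $dx^I\in C^{0,q}=\Omega^q(\BB)$; indeed $f_I\cup dx^I$ is exactly the pointwise product of $f_I$ with the horizontal form $t^*(dx^I)=dx^I\cup 1$, so these cup products do exhaust the row. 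Applying the derivation rule together with $\Phi(dx^I)=0$ yields $\Phi(\eta)=\sum_I\Phi(f_I)\cup dx^I$, so that $\Phi$ vanishes on $C^{1,*}$ if and only if it vanishes on $C^{1,0}$. A mild point to keep straight here is that the reduction must be phrased via the cup product, which raises the groupoid degree, and not via the pointwise product, for which $\delta$ fails to be a derivation.

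Finally I would compute $\Phi$ on a function $f\in C^{1,0}$. Writing $\delta f(g_1,g_2)=f(g_2)-f(g_1g_2)+f(g_1)$ and $d_\CC f(g)(v)=df(\mathrm{hor}^\CC_g(v))$, expanding $\delta(d_\CC f)$ with the coefficientwise action $\lambda_{g_1}^*=g_1^*$ (again from Lemma~\ref{lemma:commutativity-very-low}), and expanding $d_{\CC^{(2)}}(\delta f)$ through the two-component lift $\mathrm{hor}^{\CC^{(2)}}_{g_1,g_2}(v)=(\mathrm{hor}^\CC_{g_1}(v),\mathrm{hor}^\CC_{g_2}(g_1^*v))$ of Lemma~\ref{lem:one_connection}, the target terms and the $\mathrm{pr}_1,\mathrm{pr}_2$ contributions cancel, leaving
\[ \Phi(f)(g_1,g_2)(v)=df\Big(dm\big(\mathrm{hor}^\CC_{g_1}(v),\ \mathrm{hor}^\CC_{g_2}(g_1^*v)\big)-\mathrm{hor}^\CC_{g_1g_2}(v)\Big). \]
Since the covectors $df$ at a point exhaust the cotangent space of $\Sigma$, this vanishes for all $f$ exactly when $dm(\mathrm{hor}^\CC_{g_1}(v),\mathrm{hor}^\CC_{g_2}(g_1^*v))=\mathrm{hor}^\CC_{g_1g_2}(v)$ for all composable $g_1,g_2$ and all $v$, which is precisely~(\ref{eq:m2}), i.e.\ {\rm m2}. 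The part I expect to be most delicate is this last computation: tracking the action-twist inside $\delta$ and matching the $m^*$-contribution against the combined horizontal lift so that everything else cancels cleanly.
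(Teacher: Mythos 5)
Your core computation coincides with the paper's own proof: the paper also works ``just with functions'', cancels the $\mathrm{pr}_1^*$ and $\mathrm{pr}_2^*$ contributions against the outer terms of $\delta(d_\CC f)$ using the identities (\ref{eq:int:proof-pr}) and the assumed agreement of the two actions, and is left with $d_{\CC^{(2)}}(m^*f)= m^* d_{\CC}(f)$, which unwinds to (\ref{eq:m2}); your displayed formula for $\Phi(f)(g_1,g_2)(v)$ is exactly this, and the separation argument via covectors $df$ is legitimate because both vectors project to $v$ under $dt$, so their difference is $t$-vertical. Where you go beyond the paper is the reduction from general $q$ to $q=0$: the paper dismisses this with ``for simplicity'', whereas you make it explicit via the derivation property of $\Phi=\delta d-d\delta$ (your sign bookkeeping is correct) together with a decomposition of horizontal forms. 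Two points there need care. First, the cup product is not symmetric: with the paper's conventions $\omega\cup f = t^*(\omega)\, f$ for $\omega\in C^{0,q}$, $f\in C^{1,0}$, whereas $f\cup\omega$ is $f$ times the action-twisted $s$-pullback of $\omega$; so your identification of $f_I\cup dx^I$ with $f_I\, t^*(dx^I)$ has the factors in the wrong order, although the decomposition claim survives with either order (for the $f$-first order because each $\lambda_g^*$ is invertible under the standing assumptions). Second, the decomposition must hold simultaneously in neighbourhoods of $g_1$, $g_2$ and $g_1g_2$ --- hence near both base points $t(g_1)$ and $s(g_1)$ --- for the derivation identity to be usable at $(g_1,g_2)$; this is arranged by choosing finitely many globally defined forms on $\BB$ spanning near both points, but it is not literally ``local coordinates $x^i$ on $\BB$''.

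The one genuine omission relative to the statement is the clause ``if and only if $\CC$ is multiplicative'': you establish only the equivalence of commutativity with m2. The paper completes the claim by deriving m1 from the assumption that the quasi-action is an action (Lemma~\ref{lemma:commutativity-very-low}), m3 from (\ref{eq:m2}) evaluated at units, and m4 from (\ref{eq:m2}) applied to the pair $(g,g^{-1})$. Given the parenthetical ``(actually, if m2 holds)'' this is a small point, but without it the ``commutative $\Rightarrow$ multiplicative'' direction is not proved and should be added.
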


\begin{proof} For simplicity, let us work just with functions $f\in C^{\infty}(\Sigma^{(2)})$ and write down the compatibility 
\[ d_{\CC^{(2)}}(\textrm{pr}^{*}_{1}f+ \textrm{pr}^{*}_{2}f- m^*f)= \delta d_{\CC}(f).\]
Using the two equations from (\ref{eq:int:proof-pr}), we are left with 
\[ d_{\CC^{(2)}}(m^*f)= m^*d_{\CC}(f).\]
Writing this out in terms of horizontal lifts one obtains
\[ dm(\textrm{hor}^{\CC^{(2)}}_{g_1, g_2}(v))= \textrm{hor}^{\CC}_{g_1g_2}(v), \quad \textrm{for}\ (g_1, g_2)\in \Sigma^{(2)}, v\in T_{t(g_1)}\BB .\]
Since the image of $\textrm{hor}^{\CC^{(2)}}$ is $\CC^{(2)}= T\Sigma^{(2)}\cap \CC\times \CC$, we see that the condition m2 for multiplicativity implies the commutativity of the diagram and viceversa if the diagram is commutative m2 follows. Note that, with the formula for $\textrm{hor}^{\CC^{(2)}}$ from the previous lemma, we obtain precisely (\ref{eq:m2}). It remains to show that m1, m3 and m4 can be derived from m2 and the assumptions.
Condition m1 follows from the fact that the quasi-action is an action, by assumption (Lemma \ref{lemma:commutativity-very-low}). For m3 one makes use of (\ref{eq:m2}) written at units to derive (\ref{eq:m3}). Alternatively, m3 can also be derived from the rest because, in the tangent groupoid, we can write units as $V\circ V^{-1}$ with $V$ chosen to be in $\CC$. For m4 we use again (\ref{eq:m2}), this time for $g_1= g$ arbitrary and $g_2= g^{-1}$. The outcome can be written in the tangent groupoid as:
\[ 1_v= \textrm{hor}^\CC_g(v)\circ \textrm{hor}^\CC_{g^{-1}}(g^*v)\]
from which we deduce that $\textrm{hor}_{g^{-1}}(g^*v)$ is the inverse of $\textrm{hor}_g(v)$ - i.e. precisely formula (\ref{eq:m4}). 
\end{proof}

\begin{rmk}
As one sees from the discussions above, a multiplicative Ehresmann connection $\mathcal{C}$ on $t:\Sigma\to \BB$ is also an Ehresmann connection on $s:\Sigma\to \BB$. Consequently, an analogue of the lifting operation~\ref{eq:horizontal-lift} is defined with respect to $s$. In the rest of the paper, we will use the same notation for these two lifting operations; which one is being considered will always be clear from the context.
\end{rmk}

The final outcome of our discussion is that, for the (analogue of the) ``Haefliger's bicomplex''~\eqref{eq:diff-cplx-3} to exist on $\Sigma\tto \BB$, one needs a multiplicative flat connecton $\CC$ on $\Sigma$. In the rest of this paper, starting from the next section, we study pairs $(\Sigma, \CC)$ of this form. In particular, we will gain a more conceptual understanding of Haefliger's work and a generalization of it.

Before going on with this program, in the next subsection we describe the flat multiplicative connection on $J^\infty\Gamma$ underlying bicomplex~\eqref{eq:diff-cplx-3}; then, we conclude this section spending some more time with the Bott-Shulman complex.

\subsection{The multiplicative flat connection on $J^\infty\Gamma$}\label{The Cartan connection}

To describe the flat multiplicative connection on $J^\infty\Gamma$ that underlies Haefliger's differentiable cohomology of a Lie pseudogroup, it is useful to go back to the tower~\eqref{eq:the-jet-tower}
\[
J^\infty\Gamma\to \dots \to J^l\Gamma\to J^{l-1}\Gamma\to \dots \to J^1\Gamma\to J^0\Gamma\tto \BB.
\]
This is a tower of groupoids over $\BB$ and actually, since we work with Lie pseudogroups $\Gamma$, a tower of \emph{Lie} groupoids: the $J^l\Gamma$'s are Lie groupoids and each map is a surjective submersion and a morphism of groupoids. The definition of Lie pseudogroup can be summed up and is naturally expressed by saying that this tower is a normal pf-atlas for $J^\infty\Gamma$; the reader is referred to the Appendix. The flat multiplicative connection on $J^\infty\Gamma$, that we will denote by $\CC^\infty$, is a profinite dimensional distribution, i.e. a ``limit'' of distributions on the groupoids $J^l\Gamma$ which are compatible with the tower projections. 

To begin with, it is useful to observe that $J^l\Gamma$ is a subspace of the jet space $J^l(\BB, \BB)$. A section $\sigma: \BB\to J^l(\BB, \BB)$ of the projection 
\[
s: J^l(\BB, \BB)\to \BB, \quad j^l_x f\to x
\]
 is called {\bf holonomic} when $\sigma=j^l f: x\to j^l_x f$ for some $f: \BB\to \BB$. It is possible to define a distribution $\CC^l$ on $J^l(\BB, \BB)$ via the request that holonomic sections are precisely its integral sections.
\begin{defn}
 The {\bf Cartan distribution} $\CC^l$ on $J^l(\BB, \BB)$ is the distribution on $J^l(\BB, \BB)$ such that a section $\sigma: \BB\to J^l(\BB, \BB)$ is holonomic if and only if its image is an integral manifold of $\CC^l$.
\end{defn}
The Cartan distribution on jet spaces is a very well known object. In particular, it plays a central role in the geometric theory of PDEs; among the extensive literature, see for example~\cite{KRASILSHCHIKVERBOVETSKY, GOLDSCHMIDT}. A more explicit definition can be given by adopting the dual point of view. One defines a one form $\omega^l$ on $J^l(\BB, \BB)$ valued in the vertical bundle $\ker (dpr)$ of the projection $pr: J^l(\BB, \BB)\to J^{l-1}(\BB, \BB)$ by 
\[
\omega^l_{j^{l}_x f}=d_{j^{l}_x f}pr-d_{j^{l}_x f} (j^{l-1}f \circ s)
\]
and then $\CC^l=\ker(\omega^l)$ can either be taken as an alternative definition (see~\cite{MARIA,ORI,FRANCESCO}) or proven as a lemma. What matters for us is the following
\begin{prp}[\hspace{1sp}\cite{MARIA, ORI}]
For any Lie pseudogroup $\Gamma$ and for any $l\geq 1$, the Cartan distribution $\CC^l$ restricts to $J^l\Gamma$ as a regular multiplicative distribution (still denoted by $\CC^l$).
\end{prp}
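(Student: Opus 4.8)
The plan is to reduce everything to a single clean reformulation of the Cartan distribution together with the \emph{functoriality of prolongation}: the principle that the composition, inversion and unit maps of $J^l(\BB,\BB)$ are themselves prolongations of the corresponding operations of $\mathrm{Diff}_{\mathrm{loc}}(\BB)$, so that $\CC^l$, being intrinsically defined through holonomic sections, is automatically preserved by them. First I would rewrite the defining condition $\CC^l=\ker\omega^l$ in the form
\[
\CC^l_{j^l_x f}=(d\,pr)^{-1}\big(\mathrm{im}\, d(j^{l-1}f)_x\big),
\]
a subspace of $T_{j^l_x f}J^l(\BB,\BB)$: indeed $\omega^l(V)=d\,pr(V)-d(j^{l-1}f)(ds(V))$ vanishes exactly when $d\,pr(V)\in\mathrm{im}\,d(j^{l-1}f)$, and the factorisation $s=s^{(l-1)}\circ pr$ together with $s^{(l-1)}\circ j^{l-1}f=\mathrm{id}$ forces $ds(V)$ to equal the corresponding preimage vector, so the two descriptions agree. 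This exhibits $\CC^l$ as a constant-rank subbundle on all of $J^l(\BB,\BB)$ and isolates the only facts I will use about the structure maps, namely $s=s^{(l-1)}\circ pr$, $t=t^{(l-1)}\circ pr$, and the compatibility of $pr$ with $m$ and with the inversion $\tau$.

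Next I would prove regularity of the restriction. Writing $q=\dim\BB$, at $g=j^l_x\phi\in J^l\Gamma$ (with $\phi\in\Gamma$) the reformulation gives
\[
\CC^l_g\cap T_gJ^l\Gamma=\big(d\,pr|_{T_gJ^l\Gamma}\big)^{-1}\big(\mathrm{im}\,d(j^{l-1}\phi)_x\big).
\]
Here $j^{l-1}\phi$ is valued in $J^{l-1}\Gamma$ because $\phi\in\Gamma$, so $\mathrm{im}\,d(j^{l-1}\phi)_x$ is a $q$-dimensional subspace of $T_{j^{l-1}_x\phi}J^{l-1}\Gamma$; and by the \textbf{Lie pseudogroup} assumption the tower projection $pr\colon J^l\Gamma\to J^{l-1}\Gamma$ is a surjective submersion, so $d\,pr|_{T_gJ^l\Gamma}$ is onto with kernel of locally constant rank. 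The preimage of a subspace lying in the image of a surjection with constant-rank kernel again has constant rank, equal to that rank plus $q$; hence $\CC^l\cap TJ^l\Gamma$ is a genuine (regular) distribution on $J^l\Gamma$. This is the step where the Lie hypothesis is indispensable: without the submersion property there is no control on the dimension of the intersection.

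It then remains to check the four conditions of Definition~\ref{defin:multiplicativity}. I would verify m1--m4 first on $J^l(\BB,\BB)$ and observe that, since the structure maps of $J^l\Gamma$ are restrictions of those of $J^l(\BB,\BB)$ and $\Gamma$ is closed under composition and inversion, each condition restricts to $J^l\Gamma$; for m1 I would in addition exhibit the explicit integral vectors $d(j^l\phi)(u)\in\CC^l_g\cap T_gJ^l\Gamma$, whose images under $ds$ and $dt$ are $u$ and $d\phi(u)$, yielding surjectivity of both. On $J^l(\BB,\BB)$ each condition becomes a \emph{holonomic identity} proved by a one-parameter curve computation and the chain rule: writing $g_1=j^l_{f(x)}g$, $g_2=j^l_x f$ and using $pr\circ m=m^{(l-1)}\circ(pr\times pr)$, $s\circ m=s\circ\mathrm{pr}_2$ together with the composability relation $ds(v)=dt(w)=df(ds(w))$, the requirement $dm(v,w)\in\CC^l$ reduces to
\[
dm^{(l-1)}\!\big(d(j^{l-1}g\circ f)(u),\,d(j^{l-1}f)(u)\big)=d\big(j^{l-1}(g\circ f)\big)(u),\qquad u=ds(w),
\]
which is precisely the derivative at $0$ of $\tau\mapsto m^{(l-1)}\big(j^{l-1}_{f(c(\tau))}g,\,j^{l-1}_{c(\tau)}f\big)=j^{l-1}_{c(\tau)}(g\circ f)$ along a curve $c$ with $c(0)=x$, $c'(0)=u$. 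Conditions m3 and m4 are the analogous identities for $pr\circ(\text{unit})$ and $pr\circ\tau=\tau^{(l-1)}\circ pr$, each reducing to the chain rule applied to $s^{(l-1)}\circ j^{l-1}(\mathrm{id})=\mathrm{id}$ and to $\tau^{(l-1)}(j^{l-1}_z f)=j^{l-1}_{f(z)}(f^{-1})$.

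The main obstacle is not any single computation but the regularity step: guaranteeing that the set-theoretic intersection $\CC^l\cap TJ^l\Gamma$ is a subbundle of locally constant rank. This is exactly where one must invoke that $\Gamma$ is a Lie pseudogroup, so that $J^l\Gamma$ is a submanifold and $pr\colon J^l\Gamma\to J^{l-1}\Gamma$ is a surjective submersion; for a general pseudogroup the intersection can jump in dimension and the statement fails. The multiplicativity conditions, by contrast, are formal consequences of the functoriality of prolongation and hold verbatim once the distribution is known to restrict, so I would present the regularity argument in full and m1--m4 as the curve computations sketched above.
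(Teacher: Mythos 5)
Your proposal is correct, but there is nothing in the paper to measure it against: the proposition is stated without proof, imported from \cite{MARIA, ORI}, and the only in-paper hint at an argument is the remark in the proof of Theorem~\ref{thm:Classical_cartan_dist} that multiplicativity of the $\CC^l$'s ``can be checked directly (for example using the forms $\omega^l$)''. Those sources, in the spirit of Remark~\ref{rk:Cartan-gpds-via-forms}, work dually: one verifies the cocycle identity~\eqref{multiplicativity} for the Cartan form $\omega^l$, which packages the closure conditions m2--m4 into a single equation, while m1 is checked, as you do, by exhibiting the holonomic tangent vectors $d(j^l\phi)_x(u)$ with $\phi\in\Gamma$. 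Your route --- rewriting $\CC^l_{j^l_xf}=(dpr)^{-1}\bigl(\mathrm{im}\, d(j^{l-1}f)_x\bigr)$ and reading m2--m4 off the chain rule applied to $pr\circ m=m^{(l-1)}\circ(pr\times pr)$, $pr\circ\tau=\tau^{(l-1)}\circ pr$ and the unit section --- is an equivalent, distribution-level version of the same computation, with two genuine advantages: since everything factors through $dpr$, the verification applies to \emph{all} vectors of $\CC^l$ and not only to those tangent to holonomic sections; and the regularity of the restriction to $J^l\Gamma$, which the references tend to pass over, is isolated as the unique place where the Lie hypothesis enters (surjectivity of $dpr$ on $TJ^l\Gamma$ plus the fact that $j^{l-1}\phi$ is a section of $J^{l-1}\Gamma$ when $\phi\in\Gamma$). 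The only detail worth adding is the standard observation that $\mathrm{im}\,d(j^{l-1}f)_x$ depends on $j^l_xf$ alone (it involves derivatives of $f$ only up to order $l$), which is what makes your reformulated $\CC^l$ well defined as a distribution on $J^l(\BB,\BB)$ in the first place.
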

The multiplicativity of the Cartan distribution on the jet groupoids $J^l\Gamma$ of a Lie pseudogroup is a key property when studying the geometry of PDE. See~\cite{MARIA} for a systematic study of ``Cartan-like'' multiplicative forms on groupoids and~\cite{ORI} for a modern take on Cartan's seminal work on pseudogroups, where the role of multiplicativity is made explicit.

The above discussion stays true when the positive natural number $l$ is replaced by $\infty$. That is, $J^\infty\Gamma$ is equipped with a multiplicative distribution $\CC^\infty$ that comes as a restriction of an analogous distribution on $J^\infty(\BB, \BB)$ detecting holonomic sections. $\CC^\infty$ a smooth object in the sense of pf-manifolds, i.e. the ``limit'' of the distributions $\CC^l$. Indeed, to make these claims, as well as the following one, completely rigorous, one needs to work with profinite dimensional manifolds; see the Appendix and in particular Examples~\ref{exam-Jinfty-R},~\ref{exm:Car-distr-J},~\ref{ex:Cartan-form-R},~\ref{exam-pair-groupoid} and~\ref{exam-Lie-pseudogroups}. 
What one is able to see is that:
\begin{thm}\label{thm:Classical_cartan_dist}
The distribution $\CC^\infty$ on $J^\infty\Gamma$ is
\begin{itemize}
\item multiplicative;
\item a connection with respect to $s:J^\infty\Gamma\to \BB$;
\item involutive (i.e. a flat connection, see the second part of Lemma~\ref{lemma-general-D}).
\end{itemize}
\end{thm}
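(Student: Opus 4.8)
The plan is to deduce each of the three properties from its counterpart at finite jet order together with the pf-limit identification $J^\infty\Gamma \cong \plim J^l\Gamma$. The key conceptual point to keep in mind throughout is that multiplicativity is genuinely inherited level by level, whereas the connection and involutivity statements are new phenomena visible only in the limit: at every finite order $l\geq 1$ the distribution $\CC^l$ is neither an $s$-connection nor involutive. So multiplicativity will be almost formal, and the other two properties are where the work lies.

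For multiplicativity I would argue that $\CC^\infty$ is the inverse limit of the $\CC^l$. Each tower projection $pr\colon J^l\Gamma \to J^{l-1}\Gamma$ is a morphism of Lie groupoids and a surjective submersion, so its differential is a morphism of tangent groupoids $T(J^l\Gamma)\to T(J^{l-1}\Gamma)$ carrying $\CC^l$ into $\CC^{l-1}$ (holonomic sections project to holonomic ones, so the Cartan distributions are compatible with the tower). By the previous proposition each $\CC^l$ is a full sub-groupoid of $T(J^l\Gamma)$, and under the identification $T(J^\infty\Gamma)\cong \plim T(J^l\Gamma)$ from the Appendix the conditions m2, m3, m4 of Definition~\ref{defin:multiplicativity} are equalities and hence closed under the inverse limit; thus $\CC^\infty$ is a full sub-groupoid of $T(J^\infty\Gamma)$. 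The one clause not automatically preserved by an inverse limit is the surjectivity in m1, which I will obtain together with the connection property below.

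For the connection property with respect to $s$ I would pass to the dual (contact) description and work in local jet coordinates $(x^i; y^\alpha_\sigma)$ on $J^\infty(\BB,\BB)$, in which the Cartan distribution is cut out by all the contact forms $\omega^\alpha_\sigma = d y^\alpha_\sigma - \sum_i y^\alpha_{\sigma+1_i}\, d x^i$ (here $1_i$ is the $i$-th bump of the multi-index). Solving $\omega^\alpha_\sigma(V)=0$ for all $\alpha,\sigma$ forces any $V\in\CC^\infty$ to be a combination of the total derivatives $D_i = \partial_{x^i} + \sum_{\alpha,\sigma} y^\alpha_{\sigma+1_i}\, \partial_{y^\alpha_\sigma}$, so that, setting $q=\dim\BB$, one gets $\CC^\infty = \textrm{span}(D_1,\dots,D_q)$ of rank $q$ with $d s^\infty$ restricting to an isomorphism $\CC^\infty \xrightarrow{\sim} T\BB$, since $d s^\infty(D_i)=\partial_{x^i}$. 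More conceptually, through any $g=j^\infty_x f\in J^\infty\Gamma$ the holonomic section $j^\infty f$ is an integral manifold of $\CC^\infty$ that is a graph over $\BB$, so $\CC^\infty_g$ always contains a complement to $\ker(d s^\infty)$; the content of the coordinate solution is that it contains nothing more. This is exactly where the argument departs from finite order: at level $l$ the distribution $\CC^l$ also contains the entire fiber $\ker(d\, pr)$ of $pr\colon J^l\Gamma\to J^{l-1}\Gamma$, which lies in $\ker(d s^l)$, so $\CC^l$ is never an $s$-connection; these extra vertical directions are killed by $d\, pr$ and therefore disappear in the inverse limit. I expect the rigorous verification that $\CC^\infty \cap \ker(d s^\infty)=0$ in the pf-dimensional sense to be the main obstacle, and I would carry it out via the coordinate solution above, referring to the Appendix (in particular the examples on $J^\infty\R$ and the Cartan distribution on $J^\infty(\BB,\BB)$) for the pf-manifold bookkeeping.

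Finally, involutivity follows almost formally once the connection property is in hand, and can be seen in two equivalent ways. Concretely, $\CC^\infty$ is spanned by the total derivatives $D_i$, which commute, $[D_i,D_j]=0$, so it is spanned by commuting vector fields and hence involutive. Conceptually, $\CC^\infty$ is a rank-$q$ distribution admitting through every point $g=j^\infty_x f$ the $q$-dimensional integral manifold given by the image of the holonomic section $j^\infty f$; since this tangent space sits inside $\CC^\infty_g$ and both have rank $q$ they agree, so $\CC^\infty$ is tangent to the family of holonomic graphs and is therefore integrable. In the language of Lemma~\ref{lemma-general-D}, the $s$-horizontal lift sends $\partial_{x^i}$ to $D_i$, and $\textrm{hor}([\partial_{x^i},\partial_{x^j}]) = 0 = [D_i,D_j]$ shows that the connection is flat, which is the same as involutivity of $\CC^\infty$. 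All of these computations take place on $J^\infty(\BB,\BB)$ and restrict to $J^\infty\Gamma$, since the holonomic sections of elements of the Lie pseudogroup $\Gamma$ remain inside $J^\infty\Gamma$ and $\CC^l$ restricts to $J^l\Gamma$ by the previous proposition.
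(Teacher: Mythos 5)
Your proposal is correct, and it follows the same overall strategy as the paper — identify $\CC^\infty$ with the inverse limit of the finite-order Cartan distributions and deduce each property from finite-order data — but it establishes the two nontrivial bullets by a different route. The paper handles multiplicativity exactly as you do (levelwise inheritance through $T J^\infty\Gamma \cong \plim T J^l\Gamma$), but for the connection and involutivity properties it does not compute anything: it invokes the fact that the projections $pr\colon J^l\Gamma \to J^{l-1}\Gamma$ are \emph{Lie prolongations} in the sense of \cite{MARIA}, which packages precisely the two identities $dpr(\CC^l\cap\ker(ds))=0$ and $[dpr(\CC^{l+1}),dpr(\CC^{l+1})]\subset\CC^l$, and then reads off the limit statements. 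You instead prove the relevant finite-order inputs by hand, via the contact forms and total derivatives on $J^\infty(\BB,\BB)$ and the observation that holonomic sections through points of $J^\infty\Gamma$ furnish rank-$q$ integral graphs; your remark that $\CC^l\cap\ker(ds)=\ker(dpr)$ at finite order, so that the obstruction to being an $s$-connection dies in the limit, is exactly the content of the paper's first bullet. The trade-off is that your argument is self-contained and makes the mechanism visible in coordinates, while the paper's is shorter and stays within the structural framework (Lie prolongations) that it reuses elsewhere; both correctly identify that the only delicate point is the pf-bookkeeping for $\CC^\infty\cap\ker(ds^\infty)=0$, and your coordinate solution together with the restriction argument (holonomic sections of elements of $\Gamma$ stay inside $J^\infty\Gamma$) closes it.
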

\begin{proof}
The proof is well known; we sketch it for the sake of completeness.

First of all, as explained in the Appendix, the pf-tangent space $TJ^\infty\Gamma$ can be identified with the limit $\plim TJ^l\Gamma$; as a consequence, the Cartan distribution can be identified with the sequence $\{\CC^l\}$ of the finite dimensional Cartan distributions, which is compatible with~\eqref{eq:the-jet-tower} in the sense that $dpr (\CC^l)\subset \CC^{l-1}$. With this in mind, $\CC^\infty$ straight away inherits its multiplicativity from the multiplicativity of the $\CC^l$'s, which can be checked directly (for example using the forms $\omega^l$'s introduced above~\cite{ORI}).

The fact that $\CC^\infty$ is an involutive distribution complementary to $s$, i.e. a flat connection (see the second part of Lemma~\ref{lemma-general-D}), is hidden in the interplay between the projections $pr: J^l\Gamma\to J^{l-1}\Gamma$ and the Cartan distributions. Using the terminology of Definition $6.2.4$ in~\cite{MARIA}, the projections $pr: J^l\Gamma\to J^{l-1}\Gamma$ are {\bf Lie prolongations}. In particular,
\begin{itemize}
\item $dpr(\CC^l\cap \ker(ds))=0$ for all $l\geq 1$;
\item $[dpr(\CC^{l+1}), dpr(\CC^{l+1})]\subset \CC^{l}$, for all $l\geq 1$.
\end{itemize}
The first identity implies that $\CC^\infty$ is a connection while the second one forces its involutivity.
\end{proof}
\begin{defn}
$\CC^\infty$ is called the {\bf Cartan connection} on $J^\infty\Gamma$. 
\end{defn}
 Surprisingly enough, the multiplicativity of $\CC^\infty$ was the last property to appear explicitly in the literature; in fact, the multiplicativity of the finite dimensional Cartan distributions $\CC^l$ is explicitely addressed for the first time in~\cite{MARIA}. On the other hand, the transversality and involutivity properties of $\CC^\infty$ have been known for a long time.

\subsection{A look back at the structure on the Bott-Shulman and related complexes}
\label{More on the structure(s) of the Bott-Shulman complex}
 It is interesting to spell out the various 
structures that are floating around on the various complexes. We concentrate first on the Bott-Shulman complex; so, let us fix an arbitrary Lie groupoid $\G\tto \BB$ and consider 
\[ \Omega^{p, q}= \Omega^q(\G^{(p)}).\]

First of all together with the wedge product of forms, the DeRham differential and the pull-backs along the face and degeneracy maps, $(\Omega^{*,*}, \wedge, d, d_{i}^*, s_{i}^*)$ is a {\bf cosimplicial DGA},
i.e. 
\begin{itemize}
\item for each $p$, $(\Omega^{p, *}, \wedge, d)$ is a DGA; hence 
\[ -\wedge -: \Omega^{p, q}\times \Omega^{p, q'}\to \Omega^{p, q+ q'}\]
and $d: \Omega^{p, q}\to \Omega^{p, q+1}$ satisfies the Leibniz identity:
\begin{equation}\label{eq:d-deriv} 
d(\omega\wedge \omega')= d(\omega)\wedge \omega'+ (-1)^q \omega\wedge d(\omega') 
\end{equation}
whenever $\omega$ and $\omega$ are of bidegrees $(p, q)$, and $(p, q')$, respectively. 
\item these DGAs are related by morphisms (of DGAs) 
\[ d_i^*: \Omega^{p-1, *}\to  \Omega^{p, *}, \quad s_i^*: \Omega^{p+1, *}\to \Omega^{p, *}\quad (\textrm{for $0\leq i\leq p$})\]
satisfying the cosimplicial identities (i.e. the dual of the simplicial identities (\ref{simplicial-identities})). 
\end{itemize}
In turn, for any such cosimplicial DGA, one can construct the second differential (along $p$), $\delta= \sum (-1)^i d_{i}^{*}$ and, furthermore, the product structure \eqref{eq:cup-BS-complex}; indeed, note that 
the maps $\textrm{first}_{p}^{*}$ and $\textrm{last}_{p'}^{*}$ can be expressed in terms of the simplicial structure as
\[ \textrm{first}_{p}^{*}= \underbrace{d_{0}^*\ldots d_{0}^*}_{\textrm{$p'$ times}}, \quad
\textrm{last}_{p'}^{*}=  d_{p'+ p}^*d_{p'+ p-1}^*\ldots d_{p'+ 1}^*,\]
hence \eqref{eq:cup-BS-complex} makes sense for any cosimplicial DGA. 
The outcome is a {\bf double-DGA} $(\Omega^{p, q}, \cup, \delta, d)$ in the sense that it comes with:
\begin{itemize}
\item a bigraded algebra operation 
\[ - \cup -: \Omega^{p, q}\times \Omega^{p', q'}\to \Omega^{p+p', q+q'};\]
\item a differential $\delta: \Omega^{p, q}\to \Omega^{p+1, q}$ satisfying the horizontal Leibniz identity:
\begin{equation}\label{eq:delta-deriv} 
\delta(\omega\cup \omega')= \delta(\omega)\cup \omega'+ (-1)^p \omega\cup \delta(\omega') 
\end{equation}
whenever $\omega$ and $\omega$ are of bidegrees $(p, q)$, and $(p', q')$, respectively;
\item a differential $d: \Omega^{p, q}\to \Omega^{p, q+1}$ satisfying the vertical Leibniz identity:
\begin{equation}\label{eq:D-deriv} 
d(\omega\wedge \omega')= d(\omega)\wedge \omega'+ (-1)^q \omega\wedge d(\omega') 
\end{equation}
whenever $\omega$ and $\omega$ are of bidegrees $(p, q)$, and $(p', q')$, respectively. 
\end{itemize}

Finally, for any double DGA $(\Omega^{p, q}, \cup, \delta, d)$ one can also pass to the total complex as in (\ref{Tot-BS}):
\[ \textrm{Tot}^k \Omega^{*,*}:= \bigoplus_{p+ q= k} \Omega^{p, q}, \quad D_{\textrm{tot}}= \delta+ (-1)^p d \ \textrm{on}\ \Omega^{p, q}\]
endowed with the signed product 
\[ \omega\cup_{\textrm{tot}}\omega':= (-1)^{qp'} \omega\cup \omega',\quad \textrm{for $\omega\in \Omega^{p, q}, \omega'\in \Omega^{p', q'}$.}\]
Of course, the signs are chosen so that $D_{\textrm{tot}}$ satisfies the Leibniz identity w.r.t. the total degree:
\begin{equation}\label{Leibniz-total} 
D_{\textrm{tot}}(\omega\cup_{\textrm{tot}}\omega')= D_{\textrm{tot}}(\omega)\cup_{\textrm{tot}}\omega'+ 
(-1)^{k} \omega\cup_{\textrm{tot}} D_{\textrm{tot}}(\omega')
\end{equation}
where $k= p+q$ is the total degree of $\omega\in \Omega^q(\G^{(p)})$. Therefore one obtains a DGA:
\[(\textrm{Tot}\, \Omega^{*,*}, D_{\textrm{tot}}, \cup_{\textrm{tot}}).\] 

%
%
%
%
%

Of course, with the appropriate (and rather obvious) notion of morphisms, the previous constructions can be seen as functors
\[ \textrm{cosimplicial DGAs}\rightarrow \textrm{double DGAs}\rightarrow \textrm{DGAs}.\]
The conclusion is that the Bott-Shulman complex is a cosimplicial DGA and, therefore, can also be turned into a double DGA or, by passing to the total complex, into an ordinary DGA. The fact that the differentiable complex is a sub-complex of the Bott-Shulman one can be refined into the folowing result (which is actually less messy to prove):

\begin{prp} For any Lie pseudogroup $\Gamma$ on $\BB$, the differentiable complex of $\Gamma$ is a sub-cosimplicial DGA
of the Bott-Shulman complex of the corresponding groupoid $\G= \Ger(\Gamma)\tto \BB$ (via the inclusion 
$j^*: C^{p, q}_{\rm diff}(\Gamma)\hookrightarrow \Omega^q(\G^{(p)})$). In particular, the differentiable complex of $\Gamma$ carries an induced cosimplicial DGA structure.
\end{prp}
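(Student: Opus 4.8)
The plan is to show that the inclusion $j^*$ is an \emph{injective morphism of cosimplicial DGAs}; its image is then automatically a sub-cosimplicial DGA, and by transport of structure the differentiable complex acquires the induced cosimplicial DGA structure. Since the passage from a cosimplicial DGA to its associated double DGA is functorial, it suffices to verify that $j^*$ intertwines the three pieces of cosimplicial DGA data — the vertical wedge product $\wedge$, the vertical differential, and the cosimplicial operators $d_i^*, s_i^*$ — after which the horizontal differential $\delta$ and the cup product $\cup$ come for free. Injectivity is immediate because the infinite-jet map $j^\infty \colon \G = \Ger(\Gamma)\to J^\infty\Gamma$ is surjective, so each $j^{\infty,(p)}\colon \G^{(p)}\to (J^\infty\Gamma)^{(p)}$ is surjective and the pullback of sections is injective.

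Two of the three verifications are formal. First, $j^\infty$ is a morphism of (pro-finite) Lie groupoids, so the induced maps on nerves commute with all face and degeneracy maps; pulling back, $j^*d_i^* = d_i^* j^*$ and $j^* s_i^* = s_i^* j^*$, which shows both that the differentiable cochains are stable under $d_i^*, s_i^*$ and that $j^*$ respects them. Second, on both complexes the vertical product is the fibrewise wedge of sections of $t^*\Lambda^\bullet T^*\BB$, and $j^{\infty,(p)}$ is compatible with the target maps $t$; hence $j^*(\alpha\wedge\beta)=j^*\alpha\wedge j^*\beta$ by naturality of pullback, so the image is closed under $\wedge$ and $j^*$ is multiplicative in the vertical direction.

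The heart of the matter is the vertical differential. On the Bott--Shulman side this is the genuine de Rham differential $d_{DR}$ on $\Omega^q(\G^{(p)})$ (equivalently $d_\CC$ for the tautological connection of the étale groupoid, whose $t$-vertical bundle vanishes), while on the differentiable side it is the Cartan-connection differential $d_{\CC^\infty}$ of Lemma~\ref{lemma-general-D}. I must show $d_{DR}(j^*\alpha)=j^*(d_{\CC^\infty}\alpha)$, which rests on the geometric claim that $d\,j^{\infty,(p)}$ maps $T\G^{(p)}$ into $\CC^{\infty,(p)}$. For $p=1$ this is exactly the holonomicity of jet prolongations: a tangent vector at $\textrm{germ}_x(\phi)\in\G$ is the velocity of a curve of germs obtained by moving the base point along a curve through $x$, and $j^\infty$ carries this curve into the image of the holonomic section $j^\infty\phi$, which by the defining property of the Cartan distribution is an integral manifold of $\CC^\infty$; hence the velocity lands in $\CC^\infty$. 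For general $p$ a tangent vector to $\G^{(p)}$ is a compatible tuple whose components each map into $\CC^\infty$, so by the description~\eqref{descr-H-p} of $\CC^{\infty,(p)}$ the whole tuple lands in $\CC^{\infty,(p)}$. Granting the claim, I evaluate on vectors $V_1,\dots,V_{q+1}\in T\G^{(p)}$ and use that $d_{\CC^\infty}$ first applies $\textrm{pr}_{\CC^\infty}$ — which acts as the identity on the vectors $dj^{\infty,(p)}V_i\in\CC^{\infty,(p)}$ — to identify $j^*(d_{\CC^\infty}\alpha)$ with $d_{DR}(j^*\alpha)$ by naturality of $d_{DR}$.

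The main obstacle will be making this last step rigorous in the pro-finite setting, where $d_{\CC^\infty}$ and $d_{DR}$ are a priori operators on an infinite-dimensional manifold. The clean route is to observe that any differentiable cochain factors through a finite level, $\alpha=\pi_l^*\alpha_l$ with $\alpha_l$ a horizontal form on $(J^l\Gamma)^{(p)}$, where $\CC^l$ and $d_{\CC^l}$ are honest finite-dimensional objects and the holonomicity claim reads $d\,j^{l,(p)}(T\G^{(p)})\subseteq\CC^{l,(p)}$; the identity $d_{DR}((j^l)^*\alpha_l)=(j^l)^*(d_{\CC^l}\alpha_l)$ is then a statement about genuine de Rham differentials, and one passes to the limit using the identifications $TJ^\infty\Gamma\cong\plim TJ^l\Gamma$ and $\CC^\infty\cong\{\CC^l\}$ from Theorem~\ref{thm:Classical_cartan_dist} (the appendix supplies the required pf-calculus). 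Together the three verifications exhibit $j^*$ as an injective morphism of cosimplicial DGAs, so the differentiable complex is a sub-cosimplicial DGA of the Bott--Shulman complex and inherits the asserted structure.
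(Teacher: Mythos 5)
Your argument is correct and follows exactly the route the paper sets up (the paper states this proposition without writing out a proof, relying on subsections 3.2--3.6): the compatibility with the simplicial operators and the wedge product is formal, and the real content is your holonomicity observation that $dj^{\infty,(p)}$ carries $T\G^{(p)}$ into $\CC^{\infty,(p)}$ because jet prolongations of a fixed $\phi\in\Gamma$ are integral sections of the Cartan distribution, whence $d_{\CC^\infty}$ pulls back to the honest de Rham differential on the \'etale nerve. One minor imprecision in your finite-level reduction: at a fixed finite level $l$ the distribution $\CC^l$ is not an Ehresmann connection (it meets $\ker(ds)$ nontrivially, only its projection to level $l-1$ vanishes), so ``$d_{\CC^l}$'' is really a Spencer-type operator shifting levels, and the finite-dimensional identity should be stated one level up; this is harmless since $j^l=\pi_{l+1,l}\circ j^{l+1}$.
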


\section{Generalization: Haefliger cohomology}\label{Generalization: Haefliger cohomology}

To sum up the discussion from the previous section, we found out that to define the analogue of the ``Haefliger's bicomplex''~\eqref{eq:diff-cplx-3} for a (profinite dimensional) Lie groupoid $\Sigma\tto\BB$ we need a multiplicative flat connection $\CC$ on $\Sigma$. The aim of the rest of this paper is to recast Haefliger's construction in this generality, while providing a conceptual understanding of the various steps involved. Indeed, in Section $4$ we will define the {\bf Haefliger cohomology} of a pair $(\Sigma, \CC)$ given by a Lie groupoid together with a flat multiplicative connection; in our terminology, a {\bf flat (Cartan) groupoid}. In Section $5$ we will discuss the corresponding infinitesimal picture. In Section $6$ we will construct a ``van Est map" generalizing the isomorphism~\eqref{eq: Van-Est-Haefliger}. There, we will also discuss a generalization of the characteristic map~\eqref{eq:dif-charact-map} that, remarkably, is not defined as the composition with an analogue of $j^*:H^*_{\rm{diff}}(\Gamma)\to H^*_{dR}(\mathcal{G})$, but coincides with it whenever such an analogue is available.

A terminology remark: from now on, we will simply write ``Lie groupoids" or ``Lie algebroids" (the latter starting from the next section) meaning either the usual, finite dimensional, objects or the (strict) profinite dimensional ones, discussed in the Appendix (see Definition~\ref{defn:strict-things}). We stress that our profinite dimensional groupoids (algebroids) will always be defined over a finite dimensional base $\BB$, except when discussing the general setting for our van Est map, in section~\ref{Van Est maps} and, in particular, in subsection~\ref{A very general setting}. 
\footnote{More precisely: in subsection~\ref{A very general setting} we need to consider the action groupoid $\Sigma\ltimes P=\Sigma\tensor[_s]{\times}{_\mu} P$ over $P$ associated to an action of a (pf) Lie groupoid $\Sigma\rightrightarrows \BB$ on a (pf) manifold $P$ along a map $\mu:P\to \BB$.}

\subsection{Definitions and first examples}

The problem that we posed and the discussion around it slowly revealed the structure that is needed to define Haefliger cohomology - summarized in the definition below. 

%
%
%

%

\begin{defn}\label{defn:Cartan-groupoid}
A {\bf Cartan groupoid} is a Lie groupoid $\Sigma\tto \BB$ together with a subbundle $\CC\subset T\Sigma$ satisfying the following:
\begin{itemize}
\item it is an Ehresman connection on $t: \Sigma\to \BB$ (Definition \ref{def:Ehr-conn}),
\item it is multiplicative (Definition \ref{defin:multiplicativity}). 
\end{itemize}
We say that $(\Sigma, \CC)$ is a {\bf flat Cartan groupoid} if $\CC\subset T\Sigma$ is involutive. 
\end{defn}

%
%

\begin{rmk}\label{rk:Cartan-gpds-via-forms}
Typically, distributions $\CC$ as in the definition above appear as kernels of 1-forms 
\[ \omega\in \Omega^1(\Sigma, t^*E)\]
with values in representations $E$ of $\Sigma$, with the property that $\omega$ is 
{\bf multiplicative} in the following sense: for each composable pair $(g, h)\in \Sigma^{(2)}$, 
\begin{equation}\label{multiplicativity}
(m^*\omega)_{(g,h)}= (pr_1^*\omega)_{(g,h)}+ g\cdot (pr_2^*\omega)_{(g,h)}
\end{equation}
where $m: \Sigma^{(2)}\to \Sigma$ is the multiplication map and $g\cdot: E_{s(g)}\to E_{t(g)}$ is the action of $g\in \Sigma^{(2)}$. Indeed, requiring also that $\omega$ is pointwise surjective and that
\[ \CC_{\omega}:= \textrm{Ker}(\omega)\subset T\Sigma\]
is complementary to the $t$-fibers, it follows that $(\Sigma, \CC_{\omega})$ is a Cartan groupoid.

Given a Cartan groupoid $(\Sigma, \CC)$, a multiplicative 1-form $\omega$ as above such that $\CC= \textrm{Ker}(\omega)$ will be called a {\bf Cartan form for $(\Sigma, \CC)$}. It is not difficult to see (and details can be found, within a more general framework, in \cite{MARIA}) that such a Cartan form always exists, and is even unique up to the obvious notion of isomorphism.
\end{rmk}

Given a flat Cartan groupoid $(\Sigma, \CC)$ we consider the action (\ref{eq:ind-right-action-2}) of $\Sigma$ on $T\BB$, the induced actions on $\Lambda^qT^*\BB$ and the corresponding operators
\[ \delta: C^p(\Sigma, \Lambda^qT^*\BB)\to C^{p+1}(\Sigma, \Lambda^qT^*\BB),\]
and then the Ehresmann connections $\CC^{(p)}$ on $t: \Sigma^{(p)}\to \BB$ given by (\ref{descr-H-p}) and the induced 
operators 
\[ d: \Omega^{q}_{\textrm{hor}}(\Sigma^{(p)})\to \Omega^{q+1}_{\textrm{hor}}(\Sigma^{(p)}).\]

%
%
\begin{defn}\label{def-Haefl-coh-gen}
The {\bf Haefliger complex} of a flat Cartan groupoid $(\Sigma, \CC)\tto \BB$ is the bicomplex 
\[ C^{p, q}_{\textrm{Haef}}(\Sigma, \CC):= C^p(\Sigma, \Lambda^qT^*\BB)= \Omega^{q}_{\textrm{hor}}(\Sigma^{(p)}) \]
endowed with the differentials $\delta$ and $d$ described above. Its cohomology is denoted
\[ H^{*}_{\textrm{Haef}}(\Sigma, \CC)\]
and is called the {\bf Haefliger cohomology of $(\Sigma, \CC)$}. 
\end{defn}
\begin{rmk}
Occasionally, when $\omega$ is a Cartan form for $(\Sigma, \CC)$, we write $H^*_{\rm Haef}(\Sigma, \omega)$ in place of $H^{*}_{\textrm{Haef}}(\Sigma, \CC)$.
\end{rmk}

The fact that we deal, indeed, with a double complex, and also that the double complex carries the same structures as the Bott-Shulman one, follows from the proof of the Proposition below, which was essentially carried out in the previous section:

\begin{prp} For any flat Cartan groupoid $(\Sigma, \CC)\tto \BB$, the Haefliger complex 
$C^{*,*}_{\rm Haef}(\Sigma, \CC)$ 
becomes a cosimplicial DGA.
\end{prp}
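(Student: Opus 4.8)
The plan is to assemble the cosimplicial DGA structure from the ingredients already prepared in Section~\ref{Haefliger's differentiable cohomology}, so the task is really one of organisation rather than new computation. Recall that a cosimplicial DGA requires, for each $p$, a DGA of the form $(C^{p,*}_{\rm Haef}(\Sigma,\CC),\wedge,d)$, together with coface and codegeneracy maps $d_i^*,s_i^*$ that are morphisms of DGAs and satisfy the cosimplicial identities dual to \eqref{simplicial-identities}. I will use throughout the identification $C^{p,q}_{\rm Haef}(\Sigma,\CC)=\Omega^q_{\rm hor}(\Sigma^{(p)})$, the connections $\CC^{(p)}$ of \eqref{descr-H-p}, and the operators $\delta$ and $d=d_{\CC^{(p)}}$ fixed in Definition~\ref{def-Haefl-coh-gen}.

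First I would treat a fixed level $p$. That horizontal forms are closed under $\wedge$ is immediate, since $i_V(\omega\wedge\omega')=0$ as soon as $i_V\omega=i_V\omega'=0$; the vertical Leibniz rule \eqref{eq:D-deriv} for $d=d_{\CC^{(p)}}$ is exactly Lemma~\ref{lemma-general-D}, which also tells me that $d^2=0$ holds precisely when $\CC^{(p)}$ is involutive. The only thing left at this stage is therefore to deduce involutivity of $\CC^{(p)}$ from the involutivity of $\CC$ (available because $(\Sigma,\CC)$ is flat). Here I would use the product description \eqref{descr-H-p}: the distribution $\CC\times\cdots\times\CC$ on $\Sigma^{p}$ is involutive as a product of involutive distributions, and $\CC^{(p)}$ is its intersection with $T\Sigma^{(p)}$. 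Extending local sections of $\CC^{(p)}$ to sections of $\CC\times\cdots\times\CC$ tangent to $\Sigma^{(p)}$ along $\Sigma^{(p)}$, and invoking the standard fact that the bracket of two vector fields tangent to a submanifold is again tangent to it, the bracket lands back in $\CC^{(p)}$, so each $(\Omega^*_{\rm hor}(\Sigma^{(p)}),\wedge,d_{\CC^{(p)}})$ is a DGA.

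Next I would address the structure maps. The cofaces and codegeneracies are those underlying the groupoid differential \eqref{delta-group-sheaf}: for $1\le i\le p$ and for the degeneracies they are honest pull-backs along the face and degeneracy maps of the nerve, whereas the zeroth coface $d_0^*$ is such a pull-back followed by the action of the leading arrow on $\Lambda^qT^*\BB$. The cosimplicial identities are then formal: they follow from the simplicial identities \eqref{simplicial-identities} and functoriality of pull-back, and are in any case exactly the identities that make $\delta=\sum_i(-1)^id_i^*$ a differential, as already encoded in Lemma~\ref{lemma-general-delta}. Compatibility with $\wedge$ is automatic for the pull-back maps and, for $d_0^*$, holds because the $\Sigma$-action on $\Lambda^*T^*\BB$ is by algebra automorphisms, being the exterior power of the action \eqref{eq:ind-right-action} on $T^*\BB$.

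The one genuinely substantial step—and the place I expect the difficulty to concentrate—is showing that each coface and codegeneracy commutes with $d_{\CC}$, i.e. is a chain map. Reading $d_{\CC}$ through its horizontal lifts, this reduces to the claim that each face/degeneracy map of the nerve \emph{intertwines the connections} $\CC^{(p)}$. For the inner multiplication faces $d_i$ ($1\le i\le p-1$) this is precisely the multiplicativity identity \eqref{eq:m2}, namely $dm(\textrm{hor}^\CC_{g_1}(v),\textrm{hor}^\CC_{g_2}(g_1^*v))=\textrm{hor}^\CC_{g_1g_2}(v)$, fed into the lifting formula of Lemma~\ref{lem:one_connection}; the outer face $d_p$ is trivial (its lift is obtained by dropping the last slot); the action-twisted face $d_0$ commutes with $d_{\CC}$ by the compatibility of the action with $\CC$ established in Lemma~\ref{lemma:commutativity-very-low}; and the degeneracies $s_i$ use the unit condition \eqref{eq:m3}. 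This intertwining is where multiplicativity of $\CC$ (Definition~\ref{defin:multiplicativity}) is indispensable, whereas flatness entered only through $d^2=0$ and everything else was formal. Once each structure map is seen to be a DGA morphism, the package $(C^{*,*}_{\rm Haef}(\Sigma,\CC),\wedge,d,d_i^*,s_i^*)$ is a cosimplicial DGA, and the double-complex property together with the inherited Leibniz identities follows as in subsection~\ref{More on the structure(s) of the Bott-Shulman complex}.
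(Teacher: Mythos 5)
Your proposal is correct and follows essentially the same route as the paper, which does not write out a separate proof but explicitly defers to the Section~\ref{Haefliger's differentiable cohomology} lemmas (Lemmas~\ref{lemma-general-delta}, \ref{lemma-general-D}, \ref{lemma:commutativity-very-low}, \ref{lem:one_connection} and the multiplicativity lemma) that you assemble, together with the cosimplicial-DGA formalism of subsection~\ref{More on the structure(s) of the Bott-Shulman complex}. Your added details --- the involutivity of $\CC^{(p)}$ via tangential extensions inside the product distribution, and the case-by-case check that the face/degeneracy maps intertwine the connections --- are exactly the pieces the paper leaves implicit, and they are sound.
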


\begin{exm}\label{ex-Cartan-gpd-J}
Of course, the motivating example for us is the flat Cartan groupoid $\Sigma= J^{\infty}\Gamma$ associated to any Lie pseudogroup $\Gamma$; the distribution on it is the Cartan connection $\CC^\infty$ described in subsection~\ref{The Cartan connection}. 
In this example, the Haefliger cohomology $H^*_{\rm Haef}(J^\infty\Gamma, \CC^\infty)$ from Definition~\ref{def-Haefl-coh-gen} is realized as Haefliger's differentiable cohomology $H^*_{\rm diff}(\Gamma)$ of $\Gamma$ from Definition~\ref{defn:diff-coh-gamma}. 
\end{exm}

\begin{exm}\label{ex-Cartan-gpd-G} Lie groups $G$, interpreted as Lie groupoids over a point $G\tto \star$, are automatically flat Cartan groupoids- the only possibility for $\CC$ being, of course, $\CC= 0$. Note however that even this very simple example possesses an interesting Cartan $1$-form (Remark~\ref{rk:Cartan-gpds-via-forms}): the Maurer-Cartan form $\omega_{MC}\in \Omega^1(G, \mathfrak{g})$. The resulting Haefliger cohomology is precisely the differentiable cohomology of $G$:
\[H^*_{\rm Haef}(G, \CC\equiv 0)= H^*_{\rm Haef}(G, \omega_{MC})=H^*_{\rm diff}(G).\]
In this way the differentiable cohomology of Lie groups and the differentiable cohomology of Lie pseudogroups are placed in a common framework. 
\end{exm}

\begin{exm}\label{ex-Cartan-gpd-G-action}
Another interesting example is that of action groupoids $G\ltimes M \tto M$ associated to actions of Lie groups $G$ on manifolds $M$. Recall that $G\ltimes M=G\times M$; a pair $(g, x)$ is viewed as an arrow from $x$ to $gx$ and the composition is given by 
\[ (h, gx)\cdot (g, x)= (hg, x).\]
In this case the source map $s= \textrm{pr}_2: G\times M\to M$ has an obvious flat Ehresman connection: $\CC= \CC_\textrm{can}$ consisting of the vectors tangent to the slices $G\times {x}$. It is not difficult to see that 
\[ (G\ltimes M,  \CC_\textrm{can})\]
is, indeed, a flat Cartan groupoid. Note that the associated Haefliger complex is
\[ C^{p, q}= C^p(G, \Omega^q(M)),\]
i.e. the bicomplex of differentiable cochains on $G$ with values in forms on $M$, where the two differentials are the Lie group differential (with coefficients) and the DeRham differential on $M$. Hence, in some sense, the cohomology is a combination of the DeRham cohomology of $M$ with the differentiable cohomology of $G$. In general, all these are related by a spectral sequence, and there are two extreme situations that can be made more explicit:
\begin{itemize}
\item when $M$ is contractible, one obtains the differentiable cohomology of $G$;
\item when $G$ is compact (the typical case when differentiable cohomology vanishes in positive degrees) and connected, one obtains the DeRham cohomology of $M$.
\end{itemize} 
\end{exm}

\begin{rmk} \label{rmk:only-action-gpds?} Actually, in the finite dimensional case, under simple topological assumptions, a flat Cartan groupoid $\Sigma$ must be isomorphic to an action groupoid. This is the case e.g. if the base $\BB$ is compact and $1$-connected, and the s-fibers of $\Sigma$ are $1$-connected. See e.g.~\cite{MARIA}, section $6$. This discussion will become more transparent in the next section, when moving to the infinitesimal picture.
\end{rmk}

\subsection{Almost geometric structures}
\label{ssec:Formal geometric structures}

The notion of flat Cartan groupoid (and related notions) also shows up when dealing with the
``almost" version of geometric structures. Here are two illustrations of ``almost" structures that
may be useful to have in mind:
\begin{itemize}
\item while symplectic structures on a manifold $M$ are non-degenerate 2-forms that are closed, almost symplectic structures are obtained when giving up the closedness condition;
\item while complex structures on $M$ may be interpreted as morphisms $J: TM\to TM$ with $J^{2}= -\textrm{Id}$ and for which the Nijenhuis tensor $\mathcal{N}_J$ vanishes, almost complex structures give up on the last condition. 
\end{itemize}

While the notion of $\Gamma$-structure encompasses many of the standard geometric structures, 
the notion of ``almost $\Gamma$-structure" appears as a very unifying general concept. When $\Gamma$ is transitive, 
the theory admits an equivalent formulation in terms of principal group bundles; in fact, under some assumptions on $\Gamma$, one ends up with the classical notions of $G$-structure (see, for example,~\cite{STERNBERG}) and (almost) integrable $G$-structure.
In general one needs to appeal to groupoids and one discovers the notion of flat Cartan groupoid $(\Sigma, \CC)$ from Definition \ref{defn:Cartan-groupoid} and $(\Sigma, \CC)$-structure that we recall below. Indeed, the notion of ``flat Cartan'' is implicit in~\cite{FRANCESCO}, to which we refer for more details on ``almost structures''.

In some sense, the next definition is a generalization of Definition \ref{defn:Cartan-groupoid} in which we replace Lie groupoids $\Sigma\tto \BB$ by principal $\Sigma$-bundles $\pi: P\to M$ (over some manifold $M$):
\[
\xymatrix{
\Sigma \ar@<0.25pc>[d] \ar@<-0.25pc>[d]  & \ar@(dl, ul) &  P \ar[dll]^{\mu}\ar[dr]_{\pi} &    \\
\BB&  & &  M.}
\]
While before, in order to make sense of multiplicativity of $\CC\subset T\Sigma$, we made use of the differential of the multiplication and other structure maps of $\Sigma$, now we use the differential of the action map
\[ a: \Sigma\ltimes P\to P, \quad (g, p)\mapsto gp \quad (\textrm{defined on $\{(g, p): s(g)= \mu(p)\}$})\]
to obtain an action of $T\Sigma$ on $TP$. In particular, for $\CC\subset T\Sigma$, $\CC_P\subset TP$ we can consider
\[ \CC\cdot \CC_{P}:= \{(da)(v, w): v\in \CC, w\in \CC_P\ \textrm{with $ds(v)=d\mu(w)$}\}.\]

\begin{defn}\label{defn:Cartan-bundle}
Given a Cartan groupoid $(\Sigma, \CC)\tto \BB$, a {\bf principal $(\Sigma, \CC)$ -bundle} on a manifold $M$ is any principal $\Sigma$-bundle $\pi: P\to M$ endowed with an Ehresmann connection $\CC_{P}\subset TP$ with the property that $\CC\cdot \CC_{P}\subset \CC_{P}$. If additionally $\ker(d\mu)\cap \CC_P$ is an involutive distribution, we talk about {\bf $(\Sigma, \CC)$-structure}. A principal $(\Sigma, \CC)$ -bundle/$(\Sigma, \CC)$-structure is called {\bf flat} if $\CC_{P}$ is involutive.
\end{defn}
\begin{rmk}\label{rk:Pbundles_vs_structures}
As one sees from Example~\ref{ex:Principal_group_bundles}, principal $(\Sigma, \CC)$-bundles are more general objects than $(\Sigma, \CC)$-structures. That said, under flatness assumption, the two notions coincide - and the flat case is the one we focus on for the rest of this paper, since it encompasses all the geometric examples that we have in mind (Example~\ref{ex:formal_Gamma_structures}).
\end{rmk}

\begin{rmk}\label{rk:Cartan-bdles-via-forms}
Similar to Remark \ref{rk:Cartan-gpds-via-forms}, while $\CC$ arises as the kernel of a multiplicative 1-form $\omega\in \Omega^1(\Sigma, t^*E)$ with coefficients in a representation $E$ of $\mathcal{G}$, $\CC_P$ arises as the kernel of a 1-form 
\[ \theta\in \Omega^1(P, \mu^*E)\] such that the action $a:\Sigma\ltimes P\to P$
satisfies the multiplicativity property: 
\begin{equation}\label{Mult_action}
(a^*\theta)_{(g, p)}=(pr_1^*\omega)_{(g, p)}+(g\cdot pr_2^*\theta)_{(g, p)}
\end{equation}
for all $(g, p)\in \Sigma\ltimes_\mu P$. 


For more details about multiplicative actions we refer to \cite{FRANCESCO}. 
\end{rmk}

\begin{exm}\label{ex:Principal_group_bundles}
When $\Sigma= G$ is a Lie group as in Example \ref{ex-Cartan-gpd-G}, then principal $(G,0)$-bundles are the  same thing as principal $G$-bundles $\pi: P\to M$ endowed with a principal connection (in the standard sense). For instance, using the point of view of 1-forms as in the previous remark, since the 1-form corresponding to $G$ is the Maurer-Cartan form, for $P$ one is looking at $\omega_P\in \Omega^1(P, \mathfrak{g})$ satisfying 
\[
a^*(\theta)-g\cdot pr_2^*(\theta)=pr_1^*(\omega_{MC}).
\]
Evaluating on pairs $(0, v)$, where $v$ is any vector tangent to $P$, one obtains
\[
L_g^*(\theta)=g\cdot (\theta)
\] 
i.e. $G$-equivariance of $\theta$. Similarly, evaluating on pairs $(v,0)$, with $v\in \mathfrak{g}\cong T_eG$, one finds
\[
\theta(\hat{v}_p)=v
\]
where $\hat{v}$ is the vertical vector at $p$ induced by $v$. As for $(G, 0)$-structures, they correspond precisely to flat connections; see Remark~\ref{rk:Pbundles_vs_structures}.
\end{exm}

\begin{exm}\label{ex:Sigma-is-Cartan-bundle}
Any Cartan groupoid $(\Sigma, \CC)$ acting on itself by left multiplication along the target map is a principal $(\Sigma, \CC)$-bundle and a $(\Sigma, \CC)$-structure, since $\ker(dt)\cap \CC=\ker(ds)\cap \CC$ by multiplicativity and $\ker(dt)\cap \CC=0$ because $\CC$ is an Ehresmann connection. Moreover, $(\Sigma, \CC)$ is flat as a $(\Sigma, \CC)$-structure if and only if it is a flat Cartan groupoid. In fact, the multiplicativity of $\CC$ is exactly the multiplicativity of the action with respect to $\CC$ itself, and the action is clearly principal. 
\[
\xymatrix{
\Sigma \ar@<0.25pc>[d] \ar@<-0.25pc>[d]  & \ar@(dl, ul) &  \Sigma \ar[dll]^{t}\ar[dr]_{s} &    \\
\BB&  & &  \BB,}
\]
\end{exm}

\begin{defn}
Given a pseudogroup $\Gamma$ on $\BB$, a {\bf almost $\Gamma$-structure}  on a manifold $M$ 
is a flat $(J^\infty\Gamma, \CC^\infty)$-structure $(P, \CC_P)\to M$.
\[
\xymatrix{
(J^{\infty}\Gamma, \CC^\infty)\ar@<0.25pc>[d] \ar@<-0.25pc>[d]  & \ar@(dl, ul) &  (P, \CC_P)
\ar[dll]^{\mu}\ar[dr]_{\pi} &    \\
\BB&  & &  M},
\]
\end{defn}
\begin{rmk}
The flatness request in the definition above is motivated by the fact that the definiton of almost $\Gamma$-structure is meant to capture the idea of ``infinite jet of a $\Gamma$-structure''. In all the examples with geometric meaning, $P$ is a (a subset of) an infinite jet space and the corresponding $\CC_P$ is obtained as a limit of distributions and automatically flat, as in the proof Proposition~\ref{thm:Classical_cartan_dist}.  See the example below.
\end{rmk}

\begin{exm}\label{ex:formal_Gamma_structures} As indicated by the terminology and stressed in the previous remark, the motivating examples of almost $\Gamma$-structure arise when looking at the ``$\infty$-order data" induced by 
$\Gamma$-structures. To explain this, let us first look at the case when 
$\textrm{dim}(M)= \textrm{dim}(\BB)$, when $\Gamma$-structures are encoded in $\Gamma$-atlases $\mathcal{A}$. If $\mathcal{A}$ is a maximal $\Gamma$-atlas then the infinite jets of elements of $\mathcal{A}$, 
\[ J^{\infty}\mathcal{A}:= \{ j^{\infty}_{x}f: f\in \mathcal{A}, x\in M\}\subset J^{\infty}(M, \BB),\]
carries precisely the structure from the previous definition:
\[
\xymatrix{
(J^{\infty}\Gamma, \CC^\infty)\ar@<0.25pc>[d] \ar@<-0.25pc>[d]  & \ar@(dl, ul) &  (J^{\infty}\mathcal{A}, \CC^\infty_\mathcal{A}) \ar[dll]^{\mu}\ar[dr]_{\pi} &    \\
\BB&  & &  M,}
\]
Here, $\CC^\infty_\mathcal{A}$ is the canonical Cartan distribution on the infinite jet space $J^{\infty}\mathcal{A}$, see Example~\ref{exm:Car-distr-J}.
$J^{\infty}\mathcal{A}$ fibers over $M$ via the map $\pi: j^{\infty}_{x}f\mapsto x$, over $\BB$ via the map $\mu: j^{\infty}_{x}f\mapsto f(x)$,  and it carries a left-action of $J^{\infty}\Gamma$:
\[j^{\infty}_{f(x)}\phi\cdot j^{\infty}_{x}f= j^{\infty}_{x} (\phi\circ f) \quad \textrm{(for $\phi\in \Gamma$, $f\in \mathcal{A}$)}.\]
The smooth structure on $J^{\infty}\mathcal{A}$ exploits the 
open inclusion 
\[ J^{\infty}\mathcal{A}\subset J^{\infty}(M, \BB)\]
and the smooth structure on $J^{\infty}(M, \BB)$  (see Example~\ref{exam-Jinfty-R}). The Cartan distribution or form on $J^{\infty}(M, \BB)$ (see Examples \ref{exm:Car-distr-J}  and \ref{ex:Cartan-form-R} for the general constructions) induce the desired distribution (or form) on $J^{\infty}\mathcal{A}$, making it into a flat $(J^\infty\Gamma, \CC^\infty)$-structure. Actually, note that the entire diagram and all the operations involved sit inside the principal bundle given by the action of the (larger) groupoid $\Pi^{\infty}\tto \BB$ of infinite jets of diffeomorphisms (see the appendix) on the space $\Pi^{\infty}(M, \BB)$ of jets of (local) diffeomorphisms between $M$ and $\BB$:
\[
\xymatrix{
\Pi^{\infty}\ar@<0.25pc>[d] \ar@<-0.25pc>[d]  & \ar@(dl, ul) &  \Pi^{\infty}(M, \BB) \ar[dll]^{\mu}\ar[dr]_{\pi} &    \\
\BB&  & &  M}.
\]

\medskip

Let us now look at the general case (i.e. when $M$ and $\BB$ may have different dimensions). For an intrinsic approach, we represent $\Gamma$-structures by 
principal $\G$-bundles $\mathcal{P}$ over $M$ ($\G= \Ger(\Gamma)$). Then we define  
\[ \mathcal{P}^{\infty}\subset J^{\infty}(M, \BB)\]
consisting of jets of maps of type $\mu\circ \sigma$ where $\sigma$ is a section of $\pi: P\to M$ and one proceeds 
as above, obtaining all the desired structure from that of $J^{\infty}(M, \BB)$. One should be aware however that, 
if one embeds everything into a larger diagram (like the last one), one replaces $\Pi^{\infty}(M, \BB)$ by the similar space $\textrm{Subm}^{\infty}(M, \BB)$ defined using submersions (to ensure that the resulting action is free) but, even so, the action is not transitive. Hence, in general, it is only after we restrict to the $\Gamma$-version of the diagram, 
\[
\xymatrix{
J^{\infty}\Gamma\ar@<0.25pc>[d] \ar@<-0.25pc>[d]  & \ar@(dl, ul) &  \mathcal{P}^{\infty} \ar[dll]^{\mu}\ar[dr]_{\pi} &    \\
\BB&  & &  M,}
\]
 that we obtain a principal bundle (and a $(J^\infty\Gamma, \CC^\infty)$-structure).
 
 Note also that one could also proceed less intrinsically, using $\Gamma$-cocycles. The actual construction is, in principle, rather obvious: representing a $\Gamma$ structure on $M$ by a $\Gamma$-cocycle $c: M_{\mathcal{U}}\to \Ger(\Gamma)$, just compose it with the obvious morphism $\Ger(\Gamma)\to J^\infty\Gamma$ to get a $J^\infty\Gamma$-cocycle, hence a principal $J^\infty\Gamma$-bundle. Proceeding this way however, one still has to exhibit the Cartan distribution and prove the independence of the choice of the cocycle. 
 
\end{exm}

\begin{defn} An almost $\Gamma$-structure is said to be integrable if it is induced by a $\Gamma$-structure. 
\end{defn}

\begin{rmk} As we will discuss in subsection \ref{subsec:The case of Cartan bundles and almost structures}
and prove in Proposition \ref{prp:chr-map-formal-back}, 
the differentiable characteristic maps 
\[ \kappa^{\mathcal{P}}: H^{*}_{\textrm{diff}}(\Gamma)\to H^*(M).\]
associated to $\Gamma$-structures $P$ on a manifold $M$ (where $\Gamma$ is a Lie pseudogroup) only depend on the underlying almost $\Gamma$-structures. In other words, one has similar characteristic maps 
\[ \kappa^{P}: H^{*}_{\textrm{diff}}(\Gamma)\to H^*(M).\]
associated to any almost $\Gamma$-structure $P$ on $M$. And the construction actually makes sense 
more generally, for any flat Cartan groupoid $(\Sigma, \CC)\tto \BB$ and flat $(\Sigma, \CC)$-structure $(P, \CC_P)\to M$, with associated characteristic maps defined on the Haefliger cohomology:
\[ \kappa^{P}: H^{*}_{\textrm{Haef}}(\Sigma, \CC)\to H^*(M).\]
\end{rmk}

\section{The infinitesimal  version of Haefliger cohomology}\label{The infinitesimal version of Haefliger cohomology}

\subsection{Lie algebroids}

In this section, we investigate the infinitesimal data associated with (flat) Cartan groupoids. This can be seen as part of the ``Lie philosophy" of linearizing the global group-like structures. Of course, the starting point is the infinitesimal counterpart of Lie groupoids: Lie algebroids, i.e. vector bundles $A\to \BB$ endowed with a Lie bracket $[\cdot, \cdot]$ on the space $\Gamma(A)$ of section of $A$ satisfying the Leibiniz type identity 
\[ [\alpha, f\cdot\beta]= f\cdot [\alpha, \beta]+ L_{\rho(\alpha)}(f) \beta\quad \textrm{for all $\alpha, \beta\in \Gamma(A), f\in C^{\infty}(\BB)$}\]
for some vector bundle morphism $\rho: A\to T\BB$ (necessarily unique, and called the anchor of $A$). 
The Lie algebroid $A= \textrm{Lie}(\Sigma)$ of a Lie groupoid $\Sigma \tto \BB$ is defined as
\[
A:=\ker(ds)_{\BB}, 
\]
with the anchor 
\[ \rho:= dt|_{A}: A\to T\BB,\]
and with Lie bracket obtained by identifying the sections of $A$ with right invariant vector fields on $\Sigma$. For the last notion we consider arrows $g: x\to y$, the corresponding right multiplication $R_g:s^{-1}(x)\to s^{-1}(y)$ and the induced tangent map 
\[
dR_g:s^{-1}(x)\to s^{-1}(y).
\]
With this, a vector field $X$ on $\Sigma$ is called {\bf right invariant} if
\begin{itemize}
\item it is {\bf $s$-vertical};
\item one has $X_{hg}=dR_g(X_h)$ for all the composable pairs $(h, g)$.
\end{itemize}
The key remark is that any section $\alpha$ of $A$ gives rise to the right invariant vector field $\alpha^R$ given by:
\[
\alpha^R_g= dR_g(\alpha_{t(g)})
\]and then $[\alpha, \beta]$ is defined by the condition that 
\[
[\alpha, \beta]^R=[\alpha^R, \beta^R].
\]

\medskip

Next, various notions and constructions valid for Lie groupoids $\Sigma\tto \BB$ have an infinitesimal counterpart, defined for Lie algebroids $A\to \BB$. A basic example is the notion of representation which, on the infinitesimal side, takes us to representations of $A$, i.e. vector bundles $E\to \BB$ endowed with a bilinear operator
\[
\nabla:\Gamma(A)\times \Gamma(E)\to \Gamma(E), \quad (\alpha, \sigma)\mapsto \nabla_{\alpha}(\sigma)
\]
which is  $C^\infty(\BB)$-linear in the first argument, satisfies the Leibniz rule
\[\nabla_\alpha(f \sigma)=f\nabla_\alpha(\sigma)+L_{\rho(\alpha)}(f)\sigma\quad 
\textrm{for all $\alpha\in \Gamma(A)$, $\sigma\in \Gamma(E)$ and $f\in C^\infty(\BB)$},\]
and is flat in the sense that 
\begin{equation}\label{curv-A-conn} 
\nabla_{[\alpha, \beta]}= [\nabla_{\alpha}, \nabla_{\beta}].
\end{equation}
Giving up the last condition one talks about {\bf $A$-connections on $E$} and then the difference of the two terms from the flatness condition defines a tensor 
\[ k_{\nabla}\in \textrm{Hom}(\Lambda^2A^*, \textrm{End}(E)), \]
called the {\bf curvature} of $\nabla$.

\medskip

If $A= {\rm Lie}(\Sigma)$ and $E$ is a representation of $\Sigma$, then $E$ can be turned into a representation of $A$ by defining 
\begin{equation}\label{eq:Lie:on:repres}
\nabla_\alpha(\sigma)(x)=\left.\frac{d}{d\epsilon}\left({\phi^\epsilon_\alpha(x)^{-1}\cdot \sigma\left(\phi^\epsilon_{\rho(\alpha)}(x)\right)}\right)\right|_{\epsilon=0}
\end{equation}
for all $\alpha\in \Gamma(A)$, $\sigma\in \Gamma(E)$ and $x\in \BB$; here, $\phi^\epsilon_{\alpha}$, called the {\bf flow} of $\alpha$, is a one parameter family of bisections such that 
\[
\left.\frac{d}{d\epsilon}\phi^\epsilon_{\alpha}(x)\right|_{\epsilon=0}=\alpha(x)
\]
for all $x\in \BB$, while $\phi^\epsilon_{\rho(\alpha)}$ is the flow of the vector field $\rho(\alpha)$. A basic well-known result says that this construction $\textrm{Rep}(\Sigma)\to \textrm{Rep}(A)$ is injective if $\Sigma$ is $s$-connected (i.e. the $s$-fibers are connected) and 1-1 if the $s$-fibers are $1$-connected (connected and simply connected).

\medskip
\begin{exm}\label{exam-mult-correct-coeff} In subsection~\ref{subsection:action_TX}, we have seen that, for any Cartan groupoid $(\Sigma, \CC)\tto \BB$, $T\BB$ is a representation of $\Sigma$. 
Next to this, there is another natural representation of $\Sigma$: $A$ itself! Indeed,
\begin{align}\label{A_representation}
\Sigma_{s}\times_{\pi} A\to A,\ \ (g, \alpha_{s(g)})\to dR_{g^{-1}}(dm(hor^s_g(\rho(\alpha_{s(g)})),\alpha_{s(g)});
\end{align}
defines a representation of $\Sigma$ on $A$.

This also allows us to expand Remark \ref{rk:Cartan-gpds-via-forms}, where we pointed out that $\CC\subset T\Sigma$ can be described as the kernel of a multiplicative 1-form $\omega\in \Omega^1(\Sigma, t^*E)$ with coefficients in some representation $E$ of $\Sigma$. Since $\omega$ is pointwise surjective and its kernel is complementary to $\textrm{Ker}(dt)$, it follows that $E$ is actually isomorphic to $\textrm{Ker}(dt)|_\BB$ hence, using the inversion, also to 
$\textrm{Ker}(ds)|_\BB= A$. Using the multiplicativity condition, one can check that this is actually an isomorphism of representations, so that one may say (as in Corollary~\ref{cor:MC-eq-for-Cartan-form}) that 
\[ \omega\in \Omega^1(\Sigma, t^*A).\]
\end{exm}

\medskip

Another illustration of the Lie philosophy is the infinitesimal counterpart of differentiable cohomology- the cohomology of
Lie algebroid $A\to \BB$ with coefficients in representations $E= (E, \nabla)$: in degree $p$ one defines
$C^p(A, E)= \Gamma(\Lambda^p A^*\otimes E)$, and then the {\bf Koszul differential} 
%
\[ d: C^p(A, E)\to C^{p+1}(A, E)\]
is defined by
\begin{align*}
d\omega(\alpha_1, \dots \alpha_{p+1})=\sum\limits_{i<j}(-1)^{i+j}\omega([\alpha_i, \alpha_j], \alpha_1, \dots \hat{\alpha}_i, \dots, \hat{\alpha}_j, \dots, \alpha_{q+1})&\\
+\sum\limits_{i}(-1)^i\nabla_{\alpha_i}(\alpha_1, \dots, \hat{\alpha}_i, \dots \alpha_{q+1})&
\end{align*}
The cohomology of the cochain complex $(C^p(A, E), d)$ denoted by $H^*(A,E)$ is the {\bf algebroid cohomology with coefficents in $E$}. Lie algebra cohomology and DeRham cohomology are two examples, obtained respectively when $A=\mathfrak{g}$ (the Lie algebroid over a point) and $A=T\BB$ (where the anchor is the identity and the bracket is the Lie bracket of vector fields).
Finally, if $A= \textrm{Lie}(\Sigma)$ and $E$ comes from a representation of $\Sigma$, 
there is a natural cohomology map (known as the {\bf van Est map})
\[
VE: H^*_d(\Sigma, E)\to H^* (A, E).
\]
Here, the left hand side is the {\bf differentiable cohomology} of $\Sigma\tto \BB$ with coefficents in $E\to \BB$, defined as for Lie groups: the cohomology of the subcomplex of groupoid cochains with coefficents in $E$ (that is, sections of $t^*E$, where $t:\Sigma^{(p)}\to \BB$ takes the target of the first element) that are smooth.
If $\Sigma$ has cohomologically $n$-connected $s$ fibers, the map is an isomorphism in degree lower than $n$ and injective in degree $n+1$. This was proven in~\cite{VANESTPROOF} for Lie groups and generalized in~\cite{MARIUS} to Lie groupoids.

\subsection{Cartan algebroids and (infinitesimal) Haefliger cohomology}
We now proceed with the discussion of the infinitesimal counterpart of Cartan groupoids and their Haefliger cohomology. 
First of all, it may not come as a surprise that, instead of Ehresman connection on groupoids, one now considers linear (vector-bundle) connections on algebroids:
\[ \nabla:  \mathfrak{X}(\BB)\times \Gamma(A)\to \Gamma(A), \quad (X, \alpha)\mapsto \nabla_X(\alpha).\]
While an Ehresman connection gave rise to the ``quasi-action" (\ref{eq:ind-right-action-2}) of $\Sigma$ on $T\BB$, $\nabla$ will give rise to a ``quasi-action" of $A$ on $T\BB$, i.e. an $A$-connection. Namely:
\begin{align}\label{TM_conn}
\nabla^{T\BB}:\Gamma(A)\times\Gamma(T\BB)&\to \Gamma(T\BB)\nonumber \\
(\alpha, X)&\to [\rho(\alpha), X]+\rho(D_X(\alpha)).
\end{align}
Incidentally, let us point out that also the action of $\Sigma$ on $A$ from Example \ref{exam-mult-correct-coeff}  has an infinitesimal counterpart, namely the $A$-connection on $A$ 
\begin{align}\label{A_conn}
\nabla^A:\Gamma(A)\times\Gamma(A)&\to \Gamma(A)\nonumber \\
(\alpha, \alpha')&\to D_{\rho(\alpha')}\alpha+[\alpha,\alpha'].
\end{align}
What is a bit more subtle is to find what the infinitesimal analogue of the multiplicativity condition is. 
For the connection $\CC$ on $\Sigma$, being closed under multiplication was encoded in the equation (\ref{eq:m2}), which can be thought as the vanishing of some kind of 2-cochain on $\Sigma$:
\[ \Sigma^{(2)}\ni (g_1, g_2)\mapsto dm\left( \textrm{hor}_{g_1}(\_), \textrm{hor}_{g_2}(g_{1}^{*}\_)\right)- \textrm{hor}_{g_1g_2}(\_)\in \textrm{Hom}(T_{t(g_1)}\BB, A_{s(g_2)}).\]
One may think of this 2-cochain as some kind of ``multiplicativity curvature" of $\CC$. For linear connection the situation is a bit similar but we end up with 2-cochains on $A$.

\begin{defn} Given a Lie algebroid $A\to \BB$ and a connection $\nabla$ on $A$, the {\bf basic curvature of $\nabla$} is defined as the tensor 
\[ k_{\nabla}^{\textrm{bas}}\in C^2(A, \textrm{Hom}(T\BB, A)),\]
\begin{equation}\label{Spencer_compatibility}
k_{\nabla}^{\textrm{bas}}(\alpha, \alpha')X:= 
\nabla_X[\alpha,\alpha']- [\nabla_X(\alpha), \alpha']- [\alpha, \nabla_X\alpha']- 
\nabla_{\nabla^{TM}_{\alpha'}(X)} \alpha+ \nabla_{\nabla^{TM}_{\alpha}(X)} \alpha',
\end{equation}
where $\nabla^{T\BB}$ is given by (\ref{TM_conn}). We say that $\nabla$ is {\bf infinitesimally multiplicative} if  
$k_{\nabla}^{\textrm{bas}}= 0$. 

\end{defn}

Note that the basic curvature is related to the $A$-curvature of $\nabla^{T\BB}$ by:
\[ k_{\nabla^{T\BB}}= \rho \circ k_{\nabla}^{\textrm{bas}};\]
and, similarly, to the $A$-curvature of $\nabla^A$ by 
\[ k_{\nabla^{A}}= k_{\nabla}^{\textrm{bas}}\circ \rho .\]

\begin{defn} A {\bf Cartan algebroid} is an algebroid $A$ together with a connection $\nabla$ on it which is infinitesimally multiplicative (cf. the previous definition). It is called {\bf flat} if $\nabla$ is a flat connection. 
\end{defn}

\begin{rmk} The name ``Cartan algebroid'' comes from~\cite{BLAOM}, where the author considers infinitesimally multiplicative connections on Lie algebroids as a generalization of Cartan geometries~\cite{SHARPE}.
\end{rmk}
We deduce the following. 

\begin{cor} For any Cartan algebroid $(A, \nabla)$, (\ref{TM_conn}) and (\ref{A_conn}) make $T\BB$ and $A$ into representations of $A$. Moreover, $\rho: A\to T\BB$ is $A$-equivariant.
\end{cor}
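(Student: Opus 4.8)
The plan is to reduce the statement to the two curvature identities recorded just above the corollary, namely $k_{\nabla^{T\BB}} = \rho \circ k_{\nabla}^{\textrm{bas}}$ and $k_{\nabla^{A}} = k_{\nabla}^{\textrm{bas}} \circ \rho$, together with a routine check that (\ref{TM_conn}) and (\ref{A_conn}) genuinely define $A$-connections. Throughout I write $\nabla$ for the linear connection of the Cartan algebroid $(A,\nabla)$ (denoted $D$ in the displayed formulas).

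First I would verify that $\nabla^{T\BB}$ and $\nabla^A$ are $A$-connections, i.e. that each is $C^{\infty}(\BB)$-linear in the algebroid argument and satisfies the Leibniz rule with anchor $\rho$ in the coefficient argument. For $\nabla^{T\BB}_\alpha(X) = [\rho(\alpha),X] + \rho(\nabla_X\alpha)$ this is a short computation: replacing $\alpha$ by $f\alpha$, the term $-L_X(f)\rho(\alpha)$ coming from $[\rho(f\alpha),X]$ cancels the term $L_X(f)\rho(\alpha)$ coming from $\rho(\nabla_X(f\alpha))$, giving $C^\infty(\BB)$-linearity in $\alpha$; replacing $X$ by $fX$, the bracket contributes $L_{\rho(\alpha)}(f)X$ while $\nabla$ is tensorial in its vector-field slot, yielding the Leibniz rule. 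The same two Leibniz rules, applied to $\nabla^A_\alpha(\alpha') = \nabla_{\rho(\alpha')}\alpha + [\alpha,\alpha']$, give $C^\infty(\BB)$-linearity in $\alpha$ (the $\pm L_{\rho(\alpha')}(f)\alpha$ terms cancel) and the Leibniz rule in $\alpha'$. These are routine cancellations, so I would only indicate them.

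The heart of the argument is then immediate. By hypothesis $(A,\nabla)$ is a Cartan algebroid, so its basic curvature vanishes, $k_{\nabla}^{\textrm{bas}} = 0$. Substituting this into the two identities gives $k_{\nabla^{T\BB}} = 0$ and $k_{\nabla^{A}} = 0$, i.e. both $A$-connections are flat in the sense of (\ref{curv-A-conn}); hence $T\BB$ and $A$ are representations of $A$.

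Finally, $A$-equivariance of $\rho$ means $\rho\big(\nabla^A_\alpha\alpha'\big) = \nabla^{T\BB}_\alpha\big(\rho(\alpha')\big)$ for all $\alpha,\alpha'\in\Gamma(A)$. Unwinding both sides, the left-hand side is $\rho(\nabla_{\rho(\alpha')}\alpha) + \rho([\alpha,\alpha'])$ and the right-hand side is $[\rho(\alpha),\rho(\alpha')] + \rho(\nabla_{\rho(\alpha')}\alpha)$; these coincide precisely because $\rho$ is a morphism of Lie algebras, $\rho([\alpha,\alpha']) = [\rho(\alpha),\rho(\alpha')]$. Notably this last point requires neither flatness nor the Cartan condition. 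The only place where anything could go wrong is the sign and term bookkeeping in matching the definition of an $A$-connection and in the two curvature identities; but since the latter are already established in the text, no genuine obstacle remains.
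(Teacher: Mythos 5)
Your proposal is correct and follows essentially the same route as the paper, which states the corollary as an immediate deduction from the identities $k_{\nabla^{T\BB}}= \rho \circ k_{\nabla}^{\textrm{bas}}$ and $k_{\nabla^{A}}= k_{\nabla}^{\textrm{bas}}\circ \rho$ together with the vanishing of the basic curvature; you merely spell out the routine Leibniz checks and the one-line equivariance computation that the paper leaves implicit.
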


\begin{exm}\label{ex-Cartan-alg-J} The infinitesimal analogue of Example \ref{ex-Cartan-gpd-J} is the (pf) Lie algebroid $A^{\infty}$  of the (pf) Lie groupoid $J^\infty\Gamma$; we refer to our appendix, and in particular to Examples~\ref{exam-pair-groupoid} and~\ref{exam-Lie-pseudogroups}, for more details and terminology. A (strict) atlas for $A^\infty$ is given by the Lie algebroids $A^k\to \BB$ associated to $J^k\Gamma$, $k\in \mathbb{N}$, together with the maps $A^k\to A^{k-1}$ obtained deriving the projections $J^k\Gamma\to J^{k-1}\Gamma$. Since we assume normality, $A^\infty\cong\lim\limits_{\longleftarrow}A^k$. The flat infinitesimally multiplicative connection $\nabla$ on $A^\infty$ is the restriction of the Spencer operator \eqref{eq:nabla-Cartan} (see also~\cite{SPENCER} for a discussion of Spencer operators) .
We notice here that $A^\infty$ can be defined as the space of infinite jets of (local) $\Gamma$-vector fields (see Definition~\ref{defn:Gamma-vector-fields}), that is
\[A^\infty=\{j^\infty_x X:\ x\in \dom(X),\ X\text{ is a } \Gamma\text{-vector field}\}.\]

One can give an explicit description of the action of $J^\infty\Gamma$ on $A^\infty$ from example~\ref{exam-mult-correct-coeff} (which can be derived to provide a formula for the representation of $A^\infty$ on itself). Let $\alpha_x\in A^\infty_x$; we can write
\[
\alpha_x= j^\infty_x\left(\left.\frac{d}{dt}\right|_{t=0} \phi^t\right)
\]
with $\phi^t\in \Gamma$ and $\phi^0=id$. If $j^\infty_xf \in J^\infty\Gamma$ then one has
\[
j^\infty_x f \cdot \alpha_x= j^\infty_{f(x)}\left(\left.\frac{d}{dt}\right|_{t=0} f\circ \phi^t\circ f^{-1}\right)
\]
Finally, let us notice that $A^\infty$ is also a bundle of Lie algebras; in fact, the fiber $A^\infty_x$ is precisely the space $\mathfrak{a}_x(\Gamma)$ of almost $\Gamma$-vector fields from Definition~\ref{defn:Gamma-vector-fields} whose bracket is given by the formula
\[
[j^\infty_x X, j^\infty_x Y]=j^\infty_x[X,Y].
\]
The existence of this fiberwise Lie bracket is not a coincidence; an analogue structure exists on any Cartan algebroid, as we explain in the next example.
\end{exm}

\begin{exm}\label{exam-Cart-algbrds-actions} The infinitesimal analogue of Example \ref{ex-Cartan-gpd-G-action} is that of action algebroids $\mathfrak{g}\ltimes \BB$ associated to infinitesimal actions $\rho: \mathfrak{g}\to \mathfrak{X}(\BB)$ of Lie algebras. Recall that, as a vector bundle, $\mathfrak{g}\ltimes \BB$ is the product bundle $\mathfrak{g}\times \BB$, the anchor is given by the infinitesimal action and the bracket is the unique Lie bracket satisfying the Leibniz identity and the identity $[c_v, c_w]= c_{[v, w]}$, for all constant sections $c_v, c_w$ with $v, w\in \mathfrak{g}$. Endowed with the canonical flat connection, it becomes a flat Cartan algebroid. 

As in Remark \ref{rmk:only-action-gpds?}, these action algebroids exhaust most of the examples in the finite dimensional case. The key remark here is that, given a flat Cartan algebroid $(A, \nabla)$, each fiber of $A$ has a natural structure of Lie algebra. In fact, the bracket
%
%
\begin{eqnarray}\label{pt-bracket}
\{\ ,\ \}_{\textrm{pt}}: & \Gamma(A)\times \Gamma(A) &\to \Gamma(A)\\
& (\alpha,\beta) &\to [\alpha,\beta]- \nabla_{\rho(\alpha)}(\beta)+\nabla_{\rho(\beta)}(\alpha)\nonumber
\end{eqnarray}
is $C^{\infty}(\BB)$-linear in its entries and satisfies Jacobi. Therefore, it makes $(A, \{\ ,\ \}_{\textrm{pt}})$ into a bundle of Lie algebras (explaining also the notation ``pt" to indicate ``pointwise bracket"). 

\begin{defn} \label{defn:extended-isotropy}
For $x\in \BB$, the {\bf extended isotropy Lie algebra} of $(A, \nabla)$ at $x\in \BB$, denoted $\extg_x(A)$, is $A_x$ endowed with the pointwise bracket (\ref{pt-bracket}) evaluated at $x$.
\end{defn}

The terminology indicates the fact that $\extg_x(A)$ contains, as a Lie subalgebra
the isotropy Lie algebra of $A$ at $x$, $\mathfrak{g}_x(A)$ (defined for any Lie algebroid as the kernel of the anchor equipped with the restriction of the bracket on $\Gamma(A)$). 

Note also that $\nabla$ acts by derivations w.r.t. $\{\ ,\ \}_{\textrm{pt}}$. In particular, the parallel transport w.r.t. 
$\nabla$ induces Lie algebra isomorphisms between the extended isotropy Lie algebras at different points. Hence, when the base is simply connected, fixing a point $x_0$ 
one obtains an isomorphism of $A$ with $\extg_{x_0}\times \BB$ and it is straightforward to check that, actually, this is an isomorphism of algebroids. 
\end{exm}

Finally, the precise relationship with Cartan groupoids:

\begin{thm} For any Cartan groupoid $(\Sigma, \CC)\tto \BB$ its Lie algebroid $A$ becomes a Cartan algebroid with $\nabla^{\CC}: \mathfrak{X}(\BB)\times \Gamma(A)\to \Gamma(A)$ given by:
\[
\nabla^{\CC}_X(\alpha)=[{\rm hor}^{\CC}(X),\alpha^R]|_{\BB}.
\]
Moreover, $\nabla^{\CC}$ is flat if $\CC$ is so. Furthermore, when $\Sigma$ has $1$-connected s-fibers, the construction 
\[ \CC\mapsto \nabla^{\CC}\]
is a 1-1 correspondence between Cartan connections on $\Sigma$ and infinitesimally multiplicative connections on $A$. 
%
\end{thm}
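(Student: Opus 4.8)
The plan is to treat the three assertions in turn: that $\nabla^{\CC}$ is a genuine connection, that it is infinitesimally multiplicative (and flat when $\CC$ is), and that $\CC\mapsto\nabla^{\CC}$ is bijective in the $s$-simply-connected case. Throughout I read ${\rm hor}^{\CC}$ in the displayed formula as the horizontal lift relative to the source map $s$, which is the relevant one: since $\alpha^R$ is $s$-vertical and $H_X:={\rm hor}^{\CC}(X)$ is $s$-projectable with $ds(H_X)=X$, the bracket $[H_X,\alpha^R]$ is again $s$-vertical, so its restriction to $\BB$ indeed lands in $A=\ker(ds)|_{\BB}$. To see that $\nabla^{\CC}$ is a vector-bundle connection I would record two pointwise identities. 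First, $H_{fX}=(s^{*}f)\,H_X$ while $\alpha^R(s^{*}f)=df\big(ds(\alpha^R)\big)=0$, which gives $C^{\infty}(\BB)$-linearity in $X$ after restricting to $\BB$ (where $s^{*}f=f$). Second, $(f\alpha)^R=(t^{*}f)\,\alpha^R$, so that $[H_X,(f\alpha)^R]=(t^{*}f)[H_X,\alpha^R]+(H_X(t^{*}f))\,\alpha^R$; evaluating the last term at a unit $1_x$ and using \eqref{eq:m3} (so that $H_X|_{1_x}=X_x\in T_x\BB$ and $dt(H_X|_{1_x})=X_x$) yields the Leibniz rule $\nabla^{\CC}_X(f\alpha)=f\,\nabla^{\CC}_X(\alpha)+X(f)\,\alpha$.

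For flatness I would exploit a restriction trick. Since $H_X$ is tangent to $\BB$ along $\BB$ (by \eqref{eq:m3}), its flow preserves $\BB$ near the units; hence for any vector field $W$ on $\Sigma$ vanishing on $\BB$ one has $[H_X,W]|_{\BB}=0$. Applying this to $W=\gamma^R-[H_Y,\alpha^R]$, which vanishes on $\BB$ because both terms restrict to $\gamma:=\nabla^{\CC}_Y\alpha$ there, I obtain $\nabla^{\CC}_X\nabla^{\CC}_Y\alpha=[H_X,[H_Y,\alpha^R]]|_{\BB}$, and likewise with $X,Y$ swapped. The Jacobi identity on $\Sigma$ then collapses the curvature to
\[
R^{\nabla^{\CC}}(X,Y)\alpha=\big[\,[H_X,H_Y]-H_{[X,Y]}\,,\ \alpha^R\,\big]\big|_{\BB}.
\]
When $\CC$ is involutive, $[H_X,H_Y]$ is a section of $\CC$ with the same $s$-projection $[X,Y]$ as $H_{[X,Y]}$; since $ds$ restricts to an isomorphism $\CC\to T\BB$, the two coincide and the curvature vanishes, so $\nabla^{\CC}$ is flat.

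The heart of the proof, and the step I expect to be the main obstacle, is that $\nabla^{\CC}$ is infinitesimally multiplicative, i.e. that the basic curvature \eqref{Spencer_compatibility} vanishes. The cleanest route is to pass to the Cartan form: by Remark \ref{rk:Cartan-gpds-via-forms} and Example \ref{exam-mult-correct-coeff} one has $\CC=\ker\omega$ for a pointwise-surjective multiplicative form $\omega\in\Omega^1(\Sigma,t^{*}A)$ whose kernel is transverse to the $t$-fibers. Multiplicative $t^{*}A$-valued forms correspond, via the differentiation/integration theory of Cartan-like multiplicative forms \cite{MARIA}, to Spencer operators $(\nabla,\ell)$ on $A$; pointwise surjectivity with transverse kernel forces $\ell$ to be invertible (the identity after the identification of Example \ref{exam-mult-correct-coeff}), and the Spencer integrability condition then reduces exactly to $k^{\rm bas}_{\nabla}=0$. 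One checks that the connection part of this Spencer operator is precisely $\nabla^{\CC}$, which identifies the two. Alternatively, a direct proof is possible by differentiating the multiplicativity identity \eqref{eq:m2} along the unit section: rewriting each term of \eqref{Spencer_compatibility} as the restriction to $\BB$ of an iterated bracket of the fields $H_X$, $\alpha^R$, $\beta^R$ and using $\nabla^{T\BB}$ from \eqref{TM_conn}, the Jacobi identity produces the iterated $\Sigma$-brackets while the correction terms measuring the failure of $[H_X,\alpha^R]$ to be right-invariant cancel precisely by the infinitesimal form of \eqref{eq:m2}. I expect the bookkeeping of those correction terms to be the only delicate point.

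Finally, for the bijection under the $1$-connectedness hypothesis I would again invoke \cite{MARIA}. The assignment $\CC\mapsto\nabla^{\CC}$ is the restriction, to Cartan forms on one side and infinitesimally multiplicative connections on the other, of the bijection between multiplicative $t^{*}A$-valued forms and Spencer operators. Injectivity is the statement that differentiation at the units is faithful: a multiplicative form is determined by its first-order data along $\BB$ once the $s$-fibers are connected, so two multiplicative distributions with the same $\nabla^{\CC}$ must agree. Surjectivity is integration: given an infinitesimally multiplicative $\nabla$, the corresponding Spencer operator integrates over the $s$-simply-connected $\Sigma$ to a unique multiplicative $\omega$, and $\CC:=\ker\omega$ is then a Cartan connection with $\nabla^{\CC}=\nabla$. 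Simple-connectedness of the $s$-fibers is exactly what guarantees both the existence and the uniqueness in this integration step.
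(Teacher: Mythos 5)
Your proposal is correct, and it is worth noting that the paper itself offers no proof of this theorem at all: it simply states that the result is a particular case of the main theorem of \cite{MARIA}. Your treatment is therefore strictly more informative than the paper's. The elementary parts you work out — $C^{\infty}(\BB)$-linearity in $X$ via $H_{fX}=(s^{*}f)H_X$ and $s$-verticality of $\alpha^R$, the Leibniz rule via $(f\alpha)^R=(t^{*}f)\alpha^R$ and condition (m3), and the flatness argument via the restriction trick $[H_X,W]|_{\BB}=0$ for $W$ vanishing on the unit section, reducing the curvature to $[[H_X,H_Y]-H_{[X,Y]},\alpha^R]|_{\BB}$ — are all correct and are exactly the computations one would want to see spelled out. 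For the two substantive claims (vanishing of the basic curvature, i.e.\ infinitesimal multiplicativity, and the bijection under $1$-connectedness of the $s$-fibers) you ultimately defer to the correspondence between multiplicative $t^{*}A$-valued forms and Spencer operators and its integrability theory in \cite{MARIA}, which is precisely the source the paper invokes; so on those points your argument and the paper's coincide by construction. Your sketched direct alternative for $k^{\rm bas}_{\nabla}=0$ (differentiating \eqref{eq:m2} along the unit section) is plausible but, as you acknowledge, not carried out; since the cited result covers it, this does not constitute a gap.
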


This theorem is a particular case of the main result of~\cite{MARIA}. 
Using Example \ref{exam-mult-correct-coeff} and the explicit formula for computing infinitesimal actions induced from groupoid actions (see above), one deduces the first two parts of the following. The last part will be discussed in greater generality in Proposition \ref{prp:MC-eq-CP} below.

\begin{cor}\label{cor:MC-eq-for-Cartan-form}
If $(A, \nabla= \nabla^{\CC})$ is the Cartan algebroid of a Cartan groupoid $(\Sigma, \CC)$, 
\begin{enumerate} 
\item[(1)] $\nabla^{T\BB}$ and $\nabla^A$ (i.e. (\ref{TM_conn}) and (\ref{A_conn})) are induced by corresponding representations of $\Sigma$ (cf. (\ref{eq:ind-right-action-2}) and Example \ref{exam-mult-correct-coeff}).
\item[(2)] The Cartan form of $(\Sigma, \CC)$ can be realized as a multiplicative form 
\[ \omega\in \Omega^1(\G, t^*A).\]
\item[(3)] If $(\Sigma, \CC)$ is flat, using the DeRham differential on $\Sigma$ with coefficients in the (flat) pull-back bundle $t^*A$ (with connection $t^*\nabla$), 
and the pointwise bracket (\ref{pt-bracket}), $\omega$ satisfies the Maurer-Cartan equation 
%
%
\[ d_{t^*A}(\omega)+ \frac{1}{2}\{\omega, \omega\}_{\textrm{pt}}= 0.\]
\end{enumerate}
\end{cor}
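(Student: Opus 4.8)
The plan is to verify directly the identity of $t^*A$-valued $2$-forms
\[ \Omega := d_{t^*A}(\omega)+ \tfrac{1}{2}\{\omega,\omega\}_{\mathrm{pt}} \]
by evaluating it on a convenient frame. Since $\CC$ is multiplicative it is simultaneously an Ehresmann connection for $s$, so $T\Sigma = \CC\oplus \ker(ds)$; I would span $\CC$ by the horizontal lifts $\mathrm{hor}^{\CC}(X)$, $X\in\mathfrak{X}(\BB)$, and $\ker(ds)$ by the right-invariant fields $\alpha^R$, $\alpha\in\Gamma(A)$. By part $(2)$ of the Corollary (and Example~\ref{exam-mult-correct-coeff}) the Cartan form may be normalized so that $\omega(\mathrm{hor}^{\CC}(X))=0$ and $\omega(\alpha^R)=t^*\alpha$ — this is just the ``right Maurer--Cartan'' normalization of the $s$-connection form. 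Recalling the Koszul formula $d_{t^*A}\omega(V,W)=\nabla^{t^*A}_V(\omega W)-\nabla^{t^*A}_W(\omega V)-\omega([V,W])$ and the elementary fact $\tfrac{1}{2}\{\omega,\omega\}_{\mathrm{pt}}(V,W)=\{\omega(V),\omega(W)\}_{\mathrm{pt}}$ (antisymmetry of the pointwise bracket), proving $\Omega=0$ reduces to the three types of pairs from this frame.

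The \emph{invariant--invariant} case is purely algebraic and requires no flatness. Using $[\alpha^R,\beta^R]=[\alpha,\beta]^R$, hence $\omega([\alpha^R,\beta^R])=t^*[\alpha,\beta]$, and the fact that $\alpha^R$ is $t$-related to $\rho(\alpha)$, so that the pullback connection gives $\nabla^{t^*A}_{\alpha^R}(t^*\beta)=t^*(\nabla_{\rho(\alpha)}\beta)$, one finds
\[ \Omega(\alpha^R,\beta^R)=t^*\!\left(\nabla_{\rho(\alpha)}\beta-\nabla_{\rho(\beta)}\alpha-[\alpha,\beta]+\{\alpha,\beta\}_{\mathrm{pt}}\right). \]
Substituting the definition~\eqref{pt-bracket} of $\{\alpha,\beta\}_{\mathrm{pt}}$ this cancels identically. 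In other words, the pointwise bracket is \emph{defined} precisely so that this case vanishes, and the ``$+\tfrac12\{\omega,\omega\}$'' sign in the statement is forced by it.

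The remaining two cases involve $\mathrm{hor}^{\CC}$. In the \emph{horizontal--horizontal} case, $\omega$ kills both entries, so $\Omega(\mathrm{hor}^{\CC}X,\mathrm{hor}^{\CC}Y)=-\omega([\mathrm{hor}^{\CC}X,\mathrm{hor}^{\CC}Y])$; by flatness of $\CC$ the horizontal lift is bracket-preserving (Lemma~\ref{lemma-general-D}), so the bracket lies in $\CC=\ker(\omega)$ and the term vanishes. This is the single place where the involutivity hypothesis enters (and where a non-flat $\CC$ would produce the expected curvature correction). In the \emph{horizontal--invariant} case the surviving terms collapse to
\[ \Omega(\mathrm{hor}^{\CC}X,\beta^R)=t^*(\nabla^{\CC}_X\beta)-\omega([\mathrm{hor}^{\CC}X,\beta^R]), \]
so the task is to prove $\omega([\mathrm{hor}^{\CC}X,\beta^R])=t^*(\nabla^{\CC}_X\beta)$ over all of $\Sigma$, i.e.\ to promote the defining relation $\nabla^{\CC}_X\beta=[\mathrm{hor}^{\CC}X,\beta^R]|_{\BB}$ from the units to every arrow.

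The \textbf{main obstacle} is exactly this last identity. Its resolution uses the multiplicativity of the Cartan form~\eqref{multiplicativity} — equivalently the compatibility~\eqref{eq:m2} of $\mathrm{hor}^{\CC}$ with composition — to transport the unit-level definition of $\nabla^{\CC}$ along right translations $R_g$, combined with the right-invariance of $\beta^R$, showing that $[\mathrm{hor}^{\CC}X,\beta^R]$ agrees with $(\nabla^{\CC}_X\beta)^R$ modulo $\CC$. The genuine bookkeeping difficulty here is reconciling the two horizontal-lift conventions (with respect to $s$ and to $t$) that appear in the definition of $\nabla^{\CC}$ versus in the pullback connection $t^*\nabla$; this is precisely the point treated once, in full generality, in Proposition~\ref{prp:MC-eq-CP}, so in the write-up I would either differentiate the multiplicativity relation~\eqref{multiplicativity} directly or simply invoke that proposition for this step. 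Once all three cases are settled, $\Omega$ vanishes on a spanning frame and is tensorial, hence $\Omega=0$.
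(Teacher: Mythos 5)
Your argument for part (3) is correct and is essentially the paper's own: the paper proves (3) by specializing Proposition~\ref{prp:MC-eq-CP} to $P=\Sigma$ acting on itself along $t$ (Example~\ref{ex:Sigma-is-Cartan-bundle}), and the proof of that proposition is exactly your three-case evaluation of the Maurer--Cartan expression on the splitting $\mathrm{Im}(a)\oplus\ker(\theta)$ --- the vertical--vertical case cancelling by the very definition of $\{\cdot,\cdot\}_{\mathrm{pt}}$, the horizontal--horizontal case reducing to involutivity of $\CC$, and the mixed case reducing to the infinitesimal characterization of multiplicativity ($\nabla^{\mathrm{left}}=\nabla^{\mathrm{right}}$, quoted there from Proposition 5.3.4 of \cite{Francesco}), which is precisely the step you defer. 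The only caveat is scope: the corollary also asserts (1) and (2), which you use as inputs rather than prove; the paper dispatches them by observing that the quasi-actions (\ref{eq:ind-right-action-2}) and (\ref{A_representation}) differentiate to (\ref{TM_conn}) and (\ref{A_conn}), and that the coefficient bundle of the Cartan form is $\textrm{Ker}(dt)|_{\BB}\cong A$ as a representation (Example~\ref{exam-mult-correct-coeff}) --- a sentence to this effect should be added to make the write-up complete.
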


With the previous discussions in mind, the infinitesimal version of the Haefliger cohomology from Definition \ref{def-Haefl-coh-gen} is pretty clear. Given a flat Cartan algebroid $(A, \nabla)$ we first consider the infinitesimal action of $A$ on $T\BB$ given by (\ref{TM_conn}), the induced actions on $\Lambda^qT^*\BB$ and the corresponding Koszul operators
\[ d_A: C^p(A, \Lambda^qT^*\BB)\to C^{p+1}(A, \Lambda^qT^*\BB).\]
Similarly, we consider the DeRham differential on $\BB$ with coefficients in the flat bundles $\Lambda^pA^*$ (endowed with the flat connections induced by $\nabla$)
\[ d: \Omega^q(\BB, \Lambda^pA^*)\to \Omega^{q+1}(\BB, \Lambda^pA^*).\]

\begin{defn}\label{def-Haefl-coh-gen-inf}
The {\bf Haefliger complex} of a flat Cartan algebroid $(A, \nabla)\to \BB$ is the bicomplex 
\[ C^{p, q}_{\textrm{Haef}}(A, \nabla):= C^p(A, \Lambda^qT^*\BB)= \Omega^q(\BB, \Lambda^pA^*) \]
endowed with the differentials $d_A$ and $d$ described above. Its cohomology is denoted
\[ H^{*}_{\textrm{Haef}}(A, \nabla)\]
and is called the {\bf (infinitesimal) Haefliger cohomology of $(A, \nabla)$}. 
\end{defn}

We leave it to the reader to check that, indeed, this is a double complex (an argument arises as a ``bonus" of the discussion from the next subsection). As we shall see (and as expected), this cohomology will be related to $H^{*}_{\textrm{Haef}}(\Sigma, \CC)$ via a van Est map, which is an isomorphism under the usual topological conditions on the $s$-fibers of $\Sigma$.

\subsection{Matched pairs and the double}
Here we point out that flat Cartan algebroids and their Haefliger cohomology fits perfectly in the framework of matched pairs. Very briefly, a matched pair of Lie algebroids is a pair $(A_1, A_2)$ of Lie algebroids (over the same manifold $\BB$) together with a Lie algebroid structure on their direct sum, $(D:= A_1\oplus A_2, [-, -]_D, \rho_D)$ , such that $A_1$ and $A_2$ are sub-algebroids. This condition forces the anchor to be 
\[ \rho_D(\alpha_1, \alpha_2)= \rho_{A_1}(\alpha_1)+ \rho_{A_2}(\alpha_2).\]
However, while the bracket $[-, -]_D$ is determined by the brackets of the two algebroids when applied on elements of type $(\alpha_1, 0)$ or of type $(0, \alpha_2)$, one still have some freedom when combining the two types of elements. Actually, writing the components of $[(\alpha_1,0), (0,\alpha_2)]_D$ as
\[ [(\alpha_1,0), (0,\alpha_2)]_D= 
(-\nabla^2_{\alpha_2}\alpha_1,\nabla^1_{\alpha_1}\alpha_2)\]
one finds the explicit description of matched pairs:

\begin{defn}\label{def:matched_pair}
A matched pair of Lie algebroids consists of two Lie algebroids $A_1$ and $A_2$, a flat $A_1$-connection $\nabla^1$ on $A_2$ and a flat $A_2$-connection $\nabla^2$ on $A_1$ such that the identities
\begin{itemize}
\item[i)] 
\begin{align*}
[\rho_1(\alpha),\rho_2(\beta)]=-\rho_1(\nabla^2_\beta\alpha)+\rho_2(\nabla^1_\alpha\beta)
\end{align*}
\item[ii)]
\begin{align*}
\nabla^1_\alpha[\beta^1, \beta^2]=[\nabla^1_\alpha\beta^1, \beta^2]+[\beta^1, \nabla^1_\alpha\beta^2]+\nabla^1_{\nabla^2_{\beta^2}\alpha}\beta^1-\nabla^1_{\nabla^2_{\beta^1}\alpha}\beta^2
\end{align*}
\item[iii)]
\begin{align*}
\nabla^2_\beta[\alpha^1, \alpha^2]=[\nabla^2_\beta\alpha^1, \alpha^2]+[\alpha^1, \nabla^2_\beta\alpha^2]+\nabla^2_{\nabla^1_{\alpha^2}\beta}\alpha^1-\nabla^2_{\nabla^1_{\alpha^1}\beta}\alpha^2
\end{align*}
\end{itemize}
hold.
\end{defn}
It is well-known \cite{MACKENZIE, MOKRI}, and straightforward to check that, indeed, these conditions are equivalent to the fact that the resulting bracket and anchor on $A_1\oplus A_2$ make it into a Lie algebroid. The key remark for us is:

\begin{thm}\label{Double_algebroid}
There is a one to one correspondence between flat Cartan algebroids and matched pairs where one of the two algebroids is $T\BB$.
\end{thm}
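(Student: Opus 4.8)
The plan is to exhibit the correspondence explicitly and then check that the defining conditions on the two sides translate into one another. Given a flat Cartan algebroid $(A, \nabla)$, I would produce a matched pair by taking $A_1 := A$ (with its given Lie algebroid structure) and $A_2 := T\BB$ (the tangent algebroid, with identity anchor and the Lie bracket of vector fields), and by setting $\nabla^2 := \nabla$, viewed as a $T\BB$-connection on $A$, together with $\nabla^1 := \nabla^{T\BB}$, the $A$-connection on $T\BB$ from (\ref{TM_conn}). The key observation making this natural is that a $T\BB$-connection on $A$ is nothing but an ordinary linear connection on the vector bundle $A$, and such a connection being a \emph{flat} $T\BB$-connection is precisely ordinary flatness $R_\nabla = 0$, i.e. the word ``flat'' in ``flat Cartan algebroid''. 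Conversely, starting from a matched pair whose second algebroid is $A_2 = T\BB$, I would set $A := A_1$ and $\nabla := \nabla^2$; the task is then to show that the matched pair axioms force $\nabla^1 = \nabla^{T\BB}$ and make $(A, \nabla)$ a flat Cartan algebroid, and that the two assignments are mutually inverse.

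The core of the argument is matching the axioms of Definition \ref{def:matched_pair} with the flat Cartan conditions, crucially using that $\rho_2 = \mathrm{id}$ for $A_2 = T\BB$. First, condition (i), written with $\rho_2 = \mathrm{id}$, reads $[\rho(\alpha), X] = -\rho(\nabla_X \alpha) + \nabla^1_\alpha X$, equivalently $\nabla^1_\alpha X = [\rho(\alpha), X] + \rho(\nabla_X \alpha)$; this is exactly formula (\ref{TM_conn}), so (i) is equivalent to $\nabla^1 = \nabla^{T\BB}$ and carries no further information. Second, substituting $\nabla^2 = \nabla$ and $\nabla^1 = \nabla^{T\BB}$ into condition (iii) and rearranging reproduces, term by term, the vanishing of the basic curvature (\ref{Spencer_compatibility}), so (iii) is equivalent to infinitesimal multiplicativity $k_\nabla^{\mathrm{bas}} = 0$. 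Finally, the flatness of $\nabla^1 = \nabla^{T\BB}$ as an $A$-connection, which is part of the matched pair data, is automatic once $k_\nabla^{\mathrm{bas}} = 0$, since the curvatures are related by $k_{\nabla^{T\BB}} = \rho \circ k_\nabla^{\mathrm{bas}}$.

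The step I expect to require an actual (if short) computation, and hence the main obstacle, is condition (ii), since it is the only axiom that does not visibly correspond to a flat Cartan condition. My plan is to expand both sides of (ii) using $\nabla^1_\alpha X = [\rho(\alpha), X] + \rho(\nabla_X\alpha)$. After expansion the purely first-order terms cancel in pairs, the brackets of the form $[\rho(\alpha), \cdot]$ collapse via the Jacobi identity for vector fields to $[\rho(\alpha), [\beta^1, \beta^2]]$, and the remaining second-order terms $\rho\!\left(\nabla_{\beta^1}\nabla_{\beta^2}\alpha - \nabla_{\beta^2}\nabla_{\beta^1}\alpha\right)$ equal $\rho(\nabla_{[\beta^1,\beta^2]}\alpha)$ precisely because $\nabla$ is flat, so that both sides agree. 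Thus (ii) holds identically and imposes no constraint beyond the flatness of $\nabla$ already recorded. Assembling these facts shows the two constructions are well defined and inverse to each other: $\nabla^2$ recovers $\nabla$, condition (i) recovers $\nabla^{T\BB}$ from $\nabla$, and the flat Cartan conditions $R_\nabla = 0$ and $k_\nabla^{\mathrm{bas}} = 0$ are equivalent to the full set of matched pair axioms, which establishes the claimed bijection.
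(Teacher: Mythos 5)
Your proposal is correct and follows essentially the same route as the paper: identify condition (i) with the defining formula (\ref{TM_conn}) for $\nabla^{T\BB}$, condition (iii) with the vanishing of the basic curvature (\ref{Spencer_compatibility}), and check that condition (ii) holds automatically from (i), the Jacobi identity and the flatness of $\nabla$, with the flatness of $\nabla^{T\BB}$ as an $A$-connection coming for free from $k_{\nabla^{T\BB}}=\rho\circ k_{\nabla}^{\mathrm{bas}}$. The only difference is that you spell out the cancellation in condition (ii) explicitly, which the paper leaves to the reader.
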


\begin{proof}
First, let us start with a matched pair formed by $(A, \nabla^{T\BB})$ and $(T\BB, \nabla)$. We consider the three conditions from Definition~\ref{def:matched_pair}.
\begin{itemize}
\item The first condition is
\[
[\rho(\alpha), X]=\nabla^{T\BB}_\alpha X-\rho(\nabla_X\alpha)
\] 
for $\alpha\in \Gamma(A),\ X\in \mathfrak{X}(M)$. 
\item The second condition is
\[
\nabla^{T\BB}_\alpha[X,Y]=[\nabla^{T\BB}_\alpha X, Y]+[X, \nabla^{T\BB}_\alpha Y]+\nabla^{T\BB}_{\nabla_{Y}\alpha}X-\nabla^{T\BB}_{\nabla_X\alpha}Y
\]
for $\alpha\in \Gamma(A),\ X,Y\in \mathfrak{X}(\BB)$. 
\item Finally, the third condition is
\[
\nabla_X[\alpha, \beta]=[\nabla_X\alpha, \beta]+[\alpha, \nabla_X\beta]+\nabla_{\nabla^{T\BB}_{\beta}X}\alpha-\nabla_{\nabla^{T\BB}_\alpha X}\beta
\]
for $\alpha,\beta\in \Gamma(A),\ X\in \mathfrak{X}(\BB)$.
\end{itemize}
The fact that $\nabla$ is a representation of $T\BB$ on $A$ tells us precisely that it is a flat connection. The third condition above is the compatibility condition~\ref{Spencer_compatibility} between $\nabla$ and $[\ ,\ ]$. Hence, $(A, \nabla)$ is a flat Cartan algebroid. As for the second condition, one sees that it follows automatically from the first one, the fact that $\nabla$ is a representation and the Jacobi identity. The first condition is simply the formula of the representation on $T\BB$ associated with the flat Cartan algebroid $(A, \nabla)$. 

If viceversa we start with a flat Cartan algebroid $(A, \nabla)$, the first and third conditions above are satisfied chosing $\nabla^{T\BB}$ to be the representation of $A$ on $T\BB$. As for the second one, it is identically satisfied by the same reasons as before ($\nabla$ is a flat connection, the definition of $\nabla^{T\BB}$ and the Jacobi identity). 
\end{proof}

This discussion reveals the presence of a larger algebroid associated to flat Cartan algebroids:

\begin{defn}\label{Double_algebroid_def}
The {\bf double} of a flat Cartan algebroid $(A, \nabla)$, denoted $\mathcal{D}(A, \nabla)$, is the Lie algebroid corresponding to the matched pair $(A, T\BB)$. Explicitly, 
\[\mathcal{D}(A, \nabla)= A\oplus T\BB,\]
with anchor 
\[ \rho_{\mathcal{D}}(\alpha, X)= \rho_A(\alpha)+ X,\]
and with bracket 
\[
[(\alpha,X),(\beta, Y)]_{\mathcal{D}}=([\alpha, \beta]+\nabla_X\beta- \nabla_Y\alpha,[X, Y]+\nabla^{T\BB}_\alpha Y-\nabla^{T\BB}_\beta X)
\]
\end{defn}

With this, one obtains the following re-interpretation of the Haefliger cohomology:

\begin{prp}\label{prp:inf-Haefl-double} For any flat Cartan algebroid $(A, \nabla)$, its Haefliger cohomology is equal to the usual cohomology of the double algebroid $\mathcal{D}(A, \nabla)$:
\[ H^{*}_{\rm Haef}(A, \nabla)= H^*(\mathcal{D}(A, \nabla)).\]
\end{prp}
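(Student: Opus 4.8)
The plan is to recognise $C^*(\mathcal{D}(A,\nabla))$, the Chevalley--Eilenberg complex of the double, as literally the total complex of the Haefliger bicomplex, so that the two cohomologies agree essentially by definition. First I would note that, since $\mathcal{D}(A,\nabla)= A\oplus T\BB$ as a vector bundle, one has $\Lambda^n\mathcal{D}^*= \bigoplus_{p+q= n}\Lambda^pA^*\otimes \Lambda^qT^*\BB$, and therefore
\[ C^n(\mathcal{D}(A,\nabla))= \Gamma(\Lambda^n\mathcal{D}^*)= \bigoplus_{p+q= n}\Gamma(\Lambda^pA^*\otimes \Lambda^qT^*\BB)= \bigoplus_{p+q=n} C^{p,q}_{\textrm{Haef}}(A,\nabla). \]
Thus, as graded vector spaces, $C^*(\mathcal{D}(A,\nabla))$ and $\textrm{Tot}^*\,C^{*,*}_{\textrm{Haef}}(A,\nabla)$ coincide, and everything reduces to matching the differentials.

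The key step is to show that the Koszul differential $d_{\mathcal{D}}$ of $\mathcal{D}(A,\nabla)$ splits, with respect to this bigrading, as $d_{\mathcal{D}}= \partial'+ \partial''$ with $\partial'$ of bidegree $(1,0)$ and $\partial''$ of bidegree $(0,1)$. I would prove this by a type-counting argument: evaluate $d_{\mathcal{D}}\omega$, for $\omega$ of bidegree $(p,q)$, on a string of arguments of which $a$ are of the form $(\alpha,0)$ and $b$ of the form $(0,X)$. Using that $[(\alpha,0),(\beta,0)]_{\mathcal{D}}$ is purely in $A$, that $[(0,X),(0,Y)]_{\mathcal{D}}$ is purely in $T\BB$, and that the mixed bracket $[(\alpha,0),(0,Y)]_{\mathcal{D}}= (-\nabla_Y\alpha,\ \nabla^{T\BB}_\alpha Y)$ has exactly one component of each type, a short count of the anchor and bracket terms in the Koszul formula shows that every surviving summand forces $(a,b)\in\{(p+1,q),(p,q+1)\}$. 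Hence $d_{\mathcal{D}}$ has no mixed components, and $d_{\mathcal{D}}^2=0$ together with the bidegree bookkeeping automatically yields $(\partial')^2= (\partial'')^2= \partial'\partial''+\partial''\partial'= 0$; as a bonus this is exactly the claim, left to the reader after Definition~\ref{def-Haefl-coh-gen-inf}, that the Haefliger data form a genuine double complex.

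It then remains to identify the two pieces. Restricting to bidegree $(p+1,q)$, the anchor terms that delete an $A$-entry together with the $T\BB$-components of the mixed brackets assemble, via the Leibniz rule for the dual representation on $\Lambda^qT^*\BB$, into the covariant-derivative terms $\nabla^{\Lambda^qT^*\BB}_{\alpha_i}$, while the pure $A$-brackets give the bracket terms; the result is precisely the Koszul differential $d_A$ of $A$ with coefficients in the representation $\Lambda^qT^*\BB$, the representation being the one induced by $\nabla^{T\BB}$ from~\eqref{TM_conn}. Dually, restricting to bidegree $(p,q+1)$, the anchor terms deleting a $T\BB$-entry and the $A$-components of the mixed brackets assemble into the de Rham differential on $\BB$ twisted by the flat connection on $\Lambda^pA^*$ dual to $\nabla$. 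These are exactly the differentials $d_A$ and $d$ of Definition~\ref{def-Haefl-coh-gen-inf}, so $\partial'= d_A$ and $\partial''= \pm d$, and the two total complexes agree, giving $H^*_{\textrm{Haef}}(A,\nabla)= H^*(\mathcal{D}(A,\nabla))$. Conceptually this is the standard fact that the cohomology of the double of a matched pair $(A_1,A_2)$ is computed by the associated bicomplex~\cite{MACKENZIE,MOKRI}, specialised along Theorem~\ref{Double_algebroid} to $A_1= A$, $A_2= T\BB$; accordingly I expect the only genuine work, and the main place to be careful, to be the sign bookkeeping in the identification of $\partial'$ and $\partial''$, since the mixed-bracket contributions must be shown to land in the correct slot with the correct Koszul sign.
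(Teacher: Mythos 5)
Your proposal is correct and follows the same route the paper takes: the paper proves this by decomposing the Chevalley--Eilenberg complex of $A_1\oplus A_2$ according to the bigrading $\Gamma(\Lambda^pA_1^*\otimes\Lambda^qA_2^*)$ and invoking the general matched-pair statement (Theorem~\ref{Matched_pair_cohom}, with proof delegated to the literature), specialised via Theorem~\ref{Double_algebroid} to $A_1=A$, $A_2=T\BB$. Your type-counting argument and the identification of the two components of the Koszul differential with $d_A$ and the twisted de Rham differential simply carry out in detail the verification the paper outsources, and your closing remark about sign conventions for the total differential is the right place to be careful.
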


This holds true in full generality for matched pairs $(A_1, A_2)$; it is obtained by decomposing the algebroid complex of $A= A_1\oplus A_2$ 
\[ \Omega^k(A)= \bigoplus \Gamma(\Lambda^p A_1^*\otimes \Lambda^qA_2^*).\]
Actually, one has the following, which also shows that flat Cartan algebroids are precisely what is needed in order to define the Haefliger double complex (compare with the similar discussion for groupoids in Section $3$!).

\begin{thm}\label{Matched_pair_cohom}
Consider the quadruple $(A_1, \nabla^1, A_2, \nabla^2)$, where $\nabla^1$ is a representation of $A_1$ on $A_2$  and similarly for $\nabla^2$. Consider also the diagram
\[
\begin{tikzcd}
\Gamma(\Lambda^k (A_1)^*\otimes \Lambda^{q+1}(A_2)^*) \arrow[r]{}{d_{A_1}}& \Gamma(\Lambda^{k+1} (A_1)^*\otimes \Lambda^{q+1}(A_2)^*) \\
\Gamma(\Lambda^k (A_1)^*\otimes \Lambda^q(A_2)^*)\arrow[u]{}{d_{A_2}}\arrow[r]{}{d_{A_1}}&  \Gamma(\Lambda^{k+1} (A_1)^*\otimes \Lambda^q(A_2)^*)\arrow[u]{}{d_{A_2}}
\end{tikzcd}
\]
where $d_{A_1}$ and ${d_{A_2}}$ are the algebroid differentials associated to the induced representations on the exterior bundles.

The quadruple $(A_1, \nabla^1, A_2, \nabla^2)$ is a matched pair if and only if the diagram above is commutative for all $k, q \in \mathbb{N}$. Moreover, the resulting double complex is isomoprhic to the complex of algebroid cochains of $A_1\oplus A_2$.
\end{thm}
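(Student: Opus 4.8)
The plan is to exhibit both the double complex and the Lie algebroid cochain complex of $A_1\oplus A_2$ as two faces of a single degree-one operator, and to read off commutativity of the square as the equation $d^2=0$. First I would fix the bigraded algebra $\Omega^{p,q}:=\Gamma(\Lambda^pA_1^*\otimes\Lambda^qA_2^*)$, using the two ways of reading it, $\Omega^{p,q}=C^p(A_1,\Lambda^qA_2^*)=C^q(A_2,\Lambda^pA_1^*)$, so that $d_{A_1}$ (bidegree $(1,0)$) and $d_{A_2}$ (bidegree $(0,1)$) are exactly the Koszul differentials associated with the representations that $\nabla^1$ and $\nabla^2$ induce on the relevant exterior powers. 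Since $\nabla^1,\nabla^2$ are representations, i.e.\ flat, the induced connections on $\Lambda^qA_2^*$ and $\Lambda^pA_1^*$ are flat as well; hence $d_{A_1}^2=0$ and $d_{A_2}^2=0$ hold unconditionally, before any compatibility between the two is imposed.

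The key step is a computation on generators. I would regard the matched-pair anchor $\rho_D(\alpha_1,\alpha_2)=\rho_1(\alpha_1)+\rho_2(\alpha_2)$ and the bracket $[\cdot,\cdot]_D$ from the discussion preceding Definition~\ref{def:matched_pair} simply as an anchor and a Leibniz bracket on $A_1\oplus A_2$ — these formulas are defined whether or not conditions (i)--(iii) hold — and let $d$ be the associated degree-one Koszul operator on $\Omega^\bullet(A_1\oplus A_2)=\bigoplus_{p,q}\Omega^{p,q}$. Evaluating $d$ on $f\in C^\infty(\BB)$, $\xi\in\Gamma(A_1^*)$ and $\eta\in\Gamma(A_2^*)$, and using that $[\cdot,\cdot]_D$ restricts to the bracket of each factor together with the off-diagonal rule $[(\alpha_1,0),(0,\alpha_2)]_D=(-\nabla^2_{\alpha_2}\alpha_1,\nabla^1_{\alpha_1}\alpha_2)$, I would verify that $d$ has no components outside bidegrees $(1,0)$ and $(0,1)$ and that these two components are precisely $d_{A_1}$ and $(-1)^p d_{A_2}$ on $\Omega^{p,q}$. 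Writing $d=d_{A_1}+(-1)^p d_{A_2}$ and expanding yields $d^2=d_{A_1}^2+d_{A_2}^2+(-1)^p(d_{A_1}d_{A_2}-d_{A_2}d_{A_1})$, so, because $d_{A_1}^2=d_{A_2}^2=0$, one gets $d^2=0$ if and only if $d_{A_1}d_{A_2}=d_{A_2}d_{A_1}$, that is, if and only if the square commutes for all $k,q$.

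It then remains to close the logical loop. The operator $d$ arises from a genuine Lie algebroid structure on $A_1\oplus A_2$ exactly when $d^2=0$, by the standard equivalence between homological degree-one derivations of $\Gamma(\Lambda^\bullet A^*)$ and Lie algebroid structures on $A$; and by the result of \cite{MACKENZIE, MOKRI} recalled before Definition~\ref{def:matched_pair}, this in turn holds precisely when $(A_1,\nabla^1,A_2,\nabla^2)$ is a matched pair. Chaining the equivalences ``matched pair $\Leftrightarrow$ $A_1\oplus A_2$ is a Lie algebroid $\Leftrightarrow$ $d^2=0$ $\Leftrightarrow$ the square commutes'' gives the first assertion. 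The ``moreover'' part is then immediate from the identification $d=d_{A_1}+(-1)^p d_{A_2}$ established above: under the decomposition $\Lambda^n(A_1\oplus A_2)^*=\bigoplus_{p+q=n}\Lambda^pA_1^*\otimes\Lambda^qA_2^*$, the total complex of $(\Omega^{\bullet,\bullet},d_{A_1},d_{A_2})$ coincides, term by term and differential by differential, with the Lie algebroid cochain complex $(\Gamma(\Lambda^\bullet(A_1\oplus A_2)^*),d)$.

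I expect the main obstacle to be the sign bookkeeping in the key computation: confirming that the matched-pair Koszul operator has exactly the bidegree components $d_{A_1}$ and $(-1)^p d_{A_2}$, and in particular pinning down the Koszul sign $(-1)^p$ that reconciles the \emph{commuting} square of the statement with the \emph{anticommuting} total differential needed for $d^2=0$. Once this is nailed down, everything else is either the bigraded decomposition or the two cited equivalences.
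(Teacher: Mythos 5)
The paper offers no proof of this theorem at all --- it simply refers to \cite{GSX} --- so there is nothing internal to compare against; your argument is a complete proof and is, in substance, the standard one (and the one used in the cited reference): realize the candidate structure on $A_1\oplus A_2$ as a degree-one Koszul derivation $d$ of the bigraded algebra, identify its bidegree components with $d_{A_1}$ and $(-1)^pd_{A_2}$, and convert ``matched pair $\Leftrightarrow$ Lie algebroid $\Leftrightarrow$ $d^2=0$'' into commutativity of the square. Two points deserve to be written out rather than asserted. First, before any of conditions (i)--(iii) is imposed you must check that the bracket $[(\alpha_1,\alpha_2),(\beta_1,\beta_2)]_D=([\alpha_1,\beta_1]-\nabla^2_{\alpha_2}\beta_1+\nabla^2_{\beta_2}\alpha_1,\,[\alpha_2,\beta_2]+\nabla^1_{\alpha_1}\beta_2-\nabla^1_{\beta_1}\alpha_2)$ is an antisymmetric bracket satisfying the Leibniz rule for $\rho_D$; this is exactly where the $C^\infty(\BB)$-linearity of $\nabla^i$ in the algebroid slot and its Leibniz property in the module slot are used, and it is what makes the Koszul operator $d$ a well-defined derivation (so that $d^2=0\Leftrightarrow$ Jacobi $+$ anchor morphism is legitimate). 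Second, the sign: with the embedding $\omega\otimes\eta\mapsto \mathrm{pr}_1^*\omega\wedge\mathrm{pr}_2^*\eta$ of $\Lambda^pA_1^*\otimes\Lambda^qA_2^*$ into $\Lambda^{p+q}(A_1\oplus A_2)^*$, evaluating $d$ on $\Omega^{0,0}$, $\Omega^{1,0}$ and $\Omega^{0,1}$ (the off-diagonal bracket components give the dual connections, and the purely-$A_2$ bracket has no $A_1$-component, so no $(−1,2)$ or $(2,−1)$ terms appear) confirms $d=d_{A_1}+(-1)^pd_{A_2}$, whence $d^2=(-1)^p(d_{A_1}d_{A_2}-d_{A_2}d_{A_1})$ and $d^2=0$ is indeed \emph{commutativity}, not anticommutativity, of the square. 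With these two checks made explicit, your proof is complete, and the ``moreover'' statement is immediate from the same identification.
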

For a proof, see~\cite{GSX}.

\begin{rmk}\label{Isotropy_double_algebroid} 
The double algebroid $\mathcal{D}(A, \nabla)$ associated to a flat Cartan algebroid clarifies also 
the extended isotropy Lie algebras from Definition \ref{defn:extended-isotropy}: it is isomorphic to the isotropy Lie algebra (i.e. the kernel of the anchor) of $\mathcal{D}(A, \nabla)$; the isomorphism is simply $\alpha_x\to (\alpha_x, -\rho(\alpha_x))$. 
The remark from Example \ref{exam-Cart-algbrds-actions} that all the fiberwise Lie algebras are isomorphic can also be seen as a particular case of the fact that the isotropy Lie algebras of any transitive Lie algebroid are isomorphic to each other. 
\end{rmk}

\subsection{Flat $(\Sigma, \CC)$-structures}

In subsection \ref{ssec:Formal geometric structures} we discussed the notions of principal $(\Sigma, \CC)$-bundle and $(\Sigma, \CC)$-structure, where $(\Sigma, \CC)\tto \BB$ is a flat Cartan groupoid, as a general framework for ``almost geometric structures''. We also pointed out that flat principal $(\Sigma, \CC)$-bundles are actually the same as flat $(\Sigma, \CC)$-structures, and that examples coming from geometry are always flat. As we have seen, $(\Sigma, \CC)$ comes with its associated Cartan algebroid $(A, \nabla)$. Here we reformulate the definition of flat principal $(\Sigma, \CC)$-bundle/$(\Sigma, \CC)$-structure using only the infinitesimal data $(A, \nabla)$. 

\begin{prp}\label{prp:MC-eq-CP}
Let $(\Sigma, \CC)\tto \BB$ be a Cartan groupoid, with associated Cartan algebroid $(A, \nabla)$. Assume also that $\pi: P\to M$ is a principal $\Sigma$-bundle, 
\[
\xymatrix{
\Sigma \ar@<0.25pc>[d] \ar@<-0.25pc>[d]  & \ar@(dl, ul) &  P \ar[dll]^{\mu}\ar[dr]_{\pi} &    \\
\BB&  & &  M},
\]
and denote by $a: \mu^*A\to TP$ the induced infinitesimal action. 
Then any sub-bundle $\CC_P\subset TP$ making $P$ into a flat $(\Sigma, \CC)$-structure arises as the kernel of a 1-form 
\[ \theta\in \Omega^1(P, \mu^*A)\]
with the following two properties:
\begin{equation}\label{eq:thata-a-is-apha} 
\theta(a(\alpha))= \alpha \quad \textrm{for all $\alpha\in \Gamma(A)$},
\end{equation}
\[ d_{\mu^*A}(\theta)+ \frac{1}{2}\{\theta, \theta\}_{\textrm{pt}}= 0\]
where, as in Corollary \ref{cor:MC-eq-for-Cartan-form}, $\{\cdot, \cdot\}_{\textrm{pt}}$ is the pointwise bracket (\ref{pt-bracket}) 
and $d_{\mu^*A}$ is DeRham differential on $P$ with coefficients in the flat pull-back bundle $\mu^*A$. 

Furthermore, if the s-fibers of $\Sigma$ are connected, this provides a 1-1 correspondence between such sub-bundles and 1-forms with these two properties. 
\end{prp}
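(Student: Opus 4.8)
The plan is to first convert the sub-bundle $\CC_P$ into a one-form by a pointwise linear-algebra argument, and then to read the Maurer--Cartan equation as the vanishing of a curvature-type $2$-form, decomposed along the splitting of $TP$ determined by $\CC_P$. Since the $\Sigma$-action on $P$ is free, the infinitesimal action $a\colon\mu^*A\to TP$ is fibrewise injective with image the vertical bundle $\ker(d\pi)$ of the orbit projection. Being a flat $(\Sigma,\CC)$-structure forces $\CC_P$ to be an Ehresmann connection transverse to the orbits, $TP=\CC_P\oplus\ker(d\pi)$, and such sub-bundles are in bijection with splittings of $a$, i.e. with forms $\theta\in\Omega^1(P,\mu^*A)$ satisfying $\theta\circ a=\mathrm{id}$ --- precisely property (\ref{eq:thata-a-is-apha}), with $\CC_P=\ker\theta$. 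This reduction is purely pointwise and needs no connectivity hypothesis; it accounts for (\ref{eq:thata-a-is-apha}) and leaves us to match the Maurer--Cartan equation with the two remaining requirements on $\CC_P$: compatibility $\CC\cdot\CC_P\subset\CC_P$ and involutivity.

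Next I would evaluate the left-hand side $\Omega:=d_{\mu^*A}\theta+\tfrac{1}{2}\{\theta,\theta\}_{\textrm{pt}}$, an $A$-valued $2$-form on $P$, on the three kinds of pairs afforded by $TP=\CC_P\oplus\ker(d\pi)$. On two horizontal vectors $X,Y\in\CC_P$ the bracket term drops out, since $\theta(X)=\theta(Y)=0$, and using that $d_{\mu^*A}$ is built from the pulled-back flat connection $\mu^*\nabla$ one gets $\Omega(X,Y)=-\theta([X,Y])$. Hence $\Omega|_{\CC_P\times\CC_P}=0$ is exactly the involutivity of $\CC_P=\ker\theta$, i.e. flatness.

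On a mixed pair $(a(\alpha),X)$ with $X\in\CC_P$, the vanishing of $\Omega$ says that the infinitesimal action of $A$ preserves $\CC_P$; on two vertical vectors $(a(\alpha),a(\beta))$ the identity $\Omega(a(\alpha),a(\beta))=0$ holds automatically, expressing that $a$ intertwines $\mu^*\nabla$ and the pointwise bracket on the fibres $\extg_{\mu(\cdot)}(A)$ with the brackets on $TP$. This last identity is the infinitesimal content of the Cartan-algebroid axioms for $(A,\nabla)$: it is exactly what Corollary \ref{cor:MC-eq-for-Cartan-form} records in the case $P=\Sigma$, $\theta=\omega$ (the self-action of Example \ref{ex:Sigma-is-Cartan-bundle}), transported to $P$ through $a$.

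Putting the three components together, $\Omega=0$ holds if and only if $\CC_P$ is involutive and infinitesimally $A$-invariant. The forward implication --- a flat $(\Sigma,\CC)$-structure yields a $\theta$ satisfying Maurer--Cartan --- then follows with no further hypothesis, since the global compatibility $\CC\cdot\CC_P\subset\CC_P$ differentiates to the infinitesimal invariance. For the converse, and hence the asserted $1$-$1$ correspondence, the connectedness of the $s$-fibres of $\Sigma$ enters exactly here: it is the hypothesis under which infinitesimal $A$-invariance of $\CC_P$ integrates to the multiplicativity (\ref{Mult_action}) of the action, equivalently to $\CC\cdot\CC_P\subset\CC_P$ --- the same mechanism by which $A$-representations integrate to $\Sigma$-representations over connected $s$-fibres. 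The main obstacle is the computation underlying the previous paragraph: verifying that the non-horizontal components of $\Omega$ reproduce precisely the infinitesimal multiplicativity of $a$, i.e. that $a$ is compatible with $\nabla^{T\BB}$, $\nabla^A$ and $\{\cdot,\cdot\}_{\textrm{pt}}$; the integration step then upgrades this to the global compatibility under the connectedness assumption.
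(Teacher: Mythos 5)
Your proposal is correct and follows essentially the same route as the paper's proof: pass from $\CC_P$ to the retraction $\theta$ of the infinitesimal action, evaluate the Maurer--Cartan expression on the three types of pairs determined by the splitting $TP=\ker\theta\oplus\mathrm{Im}(a)$ (automatic vanishing on two action directions, infinitesimal multiplicativity on mixed pairs, involutivity on two horizontal vectors), and invoke connectedness of the $s$-fibres to integrate the infinitesimal multiplicativity back to the global condition $\CC\cdot\CC_P\subset\CC_P$. The only cosmetic difference is that the paper makes the mixed-pair computation explicit via the two operators $\nabla^{\textrm{left}}$ and $\nabla^{\textrm{right}}$ attached to the multiplicative form on the action groupoid (citing the infinitesimal characterization of multiplicativity), where you state the same content as infinitesimal $A$-invariance of $\CC_P$.
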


\begin{proof} We denote $\widetilde{A}:= \mu^*A$, by $\widetilde{\rho}$ its anchor, by $\widetilde{\nabla}$ the pull-back of the connection $\nabla$ (making $\mu^*A$ itself a flat Cartan algebroid). 

First of all we will need an infinitesimal characterization of multiplicativity - characterization that was worked out in greater generality in Proposition 5.3.4 from \cite{FRANCESCO}. Here is the part that is of interest for us. 
The idea is that the multiplicativity property (\ref{Mult_action}) of $\theta$, rewritten as 
\begin{equation}\label{Mult_action-2}
(a^*\theta)_{(g, p)}- (g\cdot pr_2^*\theta)_{(g, p)}=(pr_1^*\theta)_{(g, p)},
\end{equation}
is an equality of multiplicative forms on the action groupoid $\Sigma\ltimes P$. Hence one can just look at the equality of the corresponding infinitesimal counterparts, 
\[ \nabla^{\textrm{left}}, \  \nabla^{\textrm{right}}: \mathfrak{X}(P)\times \Gamma(\tilde{A})\to \Gamma(\tilde{A}),\]
for the left and the right hand side of (\ref{Mult_action-2}), respectively. Computing the two operators we find
\[ \nabla^{\textrm{left}}_{X}(\beta)= \widetilde{\nabla}_{\widetilde{\rho}(\theta(X))}(\beta)+ [\beta, \theta(X)]_{\widetilde{A}}- \theta([\widetilde{\rho}(\beta), X]), \]
\[ \nabla^{\textrm{right}}_{X}(\beta)= \widetilde{\nabla}_X(\beta) \]
for all $X\in \mathfrak{X}(P)$, $\beta\in \Gamma(\widetilde{A})$ (see \cite{FRANCESCO} for more details). While the second satisfies Leibniz, the first one does if and only if~(\ref{eq:thata-a-is-apha}) is satisfied.

Next, we concentrate on the Maurer-Cartan expression 
\[ MC:= d_{\mu^*A}(\theta)+ \frac{1}{2}\{\theta, \theta\}_{\textrm{pt}}\in \Omega^2(P, \widetilde{A}),\]
and we compute it on several types of tangent vectors:
\begin{itemize}
\item for  $X, Y$ in the image of $\widetilde{\rho}$, we find immediately that $MC(X, Y)= 0$;
%
%
\item if only $Y$ is in the image of $\widetilde{\rho}$ and $X\in \ker(\theta)$, writing $Y= \widetilde{\beta}$, a careful but simple computation 
gives 
\[ MC(X, \widetilde{\beta})=  \nabla^{\textrm{left}}_{X}(\beta)- \nabla^{\textrm{right}}_{X}(\beta);\]
\item $X, Y\in \textrm{Ker}(\theta)$ we find right away $MC(X, Y)= - \theta([X, Y])$.
\end{itemize}
Hence we see that $MC= 0$ encodes both infinitesimal multiplicativity as well as the involutivity of $\CC_P= \textrm{Ker}(\theta)$. 
\end{proof}

\begin{rmk} As we see in the proof, $MC= 0$ encodes more than just the infinitesimal characterization of multiplicativity of $\theta$. From that point of view, the condition $\nabla^{\textrm{left}}= \nabla^{\textrm{right}}$ is a 
better characterization. However, the minimal way to encode multiplicativity infinitesimally is to remove all the conditions that hold anyway. We see that what is left is $MC(X, \widetilde{\beta})= 0$ whenever $X$ is killed by $\theta$, i.e.:
 \[ \widetilde{\nabla}_X(\beta)= \theta([X, \widetilde{\rho}(\beta)])\quad \textrm{for all $X\in \Gamma(\CC_P)$, $\beta\in \Gamma(\widetilde{A})$}.\]
 Also this condition alone implies (\ref{eq:thata-a-is-apha}) and then the decomposition
 \[ TP 
 = \textrm{Im}(a)\oplus \CC_P\]
 (where $a$ is the infinitesimal action). 
 On the other hand one can further rewrite the last equation on $\theta$ without any reference to $\theta$, by applying it to horizontal lifts and to $\beta= \mu^*\alpha$. We find that the infinitesimal counterpart of the multiplicativity of $\CC_P$ is
 \[ \left[ \textrm{hor}(V), a(\alpha)\right ]= a\left(\nabla_V(\alpha)\right)\oplus \textrm{hor}\left([V, \rho(\alpha)]\right)\quad \textrm{for all $V\in \mathfrak{X}(\BB)$, $\alpha\in \Gamma(A)$.}\]
\end{rmk}

\begin{rmk}\label{rk:theta_tilde_MC}
 The form $\theta$ and its properties can be packed together sightly differently, using the corresponding double $\mathcal{D}= \mathcal{D}(A, \nabla)$. First of all, we re-interpret $\theta$ as a $\mathcal{D}$-valued form, where, in principle, we use $d\mu$ for the $T\BB$-component. However, to obtain compatibility with anchors, we arrange the terms to be:
\[ \tilde{\theta}:= (d\mu- \rho\circ \theta, \theta): TP\to \mathcal{D}.\]
Then the Maurer-Cartan equation for $\theta$ translates into a similar Maurer-Cartan equation for $\tilde{\theta}$ which, in turn, just encodes the fact that $\tilde{\theta}$ is a morphism of Lie algebroids. See also the proof of Proposition~\ref{prp:Van_Est_to_double}.
\end{rmk}

\begin{rmk}
The previous remark is undoubtedly related to the discussion in~\cite{ORI} concerning Cartan's seminal work on Lie pseudogroups and more precisely his ``three fundamental theorems''. A precise exposition of the material is out of the scope of this paper, but the point that we want to stress is that the Maurer-Cartan equation for $(\Sigma, \CC)$-structures mentioned in the remark above
proves, together with the flat Cartan algebroid structure on $A$, that a flat Cartan groupoid  $(\Sigma, \CC)$
is a very special case of {\bf Cartan's realization} in the sense of~\cite{ORI}, Definition $5.2.11$. One considers the automorphism of vector bundles 
\[
\Phi_\D: \D\to \D, \quad (X, \alpha)\to (X+\rho(\alpha), \alpha)
\]
There is a unique bracket $[[\ ,\ ]]$ such that the above map is an isomorphism of Lie algebroids
\[
\Phi_\D: (D, [\ ,\ ])\to (\D, [[\ ,\ ]]).
\]
We can look at the flat $(\Sigma, \CC)$-structure given by the action of $(\Sigma, \CC)$ on itself and consider the form $\tilde{\omega}=(dt-\rho\circ \omega, \omega)$ as in the previous remark, where now $\omega$ is the form dual to $\CC$.
It follows from~\cite{ORI}, Proposition $5.3.7$, that the form $\Omega=\Phi_\D\circ \tilde{\omega}=(dt, \omega)$ satisfies {\bf Cartan's structure equation} 
\[
d\Omega+\frac{1}{2}[[\Omega, \Omega]]=0
\]
which is manifestly of Maurer-Cartan type. Getting rid of $\Phi_\D$, one finds the Maurer-Cartan equation for $\tilde{\omega}$ mentioned in the previous remark. On the other hand, when $(P, \CC_P)$ is a flat $(\Sigma, \CC)$-structure and $\theta$ is the form dual to $\CC_P$, the Maurer-Cartan equation for $\tilde{\theta}=(dt-\rho\circ \theta, \theta)$ can be read, composing with $\Phi_\D$, as a Cartan's structure equation in the sense of~\cite{ORI}, Definition $5.2.11$.
\end{rmk}

\section{Van Est maps}\label{Van Est maps}

\subsection{A very general setting}\label{A very general setting}

In this subsection, given a proper action (not necessarily principal) of a flat Cartan groupoid 
$(\Sigma, \CC)\tto B$ on a space $P$ (not necessarily carrying extra-structure) 
\[
\xymatrix{
(\Sigma, \CC) \ar@<0.25pc>[d] \ar@<-0.25pc>[d]  & \ar@(dl, ul) &  P \ar[dll]^{\mu}    \\
\BB&  & },
\]
we will construct a natural map 
\begin{equation}\label{eq:general:VE} 
VE_{P}: H^*_{\rm Haef}(\Sigma, \CC)\to 
H^*_{\Sigma}(P)
\end{equation}
from the Haefliger cohomology of the Cartan groupoid to the cohomology corresponding to the subcomplex 
\begin{equation}\label{eq:incl-inv-forms-in-forms} 
\Omega^{*}(P)^{\Sigma}\subset \Omega^{*}(P)
\end{equation}
of DeRham complex of $P$ consisting of $\Sigma$-invariant differential forms on $P$. 

Here we are using that $\Sigma$ acts not only on $TM$ but also on the tangent space $TP$ of any $\Sigma$-space $P$. Indeed, the action groupoid is itself a flat Cartan groupoid with the Cartan distribution pulled back from $\Sigma$: 
\[ \Sigma_{P}:= \Sigma\ltimes P\tto P, \quad \mathcal{C}_{P}:= \textrm{pr}_{1}^{-1}\CC,\]
and we are just considering the induced action on the tangent space of the base. Working out the action one finds the explicit description: the action of $g: x\to y$ on $v_p\in T_p P$ ($p\in \mu^{-1}(x)$) is
\[ g\cdot v_p:= (da)_{g, p}({\rm hor}_g(d\mu(v_p)), v_p)\in T_{gp}P.\]
The first projection from the action groupoid gives rise to a pull-back map between the corresponding Haefliger cohomologies (defined already at the level of complexes)
\[ \textrm{pr}_{1}^{*}: H^*_{\rm Haef}(\Sigma, \CC)\to H^*_{\rm Haef}(\Sigma_P, \CC_P).\]

\begin{lem}\label{lem:proper-action-van-est} If the action of $\Sigma$ on $P$ is proper then the inclusion~\eqref{eq:incl-inv-forms-in-forms} of 
$\Omega^*(P)^{\Sigma}$ in the $p= 0$ row of the Haefliger complex of $(\Sigma_P, \CC_{P})$ induces an isomorphism:
\[ H^*_{\Sigma}(P)\cong H^*_{\rm Haef}(\Sigma_P, \CC_P).\]
\end{lem}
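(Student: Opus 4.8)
The plan is to compute $H^*_{\rm Haef}(\Sigma_P, \CC_P)$ through the spectral sequence of its defining bicomplex (Definition~\ref{def-Haefl-coh-gen}), organised so that its $E_1$-page is the cohomology of the rows with respect to the horizontal groupoid differential $\delta$. The first thing to record is the shape of the $p=0$ row. Since $\Sigma_P^{(0)}$ is the unit space $P$ and $t=\textrm{id}$ there, every form is horizontal and the vertical differential $d=d_{\CC^{(0)}}$ coincides with the de Rham differential $d_{DR}$ on $P$; thus the $p=0$ row is the de Rham complex $(\Omega^*(P), d_{DR})$. Moreover, in degree $p=0$ the horizontal differential acts by $(\delta c)(g)=g\cdot c(s(g))-c(t(g))$, so $\ker(\delta\colon C^0\to C^1)=\Omega^*(P)^{\Sigma}$. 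As the $\Sigma$-action commutes with $d_{DR}$, the invariant forms form a $d_{DR}$-subcomplex, and the inclusion~\eqref{eq:incl-inv-forms-in-forms} is a map into the total Haefliger complex (on invariant forms $\delta=0$, so $D_{\rm tot}=\pm d_{DR}$).

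Next I would identify each row. For fixed $q$, the complex $(C^{\bullet}(\Sigma_P, \Lambda^q T^*P), \delta)$ is the differentiable groupoid-cohomology complex of the action groupoid $\Sigma_P=\Sigma\ltimes P$ with coefficients in the representation $\Lambda^q T^*P$, the $\Sigma_P$-action on $TP$ being the one induced by $\CC_P$ described in the setup above. The essential hypothesis now enters: since the action of $\Sigma$ on $P$ is proper, the map $\Sigma\ltimes P\to P\times P$, $(g,p)\mapsto(gp,p)$, is proper, which is exactly the condition that $\Sigma_P\tto P$ is a proper Lie groupoid.

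The heart of the argument is then the vanishing of the differentiable cohomology of a proper groupoid in positive degrees. Using a normalised Haar system on $\Sigma_P$, which exists because $\Sigma_P$ is proper, one constructs by averaging over the source fibres a contracting homotopy, yielding
\[
H^p\big(C^{\bullet}(\Sigma_P, \Lambda^q T^*P),\delta\big)\;=\;\begin{cases}\Omega^q(P)^{\Sigma}, & p=0,\\[2pt] 0, & p>0.\end{cases}
\]
(For finite-dimensional Lie groupoids this is the standard averaging argument; see~\cite{MARIUS}.) Hence the $E_1$-page is concentrated in the single column $p=0$, equal to $\Omega^q(P)^{\Sigma}$ in bidegree $(0,q)$, and the induced differential $d_1$ is $d_{DR}$ restricted to invariant forms. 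A spectral sequence supported in one column degenerates, so the total cohomology is $H^*(\Omega^*(P)^{\Sigma}, d_{DR})=H^*_{\Sigma}(P)$, and the resulting isomorphism is precisely the one induced by the inclusion~\eqref{eq:incl-inv-forms-in-forms}.

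The one genuinely delicate point, which I expect to be the main obstacle, is the averaging step in this generality: $\Sigma$, and hence $\Sigma_P$, may be pro-finite dimensional, so the source fibres over which one integrates are pro-finite manifolds. One must therefore verify that a proper pro-finite groupoid still carries a normalised Haar system adapted to the averaging homotopy, properness confining the relevant integration to the (compact) support of a cutoff function, within the pro-finite differential-geometric framework of the Appendix. Granting this, the vanishing in positive degrees and the identification of invariants in degree $0$ carry over unchanged, and the collapse of the spectral sequence is automatic.
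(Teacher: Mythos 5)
Your argument is essentially the paper's: each row of the Haefliger bicomplex of $(\Sigma_P,\CC_P)$ is the differentiable cohomology complex of the proper groupoid $\Sigma_P\tto P$ with coefficients in $\Lambda^qT^*P$, which vanishes in positive degrees and gives $\Omega^q(P)^{\Sigma}$ in degree zero, and the double-complex collapse then yields the isomorphism induced by the inclusion. The one step you flag as delicate --- averaging over pro-finite source fibres --- is precisely what Lemma~\ref{lem-App;proper} in the Appendix settles, not by constructing a pf-Haar system but by applying the finite-dimensional vanishing result of~\cite{MARIUS} at each level of the tower and inductively patching the primitives together using $\Sigma$-invariant complements, which exist by properness (cf.~\cite{RUIMATIAS}).
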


\begin{proof} Replacing $\Sigma_{P}$ by $\Sigma$, we may assume that $\Sigma\tto X$ is a proper groupoid and 
we are comparing $C^{*}_{\textrm{Haef}}(\Sigma)$ with $\Omega^{*}(X)^{\Sigma}$. The main point is that
each row $C^{*, q}_{\textrm{Haef}}(\Sigma)$ computes the differentiable cohomology of the proper groupoid $\Sigma$ with various coefficients. But that is know to vanish in all positive degree, and give the $\Sigma$-invariant part in degree zero, if $\Sigma$ is proper (see Lemma \ref{lem-App;proper} in the Appendix). Hence, by a standard double complex argument, the conclusion follows. 
\end{proof}

Combining the two maps above, i.e. $\textrm{pr}_{1}^{*}$ and the isomorphism from the Lemma, we obtain the desired map (\ref{eq:general:VE}).

\begin{prp}\label{prp-general-VE} If the action of $\Sigma$ on $P$ is proper, $\mu$ is a submersion and the $\mu$-fibers of $P$ are cohomologically $l$-connected then $VE_{P}: H^*_{\rm Haef}(\Sigma, \CC)\to H^*_{\Sigma}(P)$ is an isomorphism up to degree $l$ and injective in degree $l+1$.
\end{prp}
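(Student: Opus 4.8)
The plan is to exploit that the map we constructed factors as $VE_P=\Psi^{-1}\circ \textrm{pr}_1^*$, where $\Psi\colon H^*_{\Sigma}(P)\xrightarrow{\sim} H^*_{\rm Haef}(\Sigma_P,\CC_P)$ is the isomorphism furnished by Lemma~\ref{lem:proper-action-van-est}. Since $\Psi$ is already an isomorphism (this is the only place properness is used), the statement reduces to proving that the chain map
\[ \textrm{pr}_1^*\colon H^*_{\rm Haef}(\Sigma,\CC)\longrightarrow H^*_{\rm Haef}(\Sigma_P,\CC_P)\]
is an isomorphism in degrees $\le l$ and injective in degree $l+1$; for this it is the submersivity of $\mu$ and the $l$-connectivity of its fibres that matter.

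First I would describe $\textrm{pr}_1$ on nerves. A $p$-string of composable arrows of $\Sigma_P=\Sigma\ltimes P$ is the same as a $p$-string of $\Sigma$ together with a point of $P$ lying over the source of its last arrow, so $\Sigma_P^{(p)}\cong \Sigma^{(p)}\tensor[_s]{\times}{_\mu}P$ is the pullback of $\mu$ along $s\colon\Sigma^{(p)}\to\BB$, and $\textrm{pr}_1\colon\Sigma_P^{(p)}\to\Sigma^{(p)}$ is a submersion whose fibres are the $\mu$-fibres of $P$, hence cohomologically $l$-connected. From $\CC_P=\textrm{pr}_1^{-1}\CC$ and the description of the higher connections in Lemma~\ref{lem:one_connection} one checks $\CC_P^{(p)}=\textrm{pr}_1^{-1}(\CC^{(p)})$; in particular $\ker(d\,\textrm{pr}_1)\subset\CC_P^{(p)}$, so the fibres of $\textrm{pr}_1$ are tangent to the (involutive, since $\CC$ is flat) distribution $\CC_P^{(p)}$, and $\textrm{pr}_1$ carries its leaves onto those of $\CC^{(p)}$.

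Next I would compare the column spectral sequences (filtration by the groupoid degree $p$) of the two Haefliger bicomplexes, whose first pages are the leafwise de Rham cohomologies
\[ E_1^{p,q}=H^q\bigl(\Omega^{\bullet}_{\textrm{hor}}(\Sigma^{(p)}),d_{\CC^{(p)}}\bigr), \qquad (E_P)_1^{p,q}=H^q\bigl(\Omega^{\bullet}_{\textrm{hor}}(\Sigma_P^{(p)}),d_{\CC_P^{(p)}}\bigr).\]
The crucial point is that for each fixed $p$ the map $\textrm{pr}_1^*$ induces an isomorphism $E_1^{p,q}\to (E_P)_1^{p,q}$ for $q\le l$ and an injection for $q=l+1$. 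This I would obtain from a leafwise Leray spectral sequence of the foliated bundle $\textrm{pr}_1$: filtering the $\CC_P^{(p)}$-leafwise forms by their degree along the fibre directions $\ker(d\,\textrm{pr}_1)$ produces a spectral sequence whose fibre term is the de Rham cohomology of a $\mu$-fibre $F$; as $H^b(F)=0$ for $1\le b\le l$ and $H^0(F)=\R$, the edge homomorphism — which is precisely $\textrm{pr}_1^*$ — is an isomorphism through degree $l$ and injective in degree $l+1$. Finally, this $E_1$-comparison propagates to the abutment by a mapping-cone argument: it forces the column spectral sequence of $\mathrm{Cone}(\textrm{pr}_1^*)$ to vanish in every bidegree $(p,q)$ with $q\le l$, whence $H^n(\mathrm{Cone}(\textrm{pr}_1^*))=0$ for $n\le l$, and the long exact sequence of the cone gives exactly that $\textrm{pr}_1^*$ is an isomorphism for $n\le l$ and injective for $n=l+1$; composing with $\Psi^{-1}$ yields the claim for $VE_P$.

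The hard part will be the fibrewise step, i.e. making the leafwise Leray spectral sequence rigorous. One has to show that the fibrewise de Rham cohomology organises into a coefficient system on $\Sigma^{(p)}$ and that $l$-connectivity of the fibres kills it in degrees $1,\dots,l$ — the vanishing is stalkwise and hence insensitive to the possible failure of local triviality of the submersion $\mu$, but the construction of the fibre-degree filtration and the fibrewise Poincaré lemma still require care. Moreover, since $\Sigma^{(p)}$, $P$ and the fibres $F$ are in general profinite dimensional, each of these steps must be carried out within the profinite-dimensional calculus of the Appendix, controlling the passage to the limit over the jet tower.
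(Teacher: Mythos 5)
Your argument is correct and follows essentially the same route as the paper: reduce via Lemma~\ref{lem:proper-action-van-est} to showing $(\mathrm{iso}_l)$ for $\mathrm{pr}_1^*$ on the Haefliger bicomplexes, then argue column by column, using that $\mathrm{pr}_1\colon \Sigma_P^{(p)}\cong \Sigma^{(p)}\tensor[_s]{\times}{_\mu}P\to \Sigma^{(p)}$ is a submersion with cohomologically $l$-connected fibres and $\CC_P^{(p)}=(d\,\mathrm{pr}_1)^{-1}(\CC^{(p)})$. The fibrewise step you flag as the hard part is precisely Proposition~\ref{lem:App:2nd} of the Appendix (proved there along the lines of the van Est argument of \cite{MARIUS}), so you may simply invoke it rather than redoing the leafwise Leray spectral sequence.
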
 

\begin{proof} 
Let us call the property from the statement ($\textrm{iso}_{l}$). 
We have to check ($\textrm{iso}_{l}$) for the map induced in cohomology by map of double complexes
\[ \textrm{pr}_{1}^{*}: C^{*, *}_{\rm Haef}(\Sigma, \CC)\to C^{*, *}_{\rm Haef}(\Sigma_P, \CC_P).\]

If $A\hookrightarrow B$ is any inclusion of double complexes and $C$ denotes the quotient, from the long exact sequence in cohomology we see that ($\textrm{iso}_{l}$) for the map in cohomology is equivalent to $C$ having trivial cohomology up to degree $k$. Furthermore if such a property holds for all columns, then it also holds for the total complex. 

By this double complex argument, it suffices to show that, in each column $p$, our map
\[ \textrm{pr}_{1}^{*}:  \Gamma(\Sigma^{(p)}, \Lambda^{*} \CC_{p}^{*}) \to \Gamma(\Sigma_P^{(p)}, \Lambda^{*} \widetilde{\CC}_{P}^{*})\]
satisfies ($\textrm{iso}_{l}$) in cohomology. Fixing $p$ and denoting 
\[ \iP:= \Sigma_{P}^{(p)}, \quad \iM= \Sigma^{(p)}, \quad \pi= \textrm{pr}_1\]
we find ourselves precisely in the setting of Proposition \ref{lem:App:2nd} from the Appendix.
\end{proof}

\subsection{The case of flat $(\Sigma, \CC)$-structures (and almost structures)}
\label{subsec:The case of Cartan bundles and almost structures}

The differentiable cohomology of pseudogroups serves as the domain of differentiable characteristic maps~\eqref{eq:dif-charact-map}
\[ \kappa^{\mathcal{P}}_{\textrm{diff}}: H^{*}_{\textrm{diff}}(\Gamma)\to H^*(M)\]
associated to $\Gamma$-structures $\mathcal{P}$ - see Definition \ref{defn:diff-coh-gamma}. Here we point out that these maps do not depend on $\mathcal{P}$ but just on the induced almost $\Gamma$-structure. Actually, these maps are defined in the general setting of flat Cartan groupoids $(\Sigma, \CC)\tto \BB$ and flat $(\Sigma, \CC)$-structures $(P, \CC_P)\to M$.
This exploits the general van Est map (\ref{eq:general:VE}) in the more special case when $P$ is actually a flat $(\Sigma, \CC)$-structure. In this case we have available the horizontal lift $\textrm{hor}$ with respect to the Ehresmann connection $\CC_P$, which induces a map
\[ \textrm{hor}^*:  \Omega^*(P)^{\Sigma}\to \Omega^*(M),\]
\[\textrm{hor}^*(\alpha)(v^1_x, \ldots, v^q_x):= \alpha(\textrm{hor}_p(v^1_x), \ldots, \textrm{hor}_p(v^q_x)),\]
where $p\in P$ is a/any element in the fiber above $x$; thanks to the 
invariance of $\alpha$, the definition is independent of the choice of $p$. Composing the induced map in cohomology with the general van Est map (\ref{eq:general:VE}) gives rise to a map 
\begin{equation}\label{eq:Haefl-charact-map} 
\kappa^{P}_{\textrm{Haef}}: H^{*}_{\textrm{Haef}}(\Sigma, \CC)\to H^*(M),
\end{equation}
called {\bf the Haefliger characteristic map associated to the $(\Sigma, \CC)$-structure $(P, \CC_P)$}. As promised, we have:

\begin{prp}\label{prp:chr-map-formal-back} For Cartan groupoids $(\Sigma, \CC)$ and $(\Sigma, \CC)$-structures induced by Lie pseudogroups $\Gamma$ and $\Gamma$-structures $\mathcal{P}$
\[ \Sigma= J^{\infty}\Gamma, \quad P= J^{\infty}\mathcal{P},\]
the Haefliger characteristic map \eqref{eq:Haefl-charact-map} becomes the differentiable characteristic map \eqref{eq:dif-charact-map}  of the $\Gamma$-structure. 
\end{prp}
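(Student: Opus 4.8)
The plan is to show that both characteristic maps compute one and the same pullback along a morphism of Cartan groupoids determined by the cocycle, and that the two apparently different recipes are just two ways of evaluating this pullback. A $\Gamma$-structure $\mathcal{P}$ is represented by a cocycle $(\mathcal{U}, f_i, \gamma_{ij})$ (Definition \ref{Cocycle}) with the $f_i\colon U_i\to \BB$ submersions; over $P=\mathcal{P}^\infty=J^\infty\mathcal{P}$ the associated holonomic sections are $\sigma_i:=j^\infty f_i\colon U_i\to P$, with $f_i=\mu\circ\sigma_i$ and with $\sigma_i,\sigma_j$ differing by the jet cocycle $\gamma_{ij}=j^\infty(\gamma^{\textrm{germ}}_{ij})$ (Example \ref{ex:formal_Gamma_structures}). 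The first observation is that the jet cocycle, viewed as a groupoid morphism $\gamma\colon M_{\mathcal{U}}\to J^\infty\Gamma$, is an \emph{integral} map of the Cartan connection: each $\gamma_{ij}$ is a holonomic section $j^\infty g_{ij}$ precomposed with one of the submersions, so $d\gamma(TM_{\mathcal{U}})\subset\CC^\infty$. Since the \v{C}ech groupoid $M_{\mathcal{U}}$ carries the tautological full connection $TM_{\mathcal{U}}$, whose Haefliger complex is the \v{C}ech--de Rham complex computing $H^*(M)$, the morphism $\gamma$ induces a pullback $\gamma^*_{\textrm{Haef}}\colon H^*_{\textrm{Haef}}(J^\infty\Gamma,\CC^\infty)\to H^*(M)$.

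Next I would identify $\kappa^{\mathcal{P}}_{\textrm{diff}}$ with $\gamma^*_{\textrm{Haef}}$, which is essentially unwinding \eqref{eq:dif-charact-map}. The map $j^*$ evaluates a differentiable cochain $c$ on infinite jets, so $j^*c$ applied to a string of germs only sees their jets; inserting this into the \v{C}ech--de Rham formula \eqref{dR_gamma} for $\gamma^*$ (Example \ref{exm:back:charact;map}) and using $j^\infty(\gamma^{\textrm{germ}}_{ij})=\gamma_{ij}$ produces exactly the pullback of the Haefliger cochain $c$ along the Cartan morphism $\gamma$, with base-point pullback $f_{i_0}^*$. Thus $\kappa^{\mathcal{P}}_{\textrm{diff}}=\gamma^*_{\textrm{Haef}}$.

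The real content is to identify $\kappa^{P}_{\textrm{Haef}}$ with the same map. Here I would use the sections to factor $\gamma$ through the action groupoid $\Sigma_P=J^\infty\Gamma\ltimes P$, equipped with the groupoid connection $\mathcal{C}_{\Sigma_P}=\textrm{pr}_1^{-1}\CC^\infty$ (not to be confused with the structure connection $\CC_P$ on $P$). The assignment $(i,x)\mapsto\sigma_i(x)$ on objects and the obvious rule on arrows defines a groupoid morphism $\Psi\colon M_{\mathcal{U}}\to\Sigma_P$ with $\textrm{pr}_1\circ\Psi=\gamma$; since $\gamma$ is integral, $\Psi$ is again a Cartan morphism, so $\gamma^*_{\textrm{Haef}}=\Psi^*_{\textrm{Haef}}\circ\textrm{pr}_1^*$. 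Recalling that $\kappa^P_{\textrm{Haef}}=\textrm{hor}^*\circ VE_P=\textrm{hor}^*\circ\iota^{-1}\circ\textrm{pr}_1^*$, where $\iota$ is the isomorphism of Lemma \ref{lem:proper-action-van-est} induced by including invariant forms in the $p=0$ row (the action is free and proper, $P\to M$ being principal), it remains to prove the key identity
\[ \textrm{hor}^* = \Psi^*_{\textrm{Haef}}\circ\iota \colon H^*_{\Sigma}(P)\to H^*(M). \]
For a $\Sigma$-invariant closed form $\alpha$, the class $\iota[\alpha]$ sits in bidegree $(0,\ast)$, where $\Psi^*_{\textrm{Haef}}$ is pullback along the object map, producing the \v{C}ech $0$-cochain $(\sigma_i^*\alpha)_i$. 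The crux is the holonomic-section property of the Cartan distribution: because $\sigma_i=j^\infty f_i$ is an integral section of $\CC_P$ and a section of $\pi$, one has $d\sigma_i(v)=\textrm{hor}_{\sigma_i(x)}(v)$, hence $\sigma_i^*\alpha=\textrm{hor}^*\alpha|_{U_i}$, and invariance of $\alpha$ makes these glue to the global form $\textrm{hor}^*\alpha$. Combining the displayed equalities gives $\kappa^P_{\textrm{Haef}}=\gamma^*_{\textrm{Haef}}=\kappa^{\mathcal{P}}_{\textrm{diff}}$.

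I expect the main obstacle to be the rigorous passage through the abstract van Est isomorphism $\iota$ of Lemma \ref{lem:proper-action-van-est}: one must check that representing a class by an honest invariant form and then applying the chain map $\Psi^*_{\textrm{Haef}}$ genuinely computes $\Psi^*\circ\iota$ and picks up no correction terms from the contracting homotopy in the higher rows. This is harmless precisely because $\iota$ is realised by a chain-level inclusion, so it suffices to verify the identity on invariant-form representatives; still, the bookkeeping of the two clashing uses of the symbol $\CC_P$ and the sign conventions deserves care. A secondary point is the compatibility of the relation $\sigma_i=\gamma_{ij}\cdot\sigma_j$ with the induced $\Sigma$-action on $TP$ from subsection \ref{A very general setting}, which is exactly what guarantees both $\sigma_i^*\alpha=\sigma_j^*\alpha$ on overlaps and the well-definedness of $\textrm{hor}^*$.
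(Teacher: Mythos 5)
Your proposal is correct, and it follows the same basic strategy as the paper — factor the characteristic map through the action groupoid, invoke the properness isomorphism of Lemma \ref{lem:proper-action-van-est}, and compare with the \v{C}ech--de Rham computation of $\gamma^*$ by means of the local sections $\sigma_i$ — but it executes this strategy at a different level. The paper stays at the germ level: it forms $\mathcal{G}\ltimes\mathcal{P}$ with $\mathcal{G}=\Ger(\Gamma)$, uses that $\pi\colon\mathcal{P}\to M$ is \emph{\'etale} to identify $H^*_{\mathcal{G}}(\mathcal{P})$ with $H^*(M)$ without ever mentioning a connection, writes down the explicit chain map $\bar\gamma^*$ built from the $\sigma_i$, and then declares the compatibility of this germ-level picture with the jet-level definition of $\kappa^P_{\textrm{Haef}}$ to be ``commutative by construction.'' You instead work with $J^\infty\Gamma\ltimes\mathcal{P}^\infty$ throughout, so the descent of invariant forms to $M$ is no longer automatic and must be mediated by $\textrm{hor}^*$; your key observation — that $\sigma_i=j^\infty f_i$ is an integral section of $\CC_P$, hence $d\sigma_i(v)=\textrm{hor}_{\sigma_i(x)}(v)$ and $\sigma_i^*\alpha=\textrm{hor}^*\alpha|_{U_i}$ — is exactly the step that makes the paper's ``by construction'' honest, and is arguably the real geometric content of the statement. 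Your approach buys a self-contained jet-level argument that makes explicit why the cocycle $\gamma$ and the morphism $\Psi$ are maps of \emph{Cartan} groupoids (integrality of holonomic sections), which is what legitimises the chain-level pullbacks of Haefliger complexes; the paper's approach buys a shorter comparison with the already-established \'etale formula \eqref{dR_gamma}, at the cost of leaving the germ/jet compatibility implicit. Your closing caveats (checking the van Est identification on invariant-form representatives, and the relation $\sigma_i=\gamma_{ij}\cdot\sigma_j$) are handled correctly and do not hide a gap.
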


\begin{proof}
We start by recalling that, by definition, $\kappa^{\mathcal{P}}_{\rm diff}$ is constructed by composing the map
\[
j^*: H^*_{\text{diff}}(\Gamma)\to H^*_{\rm dR}(\mathcal{G})\quad (\mathcal{G}=Germ(\Gamma))
\]
with the map~\eqref{eq:abstr-k-G-dR}
\[
\gamma^*: H^*_{\rm dR}(\mathcal{G})\to H^*(M).
\]
We have the map of double complexes (we stick to the notation $\mathcal{G}=Germ(\Gamma)$)
\[
{\rm pr}_1^*:C^p(\mathcal{G}, \Omega^q_\BB)\to C^p(\mathcal{G}\ltimes \mathcal{P}, \Omega^q_\mathcal{P})
\]
Using properness of the principal action, the cohomology of the right hand side is seen to be isomorphic to $H^*_{\mathcal{G}}(\mathcal{P})$ (the cohomology of $\mathcal{G}$-invariant forms on $\mathcal{P}$; this goes as in Lemma~\ref{lem:proper-action-van-est}). The projection $\pi: \mathcal{P} \to M$ is étale, because so is $\mathcal{G}$ and $\mathcal{P}$ is a principal $\mathcal{G}$-bundle, hence we see that $H^*_{\mathcal{G}}(\mathcal{P})\cong H^*_{}(X)$. We get a map
\begin{equation}\label{dr_char_hat}
\hat{\gamma}^*: H^*_{\rm dR}(\mathcal{G})\to H^*_{}(X)
\end{equation}
fitting in the diagram
\begin{equation}\label{diagram:our-map}
\begin{tikzcd}
H^*_{\rm diff}(\Gamma)\arrow[rr]{}{\kappa_{\textrm{Haef}}^P}\arrow[rd]{}{j^*}& & H^*_{}(X) \\
&H^*_{\rm dR}(\mathcal{G})\arrow[ru]{}{\hat{\gamma}^*}&  
\end{tikzcd}
\end{equation}
which is commutative by construction. Of course, $\hat{\gamma}^*$ has to be compared with $\gamma^*$ and~\eqref{diagram:our-map} has to be compared with
\begin{equation}\label{diagram:Hae-map}
\begin{tikzcd}
H^*_{\rm diff}(\Gamma)\arrow[rr]{}{\kappa^{\mathcal{P}}_{\rm diff}}\arrow[rd]{}{j^*}& & H^*_{}(X) \\
&H^*_{\rm dR}(\mathcal{G})\arrow[ru]{}{\gamma^*}&  
\end{tikzcd}.
\end{equation}
Hence, it is clear that the equality $\gamma^*=\hat{\gamma}^*$ implies our claim. To prove it, let us fix a cover $\mathcal{U}$ of $M$ 
We consider the map
\begin{align*}
\bar{\gamma}^*: C^p(\mathcal{G}\ltimes \mathcal{P}, \Omega^q_\mathcal{P})\to C^p(M_\mathcal{U}, \Omega^q_M),\quad c \to \bar{\gamma}^*(c)
\end{align*}
where 
\[
\bar{\gamma}^*(c)_{i_0, \dots, i_p}(x)=\sigma_{i_p}^*c((\gamma_{i_{p-1}i_{p}}(x), [\sigma_{i_{p-1}}]_x), \dots, (\gamma_{i_0i_1}(x), [\sigma_{i_0}]_x)).
\]
Here the $\sigma_i$'s are the sections of the principal $\mathcal{G}$-bundle $\mathcal{P}$ which correspond to the chosen $\mathcal{G}$-cocycle on $M$, and the $[\sigma_i]_x$'s are the corresponding germs at $x$ (which can be identified with the points $\sigma_i(x)\in \mathcal{P}$ since $\pi$ is étale).
More explicitely, $\sigma_i:U_i\to \mathcal{P}$ is the section of $\mathcal{P}$ which is the inverse of $\pi$ around $[\gamma_{ii}]_x\in \mathcal{P}$, the germ of $\gamma_{ii}$ at $x$ (in particular, $[\gamma_{ii}]_x=\sigma_i(x)$). 
This is a map of double complexes. The claim follows thanks to the commutative diagram below (we leave to the reader to check that $\bar{\gamma}$ induces in cohomology the isomorphism $H^*_{\mathcal{G}}(\mathcal{P})\cong H^*(X)$ used to define $\hat{\gamma}^*$):
\[
\begin{tikzcd}
C^p(\mathcal{G}\ltimes \mathcal{P}, \Omega^q_\mathcal{P})\arrow[rr]{}{\bar{\gamma}^*}& & C^p(M_\mathcal{U}, \Omega^q_X) \\
&C^p(\mathcal{G}, \Omega^q_\BB)\arrow[ru]{}{\gamma^*} \arrow[lu,swap]{}{{\rm pr_1}^*}&  
\end{tikzcd}.
\]
\end{proof}

\begin{rmk} \label{int} 
This new insight may make the characteristic maps useful also in detecting almost structures that are not integrable. 
The plan would be to use the commutative diagram 
\[
\begin{tikzcd}
H^*_{\rm diff}(\Gamma)\arrow[rr]{}{\kappa^{\mathcal{P}}_{\rm diff}} \arrow[rd, swap]{}{\kappa^{\textrm{univ}}}& & H^*(M)\\
& H^*(\Gamma)\arrow[ru, swap]{}{\kappa^{\mathcal{P}}_{\rm abs}} & 
\end{tikzcd}
\]
which is simply~\eqref{eq: Haefliger-diagram} for a general pseudogroup $\Gamma$ (notice that we see $\kappa^{\mathcal{P}}_{abs}$ as defined on $H^*(\Gamma)$ using Theorem~\ref{thm-Haefl-conj}).
Step 1 is to detect classes 
\[ u\in H^*_{\rm diff}(\Gamma)\]
that are killed when viewed as cocycles in $H^*(B\Gamma)$, i.e. are in the kernel of the universal map 
\[ \kappa^{\textrm{univ}}: H^*_{\rm diff}(\Gamma)\to H^*(\Gamma)\cong H^*(B\Gamma).\]
Then step 2 is to exhibit almost $\Gamma$-structures $P$ whose corresponding $u$-characteristic class
\[ u(P)= \kappa_{\rm diff}^{\mathcal{P}}(u)\in H^*(M)\]
does not vanish. From the previous commutative diagram one concludes immediately that $P$ cannot arise from an actual $\Gamma$-structure $\mathcal{P}$. 

We conjecture that this plan can be implemented for certain pseudogroups $\Gamma$. Note however that already step 1 is a delicate matter. For instance, even for $\Gamma= \Gamma^q$ (when the previous plan cannot work because of the Frobenius theorem) one may expect that the universal characteristic map is injective but, despite some early announcement~\cite{FUCHSWRONG}, that is still an open problem (this was pointed out in \cite{CONMOSC} - see e.g. the comments following Theorem $5$, and the further references therein). On the other hand, it may be easier to find elements in the kernel of $\kappa^{\textrm{univ}}$ than proving that the map is injective; of course, one has to look at pseudogroups for which formal integrability does not imply integrability. 
\end{rmk}

\subsection{The van Est map into infinitesimal Haefliger cohomology}

Next, we exploit the general van Est map (\ref{eq:general:VE}) in another special case: when $P$ is $\Sigma$ itself.

\begin{prp}\label{prp:Van_Est_to_double}
For a Cartan groupoid $(\Sigma, \CC)\tto \BB$, taking $P= \Sigma$ endowed with the left action of 
$\Sigma$, the resulting complex $\Omega^{*}(\Sigma)^{\Sigma}$ is isomorphic to the 
infinitesimal Haefliger complex $C^{*}_{\rm Haef}(A, \nabla)$ of 
the flat Cartan algebroid $(A, \nabla)\to \BB$ of $(\Sigma, \CC)$. 

Therefore, one obtains a canonical map
\[ VE: H^{*}_{\rm Haef}(\Sigma, \CC)\to H^{*}_{\rm Haef}(A, \nabla)\]
and this map is an isomorphism up to degree $l$ if the $s$-fibers of $\Sigma$ are cohomologically $l$-connected. 
\end{prp}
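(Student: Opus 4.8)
The plan is to obtain $VE$ as a special case of the general van Est map of Proposition~\ref{prp-general-VE} and then to identify its target. First I would take $P=\Sigma$ with the left action of $\Sigma$ on itself, so that the moment map is $\mu=t$ and the orbit projection is $s$; by Example~\ref{ex:Sigma-is-Cartan-bundle} this is a free and proper (in fact principal) flat $(\Sigma,\CC)$-structure. Since $t$ is a submersion, Proposition~\ref{prp-general-VE} applies verbatim and produces a map $VE_\Sigma\colon H^*_{\rm Haef}(\Sigma,\CC)\to H^*_\Sigma(\Sigma)$ which is an isomorphism up to degree $l$ and injective in degree $l+1$ whenever the $t$-fibers are cohomologically $l$-connected. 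The inversion $\tau\colon\Sigma\to\Sigma$ is a diffeomorphism carrying $t$-fibers to $s$-fibers, so this is exactly the hypothesis phrased in terms of $s$-fibers. It therefore remains only to produce an isomorphism of complexes $\Omega^*(\Sigma)^\Sigma\cong C^*_{\rm Haef}(A,\nabla)$, which then gives $H^*_\Sigma(\Sigma)=H^*(\Omega^*(\Sigma)^\Sigma)\cong H^*_{\rm Haef}(A,\nabla)$.

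For this chain-level isomorphism I would avoid comparing the two differentials by hand and instead invoke the Atiyah (gauge) algebroid of the principal bundle $s\colon\Sigma\to\BB$. As for principal group-bundles, the complex of invariant forms on the total space, with its de Rham differential, is canonically the Chevalley--Eilenberg complex of the Atiyah algebroid of the bundle; applied here this reads $\Omega^*(\Sigma)^\Sigma\cong C^*(T\Sigma/\Sigma)$, where $T\Sigma/\Sigma\to\BB$ is the quotient of $T\Sigma$ by the lifted left action. Thus the proposition reduces to an isomorphism of Lie algebroids $T\Sigma/\Sigma\cong\mathcal{D}(A,\nabla)$ with the double of Definition~\ref{Double_algebroid_def}; combined with Proposition~\ref{prp:inf-Haefl-double} this yields $\Omega^*(\Sigma)^\Sigma\cong C^*(\mathcal{D}(A,\nabla))=C^*_{\rm Haef}(A,\nabla)$.

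To build $T\Sigma/\Sigma\cong\mathcal{D}(A,\nabla)$ I would use $\CC$ to split the Atiyah sequence $0\to\ker(ds)/\Sigma\to T\Sigma/\Sigma\to T\BB\to 0$: the adjoint bundle $\ker(ds)/\Sigma$ is identified with $A$, and the horizontal lift of $\CC$ provides a splitting by $T\BB$. Representing invariant fields by their values along the units, one checks that the anchor of $T\Sigma/\Sigma$ induced by $ds$ matches $\rho_{\mathcal{D}}(\alpha,X)=\rho(\alpha)+X$ once the splitting is corrected on the $A$-component by the automorphism $(X,\alpha)\mapsto(X+\rho(\alpha),\alpha)$ of $\mathcal{D}$. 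The bracket is then pinned down by the flat Cartan structure: multiplicativity of $\CC$ produces the infinitesimally multiplicative connection $\nabla=\nabla^{\CC}$ on $A$ together with the representation $\nabla^{T\BB}$ on $T\BB$ (Corollary~\ref{cor:MC-eq-for-Cartan-form}), and involutivity of $\CC$ makes $(A,\nabla)$ flat; by Theorem~\ref{Double_algebroid} these are precisely the matched-pair data assembling $\mathcal{D}(A,\nabla)$. Composing the resulting isomorphism of complexes with $VE_\Sigma$ gives the desired $VE\colon H^*_{\rm Haef}(\Sigma,\CC)\to H^*_{\rm Haef}(A,\nabla)$, with the stated range.

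The step I expect to be most delicate is the bracket part of the identification $T\Sigma/\Sigma\cong\mathcal{D}(A,\nabla)$. The subtlety is that the adjoint bundle $\ker(ds)/\Sigma\cong A$ inherits from the de Rham bracket of invariant vertical fields the fibrewise (extended isotropy) bracket $\{\,\cdot,\cdot\,\}_{\rm pt}$ of Remark~\ref{Isotropy_double_algebroid}, rather than the non-tensorial algebroid bracket of $A$, and one must verify that the mixed brackets reproduce exactly $\nabla^{\CC}$ and $\nabla^{T\BB}$. A clean way to carry out this bookkeeping, and to fix all signs and anchor-twists at once, is to work with the multiplicative Maurer--Cartan form $\omega\in\Omega^1(\Sigma,t^*A)$ and the induced fibrewise-isomorphic morphism $\tilde\omega=(dt-\rho\circ\omega,\omega)\colon T\Sigma\to\mathcal{D}(A,\nabla)$ of Remark~\ref{rk:theta_tilde_MC}: its Maurer--Cartan equation (Corollary~\ref{cor:MC-eq-for-Cartan-form}) is exactly the statement that $\tilde\omega$ is a Lie algebroid morphism, and the matched-pair identities read off from $\tilde\omega$ determine the bracket on $T\Sigma/\Sigma$ with essentially no further computation.
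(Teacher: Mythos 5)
Your overall strategy is the same as the paper's: apply the general van Est map of Proposition~\ref{prp-general-VE} to $P=\Sigma$ with the left action along $\mu=t$ (properness holds since the action is principal, and the $t$-fibers are carried to $s$-fibers by inversion), and then identify $\Omega^{*}(\Sigma)^{\Sigma}$ with $C^{*}(\mathcal{D}(A,\nabla))$ by regarding left-invariant vector fields as sections of $A\oplus T\BB$ via restriction to the units, corrected by an automorphism of $A\oplus T\BB$. This is exactly what the paper does; the only substantive difference is how you propose to verify the bracket identification, and that is where there is a genuine gap.

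The form $\tilde\omega=(dt-\rho\circ\omega,\omega)$ is indeed a fibrewise-isomorphic Lie algebroid morphism $T\Sigma\to\mathcal{D}(A,\nabla)$ covering $t$, but it does not ``determine the bracket on $T\Sigma/\Sigma$ with essentially no further computation,'' because left-invariant vector fields are not $\tilde\omega$-projectable. Concretely, if $V$ is the invariant extension of $(\beta,Y)\in\Gamma(A\oplus T\BB)$, then $V_g=dm\bigl(\mathrm{hor}^{\CC}_g(dt(V_{1_{s(g)}})),V_{1_{s(g)}}\bigr)$ and multiplicativity of $\omega$ gives $\omega(V_g)=g\cdot\beta_{s(g)}$, which is a function of $t(g)$ alone only when $\beta$ is a $\Sigma$-invariant section of $A$; equivalently, $\tilde\omega^{*}$ maps $C^{*}(\mathcal{D})$ into $\Omega^{*}(\Sigma)$ but not into $\Omega^{*}(\Sigma)^{\Sigma}$. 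So the Lie subalgebra of $\mathfrak{X}(\Sigma)$ that $\tilde\omega$ identifies with $\Gamma(\mathcal{D})$ is a different one from $\mathfrak{X}(\Sigma)^{\Sigma}$; the two trivializations of $T\Sigma$ (by $\tilde\omega$, i.e.\ ``from the target,'' and by left translation from the units) differ by the adjoint-type action of $\Sigma$ on $A\oplus T\BB$. Bridging them is precisely where the terms $\nabla_{\rho(\alpha)}\beta-\nabla_{\rho(\beta)}\alpha$ and the sign in the correcting automorphism $\Phi(\alpha,v)=(-\alpha,v+\rho(\alpha))$ come from (your automorphism $(X,\alpha)\mapsto(X+\rho(\alpha),\alpha)$ is missing the minus sign on the $A$-component, which records the usual left/right anti-isomorphism). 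The paper carries this out by computing $[\hat\alpha,\hat\beta]|_{\BB}$, $[\hat\alpha,\mathrm{hor}^{\CC}(Y)]|_{\BB}$, etc., using the Maurer--Cartan equation \emph{together with} the observation that $\omega(V)$ is an invariant section of $t^{*}A$ when $V$ is invariant (this is the content of the argument in Lemma~\ref{Gelf_Fuc}). So the Maurer--Cartan equation is an ingredient of that computation, not a substitute for it; as written, your last step would fail and you would have to fall back on the direct bracket computation.
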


\begin{proof} 
We will be using the description of the infinitesimal Haefliger complex as the algebroid complex of the double $\mathcal{D}(A, \nabla)$ (see Definition~\ref{Double_algebroid_def}), appealing to Proposition \ref{prp:inf-Haefl-double}. 
Recall that the total space of $\mathcal{D}(A, \nabla)$ is $A\oplus T\BB$.
We have (identifying $A$ with $\ker(ds)|_M$ via $\omega$)
\[
A_{t(g)}\oplus T_{t(g)}\BB=T_g\Sigma
\]
and we notice that a vector field left invariant for the action of $\Sigma\ltimes \Sigma$ on $T\Sigma$ is completely determined by its value over $u(\BB)\subset \Sigma$. Moreover, as a representation of $\Sigma\ltimes \Sigma$, $T\Sigma$ is the direct sum representation of $t^*A$ and $t^*T\BB$; if $(\alpha, X)\in \Gamma( A\oplus T\BB)$ then
\[
V_g=(dm({\rm hor}^\mathcal{C}_g(dt(\alpha)), \alpha),dm({\rm hor}^\mathcal{C}_g(dt(X)), X))
\]
defines a left invariant vector field $V\in \mathfrak{X}(\Sigma)$. This gives a bijection between sections of $A\oplus T\BB$ and such invariant vector fields. Furthermore
\[
V_g=(\hat{\alpha}_g, {\rm hor}^{\mathcal{C}}_g(X))
\]
where $\hat{\alpha}$ and ${\rm hor}^{\mathcal{C}}_g(X)$ are the left invariant sections of $t^*A$ and $t^*T\BB$ associated to $\alpha\in \Gamma(A)$ and $X\in \Gamma(T\BB)$. In particular, $\hat{\alpha}$ is $s$-vertical and ${\rm hor}^{\mathcal{C}}(X)$ is the unique $s$-projectable vector field tangent to $\mathcal{C}$ and extending $X$ at $u(\BB)$. Notice that the map 
\begin{align*}
\Phi: A\oplus T\BB&\to  A\oplus T\BB\\
(\alpha_x, v_x)&\to (-\alpha_x, v_x+\rho(\alpha_x))
\end{align*}
is an isomorphism of vector bundles. 
We will show that the composition of $\Phi$ with the map $(\alpha, X)\to (\hat{\alpha}, {\rm hor}^{\mathcal{C}}(X))$ 
gives an isomorphism of Lie algebras. 

First of all, for all $\alpha, \beta\in \Gamma(A)$ and $X, Y\in \mathfrak{X}(\BB)$ (notice that $\rho(\hat{\alpha})={\rm hor}^{\mathcal{C}}(\rho(\alpha))$, and the same holds for $\beta$),
\begin{align*}
[(-\hat{\alpha}, {\rm hor}^{\mathcal{C}}(X)+\rho(\hat{\alpha})), (-\hat{\beta}, {\rm hor}^{\mathcal{C}}(Y)+\rho(\hat{\beta}))]=[-\hat{\alpha}, -\hat{\beta}]+[-\hat{\alpha}, {\rm hor}^{\mathcal{C}}(Y)]+[-\hat{\alpha}, \rho(\hat{\beta})]+[{\rm hor}^{\mathcal{C}}(X), -\hat{\beta}]\\
+[{\rm hor}^{\mathcal{C}}(X), {\rm hor}^{\mathcal{C}}(Y)]+[{\rm hor}^{\mathcal{C}}(X), \rho(\hat{\beta})]+[\rho(\hat{\alpha}), -\hat{\beta}]+[\rho(\hat{\alpha}),{\rm hor}^{\mathcal{C}}(Y)]+[\rho(\hat{\alpha}), \rho(\hat{\beta})].
\end{align*}
Since the right hand side of the equality above is left invariant, we just have to show that it coincide with $\Phi([(\alpha, X), (\beta, Y)])$ when restricted to $\BB$. Following our argument for Lemma~\ref{Gelf_Fuc} below, one sees that
\[
[\hat{\alpha}, \hat{\beta}]|_\BB=-[\alpha, \beta]+\nabla_{\rho({\alpha})}( {\beta})-\nabla_{\rho({\beta})}(\alpha)
\]
Notice also that, for any $Z\in \mathfrak{X}(\BB)$, ${\rm hor}^{\mathcal{C}}(Z)$ is tangent to $\BB$. This implies that
\[
[-\hat{\alpha}, {\rm hor}^{\mathcal{C}}(Y)]|_\BB=\nabla_Y(\alpha),\ \ [{\rm hor}^{\mathcal{C}}(X), -\hat{\beta}]=-\nabla_X(\beta)
\]
and that 
\[
[-\hat{\alpha}, \rho(\hat{\beta})]|_\BB=\nabla_{\rho(\beta)}(\alpha),\ \ [\rho(\hat{\alpha}), -\hat{\beta}]|_\BB=-\nabla_{\rho(\alpha)}(\beta)
\]
In conclusion, 
\begin{align*}
[(-\hat{\alpha}, {\rm hor}^{\mathcal{C}}(X)+\rho(\hat{\alpha})), (-\hat{\beta}, {\rm hor}^{\mathcal{C}}(Y)+\rho(\hat{\beta}))]|_\BB=\\(-[\alpha, \beta]+\nabla_Y(\alpha)-\nabla_X(\beta), [X,Y] +[\rho(\alpha), Y]+[X, \rho(\beta)]+[\rho(\alpha), \rho(\beta)])
\end{align*}
that coincides with $\Phi([(\alpha, X), (\beta, Y)])$.

\end{proof}

\subsection{Application to the transitive case}

When $(\Sigma, \omega)$ is a transitive, there is yet another natural choice of a $\Sigma$-space $P$ to which one can apply Proposition \ref{prp-general-VE}:  the $s$-fiber $s^{-1}(x)$, for $x\in \BB$.

\begin{lem}\label{Gelf_Fuc}
There is an isomorphism
\[
\Omega^*t(s^{-1}(x))^{\Sigma}\simeq C^*(\extg_x(A))
\]
where $\extg_x(A)$ is the extended isotropy Lie algebra from Definition \ref{defn:extended-isotropy}. 
Therefore, there is a canonical map in cohomology
\[
VE_x: H^*_{\rm Haef}(\Sigma, \omega)\to H^* (\extg_x(A))
\]
and this map is an isomorphism up to degree $l$ if the isotropy group $\Sigma_x$ is cohomologically $l$-connected. 
\end{lem}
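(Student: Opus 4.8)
The plan is to realize $VE_x$ as a special instance of the general van Est map $VE_P$ from Proposition \ref{prp-general-VE}, applied to $P=s^{-1}(x)$ with the left action of $\Sigma$ and moment map $\mu=t$, and then to identify the resulting target complex $\Omega^*(s^{-1}(x))^{\Sigma}$ with the Chevalley--Eilenberg complex of the extended isotropy Lie algebra $\extg_x(A)$.

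First I would verify the hypotheses of Proposition \ref{prp-general-VE}. The left action of $\Sigma$ on the $s$-fibre $P=s^{-1}(x)$ is free and transitive on its single orbit, and it is proper because the shear map $\Sigma\ltimes P\to P\times P$, $(g,h)\mapsto(gh,h)$, is a diffeomorphism with inverse $(a,h)\mapsto(ah^{-1},h)$. The moment map $\mu=t$ is a submersion, since transitivity makes $t\colon s^{-1}(x)\to\BB$ a principal $\Sigma_x$-bundle, and the $\mu$-fibre $\mu^{-1}(y)$ is the set $\Sigma(x,y)$ of arrows from $x$ to $y$, a $\Sigma_x$-torsor and hence diffeomorphic to $\Sigma_x$. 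Thus the $\mu$-fibres are cohomologically $l$-connected exactly when $\Sigma_x$ is, and Proposition \ref{prp-general-VE} yields a map $H^*_{\rm Haef}(\Sigma,\omega)\to H^*_{\Sigma}(s^{-1}(x))$ which is an isomorphism up to degree $l$ and injective in degree $l+1$.

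The heart of the argument is the identification $\Omega^*(s^{-1}(x))^{\Sigma}\cong C^*(\extg_x(A))$. Evaluation at the unit $1_x\in s^{-1}(x)$, together with the canonical identification $T_{1_x}s^{-1}(x)=\ker(ds)_{1_x}=A_x$, sends an invariant form to an element of $\Lambda^*A_x^*$; transitivity forces injectivity (an invariant form vanishing at $1_x$ vanishes everywhere), while freeness makes the value at $1_x$ extend unambiguously to an invariant form, giving surjectivity, so this is a graded isomorphism. To promote it to an isomorphism of complexes I would show that the $\Sigma$-invariant vector fields on $s^{-1}(x)$ form a finite-dimensional Lie algebra $\extg$ acting simply transitively, with evaluation at $1_x$ giving a linear isomorphism $\extg\cong A_x$ under which the bracket becomes (up to the usual sign) the pointwise bracket $\{\cdot,\cdot\}_{\rm pt}$ of \eqref{pt-bracket}, i.e.\ $\extg\cong\extg_x(A)$. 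This is precisely the computation made for $P=\Sigma$ in the proof of Proposition \ref{prp:Van_Est_to_double}, where $[\hat\alpha,\hat\beta]|_{\BB}=-[\alpha,\beta]+\nabla_{\rho(\alpha)}\beta-\nabla_{\rho(\beta)}\alpha$, now restricted to the single orbit $s^{-1}(x)$, and it agrees with Remark \ref{Isotropy_double_algebroid}, which identifies $\extg_x(A)$ with the isotropy Lie algebra of the double $\mathcal{D}(A,\nabla)$ at $x$. Finally, for any manifold carrying a simply transitive family of invariant vector fields spanning a Lie algebra $\mathfrak{k}$, the Koszul formula for the de Rham differential of an invariant form involves only the constant pairings of that form with the invariant fields and their brackets; hence the invariant complex coincides with $C^*(\mathfrak{k})$. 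Composing the resulting isomorphism $\Omega^*(s^{-1}(x))^{\Sigma}\cong C^*(\extg_x(A))$ with $VE_{s^{-1}(x)}$ defines $VE_x$ and transports the connectivity statement, completing the proof.

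The main obstacle I anticipate is the bracket identification: pinning down the Lie bracket of invariant vector fields on $s^{-1}(x)$ for the \emph{twisted} tangent action (the one built from horizontal lifts, rather than naive left translation) and checking that it reproduces $\extg_x(A)$ with the correct sign, together with the smoothness of the extension used to construct invariant forms from a prescribed value at $1_x$. The remaining ingredients are either a direct application of Proposition \ref{prp-general-VE} or the standard identification of invariant forms under a simply transitive action with a Chevalley--Eilenberg complex.
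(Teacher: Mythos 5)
Your proposal is correct and follows essentially the same route as the paper: apply the general van Est map of Proposition \ref{prp-general-VE} to $P=s^{-1}(x)$ with $\mu=t$ (whose fibres are $\Sigma_x$-torsors), and identify $\Omega^*(s^{-1}(x))^{\Sigma}$ with $C^*(\extg_x(A))$ by evaluating at $1_x$ and showing that the bracket of invariant vector fields is anti-isomorphic to the pointwise bracket. The only cosmetic difference is that the paper carries out the bracket identification via the Maurer--Cartan equation for the restricted Cartan form $\omega|_{s^{-1}(x)}$, whereas you appeal to the direct computation of $[\hat\alpha,\hat\beta]|_{\BB}$; these are the same calculation in different packaging.
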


\begin{proof}
This proof is best approached working with a multiplicative pointwise surjective form $\omega\in \Omega^1(\Sigma, t^*A)$ whose kernel is $\mathcal{C}$, see Remark~\ref{rk:Cartan-gpds-via-forms} and Corollary~\ref{cor:MC-eq-for-Cartan-form}. In particular, we will need the Maurer-Cartan equation from Corollary~\ref{cor:MC-eq-for-Cartan-form} or, more precisely, the one for the restriction of $\omega$ to $s^{-1}(x)$; see Proposition~\ref{prp:MC-eq-CP} (of course, $(s^{-1}(x), \omega|_{s^{-1}(x)})$ is a flat $(\Sigma, \omega)$-structure, compare with Example~\ref{ex:Sigma-is-Cartan-bundle}). We will keep the notation $t$ for the restriction of the target map $t:\Sigma \to \BB$ to $s^{-1}(x)$, which is surjective because $\Sigma$ is transitive.

Notice that we have a map
\[
r_{1_x}: \mathfrak{X}(s^{-1}(x))^{\Sigma} \to A_x,\quad X\to (\omega|_{s^{-1}(x)}(X))|_{1_x}
\]
One sees immediately that this is a linear isomorphism.
Observe that the inverse of $r_{1_x}$ is given by 
\[
i_x: \alpha_x\in A_x \to i_x(\alpha_x)\in \Gamma(Ts^{-1}(x))
\]
where, for all $g\in s^{-1}(x)$,
\[
(i_x(\alpha_x))_g=dm ({\rm hor}_g^\mathcal{C}(dt(\omega_{1_x}^{-1}(\alpha_x))),\omega_{1_x}^{-1}(\alpha_x)) \in T_gs^{-1}(x).
\]

We can choose $\omega$ so that $\omega_{1_x}$ is the identity map on $s$-vertical vectors and the formula for $i_x$ simplifies to
\[
(i_x(\alpha_x))_g=dm ({\rm hor}_g^\mathcal{C}(\rho(\alpha_x)),\alpha_x) \in T_gs^{-1}(x).
\] 

We claim that $r_{1_x}$ is an anti-Lie algebra map, so that composing with the map 
\[
A_x\to A_x,\quad \alpha_x\to -\alpha_x
\]
we get the desired isomorphism. 

As anticipated above, this follows because $\omega|_{s^{-1}(x)}$ satisfies the Maurer-Cartan equation
\[
d_{t^*A}(\omega|_{s^{-1}(x)})+\frac{1}{2}\{\omega|_{s^{-1}(x)}, \omega|_{s^{-1}(x)}\}_{pt}=0.
\]
To keep the notation simple, we will use the same symbols as in Proposition~\ref{prp:MC-eq-CP}: $\widetilde{\nabla}$ will denote the infinitesimally multiplicative flat connection on $t^*A$ obtained pulling back $\nabla$ via $t$, $\widetilde{A}$ will denote $t^*A$ and we rebaptise $\omega|_{s^{-1}(x)}$ as $\theta$, so that the Maurer-Cartan equation becomes
\[
d_{\widetilde{A}}(\theta)+\frac{1}{2}\{\theta, \theta\}_{pt}=0.
\]
Let us recall that, for any $X_1, X_2\in \Gamma(Ts^{-1}(x))$,
\[
d_{\widetilde{A}}(\theta)(X_1, X_2)= \widetilde{\nabla}_{X_1}(\theta(X_2))-\widetilde{\nabla}_{X_2}(\theta(X_1))-\theta([X_1, X_2]).
\]
Consequently, from the Maurer-Cartan equation we see that for each pair of invariant vector fields $X_1, X_2$, it holds
\[
\{\theta(X_1), \theta(X_2)\}_{pt}= -\widetilde{\nabla}_{X_1}(\theta(X_2))+\widetilde{\nabla}_{X_2}(\theta(X_1))+\theta([X_1, X_2])
\]
As it is shown in Proposition $5.3.4$ from~\cite{FRANCESCO}, the following identity (already recalled while proving Proposition~\ref{prp:MC-eq-CP}) holds for all $X_1, X_2\in \Gamma(Ts^{-1}(x))$:
\[
\widetilde{\nabla}_{X_1}(\theta(X_2))=\widetilde{\nabla}_{\widetilde{\rho}(\theta(X_1))}(\theta(X_2))+[\theta(X_2),\theta(X_1)]_{\widetilde{A}}-\theta([\widetilde{\rho}(\theta(X_2)), X_1])
\]
where we observe that the first two terms of the right hand side correspond to the action of $\theta(X_2)$ on $\theta(X_1)$ using the representation of $\widetilde{A}$ on itself (see formula~\eqref{A_conn}, taking into account that $(\widetilde{A}, \widetilde{\nabla})$ is a flat Cartan algebroid).
We claim that, if $X_1$ and $X_2$ are left invariant, then $\theta(X_1)$ and $\theta(X_2)$ are left invariant for the representation of $\Sigma\ltimes s^{-1}(x)$ on its algebroid $\widetilde{A}$ (see~\eqref{A_representation}). Assuming for a moment that this claim is true, using the formula recalled above we see that
\[
\widetilde{\nabla}_{X_1}(\theta(X_2))=-\theta([\widetilde{\rho}(\theta(X_2)), X_1]),\ \ \widetilde{\nabla}_{X_2}(\theta(X_1))=-\theta([\widetilde{\rho}(\theta(X_1)), X_2])
\]
for any pair of left invariant vector fields $X_1, X_2$ over $s^{-1}(x)$. Still using Proposition $5.3.4$ of~\cite{FRANCESCO}, one has that $\theta$ is the inverse of $\widetilde{\rho}$, hence $-\widetilde{\nabla}_{X_1}(\theta(X_2))+\widetilde{\nabla}_{X_2}(\theta(X_1))=-2\theta([X_1, X_2])$ holds and we have 
\[
\{\theta(X_1), \theta(X_2)\}=-\theta[X_1, X_2]
\]
i.e. the map 
\[
\mathfrak{X}(s^{-1}(x))^{\Sigma} \to A_x,\quad X\to -\theta(X)_{1_x}
\]
is an isomorphism of Lie algebras, which was our original claim.

We are left to show that if $X$ is left invariant then $\theta(X)$ is left invariant, which follows by proving that $\theta$ is a map of $\Sigma\ltimes s^{-1}(x)$-representations. Of course, this uses multiplicativity.
In the following computation, we see $\widetilde{A}=t^*A$ as the fibered product $A\times_\BB s^{-1}(x)$ and $m$ denotes both the multiplication on $\Sigma$ and the one on the action groupoid:
\begin{align*}
(g, 1_x)\cdot (\theta(X_x), 1_x)=dR_{(g^{-1}, g)} dm\left({\rm hor}^{{pr_1}^{-1}\mathcal{C}}_{(g, 1_x)}(\widetilde{\rho}(\theta(X_x))),(\theta(X_x), 1_x)\right)=\\
dR_{(g^{-1}, g)} dm\left(\left({\rm hor}^{\mathcal{C}}_g({\rho}(X_x)), \widetilde{\rho}(\theta(X_x)\right),\left(X_x, 1_x\right)\right)=dR_{(g^{-1}, g)} \left(dm({\rm hor}^\mathcal{C}_g({\rho}(X_x)), X_x), 1_x\right)=\\
(\theta(X_g), g).
\end{align*}
The last step follows from multiplicativity, which implies
\[
\theta(X_g)=g\cdot \theta(X_x)=dR_{g^{-1}}dm({\rm hor}^\mathcal{C}_g(\rho(X_x)), X_x)
\]
\end{proof}

We can then state the following theorem.
\begin{thm}\label{Improved_Haefliger}
Let $(\Sigma, \CC)$ be a transitive flat Cartan groupoid.
If $K\subset \Sigma_x$ is a compact Lie subgroup with the property that $\Sigma_x/K$ is contractible, then the map from the previous Lemma factors though an isomorphism
\[
H^*_{\rm Haef}(\Sigma, \CC)\overset{\sim}{\rightarrow} H^* (\extg_x(A),K).
\]
\end{thm}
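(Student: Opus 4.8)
The plan is to realise the relative Lie algebra cohomology $H^*(\extg_x(A),K)$ geometrically, as the $\Sigma$-invariant de Rham cohomology of the quotient $P:=s^{-1}(x)/K$, and then to obtain the asserted isomorphism as an instance of the general van Est map of Proposition~\ref{prp-general-VE}. First I would set up $P$ as a $\Sigma$-space. Since $K\subset \Sigma_x$ acts on $s^{-1}(x)$ on the right by $g\mapsto gk$ (this preserves $s^{-1}(x)$, because $s(gk)=x$, and preserves each $t$-fibre), and this right action commutes with the left action of $\Sigma$, the quotient $P=s^{-1}(x)/K$ inherits a left $\Sigma$-action with moment map $\bar\mu\colon P\to \BB$ induced by $t$. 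Because $\Sigma$ is transitive it acts transitively on $s^{-1}(x)$, hence on $P$; the stabiliser of a point of $P$ is a conjugate of $K$, hence compact, so the $\Sigma$-action on $P$ is proper. The map $\bar\mu$ is a surjective submersion whose fibre is $\Sigma_x/K$, which by hypothesis is contractible and therefore cohomologically $l$-connected for every $l$.

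Granting this, Proposition~\ref{prp-general-VE} applies to $P$ and shows that the general van Est map $VE_P\colon H^*_{\rm Haef}(\Sigma,\CC)\to H^*_\Sigma(P)$ is an isomorphism in all degrees. The heart of the argument is then to identify $H^*_\Sigma(P)$ with $H^*(\extg_x(A),K)$. The projection $q\colon s^{-1}(x)\to P$ is a principal $K$-bundle, since the right $K$-action is free and proper. The fundamental vector fields of this right action are $\Sigma$-invariant (the two actions commute), and under the isomorphism $\Omega^*(s^{-1}(x))^\Sigma\cong C^*(\extg_x(A))$ of Lemma~\ref{Gelf_Fuc} they are sent, via $r_{1_x}$, precisely onto $\mathfrak{k}={\rm Lie}(K)\subset \mathfrak{g}_x(A)\subset \extg_x(A)$: indeed $r_{1_x}(\xi^{\dagger})=\xi$ for $\xi\in\mathfrak{k}$, because $\omega_{1_x}$ is the identity on $s$-vertical vectors. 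Consequently $\Omega^*(P)^\Sigma$, viewed as the $K$-basic part of $\Omega^*(s^{-1}(x))^\Sigma$ (forms that are $K$-invariant and annihilated by the contractions $\iota_{\xi^\dagger}$, $\xi\in\mathfrak{k}$), corresponds exactly to the $K$-basic cochains $C^*(\extg_x(A),K)$. Since the isomorphism of Lemma~\ref{Gelf_Fuc} is an isomorphism of complexes and is $K$-equivariant — the right $K$-action on invariant forms matching the adjoint action of $K\subset\Sigma_x$ on $A_x=\extg_x(A)$ coming from Example~\ref{exam-mult-correct-coeff} — it restricts to an isomorphism of the relative subcomplexes, giving $H^*_\Sigma(P)\cong H^*(\extg_x(A),K)$.

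It remains to check that this composite isomorphism is the promised factorisation of the map $VE_x$ from Lemma~\ref{Gelf_Fuc}. The $\Sigma$-equivariant projection $q$ induces $q^*\colon \Omega^*(P)^\Sigma\hookrightarrow \Omega^*(s^{-1}(x))^\Sigma$, which under the identifications above becomes the canonical inclusion $C^*(\extg_x(A),K)\hookrightarrow C^*(\extg_x(A))$ and hence, in cohomology, the canonical map $H^*(\extg_x(A),K)\to H^*(\extg_x(A))$. By naturality of the construction \eqref{eq:general:VE} in the $\Sigma$-space (both $VE_P$ and $VE_x$ arise from the same map $\textrm{pr}_1^*$ followed by the isomorphism of Lemma~\ref{lem:proper-action-van-est}), one has $VE_x=q^*\circ VE_P$. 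Therefore $VE_x$ factors through the isomorphism $VE_P\colon H^*_{\rm Haef}(\Sigma,\CC)\xrightarrow{\ \sim\ } H^*(\extg_x(A),K)$, which is exactly the statement of the theorem.

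The main obstacle I expect is twofold and both difficulties are of a technical, rather than conceptual, nature. First, one must justify that the $\Sigma$-action on $P$ is proper and that $\bar\mu$ is a submersion in the profinite dimensional setting, where $\Sigma_x$ is genuinely infinite dimensional even though $K$ is an honest finite dimensional compact Lie group; here the hypothesis that $\Sigma_x/K$ is contractible is what lets the argument go through, and the Appendix's pf-machinery should supply the necessary smoothness and properness statements so that Proposition~\ref{prp-general-VE} is legitimately applicable. Second, one must verify carefully the $K$-equivariance of the Lemma~\ref{Gelf_Fuc} isomorphism and the precise matching of the $K$-basic subcomplexes, i.e. that $K$-invariant, $\mathfrak{k}$-horizontal invariant forms correspond exactly to the relative Chevalley–Eilenberg cochains $C^*(\extg_x(A),K)$; this is the step where the interplay between multiplicativity of $\omega$, the representation of $\Sigma$ on $A$, and the pointwise bracket must be tracked with some care.
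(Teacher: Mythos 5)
Your proposal follows essentially the same route as the paper's proof: apply Proposition~\ref{prp-general-VE} to the $\Sigma$-space $s^{-1}(x)/K$ (whose fibres over $\BB$ are the contractible $\Sigma_x/K$), and then identify $\Omega^*(s^{-1}(x)/K)^{\Sigma}$ with $C^*(\extg_x(A),K)$ by checking that the isomorphism of Lemma~\ref{Gelf_Fuc} descends to the $K$-basic subcomplexes. Your write-up is in fact somewhat more detailed than the paper's, which only remarks that $(s^{-1}(x),t,\Sigma_x)$ is a principal bundle and that the Lemma~\ref{Gelf_Fuc} isomorphism respects contractions and Lie derivatives along $\mathfrak{k}$.
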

\begin{rmk}
In examples coming from geometric structures, there is always a natural choice of $K$, even in the profinite dimensional case (see Example~\ref{Classical_Haefliger}).
\end{rmk}
\begin{proof}
The triple $(s^{-1}(x), t, \Sigma_x)$ is a principal bundle and we know that $s^{-1}(x)/K$ has contractible fibers. Then, we can apply Proposition~\ref{prp-general-VE} to the $\Sigma$-space $s^{-1}(x)/K$. 
We have to prove that $\Sigma$-invariant forms on $s^{-1}(x)/K$ are isomorphic to the Lie algebra cochains in $A_x$ relative to $K$. For this, one just uses the fact that the isomorphism constructed in Lemma~\ref{Gelf_Fuc} descends to the basic subcomplexes; that is, it respects the contraction and the Lie derivative with respect to elements of $\mathfrak{k}={\rm Lie}(K)$. 
\end{proof}
Composing the isomorphism from the above Theorem with map~\eqref{eq:Haefl-charact-map}, one sees that, if $(\Sigma, \CC)$ is a transitive flat Cartan groupoid, a $(\Sigma, \CC)$-structure comes with a natural map
\begin{equation}\label{eq:transitive_char_map}
\kappa^P: H^*(\mathfrak{a}_x(A), K)\to H^*(M).
\end{equation}

\begin{exm}\label{Classical_Haefliger}
When $\Gamma$ is a transitive Lie pseudogroup and $\Sigma=J^\infty\Gamma$, the resulting isomorphism becomes 
\[
H^*_{\rm diff}(\Gamma)\cong H^*(\mathfrak{a}_x(\Gamma), K),
\]
see Definition~\ref{defn:Gamma-vector-fields} and Example~\ref{ex-Cartan-alg-J}.
For $k\geq 2$, the projections
\[
pr^{k, k-1}: J^k\Gamma\to J^{k-1}\Gamma
\]
are affine bundles projections; consequently, one can always choose a subgroup $K$ as in Theorem~\ref{Improved_Haefliger}.

In particular, for $\Gamma={\rm Diff}_{\rm loc}(\mathbb{R}^q)$ and $\Sigma=J^\infty{\rm Diff}_{\rm loc}(\mathbb{R}^q)$ 
the group $K$ can be identified with the orthogonal group $O(q)$. This is a phenomenon slightly more general than ${\rm Diff}_{\rm loc}(\mathbb{R}^q)$ that we explain in Example~\ref{ex:G-struct}. As a consequence, the isomorphism from Theorem~\ref{Improved_Haefliger} reduces to~\eqref{eq: Van-Est-Haefliger}, with codomain given by the relative Gelfand-Fuchs cohomology of formal vector fields. 

Going back to a transitive $\Gamma$, the characteristic map~\eqref{eq:transitive_char_map} corresponding to an almost $\Gamma$-structure $\P^\infty$
\begin{equation}\label{eq:transitive_char_map_pseudo}
\kappa^{\P^\infty}: H^*(a_x(\Gamma), K)\to H^*(M)
\end{equation}
is seen to be defined on the relative Gelfand-Fuchs cohomology of $\Gamma$-vector fields.
\end{exm}

The characteristic map~\eqref{eq:transitive_char_map_pseudo} for transitive Lie pseudogroups had been addressed in~\cite{BOTTHAEFLIGER} and in~\cite{BERNSTEINROSENFELD, BERNSTEINROSENFELD2}. We adapt the construction of~\cite{BERNSTEINROSENFELD, BERNSTEINROSENFELD2} to our setting; the outcome is a description of map~\eqref{eq:transitive_char_map} which is in the same spirit as the geometric characteristic map of flat principal bundles (see Subsection~\ref{Geometric/abstract characteristic classes for flat bundles}), i.e. of Chern-Weil nature. We use the description from Proposition~\ref{prp:MC-eq-CP} of $\CC_P$ in terms of $\theta\in \Omega^1(P, \mu^*A)$. Restricting to 
\[ Q:= \mu^{-1}(x),\]
we obtain a principal $\Sigma_x$-bundle $Q\to M$ and 
\[ \theta_Q:= \theta|_{Q}\in \Omega^1(P, \mathfrak{a}_x(A))\]
will satisfy the Maurer-Cartan equation (cf. Proposition \ref{prp:MC-eq-CP}). The form $\theta_Q$ induces a map 
\[ \theta_Q^*: C^{*}(\mathfrak{a}_x(A))\to \Omega^*(Q).\]
This works as in the standard situation of flat principal bundles. Passing to $K$-basic elements and using that the fibers of $Q/K\to M$ are contractible, one obtains an 
induced map (compare with (\ref{kappa-omega})):
\[ \kappa^{\theta_Q}: H^{*}(\mathfrak{a}_x(A),K)\to H^{*}(M).\]

\begin{exm}\label{ex:G-struct}
To further elaborate on Example~\ref{Classical_Haefliger}, let us consider the transitive pseudogroup $\Gamma_G$ on $\mathbb{R}^q$ induced by some Lie subgroup $G\subset GL(q, \mathbb{R})$ as follows:
\[
\Gamma_G=\{\varphi\in {\rm Diff}_{\rm loc}(\mathbb{R}^q):\ d_x\varphi\in G, x\in {\rm dom}(\varphi)\}
\]
Notice that this pseudogroup is defined by a first order condition; more compactly it is {\bf of first order}, which means that $\varphi\in \Gamma_G$ if and only if $j^1_x\varphi\in J^1\Gamma_G$ for all $x\in {\rm dom}(\varphi)$.
As a consequence, a group $K$ as in Theorem~\ref{Improved_Haefliger} is induced by a maximal compact subgroup $\bar{K}$ of the isotropy group $(J^1\Gamma_G)_x \cong G$. In fact, one defines $K$ to be the group of infinite jets at $x$ of linear diffeomorphisms $\varphi$ fixing $x$ and such that $d_x\varphi\in \bar{K}$.

When $\dim(M)=q$, a $\Gamma_G$-structure on $M$ is the same thing as a {\bf flat} (or {\bf integrable}) {\bf $G$-structure}; a $(J^\infty\Gamma_G, \CC^\infty)$-structure on $M$, as in example~\ref{ex:formal_Gamma_structures}, is a {\bf formally integrable} {\bf $G$-structure} (see, for example,~\cite{STERNBERG} for $G$-structures and~\cite{FRANCESCO} for a discussion about formal integrability that is close to the formalism of this paper). 
When $\dim(M)=n>q$, a $\Gamma_G$-structure is a codimension $q$ foliation equipped with a transverse flat $G$-structure. 
\end{exm}

\appendix

\section{Pro-finite manifolds}

\subsection{Pf manifolds (pf= pro-finite)}
Pro-finite manifolds are, in principle, infinite dimensional manifolds together with a ``tail'' of finite dimensional ones, a tail that allows one to export the usual (finite dimensional) concepts to the infinite dimensional setting. The basic example is that of infinite jets, where the ``tail'' is the sequence of jets spaces of finite orders. 

\begin{defn}\label{defn:tower-mnfds}
A tower of manifolds $\iM_{\bullet}$ is a sequence 
\[ \iM_{\bullet}: \quad \ldots \longrightarrow \iM_k\stackrel{\pi_k}{\longrightarrow} \iM_{k-1}\longrightarrow \ldots \longrightarrow \iM_{2} \stackrel{\pi_2}{\longrightarrow}
\iM_{1} \stackrel{\pi_1}{\longrightarrow} \iM_{0}\]
consisting of smooth, finite dimensional manifolds $\iM_k$ and surjective submersions between them. 

A {\bf pf-atlas} on a set $\iM$, denoted
\[ a: \iM\to \iM_{\bullet},\] 
is a tower of manifolds $\iM_{\bullet}$ together with 
a collection $a= \{a_k: \iM\to \iM_k\}$ of 
surjections compatible with the tower projections ($\pi_k\circ a_k= a_{k-1}$).
\end{defn}

\begin{rmk}
Given such a pf-atlas $a: \iM\to \iM_{\bullet}$, any $x\in \iM$ has associated components w.r.t. $a$, namely:
\[ x_k= a_k(x)\in \iM_k .\]
But note that, in principle, two distinct points may have the same components.

Perhaps a better name for the components $x_k= a_k(x)$ would be that of ``truncations of $x$''. They are not arbitrary - actually each one of them determines the previous ones by:
\[ x_{k-1}= \pi_k(x_k) \quad \textrm{for all $k$}.\]
However, there may be sequences $(y_k)$ satisfying this compatibility but which do not arise from an element of $x\in \iM$. 

All these remarks can be packed together by considering the inverse limit of the tower
\[ \plim \iM_{k}:= \{(x_k)_{k\geq 0}: x_k\in \iM_k, \pi_k(x_k)= x_{k-1}\}\] 
and interpreting $a$ as a map
\[ a: \iM\to \plim_k \iM_k .\]
And we were just saying that this map may fail to be injective (different elements may have same components) or surjective (compatible sequences may not have a limit in $\iM$). 
\end{rmk}

\begin{defn} \label{defn:normal-tower}
A pf-atlas $a: \iM\to \iM_{\bullet}$ is said to be {\bf normal} if the induced map $\iM\to \plim \iM_k$ is a bijection.  
\end{defn}

\begin{exm}\label{ex:pro-not-normal}
One can always achieve normality by replacing $\iM$ by $\plim_k \iM_k$. However, there are natural examples which are not necessarily normal. For instance, $J^{\infty}\Gamma$, for certain choices of $\Gamma$ (see Example~\ref{exam-Lie-pseudogroups}). A standard example is given by the pseudogroup of analytic diffeomorphisms of $\mathbb{R}^q$; see Example~~\ref{exam-Lie-pseudogroups} and Remark~\ref{Lie_exm}. 
\end{exm}

%

\begin{defn}\label{defn:prof-map-atl} Consider $a: \iM\to \iM_{\bullet}$ and $b: \iN\to \iN_{\bullet}$ be two pf-atlases. A set theoretical function $f: \iM\to \iN$ is said to be a smooth map w.r.t. $a$ and $b$, or we say that $f: (\iM, a)\to (\iN, b)$ is a {\bf smooth map of pf-atlases} if, for each $k$, the $k$-th component
\[ b_k\circ f: \iM\to \iN_k\]
comes from a smooth map defined on some $\iM_{m_k}$ for some $m_k$ large enough:
\[ b_k\circ f= f^{m_k}_{k}\circ a_{m_k},\quad \textrm{with $f^{m_k}_{k}\in C^{\infty}(\iM_{m_k}, \iN_k)$}.\]
\end{defn}

\begin{rmk}\label{rk:concrete-morphism} The outcome is very much related to the notion of morphism between towers of manifolds. 
Seeing towers as particular cases of projective systems of (finite dimensional) manifolds, one obtains an abstract but very standard description of the resulting hom-spaces:
\[ \textrm{Hom}(\iM_{\bullet}, \iN_{\bullet}):= \plim_{n} \ilim_{m} C^{\infty}(\iM_{m}, \iN_{n}).\]
Working this out and using that the $\pi_k$'s are surjective submersions, one obtains the following description which is related to the previous definition. 

First of all, let us call a {\bf concrete morphism} between two towers $\iM_{\bullet}$ and $\iN_{\bullet}$ any 
collection of smooth maps
\[ f_{\bullet}= \{f_k:\iM_{m_k}\to \iN_{k}\}_{k\geq 1}\quad   \textrm{with $m_1\leq m_2\leq \ldots$}\]
which is compatible in the sense that that it makes the resulting squares commutative:
\[
\xymatrix{
\ldots \ar[r] & \iM_{m_k} \ar[rr]^{\pi_{m_k, m_{k-1}}}\ar[dr]^{f_{k}}  &  &   \iM_{m_{k-1}}\ar[dr]^{f_{k-1}}\ar[r]& \ldots &\\
&  \ldots \ar[r] & \iN_k \ar[rr]^{\pi_{k, k-1}} &  & \iN_{k-1}  \ar[r]& \ldots 
}  
\] 
Note that, for the maps $f_{k}^{m_k}$ arising in the previous definition, this compatibility is automatically satisfied. Therefore, the condition from the previous definition can be reformulated by requiring that $f: \iM\to \iN$ 
is compatible with a concrete morphism of towers $f_{\bullet}$ from $\iM_{\bullet}$ to $\iN_{\bullet}$.

\medskip 
Next, given a second concrete morphism $g_{\bullet}= \{g_{k}: \iM_{l_{k}}\to  \iN_{k}\}$ as above, we say that $f_{\bullet}$ and $g_{\bullet}$  are {\bf equivalent} concrete morphisms if for each $k$, $f_k$ and $g_{k}$ become equal after composing them with the projections $\iM_K\to \iM_{m_k}$ and $\iM_{K}\to \iM_{l_k}$, respectively, for some $K$-large enough (which actually may be chosen to be $\textrm{max}\{m_k, l_k\}$). With this, a {\bf morphism of towers} is an equivalence class of concrete morphisms. 

And, again, it is useful to look back at the previous definition: while there the actual $f_k= f^{m_k}_{k}$ are far from unique (they are just required to exist), we see that any two choices are equivalent; hence one has a unique, well-defined induced morphism of towers
\[ f_{\bullet}\in \textrm{Hom}(\iM_{\bullet}, \iN_{\bullet}).\]
Similar to the previous discussion about coordinates, two different $f$'s may still induce the same morphism of towers, or there are morphism of towers that are not induced by an $f$. However, neither of these can happen when the two atlases are normal.
\end{rmk}

\begin{exm} \label{tower-sample} It is straightforward now to define composition of morphisms of towers obtaining, therefore, the category of towers (of finite dimensional smooth manifolds). With the resulting notion of isomorphism we see that, given $\iM_{\bullet}$, if we 
sample the tower via an increasing sequence of indices $m_0< m_1< m_2< \ldots$, then
\[ \ldots \longrightarrow \iM_{m_k}\stackrel{\pi_k}{\longrightarrow} \iM_{m_{k-1}}\longrightarrow \ldots \longrightarrow \iM_{m_2} \stackrel{\pi_2}{\longrightarrow}
\iM_{m_1} \stackrel{\pi_1}{\longrightarrow} \iM_{m_0}\] 
is isomorphic to $\iM_{\bullet}$ itself. 
\end{exm}

%

Given Example \ref{ex:pro-not-normal}, it is interesting to consider atlases that are not normal. However, to make our life easier (and be able to just quote existing results), from now on we will start imposing the normality condition. 

%

\begin{defn}
A {\bf pf-manifold} is a set $\iM$ together with an equivalence class of normal pf-atlases, where two pf-atlases 
\[ a:\iM\to \iM_{\bullet}, \quad a': \iM\to \iM^{'}_{\bullet}\] 
are said to be {\bf equivalent} if the identity map $\textrm{id}_\iM$ is a smooth map of pf-atlases (in the sense of Definition \ref{defn:prof-map-atl}) both as a map from $(\iM, a)$ to $(\iM, a')$ as well as the other way around. Equivalently, we require the existence of an isomorphism of towers 
\[ c_{a, a'}\in \textrm{Hom}(\iM_{\bullet}, \iM^{'}_{\bullet})\quad \textrm{(``change of coordinates'')}\]
that is compatible with $a$ and $a'$. 
\end{defn}

\begin{rmk}[the underlying topological space] \label{rmk:|M|} Note that any pf-manifold carries a natural topology: the smallest one which makes all the projection $a_k$ continuous (using a/any atlas $a$).  As terminology: 
\begin{itemize}
\item we will simply talk about ``the pf-manifold $\iM$'' without explicitly specifying the (equivalence class) of the atlas in the notation. 
\item when thinking about $\iM$ just as a topological space 
 we will use the notation $|\iM|$. 
\end{itemize}
\end{rmk}

\begin{defn}\label{defn:pro-map}
A {\bf smooth pf-map} $f: \iM\to \iN$ between two pf-manifolds is any set theoretical map with the property that, for a/any atlas $a$ of $\iM$ and $b$ of $\iN$, $f$ is a smooth map w.r.t.  $a$ and $b$ (in the sense of Definition \ref{defn:prof-map-atl}). We will denote by $C^{\infty}(\iM, \iN)$ the space of such smooth maps. 
\end{defn}

%
%
%
%
%
%

\begin{exm} When $\iN= \mathbb{R}$ we obtain the algebra $C^{\infty}(\iM)$ of smooth functions on the pf-manifold $\iM$; working out the previous definition we see that they are continuous functions 
\[ f: \iM\to \R\]
with the property that there exists $m_0$ such that 
\[ f= g\circ a_{m_0}, \quad \textrm{with $g: \iM_{m_0}\to \R$ smooth in the usual sense}.\]
Abstractly, we deal with 
\[ \ilim_m C^{\infty}(\iM_m)= \cup_m C^{\infty}(\iM_m),\]
viewed as a subspace of $C(\iM)$ via the pull-back by $a$. Concretely, we think of functions $f= f(x)$ on $\iM$ as functions that may depend on 
infinitely many variables
\[ x_0= a_0(x), \quad x_1= a_1(x),  \quad \ldots,  \quad  x_k= a_k(x),  \quad \ldots\]
while smooth functions depend only on a finite number of variables $m_0$ in a smooth way. 

One can also require this condition only locally- and that give rise to another notion that we would call  ``smooth function on the space $|\iM|$" (for the notation see Remark \ref{rmk:|M|}); this gives rise to a sequence of (strict) inclusions
\begin{equation}\label{Aeq:inclusion-smooth} 
C^{\infty}(\iM)\subset  C^{\infty}(|\iM|) \subset Cont(|\iM|).
\end{equation}
\end{exm}
%

%
%

\begin{exm}\label{ex:manifds-are-pf} 
Of course, any finite dimensional manifold $M$ is a pf-manifold: just use the constant tower $\iM_{\bullet}= M$. 
\end{exm}

\begin{exm}\label{exam-Jinfty-R} The basic examples of towers and pf-manifolds are provided by jets and PDEs. Let us choose a fibration (meaning a surjective submersion)
\[ \pi: R\to M\]
between finite dimensional manifolds. Associated to such a fibration there is an entire tower of jet spaces 
\[ J^\infty R:\quad  \ldots \to J^2R\to J^1R\to J^0R= R \]
where $J^kR$ denotes the space of $k$-jets of sections of $\pi: R\to M$. Each such space with $k$ finite has a canonical smooth structure, while 
\[ J^\infty R= \plim J^k R\]
becomes a pf-manifold. A particular case of this is when $R= M\times N$ is a product and $\pi$ is the projection on the first factor. Then sections of $R$ are identified with smooth functions from $M$ to $N$ and one obtains the jet-spaces
\[ J^k(M, N):= J^k(\textrm{pr}_1: M\times N\to M).\]

On the other hand one can also proceed more generally, for PDEs on (sections of) fibrations $\pi: R\to M$; an order $k$-PDE is then encoded into a subspace $P\subset J^kR$ and the standard regularity assumption is that $P$ is a submanifold and $P\to M$ is still a fibration. Then one can consider the jet spaces of $P$ itself, $J^lP\subset J^lJ^kR$; the intersection with 
$J^{k+l}R\subset J^lJ^kR$ are called the $l$-prolongations of $P$:
\[ P^{(l)}\subset J^{k+l}R.\]
All together these give a tower of spaces
\[ P^{(\infty)}:\quad  \ldots \to P^{(2)}\to P^{(1)}\to P^{(0)}= P\]
but the maps may fail to be surjective. The space $P^{(\infty)}$ plays the role of the space of ``formal solutions" of the PDE $P$.  
One says that $P$ is formally integrable if all these maps are surjective submersions. In that case $P^{(\infty)}$ becomes a pf-manifold and the inclusions 
\[ P^{(l)}\subset J^{(k+l)}R \]
provide a natural example of concrete morphism of towers that does not preserve the degree. Of course, they represent the inclusion 
\[ P^{(\infty)}\hookrightarrow J^{\infty}R\]
as a morphism of pf-manifolds. Note also that, interpreting $P^{(l)}$ itself as a PDE of order $k+ l$, one obtains
\[ (P^{(l)})^{(\infty)}\cong P^{(\infty)},\]
as pf-manifolds. 
\end{exm}

\subsection{Direct pf-vector bundles (and sheaves)}
We now move to vector bundles and sheaves. As we shall explain, there are two interesting but different types of vector bundles to consider. In this section we look at the first type, called ``direct". Because of the examples we have in mind, we concentrate on vector bundles; for sheaves the discussion is entirely similar. 
%
%
%

\begin{defn}\label{defn:pro-vector-bundle}
A {\bf (direct) pf-vector bundle} over a pf-manifold $\iM$ consists of a pf-manifold $\iE$ and a smooth pf-map $p: \iE\to \iM$, together with a vector bundle equivalence classes of pf-vector bundle atlases where:
\begin{itemize}
\item a {\bf pf-vector bundle atlas} for $p: \iE\to \iM$ is an atlas $\tilde{a}: \iE\to \iE_{\bullet}$ for $\iE$, an atlas $a: \iM\to \iM_{\bullet}$ for $\iM$ together with the structure of  
tower of vector bundles
\[ \xymatrix{
\iE_{\bullet}: \ar[d]_{p_{\bullet}} & \ldots \ar[r] & \iE_k \ar[r]^{\pi_k}\ar[d]_{p_k} & \iE_{k-1}\ar[r] \ar[d]_{p_{k-1}} & \ldots \ar[r] &  \iE_{2}\ar[d]_{p_2} \ar[r]^{\pi_2}& 
\iE_{1} \ar[r]^{\pi_1} \ar[d]_{p_1}&  \iE_0\ar[d]_{p_0}\\
\iM_{\bullet}: & \ldots \ar[r] & \iM_k  \ar[r]_{\pi_k} & \iM_{k-1}\ar[r]  & \ldots \ar[r] &  \iM_{2} \ar[r]_{\pi_2}& 
\iM_{1} \ar[r]_{\pi_1} &  \iM_0
} \]
(hence each $\iE_k\to \iM_k$ is a smooth, finite dimensional vector bundle and each tower map $\iE_k\to \iE_{k-1}$ is a vector bundle morphisms covering the tower map $\iM_{k}\to \iM_{k-1}$) that represents $p$ 
%
%
\item two pf-vector bundle atlases, $\tilde{a}$ and $\tilde{a}'$, are said to be equivalent (as pf-vector bundle atlases) if they are equivalent as atlases of $\iE$, with the corresponding isomorphism of towers $c_{\tilde{a}, \tilde{a}'}: \iE_{\bullet}\to \iE_{\bullet}^{'}$ being represented by morphisms of vector bundles. 
\end{itemize}
A {\bf (pf-)section} of $\iE$ is any smooth pf-map $\sigma: \iM\to \iE$ satisfying $p\circ \sigma= \textrm{id}$. We denote by $\Gamma(\iE)$ the space of such sections. 
\end{defn}
%
%
%

\medskip

\begin{rmk}[representing sections] Given a vector bundle atlas $a: \iE\to \iE_{\bullet}$, the best case scenario is when we deal with sections $\sigma$ that are represented by sections $\sigma_k$ of $\iE_k$. It is not difficult to see that, given any section $\sigma$, one can find an atlas $a$ which realises the best case scenario for the given $\sigma$. However, this is of very little use since $a$ usually changes with $\sigma$.

In practice, we usually have a fixed (preferred) vector bundle 
atlas $a: \iE\to \iE_{\bullet}$; 
given that, any section of $\iE$ can be represented by a concrete morphism (cf. Remark \ref{rk:concrete-morphism}) 
\[ \sigma_{\bullet}= \{\sigma_k:\iM_{m_k}\to \iE_{k}\}_{k\geq 1}\quad   \textrm{with $m_1< m_2< \ldots$}\]
and the section condition is that $p_k\circ \sigma_k$ is the projection $\iM_{m_k}\to \iM_k$; i.e. $\sigma_k$ comes from a section of the pull-back of $\iE_k$ via the tower projection $\pi: \iM_{m_k}\to \iM_k$
\begin{equation}\label{sk-sect}
s_k\in \Gamma(\iM_{m_k}, \pi^*\iE_k).
\end{equation}

On the other hand, the underlying topological space $|\iE|$ (cf. Remark \ref{rmk:|M|}) is itself a topological vector bundle over $|\iM|$: the fiber $\iE_x$ above any $x\in |\iM|$ inherits a structure of vector space (actually a pf-vector space) and the corresponding maps (addition and multiplication by scalars) are continuous. And, of course, sections of $\iE\to \iM$ are, in particular, continuous sections of $|\iE|\to |\iM|$:
\begin{equation}\label{eq:sect-inclusion} 
\Gamma(\iE)\subset \Gamma_{\textrm{cont}}(|\iE|).
\end{equation}
To recognize when a section 
\[ \sigma: |\iM|\to |\iE|\]
is a section of the pf-vector bundle $\iE$, let us fix a vector bundle atlas $\tilde{a}: \iE\to \iE_{\bullet}$. 
We look at the $\tilde{a}$-components $(\sigma^0, \sigma^1, \ldots )$ of $\sigma$ and we view them as functions depending 
on the $a$-components $(x_0, x_1, \ldots, )$ of $x$, 
\[ \sigma_k= \sigma_k(x_0, x_1, \ldots)\in \iE_k \quad (\textrm{a section over $|\iM|$ of the pull-back of $\iE_k$}).\]
Then $\sigma$ is a section of the pf-vector bundle $\iE$ if and only if $\sigma_k$ depends only on a finite number of entries: there exists $m_k\geq k$ large enough and $s_k$ as in (\ref{sk-sect}) such that 
\[ \sigma_k(x)= s_k(x_{n_k}) \quad \forall\ x\in |\iM|.\]

%
%

Note that the last condition (existence of $s_k$) can also be required only locally - and that gives rise to what we would call ``smooth sections of $|\iE|$", and resulting (strict) inclusions
\begin{equation}
\label{Aeq:inclusion-smooth-E} 
\Gamma(\iE)\subset  \Gamma_{\textrm{smooth}}(|\iE|) \subset \Gamma_{\textrm{cont}}(|\iE|).
\end{equation}
\end{rmk}

\begin{exm}\label{ex:pf-vbdle:JkE} In Example \ref{exam-Jinfty-R}, if $\pi: E\to M$ is a vector bundle (everything is finite dimensional), then each $J^kE$ is a vector bundle over $M$ and then 
\[ \pi_{\infty}: J^{\infty}E\to M\]
becomes a pf-vector bundle (for $M$ see Example \ref{ex:manifds-are-pf}). 
\end{exm}

\begin{rmk}[the structure of $\Gamma(\iE)$] The immediate structure on $\Gamma(\iE)$, the space of sections of a vector bundle $\iE\to \iM$ over a pf-finite manifold, is the algebraic one: it is a vector space, and a module over the ring $C^{\infty}(\iM)$ of smooth functions. 

On the other hand, each vector bundle atlas $a: \iE\to \iE_{\bullet}$ gives rise to a decreasing filtration of $\Gamma(\iE)$:
\[ \ldots \subset \mathfrak{f}_{2} \subset \mathfrak{f}_{1}\subset \mathfrak{f}_{0}\subset \mathfrak{f}_{-1}= \Gamma(\iE),\]
where $\mathfrak{f}_{k}$ consists of those sections whose $k$-th component vanishes. To obtain a structure that is independent of the atlas, one considers the induced, $\mathfrak{f}_{\bullet}$-adic topology on $\Gamma(\iE)$, that is: the linear topology for which $\mathfrak{f}_{k}$s form a basis of neighborhoods of the origin. It is a simple exercise to check that this topology does not depend on the choice of the atlas. 

In conclusion, $\Gamma(\iE)$ is a topological vector space and $C^{\infty}(\iM)$-module. Of course, one could also say that  $\Gamma(\iE)$ is a topological $C^{\infty}(\iM)$-module, where $C^{\infty}(\iM)$ is endowed with the discrete topology. 
\end{rmk}

\subsection{The tangent bundle and vector fields}

There are two basic examples of vector bundles to have in mind. First of all the trivial line bundle which, where we recover the notion of smoothness and the sequence of inclusions (\ref{Aeq:inclusion-smooth-E}) becomes (\ref{Aeq:inclusion-smooth}).

The other basic example is the tangent bundle of a pf-manifold $\iM$. For an intrinsic approach one defines $T_x\iM$ as the space of derivations at $p$, $\delta_p: C^{\infty}(\iM)\to \R$, and then one sets $T\iM= \cup_{x} T_x\iM$. It follows that, for any pf-atlas $a: \iM\to \iM_{\bullet}$, one obtains a 
pf-atlas $T\iM\to T\iM_{\bullet}$, where the tangent tower of $\iM_{\bullet}$ is, of course, 
\[ T\iM_{\bullet}: \quad 
 \ldots \longrightarrow T\iM_k\stackrel{d\pi_k}{\longrightarrow} T\iM_{k-1}\longrightarrow \ldots \longrightarrow T\iM_{2} \stackrel{d\pi_2}{\longrightarrow}
T\iM_{1} \stackrel{d\pi_1}{\longrightarrow} T\iM_{0}.\]
The fact that $T\iM= \lim\limits_{\longleftarrow} T\iM_k$ is shown in \cite{PFLAUM} and can also serve as definition of $T\iM$ (a less intrinsic but perhaps more practical one). 

The tangent bundle gives rise to the  notion of smooth vector fields of $\iM$ and $|\iM|$:
\[ \mathfrak{X}(\iM)\subset \mathfrak{X}(|\iM|).\]
As a particular case of sections of vector bundles, $\mathfrak{X}(\iM)$ is a topological module over $C^{\infty}(\iM)$. 
As in the standard situation, vector fields act on smooth functions; and, using \cite{PFLAUM}, Thm 3.26, one finds 
\[ \mathfrak{X}(\iM)\cong \textrm{Der}(C^{\infty}(\iM)), \quad
\mathfrak{X}(|\iM|)\cong \textrm{Der}(C^{\infty}(|\iM|)).\]
In particular, vector fields still come endowed a Lie bracket satisfying the standard Leibniz identity. 
With the notion of tangent bundle and the Lie bracket of vector fields at hand, one can talk about distributions and involutivity in pf-manifolds $\iM$, Ehresmann connections, etc. 
%

\begin{exm}[the Cartan distribution] \label{exm:Car-distr-J}
We continue with the notations from Example \ref{exam-Jinfty-R}, for a fibration $\pi: R\to M$. 
Any section $\sigma\in \Gamma(R)$ induces a section $j^k\sigma$ of $J^kR\to M$, 
and sections of $J^kR$ of this type are called holonomic. The Cartan distribution 
\[ \CC_k \subset T J^kR \]
is defined so that it detects which sections of $J^kR$ are holonomic: they must be tangent to $\CC_k$. 
Actually one can force this property into the definition and define $\CC_k$ 
as the space of vectors that are tangent to such holonomic sections (another description using 1-forms will be
recalled below). All together define 
\[ \CC_{\infty}\subset T J^{\infty} R\]
as a distribution on the pf manifold; unlike its finite versions $\CC_k$, $\CC_{\infty}$ is involutive.
\end{exm}

\begin{exm} And here is an interesting example of pf-connection. Consider the pf-vector bundle $J^{\infty}E\to M$ 
associated to a vector bundle $E\to M$ as in Example \ref{ex:pf-vbdle:JkE}. It is well-known that $J^{\infty}E$ carries a canonical connection 
\begin{equation} 
\label{eq:nabla-Cartan}
\nabla: \mathfrak{X}(M)\times \Gamma(J^{\infty}E)\to \Gamma(J^{\infty}E)
\end{equation}
which can be used, again, to detect the holonomic sections as the ones that are flat w.r.t. $\nabla$. 
Actually, as with the Cartan distribution, this property can be used to force the definition of $\nabla$. It is interesting to 
remark that $\nabla$ is of a true pf-nature: at each level $k$ it does not descend to a connection on $J^kE$ but to an operator
\begin{equation} 
\label{eq:nabla-Cartan-k} 
\nabla^k:  \mathfrak{X}(M)\times \Gamma(J^{k}E)\to \Gamma(J^{k-1}E)
\end{equation}
which is a connection ``relative to $l: J^kE\to J^{k-1}E$" in the sense that the Leibniz identity takes the form
\[ \nabla^k_X(f\sigma)= f\nabla^k_X(\sigma)+ L_{X}(f) l(\sigma).\]
Similar to the involutivity of the Cartan distribution, $\nabla$ is a flat connection. In the literature, it is known as the {\bf Spencer operator} on $J^\infty E$. The list of literature concerning Spencer operators is extensive; see, for example, \cite{SPENCER}.
\end{exm}

%
%
%
%
%
%
%
\subsection{Differential forms}
We now move to differential forms. We are interested in the general case with coefficients. 

\begin{defn} Let $\iM$ be a pf-manifold and let $\iE$ be a pf-vector bundle over $\iM$. A $q$-differential form on $\iM$ with coefficients in $\iE$  is any $q$-alternating, $C^{\infty}(\iM)$-multilinear map
\begin{equation}\label{eq:pro-form-in-E} 
\omega: \underbrace{\mathfrak{X}(\iM)\times \ldots \mathfrak{X}(\iM)}_{q\ \textrm{times}} \to \Gamma(\iE)
\end{equation}
which is continuous w.r.t. the natural ($\mathfrak{f}$-adic) topology on $\mathfrak{X}(\iM)$ and $\Gamma(\iE)$. We denote by $\Omega^q(\iM, \iE)$ the space of such. When $\iE$ is the trivial line bundle, we talk about $q$-forms on $\iM$ and we use the notation $\Omega^q(\iM)$. 
\end{defn}
\begin{rmk}
Note that, according to this definition, the continuity of $q$-forms 
\begin{equation}\label{eq:pro-form-in-R} 
\omega: \underbrace{\mathfrak{X}(\iM)\times \ldots \mathfrak{X}(\iM)}_{q\ \textrm{times}} \to C^{\infty}(\iM)
\end{equation}
appeal to the discrete topology on $C^{\infty}(\iM)$. Without thinking at general coefficients, that may seem odd, hence it may be interesting to point out that the actual topology on smooth functions does not have such a big influence: any other topology that is linear (i.e. makes the vector space operations continuous) and for which the origin has a neighborhood that does not contain any line, would produce the same result. 
\end{rmk}

%
%
%
%
%
%

Let us concentrate on forms with trivial coefficients $\mathbb{R}$. For a pointwise description, we will be looking at topological vector spaces $V$ and at $q$-alternating, $\R$-multilinear, continuous functionals on $V$
\[ \mathfrak{alt}^q(V):= \{\omega: \underbrace{V\times \ldots \times V}_{q\ \textrm{times}} \to \R: \textrm{alternating, $\R$-multilinear, continuous}\}\]
(where, again, $\R$ will be endowed with the discrete topology). 
Of course, we will be applying this to the tangent spaces $T_x\iM$ of a pf-manifold $\iM$ with their natural $\mathfrak{f}$-topology. 
\begin{rmk}
When $V$ is finite dimensional endowed with the discrete topology, $\mathfrak{alt}^q(V)$ is simply $\Lambda^qV^*$ - but this is a notation that may be confusing if used in the infinite dimensional case (note that we are not even interested in making sense of $\Lambda^qW$ for a general pf-space $W$, and then apply it to some topological dual $V^*$ of $V$). 
\end{rmk}

\begin{rmk} When looking at topological vector spaces $V$ whose topology can be described as the $\mathfrak{f}$-adic topology induced by a filtration 
 \[ \ldots \subset \mathfrak{f}_{2} \subset \mathfrak{f}_{1}\subset \mathfrak{f}_{0}\subset \mathfrak{f}_{-1}= V\]
consisting of vector subspaces of finite codimension, then the continuity of a form $\omega$ on $V$ is equivalent to the fact that $\omega$ factors through one of the (finite dimensional) quotients:
\[ V_k:= V/\mathfrak{f}_k, \]
In other words, considering the sequence of inclusions induced by the canonical projections, one has:
\[ \mathfrak{alt}^q(V_0)\hookrightarrow \mathfrak{alt}^q(V_1)\hookrightarrow \ldots \hookrightarrow 
\cup_{k} \mathfrak{alt}^q(V_k)= \mathfrak{alt}^q(V)\]
where each $V_k$ is finite dimensional hence $\mathfrak{alt}^q(V_k)$ is the standard $\Lambda^qV_k^*$. 
%
\end{rmk}
%

\begin{prp} Let $\iM$ be a pf-manifold. Then:
\begin{enumerate}
\item[(a)] any $\omega\in \Omega^q(\iM)$ is pointwise, i.e. there 
exists a family 
\[ \{\omega_x\in \mathfrak{alt}^q(T_x\iM):  x\in \iM\},\]
inducing it in the sense that 
\[ \omega(X^1, \ldots, X^q)(x)= \omega_x(X^1_x, \ldots, X^q_x)\]
for all $X^1, \ldots, X^q\in \mathfrak{X}(\iM)$, $x\in \iM$. 
\item[(b)] if $a: \iM\to \iM_{\bullet}$ is a pf-atlas for $\iM$ then 
any usual differential form at some level $k$, $\eta\in \Omega^q(\iM_k)$, 
can be promoted to a form on $\iM$, $\tilde{\eta}\in \Omega^q(\iM)$, 
uniquely characterised by:
\[ \tilde{\eta}_x= \eta_{a_{k}(x)}\circ \left((d a_k)_x, \ldots, (d a_k)_x\right)\]
\end{enumerate}
Moreover, the resulting forms exhaust all the differential forms on $\iM$:
\[ \Omega^q(\iM_0)\hookrightarrow \Omega^q(\iM_1)\hookrightarrow \ldots \hookrightarrow 
\bigcup_{k} \Omega^q(\iM_k)= \Omega^q(\iM).\]
\end{prp}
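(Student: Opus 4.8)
The plan is to treat the three assertions in order of increasing difficulty: the promotion map (b), the tensoriality (a), and finally the exhaustion identity, where the real work sits.

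I would first dispose of (b). For $\eta\in\Omega^q(\iM_k)$ set $\tilde\eta:=a_k^*\eta$, the pullback along the smooth pf-map $a_k\colon\iM\to\iM_k$; explicitly $\tilde\eta(X^1,\dots,X^q)(x)=\eta_{a_k(x)}\big((da_k)_xX^1_x,\dots,(da_k)_xX^q_x\big)$. This is alternating and $C^\infty(\iM)$-multilinear because each $(da_k)_x$ is linear, takes values in $C^\infty(\iM)$ because $\eta$ and $a_k$ are smooth, and is continuous because it factors through the $k$-th level of the $\mathfrak f$-adic filtration; this is exactly the characterization in the statement. Since each $a_k$ is a surjective submersion, $(da_k)_x$ is onto and $a_k^*$ is injective; moreover $a_{k-1}=\pi_k\circ a_k$ gives $a_{k-1}^*=a_k^*\circ\pi_k^*$, so the pullbacks $\pi_k^*$ assemble the $\Omega^q(\iM_k)$ into a directed system whose maps into $\Omega^q(\iM)$ are injective and compatible. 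This produces the chain of inclusions and reduces the final ``$=$'' to surjectivity.

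Next I would prove (a), tensoriality. Working in a local pf-chart, where the tower is modeled on $\iM_0\times\R^{d_1}\times\R^{d_2}\times\cdots$ with coordinate fields $\partial_\alpha$ and $\mathfrak f_k$ consisting of fields with no components along directions of level $\le k$, I would show that $\omega(X^1,\dots,X^q)(x)$ depends only on the values $X^i_x$. If $X^1_x=0$, expand $X^1=\sum_\alpha f_\alpha\partial_\alpha$ with $f_\alpha(x)=0$; this is an $\mathfrak f$-adically convergent (in general infinite) sum, and here continuity of $\omega$ is used to commute it past $\omega$, yielding $\omega(X^1,\dots)(x)=\sum_\alpha f_\alpha(x)\,\omega(\partial_\alpha,\dots)(x)=0$. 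Localization is legitimate since bump functions pulled back from the genuine manifold $\iM_0$ provide enough cutoffs. The resulting pointwise values define $\omega_x\in\mathfrak{alt}^q(T_x\iM)$, and by the Remark preceding the Proposition (continuity of a pointwise form is equivalent to its factoring through a finite quotient $T_{x_k}\iM_k=T_x\iM/\mathfrak f_k$) each $\omega_x$ is horizontal for $\ker(da_{k(x)})_x$ at some finite level $k(x)$.

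The crux is the exhaustion identity, i.e. surjectivity, where I must pass from the per-point finite level $k(x)$ to a single global level $N$, and then check the descended data is genuinely smooth and constant along $a_N$-fibers. In a chart write $\omega=\sum_I c_I\,dx^I$ with $c_I\in C^\infty(\iM)$. Joint continuity of $\omega$ at the origin gives a $k_0$ with $c_I=0$ unless $I$ contains an index of level $\le k_0$, and there are only finitely many such low directions; then, fixing any $(q-1)$-tuple of these low directions and using continuity in the remaining slot (separate continuity follows from joint continuity), the kernel in that slot contains some $\mathfrak f_m$, and the maximum over the finitely many low tuples yields a level $N$ beyond which every surviving $c_I$ vanishes. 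Hence only finitely many components remain, each a global smooth function factoring through a finite level (as $C^\infty(\iM)=\cup_m C^\infty(\iM_m)$), so locally $\omega$ is pulled back from a finite level; a partition-of-unity argument over a cover of the finite-dimensional base $\iM_0$ (together with Example~\ref{tower-sample} to re-sample the tower) upgrades the local levels to one global $N$, producing $\eta\in\Omega^q(\iM_N)$ with $a_N^*\eta=\omega$. I expect the uniform-level extraction to be the main obstacle: the subtlety is that vanishing of $\omega$ on $\mathfrak f_{k}^{\,q}$ alone is strictly weaker than the horizontality-plus-fiber-constancy needed to descend, so one must genuinely test continuity at points other than the origin rather than rely on the origin estimate.
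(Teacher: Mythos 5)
Your overall architecture (pullbacks for (b), tensoriality for (a), then a uniform-level extraction for the exhaustion) parallels the paper's, and you correctly locate the crux in passing from a per-point or per-argument finite level to a single global one. But two steps do not go through as written. First, your proofs of (a) and of the exhaustion are carried out in a ``local pf-chart'' modeled on $\iM_0\times\R^{d_1}\times\R^{d_2}\times\cdots$ with coordinate fields $\partial_\alpha$; such charts are not available. Each $\pi_k:\iM_k\to\iM_{k-1}$ is only a surjective submersion, hence need not be a locally trivial fibration over opens of the base, and even where single steps trivialize, trivializing all infinitely many steps simultaneously would require an infinite intersection of basic opens, which is not open in $|\iM|$ and cannot be reached by cutoffs pulled back from $\iM_0$ (those localize only in the $\iM_0$-directions). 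The paper's proof of (a) deliberately avoids charts: it uses continuity to find one $\mathfrak{f}_k$ killed by $\omega$, passes to the $k$-th component $X_k$ of a field vanishing at $x$, which lives on the \emph{finite-dimensional} manifold $\iM_{m_k}$ where the usual factorization $X_k=\sum_i f_i Y_i$ with $f_i$ vanishing at the relevant point applies, extends the $Y_i$ to pf-vector fields, and concludes by $C^\infty(\iM)$-linearity — no infinite sums, no product structure. (It also proves separately that every $v\in T_x\iM$ extends to a global pf-vector field, which you obtain only via the chart.)

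Second, your extraction of the uniform level $N$ is incomplete even granting the chart. Continuity at the origin only kills the coefficients $c_I$ whose indices are \emph{all} above level $k_0$, and your second step only tests the remaining slot against $(q-1)$-tuples of directions of level $\le k_0$; multi-indices $I$ containing one index $\le k_0$, one index above $N$, and further indices at intermediate levels are never treated, so you have not shown that $\omega$ vanishes whenever a \emph{single} argument lies in $\mathfrak{f}_N$. Moreover, the final partition-of-unity patching over $\iM_0$ cannot produce one global $N$ when $\iM_0$ is non-compact and the local levels are unbounded — and a globally uniform level is exactly what membership in $\Omega^q(\iM)=\bigcup_k\Omega^q(\iM_k)$ demands. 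The paper resolves this much more cheaply: it reads off from the continuity of $\omega$ (with respect to the $\mathfrak{f}$-adic topology, discrete target) that $\omega$ vanishes on a single $\mathfrak{f}_k$, and then descends using the fact that every $X_k\in\mathfrak{X}(\iM_k)$ is the $k$-th component of some $X\in\mathfrak{X}(\iM)$, unique modulo $\mathfrak{f}_k$. If you do not want to take that reading of continuity for granted, then the uniform one-slot vanishing is precisely the statement you must prove, and your current argument does not yet establish it.
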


In this statement, the ``inclusions" $\Omega^q(\iM_{k-1})\hookrightarrow \Omega^q(\iM_{k})$ are, of course, induced by the projection maps $\pi: \iM_k\to \iM_{k-1}$, but we will omit $\pi^*$ from the notation. Also, in the previous construction, $\tilde{\eta}$ deserves the name $a_{k}^{*}(\eta)$ but, again, in what follows we will be omitting $a_{k}^{*}$ (as well as the tilde) from the notation. 

\begin{proof} First note that any $v\in T_x\iM$ can be written as $v= X_x$ for some $X\in \mathfrak{X}(\iM)$. To see this, at each level $k$ we look at $v_k= (da_k)_x(v)\in T_{x_k}\iM_k$ and we consider $X_k\in \mathfrak{X}(\iM_k)$ extending $v_k$. However, one can proceed inductively and, at each stage, arrange that $X_k$ is projectable to $X_{k-1}$. Then the $X_k$'s together define a vector field $X$ with the desired properties (see also \cite{PFLAUM}, Lemma 3.15).

This implies the uniqueness of each $\omega_{x}$. For the existence we are left with checking that $\omega(X, -, \ldots, -)(x)= 0$ whenever $X$ vanishes at $x$. Using an atlas $a: \iM\to \iM_{\bullet}$, since $\omega$ is continuous, it follows that it vanishes on some $\mathfrak{f}_k$, for some $k$.
Using $C^{\infty}(\iM)$-linearity it suffices to show that 
\[ X\equiv  f\cdot Y \ \textrm{mod $\mathfrak{f}_k$, for some $f\in C^{\infty}(\iM)$ vanishing at $x$, $Y\in \mathfrak{X}(\iM)$}.\]
 For that: look at the $k$-th component $X_k\in \Gamma(\iM_{m_k}, \pi^*T\iM_k)$ and write it as $f\cdot Y_k$ for some section $Y_k$ of the same bundle and $f\in C^{\infty}(\iM_k)$ vanishing at $a_k(x)$; then extend $Y_k$ to a $Y\in \mathfrak{X}(\iM)$ and also interpret $f$ as a smooth function on $\iM$. 
%
For (b) we have to check that the resulting $\tilde{\eta}$ is continuous, i.e. that it vanishes on some of the members of the filtration; but it clearly vanishes on $\mathfrak{f}_k$. 

Finally, to see that any $\omega\in \Omega^q(\iM)$ arises as some $\tilde{\eta}$ (living at some level $k$) choose, of course, the $k$ so that $\omega$ vanishes on $\mathfrak{f}_k$. And, by the same arguments as above, note that any $X_k\in \mathfrak{X}(\iM_k)$ can be realised as the $k$-th component of some $X\in \mathfrak{X}(\iM)$ (of course unique modulo $\mathfrak{f}_k$). 
\end{proof}

Note that all the usual operations and formulas valid for differential forms now make sense in the pf-setting. The ones that are relevant for us in the main body are: wedge products, interior products, Cartan magic formula, DeRham differential and the Koszul formula. 

Returning now to forms with general coefficients $\iE$, it is pretty straightforward to extend the previous discussion. The bottom line will be that the continuity condition for a form (\ref{eq:pro-form-in-E}) translates into the fact that, for any pf-vector bundle atlas $\tilde{a}: \iE\to \iE_{\bullet}$, and for any $k$, 
the $k$-component $\omega_k$ (taking values in $\Gamma(\iE_k)$), comes from a form at a finite step $m_k$
\[ \omega_k\in \Omega^q(\iM_{m_k}, \iE_{k}).\]
More formally, 
\[ \Omega^q(\iM_{\bullet}, \iE_{\bullet})= \plim_{k} \ilim_{m\geq k} \Omega^q(\iM_{m}, 
\pi^*\iE_k)\]
where, of course, $\pi= \pi_{m, k}: \iM_m\to \iM_k$ is the tower projection.

\begin{exm}
As in the finite dimensional setting, connections $\nabla$ on $\iE$ can be interpreted as operators 
\[ d_{\nabla}: \Gamma(\iE)\to \Omega^1(\iM, \iE),  \]
and then they are in 1-1 correspondence with operators on 
$\Omega^*(\iM, \iE)$ satisfying the Leibniz identity. Here all the objects are in the pf-sense hence, in principle, also the connections do not have to come from connections on vector bundles $\iE_k$. That is a phenomenon that we have seen already in the case of the standard connection (\ref{eq:nabla-Cartan}) on the infinite jet space $J^{\infty}E$. Note that, in that example, $d_{\nabla}$ is reflected at level $k$ as
\[ d_{\nabla^k}: \Gamma(J^kE)\to \Omega^1(M, J^{k-1}E).\]
\end{exm}

\begin{exm} [Cartan forms]\label{ex:Cartan-form-R}
Let us return to a fibration $\pi: R\to M$ as in Example \ref{exam-Jinfty-R} and we recall the description of the Cartan distribution (see Example \ref{exm:Car-distr-J}) as the kernel of a 1-form, the so-called Cartan form
\[ \omega\in \Omega^1(J^\infty R; \mathcal{V}).\]
First of all, $\mathcal{V}$ is the vector bundle over $J^\infty R$ which, at level $k$, is the 
vertical bundle of $J^kR\to M$:
\[ \mathcal{V}^k:= T^{\textrm{vert}} J^kR.\]
With this, the components of $\omega$ will be of type
\[ \omega^k\in \Omega^1(J^{k}R; \pi^*\mathcal{V}^{k-1})\]
(hence again of pf-nature!), where 
$\pi$ is the projection $\pi_{k, k-1}$ from $J^{k}R$ to $J^{k-1}R$. 
In particular, for the jet spaces of type $J^k(M, N)$ (see Example \ref{exam-Jinfty-R})
$\omega^k$ is given by 
\begin{equation}\label{Cartan_form_def-Luca}
\omega^k_{j^k_xf}=d_{j^k_xf}(p^{k,k-1})-d_x(j^{k-1}f)\circ d_{j^k_xf}(s)
\end{equation}
\end{exm}

Finally, note that the differential forms in the previous sense are not part of the sheaf on $\iM$ viewed as a topological space (which we denote by $|\iM|$) but rather of a pf-sheaf on $\iM$ viewed as a pf-manifold. Of course, 
\begin{equation}\label{eq:omegaq-presheaf} 
\mathcal{O}p(|\iM|)\ni U\mapsto \Omega^q(U) 
\end{equation}
is a preseheaf. 

\begin{defn} \label{def-omegaq-|M|} The sheaf of $q$-forms on $|\iM|$, denoted $\Omega^q_{|\iM|}$, is the sheaf on $|\iM|$ associated to the sheaf (\ref{eq:omegaq-presheaf}). The space of its global sections is denoted
$\Omega^q(|\iM|)$.  Similarly for forms with coefficients. 
\end{defn}

While the elements of $\Omega^q(\iM)$ are $q$-alternating, $C^{\infty}(\iM)$-multilinear maps
(\ref{eq:pro-form-in-R}) which are required to come from a $q$-form on some $\iM_k$, the elements of $\Omega^q(|\iM|)$ can be described similarly, but imposing the same condition only locally. 

\subsection{Inverse pf-sheaves (and vector bundles)}
We now move on and 
discuss the other type of vector bundles and sheaves on pf-manifolds: the inverse ones. Here, differently from the case of direct vector bundles/sheaves, the discussion for sheaves is simpler. Because of this and given the examples that we have in mind, we concentrate on sheaves first. In Example~\ref{exm:relation_direct_inverse} we will clarify the relation between direct and inverse vector bundles/sheaves.

\begin{defn} \label{defn:sheaves-towers}
A sheaf $\mathcal{S}_{\bullet}$ on a tower $\iM_{\bullet}$ is a sequence of sheaves $\mathcal{S}_{k}$ on $\iM_k$ together
with sheaf morphisms $\phi_k: \pi_{k}^{*} \mathcal{S}_{k-1}\to \mathcal{S}_{k}$ for each $k$.

Given a second sheaf 
$\mathcal{S}_{\bullet}^{'}$ on another tower $\iN_{\bullet}$, a concrete morphism of sheaves  $F_{\bullet}: \mathcal{S}_{\bullet}\to \mathcal{S}_{\bullet}^{'}$ consists of a strict morphism of towers
\[ f_{\bullet}= \{f_k:\iM_{m_k}\to \iN_{k}\}_{k\geq 1}\quad   \textrm{with $m_1< m_2< \ldots$}\]
together with morphisms of sheaves over $\iM_{m_k}$ 
\[ F_{k}: f_{k}^{*}\mathcal{S}_{k}^{'} \to \mathcal{S}_{m_k}\]
satisfying the coherence condition (for the notations, see the explanation below).  
\[\xymatrix{
\mathcal{S}_{m_k} & & & \pi^*\mathcal{S}_{m_{k-1}}\ar[lll]_{\phi}\\
f_{k}^*\mathcal{S}_{k}^{'} \ar[u]^{F_k} & & f_{k}^{*}\pi^{*}\mathcal{S}_{k-1}^{'} \ar[ll]_{f_{k}^{*}\phi'}\ar@{}[r]|*=0[@]{=} & \pi^*f_{k-1}^{*}\mathcal{S}_{k-1}^{'} \ar[u]_{\pi^*F_{k-1}}
}\]


Given another concrete morphism of sheaves $G_{\bullet}: \mathcal{S}_{\bullet}\to \mathcal{S}_{\bullet}^{'}$, with 
concrete morphism of towers 
\[ g_{\bullet}= \{g_k:\iM_{l_k}\to \iN_{k}\}_{k\geq 1}\quad   \textrm{with $l_1< l_2< \ldots$},\]
we say that $F_{\bullet}$ and $G_{\bullet}$ are equivalent if for each $k$ there exists $K$ larger than $n_k$ and $l_k$ such that, as in the definition of equivalence of strict morphisms of towers, one has a commutative diagram
\[ \xymatrix{
& \iM_K \ar[ld]_{\pi} \ar[rd]^{\pi} & \\
\iM_{m_k}  \ar[rd]^{f} & & \iM_{l_k}\ar[ld]_{g} \\
& \iN_k & 
}\]
as well as a commutative diagram of sheaves over $\iM_{K}$:
\[ \xymatrix{
& \mathcal{S}_{K}  & \\
\pi^* \mathcal{S}_{m_k}\ar[ru]^{\phi}  & & \pi^*\mathcal{S}_{l_k}\ar[lu]_{\phi}  \\
& \pi^*f^*\mathcal{S}_{k}^{'}= \pi^*g^*\mathcal{S}_{k}^{'}\ar[lu]_{\pi^*F}\ar[ru]^{\pi^*G} & 
}\]

Finally, a morphism between two sheaves (over towers) is an equivalence class of concrete morphisms. 
\end{defn}

As a convention for our diagrams: 
\begin{itemize}
\item for notational simplicity we omit the missing indices for $\phi$, $\phi'$ and the tower projections $\pi$; on the diagram they should be clear by looking at the domains of the arrows.
\item we also denote by $\pi$ and $\phi$ the composition of consecutive $\pi_k$'s and $\phi_k$, respectively. 
\end{itemize}

And now we move to pf-manifolds. 

\begin{defn} \label{defn:sheaves-pro-manifolds} Let $\iM$ be a pf-manifold and let 
$\mathcal{S}_{|\iM|}$
 be a sheaf on the topological space 
 $|\iM|$ (underlying  $\iM$). 
 A pf-atlas for $\mathcal{S}_{|\iM|}$ is a pf-atlas $a: \iM\to \iM_{\bullet}$ of $\iM$, together with a sheaf $(\mathcal{S}_{\bullet}, \phi_{\bullet})$ on the tower $\iM_{\bullet}$ and an isomorphism of sheaves over $|\iM|$:
\begin{equation}\label{ind-limit-sheaf} 
\tilde{a}: \ilim a_{k}^{*} \mathcal{S}_k\cong \mathcal{S}_{|\iM|}.
\end{equation}
Two such pf-atlases $\tilde{a}$ and $\tilde{a}'$ are said to be {\bf equivalent} (as pf-atlases for $\mathcal{S}_{|\iM|}$) if they are equivalent as sheaves of towers, by isomorphisms that are compatible with $\tilde{a}$ and $\tilde{a}'$. 

Finally, a {\bf pf-sheaf} $\mathcal{S}$ on $\iM$, is a sheaf $\mathcal{S}_{|\iM|}$ over $|\iM|$ together with an equivalence class of pf-atlases. 
\end{defn}

Note that the isomorphism (\ref{ind-limit-sheaf}), hence also the inductive limit there, is within the category of sheaves; and the connecting maps are induced by the $\phi_k$'s. This means that we have morphisms of sheaves 
\[ \tilde{a}_k: a_{k}^{*} \mathcal{S}_k\to \mathcal{S}_{|\iM|}\]
which are compatible with the $\phi_k$s:
\[ \xymatrix{
 a_{k}^{*} \mathcal{S}_k \ar[dr]^{\tilde{a}_k}  & \\
  &  \mathcal{S}_{|\iM|}  \\
 a_{k}^{*}\pi_{k}^{*} \mathcal{S}_{k-1}= 
a_{k-1}^{*} \mathcal{S}_{k-1} \ar[uu]^{a_{k}^{*}\phi_k}\ar[ur]_{\tilde{a}_{k-1}} & 
}  .\]
Furthermore, $\mathcal{S}_{|\iM|}$ is universal among sheaves equipped with such diagrams. The fact that $\tilde{a}= \ilim \tilde{a}_k$ is an isomorphism of sheaves boils down to a similar stalk-wise statement.  
However, such isomorphisms do not hold at the level of sections; what happens there is the following:
\begin{itemize}
\item the spaces $\Gamma(\iM_k, \mathcal{S}_k)$ form an inductive system
\[ \phi: \Gamma(\iM_k, \mathcal{S}_k)\to \Gamma(\iM_{k+1}, \mathcal{S}_{k+1}),\]
\item each $\Gamma(\iM_k, \mathcal{S}_k)$ injects into $\Gamma(|\iM|, \mathcal{S}_{|\iM|})$ via the map $\tilde{a}_k$ (and the base map $a_k$)
\[ \tilde{a}_k: \Gamma(\iM_k, \mathcal{S}_k)\hookrightarrow \Gamma(|\iM|, \mathcal{S}_{|\iM|})\]
\end{itemize}
and, together, one obtains 
\[ \ilim_k \Gamma(\iM_k, \mathcal{S}_k)\cong \bigcup_{k} \tilde{a}_k\left( \Gamma(\iM_k, \mathcal{S}_k)\right) \subset \Gamma(|\iM|, \mathcal{S}_{|\iM|}).\]
It is not difficult that the resulting subspace of $\Gamma(\iM, \mathcal{S})$ does not depend on the atlas  
$\tilde{a}$ of $\mathcal{S}$.

\begin{defn} For any pf-sheaf $\mathcal{S}$ on a pf-manifold $\iM$, the resulting subspace of $\Gamma(|\iM|, \mathcal{S}_{|\iM|})$ 
is called the space of (pf-sections) of $\mathcal{S}$, denoted
\[ \Gamma(\iM, \mathcal{S})\subset \Gamma(|\iM|, \mathcal{S}_{|\iM|}).\]
Hence it consists of sections $s$ of $\mathcal{S}_{|\iM|}$ with the property that for a/any pf-atlas of $\mathcal{S}$, there exists a $k$ and a section $s_k\in \Gamma(\iM_k, \mathcal{S}_k)$ such that 
\[ s(x)= \tilde{a}_k(s_k(a_k(x))). \]
\end{defn}

%
%
%
%
%
%
%
%

Note that the entire discussion about sheaves, starting with Definition \ref{defn:sheaves-towers} and \ref{defn:sheaves-pro-manifolds}, makes sense if we replace the word ``sheaf" by ``vector bundles". The resulting notions will be coined as {\bf inverse vector bundles} on towers and on pf-manifolds. 

\begin{exm}
The basic example is, of course, the pf-sheaf of $q$-forms, denoted $\Omega^{q}_{\iM}$.
The underlying sheaf on $|\iM|$ is $\Omega^{q}_{|\iM|}$ from Definition \ref{def-omegaq-|M|} and pf-atlases are obtained from atlases $a: \iM\to \iM_{\bullet}$ of $\iM$, using $\Omega^q_{\iM_{k}}$ over each $\iM_k$ (and the required sheaf morphisms are given by pull-back of forms).
%
%
%
\end{exm}

\begin{exm} Any (inverse) pf-vector bundle $\iE$ on $\iM$
gives rise to a (inverse) pf-sheaf $\Gamma_{\iE}$ of sections.  
\end{exm}

\begin{exm}\label{exm:relation_direct_inverse} Any of the usual operations with vector bundles which are covariant in all arguments (e.g. direct sums, tensor products, tensor powers, etc) can be lifted right away to both (direct/inverse) types of pf-vector bundles. The situation is different for constructions that are contravariant in one of the terms- the basic example being that of taking duals. That is precisely the operation that changes direct vector bundles into inverse ones, and the other way around. 

In particular, we see that for any pf-manifold $\iM$, we can talk about $T^*\iM$, as well as $\Lambda^q T^*\iM$, as inverse vector bundles over $\iM$. Passing to sections (cf. the previous example), one recovers the sheaf of $q$-forms. 
\end{exm}

\subsection{Oids in the pro-finite context}
Most of the basic concepts on Lie groupoids and Lie algebroids can be adapted to the pro-finite setting in a manner that is rather straightforward, though occasionally tedious. Furthermore, given the examples we have in mind, and to avoid further complications, in the main body of the paper we will not even use some of the most general concepts. For instance, the most general concept of pf-groupoid $\Sigma\tto \iM$ comes with $\Sigma_{k}$s, $\iM_{k}$s and the structure maps (source/target, multiplication) are just pf-maps (hence may lower the index $k$). However, the examples that we encounter in the main body are most of the time over a finite dimensional base $\iM$ and they are always ``strict" in the following sense.

\begin{defn}\label{defn:strict-things} A {\bf strict Lie pf-groupoid} is a topological groupoid $\Sigma\tto \iM$, where $\Sigma$ and $\iM$ are pf-manifolds which admit strict pf-atlases, i.e. atlases $\tilde{a}: \Sigma\to \Sigma_{\bullet}$, $a: \iM\to \iM_{\bullet}$, where each $\Sigma_k$ is a Lie groupoid over $\iM_k$ with structure maps compatible with those of $\Sigma$ (towers of Lie groupoids). We say that $\Sigma$
\begin{itemize}
\item is {\bf proper} if all the $\Sigma_k\tto \iM_k$ may be chosen to be proper. 
\item has (cohomologically) {\bf $l$-connected fibers}, if all the $\Sigma_k\tto \iM_k$ may be chosen to have this property.
\end{itemize}

A Lie pf-{\bf subgroupoid} of $\Sigma$ is any subgroupoid $\Upsilon\subset \Sigma$ which is itself a (strict) Lie pf-groupoid, such that
$\Sigma$ admits a strict pf-atlas $\tilde{a}: \Sigma\to \Sigma_{\bullet}$ with the property that $\tilde{a}|_{\Upsilon}: \Upsilon\to \Upsilon_{\bullet}$ 
%
 is a strict pf-atlas of $\Upsilon$, where each $\Upsilon_{k}:= \tilde{a}_k(\Upsilon)$ is a Lie subgroupoid of $\Sigma_k$.

Similarly, a {\bf strict action} of such a groupoid on a pf-manifold $P$ is a topological action of $\Sigma$ on $P$ along some pf-map $\mu: P\to \iM$, such that there exists atlases for $\Sigma$ and $\iM$ as above, and for $P$, such that the action factors through actions of $\Sigma_k$ on $P_k$. 
We define {\bf strict principal $\Sigma$-bundles} and {\bf strict (direct/inverse) representations} of $\Sigma$ on pf-vector bundles in an analogous way. 
\end{defn}

Cochains on $\Sigma$ with values in a representation $\iE$ are defined as in the finite dimensional case, see (\ref{equation:Cp(G,E)}), except that we use sections in the pf-setting; hence $C^{*}(\Sigma, \iE)$ is a subcomplex of $C^{*}(|\Sigma|, |\iE|)$. The same is true when defining multiplicative forms.

Concerning pf-algebroids $A\to \iM$, the discussion is again quite similar, although here we will not restrict entirely to strict ones. Hence the Lie bracket on $\Gamma(A)$ may come from ``brackets" $\Gamma(A_k)\times \Gamma(A_k)\to \Gamma(A_{k-1})$ (hence the Jacobi identity makes sense only on the entire $\Gamma(A)$). Nevertheless, if $\Sigma\tto \iM$ is a strict pf-groupoid, one defined its Lie algebroid $A= \textrm{Lie}(\Sigma)$ precisely as in the finite dimensional case, and the outcome will be a strict pf-agebroid. Actually, it is not difficult to see that, using strict pf-atlases as in Definition \ref{defn:strict-things}, the Lie algebroids $A_k= \textrm{Lie}(\Sigma_k)\to \iM_k$ can be used as strict atlases for $A$ (in particular $A= \plim A_k$). 

On the other hand one has to be a bit careful with the Lie functor at the level of representations, $\textrm{Rep}(\Sigma)\to \textrm{Rep}(A)$, since the defining formula (\ref{eq:Lie:on:repres}) uses flows. For that one just remarks that that formula defining $\nabla_{\alpha}(\sigma)(x)$ can be described as the differential at $1_x$ of the map 
\[ s^{-1}(x)\to \iE_x, \quad g\mapsto g\cdot \sigma(t(g))\in \iE_x,\]
applied to $\alpha(x)$.

Here are two results that we use in the main body of the paper.

\begin{lem}\label{lem-App;proper} If $\Sigma\tto \iM$ is a proper strict pf-groupoid and $\iE$ is any pf-representation, then $H^{p}(\Sigma, \iE)= 0$ when $p\neq 0$, while when $p= 0$ it is the space of invariant sections $\Gamma(\iE)^{\Sigma}$. 
\end{lem}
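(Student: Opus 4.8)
The plan is to reduce the statement to the known finite-dimensional result for proper Lie groupoids and then pass to the limit along a strict atlas. Recall that for an ordinary proper Lie groupoid $\Sigma_k \tto \iM_k$ and a representation $E_k$, the differentiable cohomology $H^p(\Sigma_k, E_k)$ vanishes for $p > 0$ and equals the space $\Gamma(E_k)^{\Sigma_k}$ of invariant sections for $p = 0$; this is proved by averaging, using a Haar system on $\Sigma_k$ together with a cutoff function (whose existence is guaranteed by properness) to build a contracting homotopy on the complex of normalised differentiable cochains. I would take this theorem (as in~\cite{MARIUS}) as the sole analytic input.

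First I would fix, using the hypothesis of properness, a strict pf-atlas $\tilde{a}\colon \Sigma \to \Sigma_\bullet$, $a\colon \iM \to \iM_\bullet$ in which every $\Sigma_k \tto \iM_k$ is a \emph{proper} Lie groupoid and the representation $\iE$ is realised by a strict tower of representations $E_k$ of $\Sigma_k$. Because the groupoid and the action are strict, all the structure maps (source, target, multiplication) and the action on $\iE$ are represented level-wise; consequently the groupoid differential $\delta$ of~\eqref{delta-group-sheaf} preserves each finite level, i.e. restricts to the usual differential $\delta\colon C^p(\Sigma_k, E_k) \to C^{p+1}(\Sigma_k, E_k)$, while the tower pull-backs $C^*(\Sigma_{k-1}, E_{k-1}) \to C^*(\Sigma_k, E_k)$ are chain maps. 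The key point is then that, by the very definition of pf-sections for inverse pf-bundles applied to $t^*\iE$ over the tower $\Sigma^{(p)}_\bullet$, the pf-cochain complex is the filtered colimit of the finite-level complexes:
\[
C^*(\Sigma, \iE) \cong \ilim_k C^*(\Sigma_k, E_k).
\]

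Since filtered colimits of complexes of vector spaces are exact, cohomology commutes with them, giving
\[
H^p(\Sigma, \iE) \cong \ilim_k H^p(\Sigma_k, E_k).
\]
For $p > 0$ each term on the right vanishes by the finite-dimensional theorem, hence so does the colimit. For $p = 0$ each term is $\Gamma(E_k)^{\Sigma_k}$; since a pf-section is invariant precisely when any level representing it is invariant (the action factoring through the $\Sigma_k$), the colimit of these invariant sections is exactly the space $\Gamma(\iE)^{\Sigma}$ of invariant pf-sections. This would establish both assertions at once.

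The main obstacle is the bookkeeping around the two flavours of coefficients. For the inverse representations that actually occur when the lemma is invoked (e.g. $\iE = \Lambda^q T^*\BB$ in Lemma~\ref{lem:proper-action-van-est}, which is inverse), pf-sections genuinely form the filtered colimit above and the argument is clean. For a general \emph{direct} pf-representation one must instead recognise the cochain complex as an \emph{inverse} limit of the $C^*(\Sigma_k, E_k)$ and verify a Mittag-Leffler-type condition — the transition maps being induced by the surjective submersions of the tower — to ensure that the derived limit $\plim^1$ vanishes and cohomology still commutes with the limit. Checking that a level-wise proper atlas can be chosen compatibly with the representation (so that Crainic's theorem applies uniformly) is the only genuinely delicate point; everything else is formal, and one could alternatively avoid the colimit formalism altogether by transporting a single pf-Haar system and pf-cutoff function through the atlas to build the contracting homotopy directly on $C^*(\Sigma, \iE)$.
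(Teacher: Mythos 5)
Your reduction to the finite-dimensional theorem of \cite{MARIUS} is the right starting point, and for trivial/constant coefficients and for inverse pf-sheaves your filtered-colimit argument is exactly what the paper does: there the cochains are genuinely the union of the finite-level cochains, cohomology commutes with the colimit, and the statement follows. The gap is in the case the lemma is actually stated for, namely an arbitrary (in particular direct) pf-representation $\iE$. For a direct pf-vector bundle the displayed isomorphism $C^*(\Sigma, \iE)\cong \ilim_k C^*(\Sigma_k, E_k)$ is false: by the definition of pf-sections of a direct bundle, a cochain $c$ is an entire sequence of components $c_k$ with values in $E_k$, each factoring through some finite level $\Sigma_{n_k}$, so the complex has the form $\plim_k\ilim_m C^p(\Sigma_m,\pi^*E_k)$ --- a limit of colimits, not a colimit. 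You acknowledge this in your final paragraph but only gesture at ``a Mittag-Leffler-type condition''; the surjectivity of the tower submersions does not by itself let you choose the level-wise primitives compatibly, and arranging that compatibility is precisely the content of the proof. (You also locate the ``only delicate point'' in choosing a level-wise proper atlas compatible with the representation, but that is essentially built into the definition of a proper strict pf-groupoid with a strict representation.)

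The paper resolves the direct case by an explicit induction. Given a cocycle $c$ with components $c_k\in C^p(\Sigma_{n_k}, E_k)$, one writes $c_k=\delta(u_k)$ using the finite-dimensional result, and must then arrange that $\pi^*(u_k)$ (pull-back along the tower of $\Sigma$) agrees with $\pi\circ u_{k+1}$ (projection along the tower of $\iE$). The key extra input, again a consequence of properness (via \cite{RUIMATIAS}), is a $\Sigma_{n_{k+1}}$-invariant splitting $E_{k+1}=\tilde{E}_k\oplus V_k$ with $V_k=\ker(E_{k+1}\to E_k)$. The $\tilde{E}_k$-component $u_{k+1}'$ of $u_{k+1}$ is then forced by $u_k$, and the remaining equation $\delta(u_{k+1}'')=c_{k+1}-\delta(u_{k+1}')$ is a cocycle equation with values in $V_k$, solvable by the finite-dimensional theorem applied to the representation $V_k$. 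Even if you prefer to package this as a $\plim^1$-vanishing statement, you still need that invariant splitting to make the relevant transition maps surjective; the invariant complement is the idea your proposal is missing.
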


\begin{proof} 
When the coefficients are trivial (or constant) we can just appeal to the similar finite dimensional result from~\cite{MARIUS} since, in that case, the cochains are the union of cochains defined at each step of the tower. This is also enough if the coefficent space is an inverse sheaf. For coefficients given by a direct vector bundle $E$ we have to be slightly more careful. We fix an atlas $\Sigma\to \Sigma_{\bullet}$ consisting of proper groupoid, with $E_k\to \iM_k$ representation of $\Sigma_k\tto \iM_k$. Let $c$ be a $p$-cochain with $p\neq 0$ such that $\delta(c)= 0$ and let $n_0< n_1< \ldots$ so that the $k$-th component of $c$ comes from $\Sigma_{n_k}$:
\[ c_k\in C^p(\Sigma_{n_k}, \iE_k).\]
Using the properness of $\Sigma_{n_k}$ we can write 
\[ c_k= \delta(u_k)\quad \textrm{with $u_k\in C^{p-1}(\Sigma_{n_k}, \iE_k)$}.\]
We just have to make sure that one can arrange that the two types of resulting $E_k$-valued cochains over $\Sigma_{n_{k+1}}$, 
\[ \pi^*(u_k)= u_k\circ (\pi, \ldots, \pi), \quad \pi\circ u_{k+1} \in C^{p-1}(\Sigma_{n_{k+1}}, \iE_k)\]
coincide (where the first $\pi$ is the one of the tower $\Sigma_{\bullet}$, while the second is of the tower $\iE_{\bullet}$). We proceed inductively. Once $u_k$ was chosen, the previous condition determines part of $u_{k+1}$. Actually, let us decompose $\iE_{k+1}= \tilde{E}_k\oplus V_k$, where $V_k$ is the kernel of the map $\iE_{k+1}\to \iE_k$, and 
$\tilde{E}_k$ is a complement which is $\Sigma_{n_{k+1}}$-invariant (the existence of such a complement is another consequence of properness- see \cite{RUIMATIAS}). The elements $v\in \iE_{k+1}$ will be decomposed as $v= v'+ v''$ according to this direct sum. Then, for the components of the element $u_{k+1}$ that we are looking for: 
$u_{k+1}'$ is uniquely determined by the condition $\pi(u_{k+1}')= \pi^*(u_k)$, while when choosing $u_{k+1}''$ the only condition comes from the requirement $\delta(u_{k+1})= c_{k+1}$, i.e. 
\[ \delta(u_{k+1}'')= c_{k+1}- \delta(u_{k+1}').\]
Note that the right hand side is a cocycle with values in $V_k$ (since $\delta(u_{k})= c_{k}$). Hence using again the result from the finite dimensional case (for the representation $V_k$ of $\Sigma_{n_{k+1}}$), one does find the desired $V_k$-valued cochain $u_{k+1}''$.
\end{proof}

The following is a particular case of Theorem 2 from \cite{MARIUS} (applied to foliations) extended to the pf-setting. 

\begin{prp}\label{lem:App:2nd}  Let $\pi: \iP\to \iM$ be a submersion between pf-manifolds. Let $\iC\subset T\iM$ be an involutive distribution, consider its inverse
\[ \widetilde{\iC}:= \pi^{!}(\iC):= (d\pi)^{-1}(\iC)\subset T\iP.\]
Assume that the fibers of $\pi$ are cohomologically $l$-connected in the pf-sense i.e. after passing to pf-atlases, 
$\pi$ can be represented by maps whose fibers are cohomologically $l$-connected in the usual sense. Then 
the map induced by pull-back in the $\iC$-cohomologies (the Lie algebroid cohomologies of the subalgebroids $\iC\hookrightarrow T\iM$ and $\widetilde{\iC}\hookrightarrow T\iP$)
\[ \pi^*: H^*(\iC) \to H^*(\widetilde{\iC}),\]
is an isomorphism up to degree $l$ and is injective in degree $l+1$.
\end{prp}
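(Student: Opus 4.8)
The plan is to reduce the assertion to its finite-dimensional counterpart — which is precisely Theorem 2 of \cite{MARIUS} applied to the foliation $\iC$ — and then to upgrade the conclusion to the pf-setting, the only genuinely new point being that the distribution $\iC$ itself need not be defined at any finite level of a pf-atlas. I would begin by recording the structural observation that organizes everything: since $\pi$ is a submersion and $0\in \iC$, the vertical bundle $\ker(d\pi)$ is contained in $\widetilde{\iC}=(d\pi)^{-1}(\iC)$, and $d\pi$ realizes $\widetilde{\iC}$ as an extension of Lie algebroids
\[ 0\longrightarrow \ker(d\pi)\longrightarrow \widetilde{\iC}\xrightarrow{\ d\pi\ } \pi^*\iC\longrightarrow 0. \]
Here $\ker(d\pi)$ is the (strict!) vertical foliation whose leaves are the $\pi$-fibers, and $d\pi\colon\widetilde{\iC}\to\iC$ is a fibration of Lie algebroids covering $\pi$, inducing on foliated de Rham complexes exactly the map $\pi^*$. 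This is the situation to which Theorem 2 of \cite{MARIUS} applies.

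The mechanism I would recall is the Leray-type (Hochschild--Serre) spectral sequence of this extension, obtained by filtering $\Omega^*(\widetilde{\iC})=\Gamma(\Lambda^*\widetilde{\iC}^*)$ by vertical degree relative to the dual sequence $0\to \pi^*\iC^*\to \widetilde{\iC}^*\to \ker(d\pi)^*\to 0$. The $E_1$-page is computed by the leafwise differential along $\ker(d\pi)$, so $E_1^{p,q}=\Gamma(\Lambda^p\pi^*\iC^*\otimes \H^q)$ with $\H^q$ the fibrewise cohomology of $\ker(d\pi)$. The hypothesis that the $\pi$-fibers are cohomologically $l$-connected gives $\H^0=\R$ (trivial coefficients, the fibers being connected) and $\H^q=0$ for $1\le q\le l$. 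Hence the bottom row reproduces the pulled-back complex $\pi^*\Omega^*(\iC)$, so $E_2^{p,0}=H^p(\iC)$ with edge homomorphism $E_2^{p,0}\hookrightarrow H^p(\widetilde{\iC})$ equal to $\pi^*$, while all rows $1\le q\le l$ vanish. Inspecting the differentials $d_r$ then yields the two claims at once: in total degree $n\le l$ only $E_\infty^{n,0}=H^n(\iC)$ survives, giving the isomorphism; and for $n=l+1$ no $d_r$ can leave or enter $E_r^{l+1,0}$ within the vanishing range $1\le q\le l$, so $E_\infty^{l+1,0}=E_2^{l+1,0}=H^{l+1}(\iC)$ sits as the bottom filtration piece of $H^{l+1}(\widetilde{\iC})$, forcing $\pi^*$ to be injective in degree $l+1$.

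Finally I would carry out the pf-extension, which is where the real subtlety lies. One cannot simply write $H^*(\iC)=\ilim_k H^*(\iC_k)$: in the examples of interest (e.g. $\iC=\CC^{(p)}$ built from the Cartan connection $\CC^\infty$) the distribution $\iC$ is a limit of \emph{non-involutive} distributions and is therefore not represented by involutive distributions at any finite level. The way around this is exactly the shape of the argument above: $\iC$ enters only through the bottom row $E_2^{*,0}=H^*(\iC)$, that is through its own pf foliated complex $\Omega^*(\iC)$, and never through a finite-level decomposition. All the finite-level input is confined to the fibrewise cohomology $\H^q$ of $\ker(d\pi)$, which \emph{is} strict: passing to a strict pf-atlas, $\pi$ is represented by maps $\pi_k$ with cohomologically $l$-connected fibers, and since pf-forms along the fibers are direct limits of ordinary forms, $\H^q=\ilim_k \H^q_{(k)}$ vanishes for $1\le q\le l$ and equals $\R$ for $q=0$. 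The filtration is bounded in each total degree, so the spectral sequence converges at the pf-level and the vanishing estimates transport verbatim. I expect the main obstacle to be conceptual rather than computational: namely isolating the fact that only the genuinely strict piece $\ker(d\pi)$ consumes the connectivity hypothesis, so that the non-strictness of $\iC$ — which would wreck a naive direct-limit argument — causes no trouble once the computation is run intrinsically through the algebroid fibration $d\pi$.
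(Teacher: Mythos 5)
Your reduction is essentially the same as the paper's: both run the Leray--Hochschild--Serre spectral sequence of the extension $0\to\ker(d\pi)\to\widetilde{\iC}\to\pi^*\iC\to 0$ (this is the spectral sequence of Theorem 2 of \cite{MARIUS}, which the paper invokes directly), and both correctly isolate the crux as the vanishing, in degrees $1,\ldots,l$, of the fibrewise cohomology of $\mathcal{F}_\pi:=\ker(d\pi)$ with coefficients pulled back from $\iM$. Your finite-dimensional bookkeeping on the $E_2$-page (isomorphism up to degree $l$, injectivity in degree $l+1$) is correct, and your observation that the non-involutivity of the finite-level truncations of $\iC$ is quarantined away from the fibrewise computation is a genuinely good one.

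The gap is in the sentence ``since pf-forms along the fibers are direct limits of ordinary forms, $\H^q=\ilim_k \H^q_{(k)}$ \ldots\ so the vanishing estimates transport verbatim''. This is exactly the step to which the paper's proof is devoted, and it is not automatic. The coefficient systems entering the fibrewise complexes are built from \emph{direct} pf-vector bundles over $\iM$ --- the paper reduces to $H^i(\mathcal{F}_\pi,\pi^*\iE)=0$ for an arbitrary direct pf-vector bundle $\iE$, the relevant case being $\iE=\Lambda^q\iC$ --- and for such coefficients a cochain is not a form living at some finite level: it is a compatible sequence of components $\omega^k$, each coming from a level $n_k$ that grows with $k$. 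The cochain complex is therefore an \emph{inverse} limit over the coefficient tower, and cohomology does not commute with inverse limits. The finite-dimensional theorem hands you a primitive $\eta_k$ for each component separately, but nothing guarantees that $\eta_{k+1}$, projected to $\iE_k$-valued cochains, agrees with the pull-back of $\eta_k$; one must correct the $\eta_k$ inductively, using a splitting $\iE_{k+1}=\tilde{E}_k\oplus V_k$ with $V_k=\ker(\iE_{k+1}\to\iE_k)$ and solving a residual equation with values in $V_k$ at each stage, exactly as in the proof of Lemma \ref{lem-App;proper}. Your direct-limit claim would be legitimate only after verifying that the leafwise differential $d_0$ preserves the finite-level subspaces of the coefficient system, so that the fibrewise complex is a direct limit of \emph{complexes} and not merely of graded vector spaces; that verification is absent, and it is where the actual content of the pf-upgrade lies.
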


\begin{proof} 
In principle, we just follow the proof of Theorem 2 from \cite{MARIUS} in the particular case of foliations (second part of the proof there), taking care of the pf-version. As there, using the same construction of the spectral sequence, one reduces everything to proving that 
\[ H^i(\mathcal{F}_{\pi}, \iE)= 0\quad \textrm{for all $i\in \{1, \ldots, l\}$},\]
where 
\[ \mathcal{F}_{\pi}:= \textrm{Ker}(d\pi)\]
(another involutive distribution) and $\iE$ is any pf-vector bundle over $\iM$. Here the coefficients $\pi^*\iE$ are endowed with the canonical $\mathcal{F}_{\pi}$-connection, uniquely determined by the condition that pull-back sections are flat (and of relevance for us are the cases $\iE= \Lambda^q\mathcal{C}$).

As in the proof of the previous result, this follows in principle from its finite dimensional version. To see this, start with any cocycle $\omega\in C^i(\mathcal{F}_{\pi}, \pi^*E)$, look at its $k$-components $\omega^k$, each one of them comes from the complex over some $\iP_{n_k}$, and apply the finite dimensional result there to write $\omega_k= d(\eta_k)$ (for that we need the fibers of $\iP_{n_k}\to \iM_{n_k}$ to be cohomologically $l$-connected). Again, one has to make sure that the consecutive $\eta_k$s are chosen compatible, but that is done precisely as in the previous proof, just that the situation is a bit simpler now, because we just need vector bundle complements inside each $E_{k+1}$. 
\end{proof}

 \begin{exm}\label{exam-pair-groupoid}
The main examples of groupoids that we encounter in this paper there are groupoids of germs (finite dimensional but possibly huge and non-Hausdorff) and groupoids of jets. The second type sit inside the tower of full jet groupoids associated to (finite dimensional) manifolds $\BB$
\[ \Pi^k= \Pi^k(\BB):=  \{j^k_xf:\ f\in \textrm{Diff}_{\textrm{loc}}(\BB), x\in \textrm{domain}(f)\},\]
which are groupoids over $\BB$ with source $s(j^k_xf)=x$, target $t(j^k_xf)=f(x)$ and multiplication 
\[
j^k_{f(x)}g\cdot j^k_xf=j^k_x(f\circ g).
\]
For $k$ finite $\Pi^k$ inherits a smooth structure from the fact that it sits as an open in the $k$-jet space $J^k(\BB, \BB)$. All-together, one has a tower of Lie groupoids
\[ \Pi^\infty : \ldots \to \Pi^2 \to \Pi^1 \to \Pi^0 = \BB\times \BB\]
and $\Pi^{\infty}\tto B$ becomes a strict pf-groupoid. 

Corresponding to this there is a pf-vector bundle over $\BB$, 
\[ \TT^\infty : \ldots \to \TT^2 \to \TT^1 \to \TT^0 = T\BB,\]
where 
\[ \TT^k:= J^kT\BB\to \BB\]
consists of $k$-jets of vector fields. Each $\TT^k$ with $k$ finite carries a canonical structure of Lie algebroid over $\BB$- the unique one with anchor the projection $\pi= \pi_{k, 0}: \TT^k\to \TT^0= T\BB$ and with bracket satisfying
\[ [j^k X, j^k Y]= j^k[X, Y].\]
Of course, when $k =\infty$, the previous discussion makes $\TT^\infty$ into a strict pf-algebroid over $\BB$. Actually, this is precisely the Lie algebroid corresponding to the jet groupoids: the identification of $j^k_xX$ with a tangent vector to the $s$-fiber of $\Pi^k$, at the unit, is
\[ j^k_xX\mapsto \left. \frac{d}{dt}\right|_{t= 0} j^{k}_{x}(\phi^X_t).\]

The general discussion of Cartan distributions and forms (see Example \ref{exm:Car-distr-J} and \ref{ex:Cartan-form-R}), applied to the first projection $R= \BB\times \BB\to \BB$ 
gives rise to similar objects on $\Pi^{\infty}$. Note that the vertical bundle from Example \ref{exm:Car-distr-J} becomes the pull-back via $t$ of $\TT^{\infty}$; hence using our convention for denoting forms with coefficients in representations, the Cartan form becomes
\[ \omega_{\infty}\in \Omega^1(\Pi^{\infty}, \TT^{\infty})\]
(and, over $\Pi^k$, it becomes a form with values in $\TT^{k-1}$). 

The structure that emerges is related to the general discussion of flat Cartan groupoids from the main body of the paper, as $\Pi^{\infty}\tto \BB$ becomes a flat Cartan groupoid. The corresponding flat Cartan algebroid is $(\TT^{\infty}, \nabla)$ where $\nabla$ is the Spencer operator (\ref{eq:nabla-Cartan}) in the case $E= T\BB$. 
\end{exm}

\begin{exm}[Lie pseudogroups]\label{exam-Lie-pseudogroups} 
A similar discussion applies to general (Lie) pseudogroups on $\BB$ (the previous example corresponds to the full pseudogroup $\textrm{Diff}_{\textrm{loc}}(\BB)$). First of all, for any pseudogroup $\Gamma$ on $\BB$, restricting our attention at jets of diffeomorphisms from $\Gamma$, one obtains the tower of groupoids
\begin{equation}\label{eq:tower-JinftyGamma} 
J^{\infty}\Gamma: \ldots \to J^2\Gamma\to J^1\Gamma\to J^0\Gamma,
\end{equation}
where each $J^k\Gamma$ sits inside $\Pi^k$. 
The only ``problem" is that the groupoids $J^k\Gamma$ may fail to be smooth manifolds and the maps in the tower may fail to be submersions.

\begin{defn} A Lie pseudogroup on $\BB$ is any pseudogroup with the property that $J^{\infty}\Gamma$ is a smooth pf-subgroupoid of $\Pi^{\infty}$. 
\end{defn}

It follows that the corresponding algebroids
\[ A^k:= \textrm{Lie}(J^k\Gamma) \]
are Lie sub-algebroids of $\TT^k$, that $A^{\infty}$ becomes a pf-sub-algebroid of $\TT^{\infty}$ and that
the action of $\Pi^k$ on $\TT^{k-1}$ restricts to an action of $J^k\Gamma$ on $A^{k-1}$. Furthermore, the Cartan forms restrict to similar forms
\[ \omega_k\in \Omega^1(J^k\Gamma, t^*A^{k-1}), \quad \omega_{\infty}\in \Omega^1(J^\infty\Gamma, t^*A^{\infty}),\]
and the same happens for the Spencer operator on $\TT^\infty$.
\end{exm}

\begin{rmk}\label{Lie_exm} Given our convention that al the pf-manifolds are normal, the condition that $\Gamma$ is a Lie subgroupoid includes the condition $J^{\infty}\Gamma= \plim J^k\Gamma$. For this reason, it would be desirable to remove normality: this condition is often satisfied, but there are interesting examples when it is not - e.g. for the pseudogroup $\Gamma_{\rm an}^n$ of analytic diffeomorphisms on $\R^n$. In fact, $J^k\Gamma^n_{\rm an}=J^k\Gamma^n$, but $J^\infty\Gamma^n_{\rm an}$ is strictly contained into $J^\infty\Gamma^n$: there are formal power series that are not convergent. 

On the other hand, for Lie pseudogroups, $J^\infty\Gamma$ is not necessarily infinite dimensional; pseudogroups for which it has finite dimension are called {\bf of finite type}. For instance, consider the pseudogroup in $\R$ consisting of (locally defined) diffeomorphisms of type
\[ \phi: x\mapsto \frac{ax+ b}{cx+ d} ,\quad \textrm{with $a, b, c, d\in \R$ with $ad- bc\neq 0$}.\]
One has
\[ J^1\Gamma= \Pi^1,\quad J^2\Gamma= \Pi^2\]
and, for all $k\geq 2$, the projection $J^{k+1}\Gamma\to J^k\Gamma$ is an isomorphism. 
This 
follows from the fact that any diffeomorphism $\phi$ of the previous type satisfies 
\[ \phi^{'''}= \frac{3 (\phi^{''})^2}{2 \phi'},\]
hence all the higher derivatives can be expressed in terms of the first two. 
Notice that from this it also follows that
the Cartan distribution on $J^3\Gamma$ is involutive. Moreover, the isomorphisms $J^{k+1}\Gamma\to J^k\Gamma$, for $k\geq 3$, induce isomorphisms of the Cartan distributions. This means that the projection 
\[
(J^\infty\Gamma, \omega^{\infty}) \to (J^3\Gamma, \omega^3)
\]
is an isomorphism of flat Cartan groupoids.
\end{rmk}


\bibliography{References}{}
\bibliographystyle{plain}

\end{document}